\title{$S=T$ for Shimura Varieties and moduli spaces of $p$-adic shtukas}
\author{Zhiyou Wu}
\tikzset{
labl1/.style={anchor=north, rotate=90, inner sep=1.2mm}
}
\newcommand*{\rom}[1]{\expandafter\@slowromancap\romannumeral #1@}
\newcommand{\nc}{\newcommand}
\nc{\on}{\operatorname}
\DeclareSymbolFontAlphabet{\mathbb}{AMSb} 
\DeclareSymbolFontAlphabet{\mathbbl}{bbold} 
\newcommand{\prism}{{\mathlarger{\mathbbl{\Delta}}}}
\newtheorem{proposition}{Proposition}[section]
\newtheorem{theorem}[proposition]{Theorem}
\newtheorem{example}[proposition]{Example}
\newtheorem{corollary}[proposition]{Corollary}
\newtheorem{definition}[proposition]{Definition}
\newtheorem{remark}[proposition]{Remark}
\newtheorem{lemma}[proposition]{Lemma}
\newtheorem{notation}[proposition]{Notation}
\begin{document}

\begin{abstract}
We prove the $S=T$ conjecture proposed by  Xiao--Zhu in \cite{2017arXiv170705700X}, making use of Scholze's theory of diamonds and v-stacks and Fargues--Scholze's geometric Satake equivalence. Following \cite{2018arXiv180205299X}, we deduce the Eichler--Shimura relation for Shimura varieties of Hodge type.  
\end{abstract}

\maketitle

\tableofcontents

\section{Introduction}

Xiao--Zhu have raised a question on $S=T$ for Shimura varieties in  \cite[Conjecture 7.3.13]{2017arXiv170705700X}, which states that excursion operators on the compactly supported cohomology are the same as Hecke operators, and is an analogue for Lafforgue's $S=T$ theorem in \cite[Proposition 6.2]{Lafforgue2018}. This is a fundamental question which can be interpreted as a local-global compatibility in a categorical formulation of Langlands correspondence, see \cite[Conjecture 4.7.16]{2020arXiv200802998Z}. Moreover, it has consequences on the Eichler--Shimura relation for compactly supported cohomology of Shimura varieties of Hodge type, see \cite[Proposition 6.3.1]{2018arXiv180205299X}.

The main result of this paper is a proof of this conjecture.
\begin{theorem}
Let $X_K$ be a Shimura variety of Hodge type with respect to a reductive group $G$ and level $K$. Let $p$ be a prime over which $G$ is unramified, and $K$ is hyperspecial at $p$. Let $l$ be a prime different from $p$, then we have an identity
\[
T_V= S_V 
\]
as endormorphisms of 
$H^i_c(\mathfrak{X}_{K,\overline{\mathbb{F}_p}},\overline{\mathbb{Q}}_\ell)$,
where $\mathfrak{X}_K$ is the smooth integral model of $X_K$ constructed in \cite{Kisin2010}, $V$ is a representation of $^L G$, $T_V$ is the Hecke operator associated to $V$ as defined in section \ref{rieourioeu}, and $S_V$ is the excursion operator associated to $V$ as defined in \cite[Section 7.3.11]{2017arXiv170705700X}. 
\end{theorem}

An application of such result, as observed by Xiao--Zhu, is the generalization of the classical Eichler--Shimura relation (\cite{GoroSHIMURA1958}, \cite{Eichler1954}) to Shimura varieties of Hodge type, proving a conjecture of Blasius--Rogawski in this setting.

\begin{corollary} (\cite[Proposition 6.3.1]{2018arXiv180205299X})
We have the Eichler--Shimura relation for Shimura varieties of Hodge type. 
\end{corollary}

Strictly speaking, there are two versions of $S=T$ in mixed characteristics (while there is only one in equal characteristic). The first is $S=T$ for moduli spaces of mixed characteristic shtukas, and the second is the one for Shimura varieties. In \cite{2017arXiv170705700X}, Xiao--Zhu have proved $S=T$ for moduli spaces of Witt vector shtukas ($loc.cit. $ theorem 6.0.1 (2)), and  zero dimensional Shimura varieties ($loc.cit.$ proposition 7.3.14). However, even their version for moduli spaces of shtukas is strictly weaker than Lafforgue's $S=T$ theorem in equal characteristics, and is again zero dimensional in nature. More precisely, they have proved that the excursion operator associated to a representation of the Langlands dual group acts on the cohomology of a zero dimensional moduli space of shtukas by the function associated to the representation through classical Satake. While Lafforgue's $S=T$ theorem states that the Hecke correspondence associated to a representation $V$ is the same as the excursion operator  associated to $V$ on any moduli space of shtukas. 

The reason Xiao--Zhu only prove their results for zero dimensional objects is the same for both Shimura varieties and moduli spaces of shtukas. Namely, they work with the Witt vector affine Grassmannian and moduli spaces of Witt vector shtukas, which are living over a characteristic $p$ base, so they can only talk about the special fiber of Shimura varieties and moduli spaces of shtukas. However, Hecke operators at $p$ are only naturally visible (as étale correspondences) in characteristic 0, or at least prime to $p$. The usual treatment is to take the closure of the Hecke correspondence in an integral model and reduce to characteristic $p$, this usually creates a lot of degeneration, as the classical example 
$T_p \equiv \on{Frob} + V $
for modular curves shows. This problem only goes away in zero dimension, where the Hecke operators "flatten" to characteristic $p$. Since Xiao--Zhu can only work with the special fiber, they can not even define Hecke operators for general moduli space of Witt vector shtukas. Indeed, they only state their conjecture and results for cohomology, which is nothing but passing to zero dimension, where they have the obvious notion of Hecke operators, namely the action of a function. 

There is a natural way to proceed. Namely the  theory of diamonds and v-stacks developed by Scholze in \cite{SW17} and \cite{2017arXiv170907343S} allows us to talk about moduli spaces of mixed characteristic shtukas over generic fibers,  so we can  define Hecke operators for moduli spaces of shtukas. Further, we have integral models of moduli spaces of shtukas using this framework, whose special fiber is essentially the moduli spaces of Witt vector shtukas considered by Xiao--Zhu, and the proof in this article is in some sense a nearby cycle construction relating cohomology in characteristic $0$ to cohomology in characteristic $p$. 

More precisely, the excursion operators on moduli spaces of shtukas are defined by the composition  of  creation, partial Frobenius and annihilation correspondences, each of which are essentially pullbacks of cohomological correspondences on Hecke stacks. The key to the construction of these correspondences on the Hecke stacks is the geometric Satake established by Fargues and Scholze in \cite[Chapter \rom{6}]{2021arXiv210213459F}, which can be viewed as a globalization of the Witt vector geometric Satake established by  Zhu in \cite{Zhu2017}. We can perform these constructions on the integral models of moduli spaces of shtukas, whose restriction to the special fiber are the excursion operators for moduli spaces of Witt vector shtukas constructed by Xiao--Zhu.  

On the other hand, we can prove that the excursion operators on the generic fiber are the same as the Hecke operators. We follow the same strategy as Vincent Lafforgue's proof in \cite[Proposition 6.2]{Lafforgue2018}, which is basically to move legs to reduce to the zero dimensional case that has already been established by Xiao--Zhu in \cite[Proposition 6.0.1 (2)]{2017arXiv170705700X}. 

Now we have the $S=T$ for moduli spaces of $p$-adic shtukas, and we want to deduce the corresponding $S=T$ for Shimura varieties. Recall that excursion operators for the special fiber of Shimura varieties are defined as the pullback of the excursion operators for moduli spaces of Witt vector shtukas through crystalline period maps from (special fibers of) Shimura varieties to  moduli spaces of Witt vector shtukas, see \cite[Section 7.2.3]{2017arXiv170705700X}. We generalize this construction to period maps from integral models of Shimura varieties to (integral) moduli spaces of $p$-adic shtukas. The special fiber of this map is essentially the old crystalline period map, while the generic fiber is closely related to the (quotient of) Hodge--Tate period map in \cite[Section 2]{Caraiani_2017}   or \cite[Chapter \rom{3}.3]{Scholze2015}. The construction of this map makes essential use of the classification of $p$-divisible groups over integral perfectoid rings in terms of Breuil--Kisin--Fargues modules in the appendix of   \cite[Lecture 17]{SW17}. Now the pullbacks of excursion operators on moduli spaces of $p$-adic shtukas along these period maps produce the excursion operators on Shimura varieties, and the $S=T$ for Shimura varieties follows formally from the $S=T$ for moduli spaces of shtukas.

The integral period map also appears in the recent work   of Georgios Pappas   and Michael Rapoport, see \cite[Section 4]{2021arXiv210608270P}. Indeed, they construct it more generally for Shimura varieties with parahoric reduction, while we work only with the good reduction case. Moreover, they construct it on the full adic space associated to the integral model of Shimura varieties, while we work only with the good reduction locus. 

Under certain restrictive hypothesis, the Eichler--Shimura relation for Shimura varieties of Hodge type has also recently been established by Si Ying Lee in \cite{2020arXiv200611745L}. We provide an extensive remark on the comparison between the two approaches to congruence relations, which we believe to help clarify the ideas underlying this paper. 

The strategy of proof in \cite{2020arXiv200611745L} is similar to that of Faltings--Chai (\cite[Chapter \rom{7}.4]{Faltings2010-sw}) , Torsten Wedhorn (\cite{+2000+43+71}) and many others. The key to all approaches to congruence relations is to detect the degeneration of $p$-isogenies between abelian varieties from characteristic $0$ to characteristic $p$ using linear algebra data coming from cohomology theory, and the difference lies in which cohomology theory to use. The approach in \cite[Chapter \rom{7}.4]{Faltings2010-sw}   is to use 
étale cohomology  to detect the effect of specialization of isogenies.  For the ordinary reduction, this is easy since the specialization just adds a canonical filtration on the Tate module, and the effect on isogenies is simply to preserve this filtration.  Hence their proof makes crucial use of ordinary abelian varieties, which restricts the applicability of the method, namely one has to  add a condition on the density of  the ordinary isogeny locus. 

To get rid of this condition, it is necessary to look at the specialization of general abelian varieties and their isogenies, and detect them using linear algebra data. This is exactly what $p$-adic Hodge theory could help us with. In the ordinary case, $p$-adic Hodge theory is hidden since the comparison already takes places at the rational level, namely the canonical filtration. In general, the period rings manifest themselves. However, it is hard to find a direct linear algebra description of the specialization as in the ordinary case. Indeed,  it is well-known that  the relation between the Hodge--Tate period of a $p$-divisible group over $\mathcal{O}_{\mathbb{C}_p}$ and the Dieudonné module of its special fiber is mysterious. The way we proceed is exactly the way Scholze describes the mysterious relation in \cite[Lecture 12]{SW17} 
using intermediate integral objects, namely $p$-adic shtukas. 

The lack of explicit linear algebra models  in the previous approach is remedied by the high power functorial geometric models, which puts us in the framework of geometric Langlands. In particular, the degeneration is described by the comparison theorems in $p$-adic Hodge theory, in the disguise of the Fargues--Fontaine curve (or closely related objects), and the lack of explicit identifications is remedied by the functoriality of Fargues--Fontaine curve, which significantly mechanizes the situation by putting us in the framework of diamonds and v-stacks, so we can mimic what has been done in geometric Langlands. More precisely, the degeneration is reflected by the behaviour of excursion operators when the legs of shtukas collide, somehow the excursion operators "flatten" the situation by allowing extra freedom on adding legs and intermediate modifications, where the magic ultimately boils down to the fusion product in geometric Satake.

We learned from Scholze after initial progress on this project that Zhu also has a proof of the $S=T$ conjecture for some time, which in principle should be similar  to the proof presented here. 

We now briefly describe the content of this paper. In section 2, we provide definitions for all the spaces that we will use. In section 3,
we recall the geometric Satake proved by Fargues--Scholze, and interpret  morphisms between representations of the dual group as cohomological correspondences. In section 4, we introduce the Hecke operators on moduli spaces of shtukas. In section 5, 
we construct the excursion operators for moduli spaces of shtukas. In section 6, we prove the $S=T$ for moduli spaces of shtukas. In the last section, we construct the integral period maps from Shimura varieties to moduli spaces of shtukas, and  prove the $S=T$ for Shimura varieties. We collect some results on the comparison between cohomology of algebraic stacks and v-stacks, which is used  to compare the constructions given here with the constructions in \cite{2017arXiv170705700X}.

\subsection*{Convention}
We fix a prime $p$ throughout. We will work with the theory of v-stacks and diamonds as developed in \cite{2017arXiv170907343S}, by definition they are stacks over the category of characteristic $p$ perfectoid spaces with the v-topology.

We follow the notation in \cite[Section 15]{2017arXiv170907343S}  to denote $X^{\diamond}$ the diamond associated to  an adic space $X$ over $\mathbb{Z}_p$, which parametrizes untilts $S^{\sharp}$ of $S$ together with a map $S^{\sharp} \rightarrow X$. When $X=\on{Spa}(A,A^+)$, we denote as usual
$X^{\diamond} = \on{Spd}(A,A^+)$. When $A^+$ is clear from the situation, we abbreviate the notation by writing $\on{Spd}(A) =\on{Spd}(A,A^+) $. For example, when $K$ is a non-archimedean field, we write $\on{Spd}(K) =\on{Spd}(K,\mathcal{O}_K) $.

 Let $X$ be an algebraic stack or scheme locally of finite type over a discrete valuation ring $\mathcal{O}$ with perfect residue field, as in appendix \ref{serepoir}, we write $X^{\diamond}$ (resp. $X^{\diamond\diamond}$) for the stackification of the prestack sending a characteristic $p$ perfectoid space $\on{Spa}(R,R^+)$ to the groupoid of untilts $\on{Spa}(R^{\sharp},R^{\sharp,+})$ over $\mathcal{O}$ together with a map $\on{Spec}(R^{\sharp,+}) \rightarrow X$ (resp. $\on{Spec}(R^{\sharp}) \rightarrow X$) of algebraic stacks over $\mathcal{O}$. When $X$ is a scheme,
this deviates from the notation in \cite[Section 27]{2017arXiv170907343S}, where they denote $X^{\diamond}$ for our $X^{\diamond\diamond}$. Our notations of $X^{\diamond}$ and $X^{\diamond\diamond}$ are different from other authors such as \cite{2022arXiv220101234A}, where they use $X^{\diamond}$ and $X^{\Diamond} $.

For $E$ an extension of $\mathbb{Q}_p$, $\on{Spd}(E)$ is the diamond parametrizing characteristic 0 untilts with a morphism to $\on{Spa}(E,\mathcal{O}_E)$, and $\on{Spd}(\mathcal{O}_E)$ is the v-sheaf parametrizing all untilts  with a morphism to $\on{Spa}(\mathcal{O}_E,\mathcal{O}_E)$. 

Following the notation of \cite[Lecture 13]{SW17},  we denote 
\[
\mathcal{Y}_{[0,\infty)}(S) := \on{Spa}(W(R^+),W(R^+)) \setminus V([\omega]),
\]
where $S=\on{Spa}(R,R^+)$ is a characteristic $p$ affinoid perfectoid space with a topological nilpotent unit $\omega$. For general $S$, we simply glue along affinoid ones. 
We have a natural identification
\[
\mathcal{Y}_{[0,\infty)}(S)^{\diamond} = S \times \on{Spd}(\mathbb{Z}_p). 
\]
Then an untilt of $S$ is equivalent to a section of 
$\mathcal{Y}_{[0,\infty)}(S)^{\diamond} \rightarrow S$,
giving rise to an effective Cartier divisor on $\mathcal{Y}_{[0,\infty)}(S)$. 

We denote 
\[
\mathcal{X}_{\on{FF},S} := \mathcal{Y}_{(0,\infty)}(S) / \on{Frob}_S^{\mathbb{Z}}
\]
the relative Fargues--Fontaine curve with respect to $S$, where 
\[
\mathcal{Y}_{(0,\infty)}(S) := \on{Spa}(W(R^+),W(R^+)) \setminus V(p[\omega])
\]
for 
$S=\on{Spa}(R,R^+)$. We note that $\mathcal{Y}_{(0,\infty)}(S)$ is an open subspace of $\mathcal{Y}_{[0,\infty)}(S)$, the generic fiber of $\mathcal{Y}_{[0,\infty)}(S)$ over $\mathbb{Z}_p$. Moreover, the associated diamond
$\mathcal{Y}_{(0,\infty)}(S)^{\diamond}$
 can be identified with
$S^{\diamond} \times \on{Spd}(\mathbb{Q}_p)$.

Now let $S^{\sharp} =\on{Spa}(R^{\sharp},R^{\sharp,+})$ 
be an untilt of $S$, 
we denote 
\[
\mathcal{O}^{\land}_{\mathcal{Y}_{[0,\infty)}(S), S^{\sharp}} := W(R^+) [\frac{1}{[\omega]}]^{\wedge}_{\xi},
\]
i.e. the $\xi$-adic completion of 
$W(R^+) [\frac{1}{[\omega]}]$, where $\xi \in W(R^+)$ is a generator of the kernel of $W(R^+)\rightarrow R^{\sharp,+}$, and $\omega \in R^+$ is a pseudo-uniformizer. Note that 
\[
\mathcal{O}^{\land}_{\mathcal{Y}_{[0,\infty)}(S), S^{\sharp}} = \mathbb{B}_{\on{dR}}^+(R^{\sharp})
\]
if $R^{\sharp}$ is of characteristic $0$, and \[
\mathcal{O}^{\land}_{\mathcal{Y}_{[0,\infty)}(S), S^{\sharp}} = W(R)
\]
if $S^{\sharp}=S$, where $\mathbb{B}_{\on{dR}}^+(R^{\sharp}) $
is the $\xi$-adic completion of $W(R^+)[\frac{1}{p}]$
with $\xi \in W(R^+)$ being a generator of the kernel of the canonical map $W(R^{+})[\frac{1}{p}] \rightarrow R^{\sharp}$.

When we have finitely many untilts $S_i^{\sharp} = \on{Spa}(R^{\sharp}_i,R^{\sharp,+}_i)$ of $S$ (possibly not all distinct), we denote
\[
\mathcal{O}^{\land}_{\mathcal{Y}_{[0,\infty)}(S), \sum S_i^{\sharp}}
:=
W(R^+) [\frac{1}{[\omega]}]^{\wedge}_{\xi},
\]
where $\xi = \prod \xi_i$ with $\xi_i$ a generator of the kernel of $W(R^+)\rightarrow R_i^{\sharp,+}$. 

We will use frequently the formal properties of cohomological correspondences. Let
$X,Y$ and $Z$ be small v-stacks which form a diagram 
\[
\begin{tikzcd}
 & Z \arrow[dr,"t"] \arrow[dl,"s"'] &
 \\
X &  & Y,
\end{tikzcd}
\]
where we assume that $t$ is compactifiable, representable in locally spatial diamonds with locally dim.trg $< \infty$.  
Let $\Lambda = \mathcal{O}_K$ with $K$ a finite extension of $\mathbb{Q}_\ell$,  $A\in D_{\text{ét}}(X,\Lambda)$ and $B \in D_{\text{ét}}(Y,\Lambda)$, then
a cohomological correspondence
from 
$A$ to $B$ 
supported on $Z$, denoted by
\[
\mathscr{C}: (X,A) \longrightarrow (Y,B),
\]
is defined to be a morphism
\[
\mathscr{C}: s^*A \longrightarrow t^!B
\]
in $D_{\text{ét}}(Z,\Lambda)$.

Let us be given a commutative diagram
\[
\begin{tikzcd}
 & \mathcal{Z}  \arrow[dd,"f"]\arrow[dr,"t'"] \arrow[dl,"s'"'] &
 \\
\mathcal{X} \arrow[dd,"h"] &  & \mathcal{Y} \arrow[dd,"g"]
\\
 & Z \arrow[dr,"t"] \arrow[dl,"s"'] &
 \\
X &  & Y.
\end{tikzcd}
\]
of small $v$-stacks. Let $\mathscr{C} : (X,A) \rightarrow (Y,B)$ be a cohomological correspondence supported on $Z$, then we can pullback $\mathscr{C}$ along the diagram
to obtain a cohomological correspondence
\[
f^*\mathscr{C}:(\mathcal{X},h^*A) \longrightarrow (\mathcal{Y}, g^*B)
\]
supported on $\mathcal{Z}$, 
if either the right square of the diagram is Cartesian or $f,g$ and $h$ are cohomologically smooth, see \cite[Appendix A.2.11]{2017arXiv170705700X}.

On the other hand, let $\mathscr{C} : (\mathcal{X},A) \rightarrow (\mathcal{Y},B)$ be a cohomological correspondence supported on $\mathcal{Z}$, and suppose that $s'$ and $s$ are proper, then we can !-pushforward $\mathscr{C}$ to obtain a cohomological correspondence
\[
Rf_!\mathscr{C}: (X, Rh_!A) \longrightarrow (Y,Rg_!B)
\]
supported on $Z$, see \cite[Appendix A.2.6]{2017arXiv170705700X}.

Moreover, we can compose cohomological correspondences, and pullback and !-pushforward commute with composition, see appendix A.2 of $loc.cit.$. The cohomological correspondences and their relation with 6-functor formalism are studied systematically in the more recent works, see \cite{2024arXiv241013038H} and \cite{Lu_Zheng_2022}.

\subsection*{Acknowledgments}
I would like to thank Peter Scholze for his constant help and encouragement on this project. I am particularly grateful for the  early access of his fundamental work \cite{2021arXiv210213459F} with Fargues. I would like also to thank Xinwen Zhu for comments on the initial draft and sharing ideas with me. I would like to also thank the anonymous referee for many detailed suggestions and corrections.  I am  grateful to Max Planck Institute for Mathematics in Bonn for its hospitality and financial support.

\section{Basic Definitions}

Let $G$ be a reductive group over $\mathbb{Q}_p$, together with a reductive model $\mathcal{G}$  over $\mathbb{Z}_p$. Then $G$ is unramified, so split over $\Breve{\mathbb{Q}}_p:= \Breve{\mathbb{Z}}_p [\frac{1}{p}]$, where $ \Breve{\mathbb{Z}}_p :=W(\overline{\mathbb{F}_p})$.  Let 
$\mu $ be a conjugacy class of cocharacters
$\mathbb{G}_m \rightarrow G_{\overline{\mathbb{Q}_p}}$,
whose field of definition is $E$, which is contained in $\Breve{\mathbb{Q}}_p$ (by the unramifiedness of $G$). 

Let us first introduce notations that we will use frequently. Let $S$ be an affinoid perfectoid space of characteristic $p$,  $\mathcal{P}_1$ and $ \mathcal{P}_{2}$  $\mathcal{G}$-torsors  on 
$\mathcal{Y}_{[0,\infty)}(S)$, and $S^{\sharp}$  an untilt of $S$. We will use the dotted arrow
\[
\begin{tikzcd}
\mathcal{P}_1  \arrow[r,dashed, "\varphi"] &
\mathcal{P}_2
\end{tikzcd}
\]
to denote a modification from $\mathcal{P}_1$ to $\mathcal{P}_2$, i.e. $\varphi$ is an isomorphism
\[
\varphi:  \mathcal{P}_1|_{\mathcal{Y}_{[0,\infty)}(S)\setminus  S^{\sharp}} \cong \mathcal{P}_{2}|_{\mathcal{Y}_{[0,\infty)}(S)\setminus  S^{\sharp}}
\]
that is meromorphic at $S^{\sharp}$, in the sense of \cite[Definition 5.3.5]{SW17}. We can also have modifications with respect to finitely many untilts of $S$. 

Let $ \mu$ be a dominant cocharacter of $G$, we will say that $\varphi$ is bounded by $\mu$ at $S^{\sharp}$
if for any geometric rank 1 point $C$ of $S$, and  trivializations of 
$\mathcal{P}_1$
and $\mathcal{P}_2$
over  
$\mathcal{O}^{\land}_{\mathcal{Y}_{[0,\infty)}(C), C^{\sharp}} [\frac{1}{\xi}]$, the "punctured formal neighborhood of $C^{\sharp}$", 
$\varphi$ 
lies in 
$\mathcal{G}(\mathcal{O}^{\land}_{\mathcal{Y}_{[0,\infty)}(C), C^{\sharp}}) \lambda(\xi) \mathcal{G}(\mathcal{O}^{\land}_{\mathcal{Y}_{[0,\infty)}(C), C^{\sharp}})$
for some $\lambda \leq \mu$, 
where
$\xi \in \mathcal{O}^{\land}_{\mathcal{Y}_{[0,\infty)}(C), C^{\sharp}}$ 
is a uniformizer, and
$\lambda(\xi) \in G(\mathcal{O}^{\land}_{\mathcal{Y}_{[0,\infty)}(C), C^{\sharp}} [\frac{1}{\xi}])$
is the image of $\xi$ under ($\mathcal{O}^{\land}_{\mathcal{Y}_{[0,\infty)}(C), C^{\sharp}} [\frac{1}{\xi}]$-points of) $\lambda$. Note that this is a pointwise condition on $S$.

The definitions introduced below are only prestacks a priori, but it follows directly
 from \cite[Proposition 19.5.3]{SW17} and \cite[Corollary 17.1.9]{SW17}  that they are all stacks, with the exception on (global) Witt vector shtukas and Hecke stacks in section \ref{wittt}, where the stackification is necessary.

\subsection{Hecke stacks}

We first introduce Hecke stacks.

\begin{definition}

Let $I$ be a finite set, and $\mu_{\bullet}$ be a collection of dominant cocharacters of $G$ indexed by $I$.  Let $\{I_1, \cdots, I_k\}$ be a partition of $I$. We define 
\[
\on{Hecke}_{}^{(I_1, \cdots, I_k)}
\]
to be the stack over $\on{Spd}(\Breve{\mathbb{Z}}_p)^I $ ($:= \on{Spd}(\Breve{\mathbb{Z}}_p) \times_{\on{Spd}(\overline{\mathbb{F}_p})} \cdots \times_{\on{Spd}(\overline{\mathbb{F}_p})}  \on{Spd}(\Breve{\mathbb{Z}}_p)$) whose value at $S$ is the groupoid of 

$\bullet$ 
$\mathcal{G}$-torsors $\mathcal{P}_1, \cdots, \mathcal{P}_{k+1}$  on 
$\mathcal{Y}_{[0,\infty)}(S)$

$\bullet$
 untilts
$S^{\sharp}_i \hookrightarrow \mathcal{Y}_{[0,\infty)}(S)$
of $S$ defined over $\Breve{\mathbb{Z}}_p$ indexed by $i\in I$, and

$\bullet$
modifications
\[
\begin{tikzcd}
\mathcal{P}_1  \arrow[r,dashed, "\varphi_1"] &
\mathcal{P}_2 \arrow[r,dashed, "\varphi_2"] &
\cdots \cdots   \arrow[r,dashed,"\varphi_{k-1}" ]&
\mathcal{P}_k \arrow[r,dashed,"\varphi_k" ] &
\mathcal{P}_{k+1},
\end{tikzcd}
\]
where the dotted arrows $\varphi_i$ are isomorphisms
\[
\varphi_i:  \mathcal{P}_i|_{\mathcal{Y}_{[0,\infty)}(S)\setminus \underset{j \in I_i}{\cup} S^{\sharp}_j} \cong \mathcal{P}_{i+1}|_{\mathcal{Y}_{[0,\infty)}(S)\setminus \underset{j \in I_i}{\cup} S^{\sharp}_j}
\]
that are meromorphic along the  Cartier divisor 
$ \underset{j \in I_i}{\sum} S^{\sharp}_j $ of 
$ \mathcal{Y}_{[0,\infty)}(S)$.

We define 
\[
\on{Hecke}_{\mu_{\bullet}}^{(I_1, \cdots, I_k)}
\]
to be the closed substack of $\on{Hecke}_{}^{(I_1, \cdots, I_k)}$
parametrizing the  data as
$\on{Hecke}_{}^{(I_1, \cdots, I_k)}$ does with the additional condition that 
 at every geometric rank 1 point $C$ of $S$ and untilt $C^{\sharp}$ of $C$,
$\varphi_i$ is bounded by 
$\underset{
\begin{subarray}{c}
j \in I_i
\\
C^{\sharp} \in S^{\sharp}_j 
  \end{subarray}
  }{\sum} \mu_j$ 
at  $C^{\sharp}$, where $C^{\sharp} \in S^{\sharp}_j $ means that the canonical untilt (over $S_j^{\sharp}$) of $C$ over $S$, under the equivalence between perfectoid spaces over $S^{\sharp}_j$ and perfectoid spaces over $S$,  is identified with $C^{\sharp}$.

When the partition is trivial, we make the following simplification
\[
\on{Hecke}_{\mu_{\bullet}}:= \on{Hecke}_{\mu_{\bullet}}^{(I)},
\]
which parametrizes
\[
\varphi:
\mathcal{P}_1|_{\mathcal{Y}_{[0,\infty)}(S)\setminus \underset{i\in I}{\cup} S^{\sharp}_i} \cong \mathcal{P}_2|_{\mathcal{Y}_{[0,\infty)}(S)\setminus \underset{i\in I}{\cup} S^{\sharp}_i}
\]
that are meromorphic and bounded by $\mu_i$ at each untilt $S_i^{\sharp}$ of $S$.

\end{definition}

\begin{remark} \label{ruieuivcdeieurie}
We note that $\on{Hecke}^{(I_1,\cdots,I_k)}_{\mu_{\bullet}}$
is really defined over the product of  ring of integers of the defining fields of $\mu_{\bullet}$. The base can be further refined if we do not specify the boundedness condition. Indeed, $\on{Hecke}^{(I_1,\cdots,I_k)}$ can be naturally defined over $\on{Spd}(\mathbb{Z}_p)^I$, i.e. it is the base change to $\on{Spd}(\Breve{\mathbb{Z}}_p)^I$ of a v-stack defined over $\on{Spd}(\mathbb{Z}_p)^I$. 
\end{remark}

We observe that there are natural maps between Hecke stacks. 

Let 
$\zeta: I \rightarrow J$ 
be a map of finite sets, $(J_1, \cdots, J_k)$ be a partition of $J$, and $I_i = \zeta^{-1}J_i$, there exists a natural map
\begin{equation}  \label{eiurwdncmn}
i_{\zeta}: 
\on{Hecke}^{(I_1, \cdots, I_k)} \times_{\on{Spd}(\Breve{\mathbb{Z}}_p)^I, \zeta} \on{Spd}(\Breve{\mathbb{Z}}_p)^J \hookrightarrow
\on{Hecke}^{(J_1, \cdots, J_k)},
\end{equation}
where we abuse the notation by writing $\zeta: \on{Spd}(\Breve{\mathbb{Z}}_p)^J \rightarrow \on{Spd}(\Breve{\mathbb{Z}}_p)^I$
the permutation map corresponding to $\zeta$, i.e. sending $(x_j)_{j\in J}$ to $(x_{\zeta(i)})_{i\in I}$.   It is defined by sending
\[
\begin{tikzcd}
\mathcal{P}_1  \arrow[r,dashed, "\varphi_1"] &
\mathcal{P}_2 \arrow[r,dashed, "\varphi_2"] &
\cdots \cdots   \arrow[r,dashed,"\varphi_{k-1}" ]&
\mathcal{P}_k \arrow[r,dashed,"\varphi_k" ] &
\mathcal{P}_{k+1},
\end{tikzcd}
\]
to itself, except we just relabel the legs using $\zeta$. More precisely, it sends 
\[
\varphi_i:  \mathcal{P}_i|_{\mathcal{Y}_{[0,\infty)}(S)\setminus \underset{k \in I_i}{\cup} S^{\sharp}_{\zeta(k)}} \cong \mathcal{P}_{i+1}|_{\mathcal{Y}_{[0,\infty)}(S)\setminus \underset{k \in I_i}{\cup} S^{\sharp}_{\zeta(k)}}
\]
to 
\[
\varphi_i':  \mathcal{P}_i|_{\mathcal{Y}_{[0,\infty)}(S)\setminus \underset{j \in J_i}{\cup} S^{\sharp}_{j}} \cong \mathcal{P}_{i+1}|_{\mathcal{Y}_{[0,\infty)}(S)\setminus \underset{j \in J_i}{\cup} S^{\sharp}_{j}},
\]
which is the restriction of $\varphi_i$ to
$\mathcal{Y}_{[0,\infty)}(S)\setminus \underset{j \in J_i}{\cup} S^{\sharp}_{j}$, i.e.
the modification at $S^{\sharp}_{j}$ is given by $\varphi_i$ if $j \in \zeta(I_i)$, and it is trivial if $j \notin \zeta(I_i)$.

Moreover, we have the convolution map
\begin{equation} \label{7294389823}
\pi^{(I_1, \cdots, I_k)} : \on{Hecke}^{(I_1, \cdots, I_k)} \longrightarrow \on{Hecke}^{(I)},
\end{equation}
which is defined by sending 
\[
\begin{tikzcd}
\mathcal{P}_1  \arrow[r,dashed, "\varphi_1"] &
\mathcal{P}_2 \arrow[r,dashed, "\varphi_2"] &
\cdots \cdots   \arrow[r,dashed,"\varphi_{k-1}" ]&
\mathcal{P}_k \arrow[r,dashed,"\varphi_k" ] &
\mathcal{P}_{k+1}
\end{tikzcd}
\]
to 
\[
\begin{tikzcd}
\mathcal{P}_1  \arrow[r,dashed, "\varphi_k  \cdots  \varphi_1"] &
\mathcal{P}_{k+1}. 
\end{tikzcd}
\]

Lastly, we have the maps that split the partition. 
Let $(J_1, \cdots, J_l)$ be a partition of $I$ such that $J_j = I_{i_j} \cup \cdots \cup I_{i_{j+1}-1}$ for some partition $1=i_1 < i_2 < \cdots < i_{l+1}= k+1$ of  $k+1$. We define
\begin{equation} \label{24939923-043-204}
\kappa^{(I_1, \cdots, I_k)}_{(J_1, \cdots, J_l)}: \on{Hecke}^{(I_1, \cdots, I_k)} \longrightarrow \underset{j}{\prod} \on{Hecke}^{(I_{i_j}, \cdots, I_{i_{j+1}-1})}
\end{equation}
to be the forgetting map that sends 
\[
\begin{tikzcd}
\mathcal{P}_1  \arrow[r,dashed, "\varphi_1"] &
\mathcal{P}_2 \arrow[r,dashed, "\varphi_2"] &
\cdots \cdots   \arrow[r,dashed,"\varphi_{k-1}" ]&
\mathcal{P}_k \arrow[r,dashed,"\varphi_k" ] &
\mathcal{P}_{k+1}
\end{tikzcd}
\]
to 
\[
\underset{j}{\prod} (\begin{tikzcd}
\mathcal{P}_{i_j}  \arrow[r,dashed, "\varphi_{i_j}"] &
\mathcal{P}_{i_{j+1}} \arrow[r,dashed, "\varphi_{i_j+1}"] &
\cdots \cdots   \arrow[r,dashed,"\varphi_{i_{j+1}-2}" ]&
\mathcal{P}_{i_{j+1}-1} \arrow[r,dashed,"\varphi_{i_{j+1}-1}" ] &
\mathcal{P}_{i_{j+1}}
\end{tikzcd} 
).
\]

Now we introduce the correspondence version for Hecke stacks.

\begin{definition}
Let $I, J$ be finite sets, and $\mu_{\bullet}, \mu'_{\bullet}$ be collections of dominant cocharacters of $G$ indexed by $I$ and $J$ respectively. Let $\nu$ be a dominant cocharacter of $G$. Then we define 
 \[
 \on{Hecke}^{\nu}_{\mu_{\bullet}|\mu_{\bullet}'}
 \]
to be the stack  with value at a characteristic $p$ perfectoid space $S$ being the groupoid of the data of

$\bullet$ 
$\{\mathcal{P}_1, \mathcal{P}_2, \varphi \}\in \on{Hecke}_{\mu_{\bullet} }(S)$ 
and 
$\{\mathcal{P}'_1, \mathcal{P}_2',\varphi'\} \in \on{Hecke}_{\mu'_{\bullet} }(S)$,

$\bullet$ (meromorphic) modifications bounded by $\nu$ 
\[
\gamma_1:
\mathcal{P}_1|_{\mathcal{Y}_{(0,\infty)}(S)} \cong \mathcal{P}'_1|_{\mathcal{Y}_{(0,\infty)}(S)} 
\]
\[
\gamma_2:
\mathcal{P}_2|_{\mathcal{Y}_{(0,\infty)}(S)} \cong \mathcal{P}'_2|_{\mathcal{Y}_{(0,\infty)}(S)} 
\]
 at the characteristic $p$ untilt 
$S \hookrightarrow \mathcal{Y}_{[0,\infty)}(S)$
such that the following diagram is commutative
\[
\begin{tikzcd}
\mathcal{P}_1 \arrow[r, dashed,"\varphi"] \arrow[d, dashed,"\gamma_1"] 
&
\mathcal{P}_2 \arrow[d,dashed,"\gamma_2"] 
\\
\mathcal{P}'_1 \arrow[r,dashed,"\varphi'"] &
\mathcal{P}'_2.
\end{tikzcd}
\]

\end{definition}

We will occasionally use the following more general version.

\begin{definition} \label{badhec}
Let $I, J$ be finite sets, and $\mu_{\bullet}, \mu'_{\bullet}$ be collections of dominant cocharacters of $G$ indexed by $I$ and $J$ respectively. Let $\{I_1, \cdots, I_k\}$ and $\{J_1, \cdots, J_l\}$ be partitions of $I$ and $J$. Let $\nu$ be a dominant cocharacter of $G$. Then we define 
 \[
 \on{Hecke}^{(I_1, \cdots, I_k)|(J_1, \cdots, J_l), \nu}_{\mu_{\bullet}|\mu_{\bullet}'}
 \]
to be the stack  with value at a characteristic $p$ perfectoid space $S$ being the groupoid of the data of

$\bullet$ 
$\{\mathcal{P}_1,\cdots, \mathcal{P}_{k+1}, \varphi_1, \cdots \varphi_k \}\in \on{Hecke}_{\mu_{\bullet} }^{(I_1, \cdots, I_k)}(S)$ 
and 
$\{\mathcal{P}'_1, \cdots, \mathcal{P}_{l+1}',\varphi',\cdots \varphi_l'\} \in \on{Hecke}_{\mu'_{\bullet} }^{(J_1, \cdots, J_l)}(S)$,

$\bullet$ modifications bounded by $\nu$ (in the sense similar to the above)
\[
\gamma_1:
\mathcal{P}_1|_{\mathcal{Y}_{(0,\infty)}(S)} \cong \mathcal{P}'_{1}|_{\mathcal{Y}_{(0,\infty)}(S)} 
\]
\[
\gamma_2:
\mathcal{P}_{k+1}|_{\mathcal{Y}_{(0,\infty)}(S)} \cong \mathcal{P}'_{l+1}|_{\mathcal{Y}_{(0,\infty)}(S)} 
\]
at the characteristic $p$ untilt 
$S \hookrightarrow \mathcal{Y}_{[0,\infty)}(S)$
such that the following diagram is commutative
\[
\begin{tikzcd}
 \arrow[d, dashed,"\gamma_1"] \mathcal{P}_1  \arrow[r,dashed, "\varphi_1"] &
\mathcal{P}_2 \arrow[r,dashed, "\varphi_2"] &
\cdots \cdots   \arrow[r,dashed,"\varphi_{k-1}" ]&
\mathcal{P}_k \arrow[r,dashed,"\varphi_k" ] &
\mathcal{P}_{k+1} \arrow[d,dashed,"\gamma_2"] 
\\
\mathcal{P}_1'  \arrow[r,dashed, "\varphi_1'"] &
\mathcal{P}_2' \arrow[r,dashed, "\varphi_2'"] &
\cdots \cdots   \arrow[r,dashed,"\varphi_{l-1}'" ]&
\mathcal{P}'_l \arrow[r,dashed,"\varphi_l'" ] &
\mathcal{P}'_{l+1}.
\end{tikzcd}
\]

\end{definition}

We will also make use of a local version of Hecke stacks, which parametrizes modifications of torsors over the formal neighborhoods of Cartier divisors corresponding to untilts. 

\begin{definition}

Let $I$ be a finite set, and $\mu_{\bullet}$ be a collection of dominant cocharacters of $G$ indexed by $I$.  Let $\{I_1, \cdots, I_k\}$ be a partition of $I$. We define 
\[
\on{Hecke}_{\mu_{\bullet}}^{\on{loc},(I_1, \cdots, I_k)}
\]
to be the stack over $ \on{Spd}(\Breve{\mathbb{Z}}_p)^I$ whose value at $S$ is the groupoid of 

$\bullet$
 untilts
$S^{\sharp}_i$
of $S$ defined over $\Breve{\mathbb{Z}}_p$ indexed by $i\in I$,

$\bullet$ 
$\mathcal{G}$-torsors $\mathcal{P}_1, \cdots, \mathcal{P}_{k+1}$  on 
$Spec( \mathcal{O}^{\land}_{\mathcal{Y}_{[0,\infty)}(S), \sum S_i^{\sharp}})$, and,

$\bullet$
modifications
\[
\begin{tikzcd}
\mathcal{P}_1  \arrow[r,dashed, "\varphi_1"] &
\mathcal{P}_2 \arrow[r,dashed, "\varphi_2"] &
\cdots \cdots   \arrow[r,dashed,"\varphi_{k-1}" ]&
\mathcal{P}_k \arrow[r,dashed,"\varphi_k" ] &
\mathcal{P}_{k+1},
\end{tikzcd}
\]
where the dotted arrows $\varphi_i$ are isomorphisms
\[
\varphi_i:  \mathcal{P}_i|_{\mathcal{O}^{\land}_{\mathcal{Y}_{[0,\infty)}(S), \sum S_k^{\sharp}}[\frac{1}{\underset{j \in I_i}{\prod}\xi_j}]} \cong 
\mathcal{P}_{i+1}|_{\mathcal{O}^{\land}_{\mathcal{Y}_{[0,\infty)}(S), \sum S_k^{\sharp}}[\frac{1}{\underset{j \in I_i}{\prod}\xi_j}]}
\]
such that at every geometric rank 1 point $C$ of $S$ and untilt $C^{\sharp}$ of $C$,
$\varphi_i$ is bounded by 
$\underset{
\begin{subarray}{c}
j \in I_i
\\
C^{\sharp} \in S^{\sharp}_j 
  \end{subarray}
  }{\sum} \mu_j$ 
at  $C^{\sharp}$.

When the partition is trivial, we make the following simplification
\[
\on{Hecke}^{\on{loc}}_{\mu_{\bullet}}:= \on{Hecke}_{\mu_{\bullet}}^{{\on{loc}},(I)}. 
\]

Lastly, we denote
\[
\on{Hecke}^{{\on{loc}},(I_1, \cdots, I_k)} := \underset{\mu_{\bullet}}{\bigcup} \on{Hecke}_{\mu_{\bullet}}^{{\on{loc}},(I_1, \cdots, I_k)}.
\]
\end{definition}

\begin{remark}
When the partition of $I$ is trivial, the local Hecke stack is defined in \cite[Definition \rom{6}.1.6]{2021arXiv210213459F}. The general partition case also appears in the proof of \cite[Definition/proposition \rom{6}.9.4]{2021arXiv210213459F}.
\end{remark}

\begin{remark} \label{u803r4u3}
The drawback of local Hecke stacks is that there is no obvious correspondence between them, due to that the torsors are defined on different domains. However, this problem disappears if we fix the legs at characteristic $p$ untilts.  An important example is the Witt vector local Hecke stacks to be defined below, where we have the correspondences.
\end{remark}

There is a canonical map
\[
\on{Hecke}_{\mu_{\bullet}}^{(I_1, \cdots, I_k)}
\longrightarrow
\on{Hecke}_{\mu_{\bullet}}^{\on{loc},(I_1, \cdots, I_k)}
\]
being the restriction from global $G$-torsors to local $G$-torsors. We will use it to define the Satake sheaves on $\on{Hecke}_{\mu_{\bullet}}^{(I_1, \cdots, I_k)}$. 

Let $L^{+}\mathcal{G} $ be the group v-sheaf over $\on{Spd}(\Breve{\mathbb{Z}}_p)$
whose value at $S$ is
\[
L^{+}\mathcal{G}(S) = \{(x, S^{\sharp}) | S^{\sharp} \in \on{Spd}(\Breve{\mathbb{Z}}_p)(S), x \in \mathcal{G}(\mathcal{O}^{\land}_{\mathcal{Y}_{[0,\infty)}(S), S^{\sharp}}) \},
\]
and 
$Gr_{\mathcal{G},\on{Spd}(\Breve{\mathbb{Z}}_p),\leq \mu}$ 
the Beilinson--Drinfeld affine Grassmannian as defined in \cite[Definition 20.3.1]{SW17}, which classifies $\mathcal{G}$-torsors on the formal neighborhood of $S^{\sharp}$ together with a trivialization of the generic fiber that is bounded by $\mu$. By \cite[Proposition 20.5.4]{SW17}, 
$Gr_{\mathcal{G},\on{Spd}(\Breve{\mathbb{Z}}_p),\leq \mu} \rightarrow \on{Spd}(\Breve{\mathbb{Z}}_p)$
is representable in spatial diamonds, and so in particular a small v-sheaf. By definition, we have 
\[
\on{Hecke}_{\mu}^{{\on{loc}}} =L^{+}\mathcal{G} \setminus Gr_{\mathcal{G},\on{Spd}(\Breve{\mathbb{Z}}_p),\leq \mu}
\]
as a stack quotient,
where $L^{+}\mathcal{G}$ acts on the trivialization, so 
$\on{Hecke}_{\mu}^{{\on{loc}}}$ is a small v-stack.

We have similar expression for the global Hecke stack $\on{Hecke}_{\mu}$, which can be written as the quotient of 
$Gr_{\mathcal{G},\on{Spd}(\Breve{\mathbb{Z}}_p),\leq \mu}$ 
by the group v-sheaf parametrizing automorphisms of the trivial global $\mathcal{G}$-torsors on $\mathcal{Y}_{[0,\infty)}$. Moreover, for Hecke stacks with multiple legs, it can be written again as quotients of twisted products of affine Grassmannian, see the beginning of \cite[Chapter \rom{6}]{2021arXiv210213459F}  (for the version of local Hecke stacks).  

If  the  characteristic of the untilt $S^{\sharp}$ is $p$,  then $S^{\sharp} = S$, and  
\[
\mathcal{O}^{\land}_{\mathcal{Y}_{[0,\infty)}(S), S^{\sharp}} = W(R)
\]
if $S=\on{Spa}(R,R^{+})$. Then the fiber of the Beilinson--Drinfeld affine Grassmannian at the characteristic $p$ untilt is the Witt vector affine Grassmannian considered in \cite{Zhu2017}. \footnote{We can embed perfect schemes into the category of v-sheaves using either the $\diamond$- or $\diamond\diamond$-analytification functor as explained in appendix \ref{serepoir}. The Witt vector affine Grassmannian is ind-proper, so the two $v$-sheaves associated to it are the same.} 
On the other hand, if 
$S^{\sharp}=\on{Spa}(R^{\sharp},R^{\sharp,+})$
is a characteristic zero untilt, we have 
\[
\mathcal{O}^{\land}_{\mathcal{Y}_{[0,\infty)}(S), S^{\sharp}} = \mathbb{B}^+_{\text{dR}}(R^{\sharp}),
\]
and the fiber of 
$Gr_{\mathcal{G},\on{Spd}(\Breve{\mathbb{Z}}_p),\leq \mu}$
at $S^{\sharp}$ is the 
$\mathbb{B}^+_{\text{dR}}$-affine Grassmannian as defined in \cite[Lecture 19]{SW17}.

Similar to global Hecke stacks, there are natural maps between local Hecke stacks. 

Let 
$\zeta: I \rightarrow J$ 
be a map of finite sets, $(J_1, \cdots, J_k)$ be a partition of $J$, and $I_i = \zeta^{-1}J_i$, there exists a natural map
\begin{equation}  \label{a4}
i^{\on{loc}}_{\zeta}: 
\on{Hecke}^{\on{loc},(I_1, \cdots, I_k)} \times_{\on{Spd}(\Breve{\mathbb{Z}}_p)^I, \zeta} \on{Spd}(\Breve{\mathbb{Z}}_p)^J \hookrightarrow
\on{Hecke}^{\on{loc},(J_1, \cdots, J_k)},
\end{equation}
where we abuse the notation by writing $\zeta: \on{Spd}(\Breve{\mathbb{Z}}_p)^J \rightarrow \on{Spd}(\Breve{\mathbb{Z}}_p)^I$
the permutation map corresponding to $\zeta$, i.e. sending $(x_j)_{j\in J}$ to $(x_{\zeta(i)})_{i\in I}$.   It is defined by sending
\[
\begin{tikzcd}
\mathcal{P}_1  \arrow[r,dashed, "\varphi_1"] &
\mathcal{P}_2 \arrow[r,dashed, "\varphi_2"] &
\cdots \cdots   \arrow[r,dashed,"\varphi_{k-1}" ]&
\mathcal{P}_k \arrow[r,dashed,"\varphi_k" ] &
\mathcal{P}_{k+1},
\end{tikzcd}
\]
to itself, except we just relabel the legs using $\zeta$. More precisely, it sends 
\[
\varphi_i:  \mathcal{P}_i|_{\mathcal{O}^{\land}_{\mathcal{Y}_{[0,\infty)}(S), \sum S_k^{\sharp}}[\frac{1}{\underset{h \in I_i}{\prod}\xi_{\zeta(h)}}]} \cong \mathcal{P}_{i+1}|_{\mathcal{O}^{\land}_{\mathcal{Y}_{[0,\infty)}(S), \sum S_k^{\sharp}}[\frac{1}{\underset{h \in I_i}{\prod}\xi_{\zeta(h)}}]}
\]
to 
\[
\varphi_i':  \mathcal{P}_i|_{\mathcal{O}^{\land}_{\mathcal{Y}_{[0,\infty)}(S), \sum S_k^{\sharp}}[\frac{1}{\underset{j \in J_i}{\prod}\xi_j}]} \cong \mathcal{P}_{i+1}|_{\mathcal{O}^{\land}_{\mathcal{Y}_{[0,\infty)}(S), \sum S_k^{\sharp}}[\frac{1}{\underset{j \in J_i}{\prod}\xi_j}]},
\]
where the modification at $S^{\sharp}_{j}$ is given by $\varphi_i$ if $j \in \zeta(I_i)$, and it is trivial, i.e. no modification at all, if $j \notin \zeta(I_i)$.

Moreover, we have the convolution map
\begin{equation} \label{a5}
\pi^{\on{loc},(I_1, \cdots, I_k)} : \on{Hecke}^{\on{loc},(I_1, \cdots, I_k)} \longrightarrow \on{Hecke}^{\on{loc},(I)},
\end{equation}
which is defined by sending 
\[
\begin{tikzcd}
\mathcal{P}_1  \arrow[r,dashed, "\varphi_1"] &
\mathcal{P}_2 \arrow[r,dashed, "\varphi_2"] &
\cdots \cdots   \arrow[r,dashed,"\varphi_{k-1}" ]&
\mathcal{P}_k \arrow[r,dashed,"\varphi_k" ] &
\mathcal{P}_{k+1}
\end{tikzcd}
\]
to 
\[
\begin{tikzcd}
\mathcal{P}_1  \arrow[r,dashed, "\varphi_k  \cdots  \varphi_1"] &
\mathcal{P}_{k+1}. 
\end{tikzcd}
\]

Lastly, we have the maps that split the partition. 
Let $(J_1, \cdots, J_l)$ be a partition of $I$ such that $J_j = I_{i_j} \cup \cdots \cup I_{i_{j+1}-1}$ for some partition $1=i_1 < i_2 < \cdots < i_{l+1}= k+1$ of  $k+1$. We define
\begin{equation} \label{a6}
\kappa^{\on{loc},(I_1, \cdots, I_k)}_{(J_1, \cdots, J_l)}: \on{Hecke}^{\on{loc},(I_1, \cdots, I_k)} \longrightarrow \underset{j}{\prod} \on{Hecke}^{\on{loc},(I_{i_j}, \cdots, I_{i_{j+1}-1})}
\end{equation}
to be the forgetting map that sends 
\[
\begin{tikzcd}
\mathcal{P}_1  \arrow[r,dashed, "\varphi_1"] &
\mathcal{P}_2 \arrow[r,dashed, "\varphi_2"] &
\cdots \cdots   \arrow[r,dashed,"\varphi_{k-1}" ]&
\mathcal{P}_k \arrow[r,dashed,"\varphi_k" ] &
\mathcal{P}_{k+1}
\end{tikzcd}
\]
to 
\[
\underset{j}{\prod} (\begin{tikzcd}
\mathcal{P}_{i_j}  \arrow[r,dashed, "\varphi_{i_j}"] &
\mathcal{P}_{i_{j+1}} \arrow[r,dashed, "\varphi_{i_j+1}"] &
\cdots \cdots   \arrow[r,dashed,"\varphi_{i_{j+1}-2}" ]&
\mathcal{P}_{i_{j+1}-1} \arrow[r,dashed,"\varphi_{i_{j+1}-1}" ] &
\mathcal{P}_{i_{j+1}}
\end{tikzcd} 
).
\]

\begin{notation} \label{notehecke}
Throughout the paper, we use the following notation to denote the restriction  to the generic fiber,
\[
\on{Hecke}_{\mu_{\bullet},\eta_{\bullet}}^{(I_1,\cdots,I_k)} := \on{Hecke}_{\mu_{\bullet}}^{(I_1,\cdots,I_k)} |_{ \on{Spd}(\Breve{\mathbb{Q}}_p)^I}
\]
\[
\on{Hecke}_{\mu_{\bullet},\eta_{\bullet}}^{\on{loc},(I_1,\cdots,I_k)} := \on{Hecke}_{\mu_{\bullet}}^{\on{loc},(I_1,\cdots,I_k)} |_{ \on{Spd}(\Breve{\mathbb{Q}}_p)^I}
\]
\[
\on{Hecke}^{(I_1, \cdots, I_k)|(J_1, \cdots, J_l), \nu}_{\mu_{\bullet}|\mu_{\bullet}', \eta_{\bullet}|\eta_{\bullet}} :=
\on{Hecke}^{(I_1, \cdots, I_k)|(J_1, \cdots, J_l), \nu}_{\mu_{\bullet}|\mu_{\bullet}'} |_{ \on{Spd}(\Breve{\mathbb{Q}}_p)^I \times  \on{Spd}(\Breve{\mathbb{Q}}_p)^J}.
\]
Similarly for the restriction to the special fiber,
\[
\on{Hecke}_{\mu_{\bullet},s_{\bullet}}^{(I_1,\cdots,I_k)} := \on{Hecke}_{\mu_{\bullet}}^{(I_1,\cdots,I_k)} |_{ \on{Spd}(\overline{\mathbb{F}_p})^I}
\]
\[
\on{Hecke}_{\mu_{\bullet},s_{\bullet}}^{\on{loc},(I_1,\cdots,I_k)} := \on{Hecke}_{\mu_{\bullet}}^{\on{loc},(I_1,\cdots,I_k)} |_{ \on{Spd}(\overline{\mathbb{F}_p})^I}
\]
\[
\on{Hecke}^{(I_1, \cdots, I_k)|(J_1, \cdots, J_l), \nu}_{\mu_{\bullet}|\mu_{\bullet}', s_{\bullet} | s_{\bullet}} :=
\on{Hecke}^{(I_1, \cdots, I_k)|(J_1, \cdots, J_l), \nu}_{\mu_{\bullet}|\mu_{\bullet}'} |_{ \on{Spd}(\overline{\mathbb{F}_p})^I \times  \on{Spd}(\overline{\mathbb{F}_p})^J}. 
\]

We also have the mixed version, for example
\[
\on{Hecke}_{(\mu_{1}, \mu_2),s_{} \times \eta}^{(\{1\},\{2\})} := \on{Hecke}^{(\{1\},\{2\})}_{(\mu_{1}, \mu_2) }|_{ \on{Spd}(\overline{\mathbb{F}_p}) \times \on{Spd}(\Breve{\mathbb{Q}}_p)}.
\]
\end{notation}

\subsection{Moduli spaces of $p$-adic shtukas}

We now introduce the critical objects of study in this article, the moduli spaces of shtukas. 

\begin{definition}
The moduli space of shtukas  associated to $\mathcal{G}$ and $\mu$
is the stack 
\[
\on{Sht}_{\mu }
\]
over $\on{Spd}(\Breve{\mathbb{Z}}_p)$ whose value at a characteristic $p$ perfectoid space $S$ is the groupoid of the data of

$\bullet$  a $\mathcal{G}$-torsor $\mathcal{P}$ over $\mathcal{Y}_{[0,\infty)}(S)$,

$\bullet$ an (not necessarily of characteristic 0) untilt 
$S^{\sharp} \hookrightarrow \mathcal{Y}_{[0,\infty)}(S)$
of $S$ defined over $\Breve{\mathbb{Z}}_p$, and

$\bullet$ an isomorphism 
\[
\varphi_{\mathcal{P}}:  \mathcal{P}|_{\mathcal{Y}_{[0,\infty)}(S)\setminus S^{\sharp}} \cong \on{Frob}_S^*\mathcal{P}|_{\mathcal{Y}_{[0,\infty)}(S)\setminus S^{\sharp}}.
\]
that is meromorphic along the Cartier divisor 
$S^{\sharp} \hookrightarrow \mathcal{Y}_{[0,\infty)}(S)$, 
and bounded by $\mu$. 

\end{definition}

\begin{remark}
Although we work with the base $\Breve{\mathbb{Z}}_p$, the definition of moduli spaces of shtukas really depends on the group $\mathcal{G}$ over $\mathbb{Z}_p$ rather than its base change to $\Breve{\mathbb{Z}}_p$, as otherwise the Frobenius pullback $\on{Frob}_S^*\mathcal{P}$ does not make sense (it will be a torsor with respect to a Frobenius twist of the structure group over $\Breve{\mathbb{Z}}_p$). 
\end{remark}

\begin{remark}
The definition given here is close to the classical function field counterpart, which is slightly different from the ones in \cite[Definition 25.1.1 and 23.3.1]{SW17}  in that we do not fix a trivialization of the $\mathcal{G}(\mathbb{Z}_p)$-torsor as in $loc.cit.$, nor do we fix the type of shtukas in $B(G)$. The treatment in $loc.cit.$ is to emphasize the way moduli spaces of shtukas generalize Rapoport--Zink spaces \footnote{Indeed, the classical Rapoport Zink space can be identified with the fiber  at $b$ of a canonical map $\on{Sht_{\mu}} \longrightarrow \on{Bun}_G$.}, whereas we want to mimic the treatment of Lafforgue in \cite{Lafforgue2018}.   
\end{remark}

We will also make use of moduli spaces of shtukas with several legs.

\begin{definition}
Let $I$ be a finite set, and $\mu_{\bullet}$ be a collection of dominant cocharacters of $G$ indexed by $I$. Let $\{I_1, \cdots, I_k\}$ be a partition of $I$. 
The moduli space of shtukas  associated to $G$, $\mu_{\bullet}$ and $\{I_1, \cdots, I_k\}$ 
is the stack 
\[
\on{Sht}_{\mu_{\bullet}}^{(I_1, \cdots, I_k)}
\]
over $ \on{Spd}(\Breve{\mathbb{Z}}_p)^I$ whose value at a characteristic $p$ perfectoid space $S$ is the groupoid of the data of

$\bullet$  $\mathcal{G}$-torsors $\mathcal{P}_1, \cdots, \mathcal{P}_k$ over $\mathcal{Y}_{[0,\infty)}(S)$,

$\bullet$ (not necessarily of characteristic 0) untilts
$S^{\sharp}_i \hookrightarrow \mathcal{Y}_{[0,\infty)}(S)$
of $S$ defined over $\Breve{\mathbb{Z}}_p$ indexed by $i\in I$,  and

$\bullet$ modifications 
\[
\begin{tikzcd}
\mathcal{P}_1  \arrow[r,dashed, "\varphi_1"] &
\mathcal{P}_2 \arrow[r,dashed, "\varphi_2"] &
\cdots \cdots   \arrow[r,dashed,"\varphi_{k-1}" ]&
\mathcal{P}_k \arrow[r,dashed,"\varphi_k" ] &
\on{Frob}_S^*\mathcal{P}_1
\end{tikzcd}
\]
where the dotted arrows $\varphi_i$ are isomorphisms
\[
\varphi_i:  \mathcal{P}_i|_{\mathcal{Y}_{[0,\infty)}(S)\setminus \underset{j \in I_i}{\cup} S^{\sharp}_j} \cong \mathcal{P}_{i+1}|_{\mathcal{Y}_{[0,\infty)}(S)\setminus \underset{j \in I_i}{\cup} S^{\sharp}_j}
\]
with $\mathcal{P}_{k+1} =\on{Frob}_S^*\mathcal{P}_1$,  which are meromorphic along the  Cartier divisor 
$ \underset{j \in I_i}{\sum} S^{\sharp}_j $ of 
$ \mathcal{Y}_{[0,\infty)}(S)$. 
Moreover, at every geometric rank 1 point $C$ of $S$ and untilt $C^{\sharp}$ of $C$,
$\varphi_i$ is bounded by 
$\underset{
\begin{subarray}{c}
j \in I_i
\\
C^{\sharp} \in S^{\sharp}_j 
  \end{subarray}
  }{\sum} \mu_j$ 
at  $C^{\sharp}$.

When the partition is trivial, we drop the upper script to simplify the notation, i.e. 
\[
\on{Sht}_{\mu_{\bullet} } := 
\on{Sht}_{\mu_{\bullet}}^{(I)}, 
\]
which parametrizes isomorphisms of $\mathcal{G}$-torsors
\[
\mathcal{P}|_{\mathcal{Y}_{[0,\infty)}(S)\setminus \underset{i\in I}{\cup} S^{\sharp}_i} \cong \on{Frob}_S^*\mathcal{P}|_{\mathcal{Y}_{[0,\infty)}(S)\setminus \underset{i\in I}{\cup} S^{\sharp}_i}
\]
that are meromorphic and bounded by $\mu_i$ at each untilt $S_i^{\sharp}$ of $S$. 
\end{definition}

The following variant defines a correspondence between moduli spaces of shtukas, and will be very important for our purpose.

\begin{definition}
Let 
$\mu, \mu' $ and $\nu$ be conjugacy classes of cocharacters
$\mathbb{G}_m \rightarrow G_{\overline{\mathbb{Q}_p}}$. We define 
 \[
 \on{Sht}^{}_{\mu|\mu'}
 \]
 to be the stack over $\on{Spd}(\Breve{\mathbb{Z}}_p)^2$ with value at a characteristic $p$ perfectoid space $S$ being the groupoid of the data of

$\bullet$ $\mathcal{P} \in \on{Sht}_{\mu}(S)$ and 
$\mathcal{P}' \in \on{Sht}_{\mu'}(S)$,

$\bullet$ a meromorphic modification 
\[
\gamma:
\mathcal{P}|_{\mathcal{Y}_{(0,\infty)}(S)} \cong \mathcal{P}'|_{\mathcal{Y}_{(0,\infty)}(S)} 
\]
at the characteristic $p$ untilt 
$S \hookrightarrow \mathcal{Y}_{[0,\infty)}(S)$
as shtukas, i.e. the following diagram is commutative
\[
\begin{tikzcd}
\mathcal{P} \arrow[r, dashed,"\varphi_{\mathcal{P}}"] \arrow[d, dashed,"\gamma"] &
\on{Frob}_S^*\mathcal{P} \arrow[d,dashed,"\on{Frob}_S^*\gamma"] \\
\mathcal{P}' \arrow[r,dashed,"\varphi_{\mathcal{P}'}"] &
\on{Frob}_S^*\mathcal{P}'
\end{tikzcd}
\]
with the notation that dashed arrows denote generically defined modifications. 

We define
\[
\on{Sht}^{\nu}_{\mu|\mu'}
\]
to be the closed substack of
$\on{Sht}_{\mu|\mu'}$ 
parametrizing the data as $\on{Sht}_{\mu|\mu'}$ does with the additional condition that  $\gamma$ is bounded by $\nu$.

\end{definition}

Similarly, we have the version for moduli spaces of shtukas with several legs.

\begin{definition}
Let $I, J$ be finite sets, and $\mu_{\bullet}, \mu'_{\bullet}$ be collections of dominant cocharacters of $G$ indexed by $I$ and $J$ respectively. Let $\{I_1, \cdots, I_k\}$ and $\{J_1, \cdots, J_l\}$ be partitions of $I$ and $J$. Let $\nu$ be a dominant cocharacter of $G$. Then we define 
 \[
 \on{Sht}^{(I_1, \cdots, I_k)|(J_1, \cdots, J_l), \nu}_{\mu_{\bullet}|\mu_{\bullet}'}
 \]
to be the stack  with value at a characteristic $p$ perfectoid space $S$ being the groupoid of the data of

$\bullet$ 
$\{\mathcal{P}_1, \cdots, \mathcal{P}_k, \varphi_1,\cdots, \varphi_k \} \in \on{Sht}^{(I_1, \cdots, I_k)}_{\mu_{\bullet} }(S)$ 
and 
$\{\mathcal{P}_1', \cdots, \mathcal{P}_l', \varphi_1',
\cdots, \varphi_l'\} \in \on{Sht}^{(J_1, \cdots, J_l)}_{\mu'_{\bullet} }(S)$,

$\bullet$ a modification bounded by $\nu$ (in the sense similar to the above)
\[
\gamma:
\mathcal{P}_1|_{\mathcal{Y}_{(0,\infty)}(S)} \cong \mathcal{P}'_1|_{\mathcal{Y}_{(0,\infty)}(S)} 
\]
at the characteristic $p$ untilt 
$S \hookrightarrow \mathcal{Y}_{[0,\infty)}(S)$
as shtukas, i.e. the following diagram is commutative
\[
\begin{tikzcd}
 \arrow[d, dashed,"\gamma"] \mathcal{P}_1  \arrow[r,dashed, "\varphi_1"] &
\mathcal{P}_2 \arrow[r,dashed, "\varphi_2"] &
\cdots \cdots   \arrow[r,dashed,"\varphi_{k-1}" ]&
\mathcal{P}_k \arrow[r,dashed,"\varphi_k" ] &
\on{Frob}_S^*\mathcal{P}_1 \arrow[d,dashed,"\on{Frob}_S^*\gamma"] 
\\
\mathcal{P}_1'  \arrow[r,dashed, "\varphi_1'"] &
\mathcal{P}_2' \arrow[r,dashed, "\varphi_2'"] &
\cdots \cdots   \arrow[r,dashed,"\varphi'_{l-1}" ]&
\mathcal{P}'_l \arrow[r,dashed,"\varphi_l'" ] &
\on{Frob}_S^*\mathcal{P}_1'.
\end{tikzcd}
\]

When the partitions of $I$ and $J$ are trivial, we will simplify the notation by writing 
\[
\on{Sht}^{\nu}_{\mu_{\bullet}|\mu_{\bullet}'} := 
 \on{Sht}^{(I)|(J), \nu}_{\mu_{\bullet}|\mu_{\bullet}'},
\]
which parametrizes commutative diagrams
\[
\begin{tikzcd}
\mathcal{P} \arrow[r, dashed,"\varphi_{\mathcal{P}}"] \arrow[d, dashed,"\gamma"] &
\on{Frob}_S^*\mathcal{P} \arrow[d,dashed,"\on{Frob}_S^*\gamma"] 
\\
\mathcal{P}' \arrow[r,dashed,"\varphi_{\mathcal{P}'}"] &
\on{Frob}_S^*\mathcal{P}'
\end{tikzcd}
\]
where $\varphi_{\mathcal{P}}$ (resp. $\varphi_{\mathcal{P}'}$) is meromorphic and bounded by $\sum_i \mu_i$ (resp. $\sum_{j}\mu_j'$) at each untilt $S^{\sharp}$ of $S$, where $i $ (resp. $j$) ranges over $i\in I$ (resp. $j\in J$) for which  $S_i^{\sharp} = S^{\sharp}$ (resp. $S_j^{\sharp} = S^{\sharp}$). 
\end{definition}

\begin{remark}
Let $E_i$ be the reflex field of $\mu_i$, then $\on{Sht}_{\mu_{\bullet}}^{(I_1,\cdots,I_k)}$ is actually defined over $\prod \on{Spd}(\mathcal{O}_{E_i})$, and similarly for $\on{Sht}^{(I_1, \cdots, I_k)|(J_1, \cdots, J_l), \nu}_{\mu_{\bullet}|\mu_{\bullet}'}$. The same refinement works for all the spaces defined in this section. 
\end{remark}

\begin{notation} \label{nota}
We will use similar notations as in Notation \ref{notehecke} for the restriction of moduli spaces of shtukas to the generic fiber,
\[
\on{Sht}_{\mu_{\bullet},\eta_{\bullet}}^{(I_1, \cdots, I_k)} := \on{Sht}_{\mu_{\bullet} }^{(I_1, \cdots, I_k)}|_{\on{Spd}(\Breve{\mathbb{Q}}_p)^I},
\]
\[
\on{Sht}^{(I_1, \cdots, I_k)|(J_1, \cdots, J_l),\nu}_{\mu_{\bullet}|\mu'_{\bullet}, \eta_{\bullet}|\eta_{\bullet}} :=
\on{Sht}^{(I_1, \cdots, I_k)|(J_1, \cdots, J_l),\nu}_{\mu_{\bullet}|\mu'_{\bullet}}|_{ \on{Spd}(\Breve{\mathbb{Q}}_p)^I \times  \on{Spd}(\Breve{\mathbb{Q}}_p)^J},
\]
and in particular 
\[
\on{Sht}^{\nu}_{\mu|\mu', \eta_{\bullet}|\eta_{\bullet}} := \on{Sht}^{\nu}_{\mu|\mu'}|_{\on{Spd}(\Breve{\mathbb{Q}}_p) \times \on{Spd}(\Breve{\mathbb{Q}}_p)}.
\]
Note that in the last case, $\mu$ has to equal to $\mu'$. 

Similarly, we have the restriction to the special fiber
\[
\on{Sht}_{\mu_{\bullet},s_{\bullet}}^{(I_1, \cdots, I_k)} := \on{Sht}_{\mu_{\bullet} }^{(I_1, \cdots, I_k)} |_{ \on{Spd}(\overline{\mathbb{F}_p})^I},
\]
\[
\on{Sht}^{(I_1, \cdots, I_k)|(J_1, \cdots, J_l),\nu}_{\mu_{\bullet}|\mu'_{\bullet}, s_{\bullet}|s_{\bullet}} :=
\on{Sht}^{(I_1, \cdots, I_k)|(J_1, \cdots, J_l),\nu}_{\mu_{\bullet}|\mu'_{\bullet}}|_{ \on{Spd}(\overline{\mathbb{F}_p})^I \times   \on{Spd}(\overline{\mathbb{F}_p})^J}. 
\]

We also have the mixed version, for example
\[
\on{Sht}_{(\mu_{1}, \mu_2),s_{} \times \eta}^{(\{1\},\{2\})} := \on{Sht}^{(\{1\},\{2\})}_{(\mu_{1}, \mu_2) }|_{ \on{Spd}(\overline{\mathbb{F}_p}) \times \on{Spd}(\Breve{\mathbb{Q}}_p)}.
\]
\end{notation}

We have the following alternative characterization of moduli spaces of shtukas over the generic fiber.

\begin{theorem} \label{ffsht}
$\on{Sht}_{\mu, \eta}$ is equivalent to the stack over $\on{Spd}(\Breve{\mathbb{Q}}_p)$ sending a characteristic $p$ perfectoid space $S$ over $\Breve{\mathbb{Q}}_p$ to the groupoid of 

$\bullet$ 
$G$-torsors $\mathcal{E}_0$ and $\mathcal{E}$ over the Fargues--Fontaine curve 
$\mathcal{X}_{\on{FF}, S}$ 
such that $\mathcal{E}_0$ is trivial at every geometric point of $S$. The data $\mathcal{E}_0$ is equivalent to a pro-étale $G(\mathbb{Q}_p)$-torsor $\mathbb{L}_{\eta}$ on $S$ by \cite[Theorem 8.5.5]{KL},  

$\bullet$ 
an untilt
$S^{\sharp} \hookrightarrow \mathcal{Y}_{[0,\infty)}(S)$
of $S$ defined over $\Breve{\mathbb{Q}}_p$,

$\bullet$ 
an isomorphism 
\[
\alpha: 
\mathcal{E}_0|_{\mathcal{X}_{\on{FF}, S}\setminus S^{\sharp}} 
\cong
\mathcal{E}|_{\mathcal{X}_{\on{FF}, S}\setminus S^{\sharp}}
\]
that is meromorphic along $S^{\sharp}$ and bounded by $\mu$, 

$\bullet$ 
a sub $\mathcal{G}(\mathbb{Z}_p)$-torsor $\mathbb{L}$ inside $\mathbb{L}_{\eta}$.

\end{theorem}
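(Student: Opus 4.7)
The plan is to construct a quasi-inverse pair of functors, pivoting on descent of $\mathcal{G}$-torsors from $\mathcal{Y}_{[0,\infty)}(S)$ to the Fargues-Fontaine curve $\mathcal{X}_{\text{FF},S}$ through the Frobenius structure, together with the equivalence between pro-\'etale $\mathcal{G}(\mathbb{Z}_p)$-torsors on $S$ and Frobenius-equivariant $\mathcal{G}$-torsors on $S$ via tilting. For the forward functor, given a Shtuka $(\mathcal{P}, S^{\sharp}, \varphi_{\mathcal{P}})$ with $S^{\sharp}$ of characteristic $0$, I restrict $\mathcal{P}$ to $\mathcal{Y}_{(0,\infty)}(S) \setminus S^{\sharp}$, where $\varphi_{\mathcal{P}}$ is a genuine Frobenius-equivariant structure, and descend under the discrete action of $\text{Frob}_S^{\mathbb{Z}}$ to a $G$-torsor on $\mathcal{X}_{\text{FF},S} \setminus \bar S^{\sharp}$. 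Since the Frobenius translates of $S^{\sharp}$ are discrete in $\mathcal{Y}_{(0,\infty)}(S)$ (they accumulate only at the excluded locus $V([\omega])$), I can pick a neighborhood $U$ of $S^{\sharp}$ disjoint from its Frobenius translates; the restrictions $\mathcal{P}|_U$ and $\text{Frob}_S^* \mathcal{P}|_U$ then furnish two canonical extensions across the image $\bar S^{\sharp}$, yielding $G$-torsors $\mathcal{E}$ and $\mathcal{E}_0$ on all of $\mathcal{X}_{\text{FF},S}$, and the meromorphic $\varphi_{\mathcal{P}}$ descends to $\alpha : \mathcal{E}_0 \cong \mathcal{E}$ bounded by $\mu$ at $\bar S^{\sharp}$.

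To identify the pro-\'etale structure, at any geometric point $\text{Spa}(\mathbb{C}, \mathbb{C}^+) \to S$ the $\mathcal{G}$-torsor $\mathcal{P}$ is trivial over $\mathcal{Y}_{[0,\infty)}(\mathbb{C})$ by a Steinberg-type vanishing for reductive group torsors over punctured Witt discs, forcing $\mathcal{E}_0|_{\mathcal{X}_{\text{FF},\mathbb{C}}}$ to be trivial; hence by \cite{KL} the global $\mathcal{E}_0$ corresponds to a pro-\'etale $G(\mathbb{Q}_p)$-torsor $\mathbb{L}_{\eta}$ on $S$. The sub-torsor $\mathbb{L} \subset \mathbb{L}_{\eta}$ is produced by restricting $(\mathcal{P}, \varphi_{\mathcal{P}})$ to the characteristic $p$ divisor $V(p) \subset \mathcal{Y}_{[0,\infty)}(S)$: there $\varphi_{\mathcal{P}}$ is a genuine isomorphism since $S^{\sharp}$ has characteristic $0$, so we obtain a Frobenius-equivariant $\mathcal{G}$-torsor on $S$, which corresponds under tilting to a pro-\'etale $\mathcal{G}(\mathbb{Z}_p)$-torsor $\mathbb{L}$, with inclusion into $\mathbb{L}_{\eta}$ induced by $\mathcal{G}(\mathbb{Z}_p) \hookrightarrow G(\mathbb{Q}_p)$.

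For the inverse functor, given $(\mathcal{E}_0, \mathcal{E}, S^{\sharp}, \alpha, \mathbb{L} \subset \mathbb{L}_{\eta})$, I first form the Frobenius-equivariant $\mathcal{G}$-torsor $\mathcal{P}_0 := \mathbb{L} \times^{\mathcal{G}(\mathbb{Z}_p)} \mathcal{G}_{\mathcal{Y}_{[0,\infty)}(S)}$ on $\mathcal{Y}_{[0,\infty)}(S)$, a trivial-$\mu$ Shtuka whose FF-descent is identified with $\mathcal{E}_0$; I then modify $\mathcal{P}_0$ at the single leg $S^{\sharp}$ using $\alpha$ to produce a $\mathcal{G}$-torsor $\mathcal{P}$ with meromorphic Frobenius bounded by $\mu$, the Frobenius-equivariance of $\mathcal{P}_0$ ensuring coherent propagation across the Frobenius orbit of $\bar S^{\sharp}$. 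The two constructions are mutual inverses by tracking the data through the descent. The main obstacle is the triviality of $\mathcal{E}_0$ at every geometric point, which reduces to the Steinberg-type vanishing of $H^1$ for $\mathcal{G}$ on $\mathcal{Y}_{[0,\infty)}(\mathbb{C})$ and to verifying that the two extensions $\mathcal{E}$ and $\mathcal{E}_0$ are independent of the choice of $U$ (two choices differ by a Frobenius translation already absorbed in the descent). A secondary but essential compatibility is the matching of the integral structure on the $\mathcal{Y}_{[0,\infty)}$-side with the sub-torsor $\mathbb{L}$, which follows from tilting applied to $\mathcal{G}$-torsors.
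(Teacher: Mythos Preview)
Your overall strategy is exactly what the paper invokes: the paper's proof is simply a pointer to \cite{SW17}~23.3.1, noting that dropping the rigidification of $\mathcal{E}$ (the fixed $b \in B(G)$) gives the present statement. You have reconstructed that argument faithfully in outline—descent along the $\text{Frob}_S^{\mathbb{Z}}$-action to $\mathcal{X}_{\text{FF},S}$, the two extensions at $\bar S^{\sharp}$, and the lattice from $V(p)$.

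There is, however, a real gap in your justification that $\mathcal{E}_0$ is trivial at every geometric point. Knowing that $\mathcal{P}$ is trivial on $\mathcal{Y}_{[0,\infty)}(\mathbb{C})$ does \emph{not} by itself force $\mathcal{E}_0|_{\mathcal{X}_{\text{FF},\mathbb{C}}}$ to be trivial: the isomorphism class of the descended torsor on $\mathcal{X}_{\text{FF},\mathbb{C}}$ is governed by the Frobenius structure $\varphi_{\mathcal{P}}$, not by the underlying torsor $\mathcal{P}$ alone. Both $\mathcal{E}$ and $\mathcal{E}_0$ arise from the same trivial $\mathcal{P}$ with the same $\varphi_{\mathcal{P}}$, differing only in the choice of local lattice at $\bar S^{\sharp}$, and $\mathcal{E}$ is typically \emph{not} trivial—so triviality of $\mathcal{P}$ cannot be the whole story. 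What actually pins down $\mathcal{E}_0$ as the trivial one is precisely the integral structure at $V(p)$: the Shtuka extends across $p=0$ with $\varphi_{\mathcal{P}}$ an isomorphism there, and this forces the $\text{Frob}^*\mathcal{P}$-extension to coincide with the torsor built from the pro-\'etale $\mathcal{G}(\mathbb{Z}_p)$-torsor $\mathbb{L}$. You construct $\mathbb{L}$ in the next sentence but treat it as independent data; in the SW17 argument it is exactly what certifies $\mathcal{E}_0 \cong \mathbb{L}_{\eta} \times^{G(\mathbb{Q}_p)} \mathcal{O}_{\mathcal{X}_{\text{FF}}}$, hence trivial at geometric points. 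Reordering your construction—first produce $\mathbb{L}$, then \emph{define} $\mathcal{E}_0$ from $\mathbb{L}_{\eta}$, then verify this agrees with the $\text{Frob}^*\mathcal{P}$-extension—closes the gap.

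A smaller issue: in your inverse functor, modifying $\mathcal{P}_0$ at the single divisor $S^{\sharp}$ does not directly produce a Shtuka with one leg, since $\text{Frob}^*$ of the modified torsor acquires a leg at $\text{Frob}^{-1}(S^{\sharp})$ as well. One must either propagate the modification along the full Frobenius orbit in the direction escaping $\mathcal{Y}_{[0,\infty)}$, or equivalently glue the pullback of $\mathcal{E}$ to $\mathcal{P}_0$ across a fundamental domain; this is handled carefully in \cite{SW17} and deserves a sentence.
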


\begin{proof}
The proof of \cite[Proposition 23.3.1]{SW17}  goes without change. Indeed, in $loc. cit.$ they fix the type of shtukas in $\on{B}(G)$, which corresponds to rigidifying $\mathcal{E}$. Dropping this extra condition gives exactly the data as in the statement.
\end{proof}

We have a natural correspondence diagram
\begin{equation} \label{etalecor}
\begin{tikzcd}
& \on{Sht}^{\nu}_{\mu|\mu, \eta} \arrow[dr, "p_2"] \arrow[dl,"p_1"'] & 
\\
\on{Sht}_{\mu,\eta}  & & \on{Sht}_{\mu,\eta}
\end{tikzcd}
\end{equation}
where $p_1$ (resp. $p_2$) sends $\{\mathcal{P}, \mathcal{P}', \gamma,S^{\sharp}\}$
to $\{\mathcal{P},S^{\sharp}\}$ (resp. $\{\mathcal{P}',S^{\sharp}\}$). 

\begin{proposition} \label{oewuriueoiiowiox}
The correspondence (\ref{etalecor}) is an étale correspondence, i.e. both $p_1$ and $p_2$ are finite étale morphisms.
\end{proposition}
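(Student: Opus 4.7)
The plan is to reinterpret the correspondence via Theorem \ref{ffsht}, which identifies $Sht_{\mu,\eta}$ with the moduli of $G$-torsors on the relative Fargues-Fontaine curve $\mathcal{X}_{\text{FF},S}$ equipped with a modification at the characteristic $0$ leg $S^{\sharp}$ and an integral reduction $\mathbb{L} \subset \mathbb{L}_\eta$ of the associated pro-étale $G(\mathbb{Q}_p)$-torsor to a $\mathcal{G}(\mathbb{Z}_p)$-torsor. The key observation is that, in the generic fiber, the characteristic $p$ untilt $S \hookrightarrow \mathcal{Y}_{[0,\infty)}(S)$, where $\gamma$ has its controlled singularity, is disjoint from the characteristic $0$ leg $S^{\sharp}$, where $\varphi_{\mathcal{P}}, \varphi_{\mathcal{P}'}$ have their $\mu$-singularities. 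Since $\gamma$ is defined on all of $\mathcal{Y}_{(0,\infty)}(S)$ and is Frobenius-equivariant by the commutative diagram in its definition, it descends to an isomorphism of $G$-torsors on $\mathcal{X}_{\text{FF},S}$ intertwining the modifications $\alpha$ and $\alpha'$. This in particular yields a canonical isomorphism $\mathbb{L}_\eta \cong \mathbb{L}_\eta'$ of pro-étale $G(\mathbb{Q}_p)$-torsors, after which both $\mathbb{L}$ and $\mathbb{L}'$ can be viewed as $\mathcal{G}(\mathbb{Z}_p)$-reductions inside a common $\mathbb{L}_\eta$.

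Next, I would show that the boundedness of $\gamma$ by $\nu$ at the characteristic $p$ untilt translates precisely to $\mathbb{L}$ and $\mathbb{L}'$ being in relative position $\leq \nu$ inside $\mathbb{L}_\eta$. Granting this, the fiber of $p_1$ over $(\mathcal{P}, S^{\sharp})$ is the set of $\mathcal{G}(\mathbb{Z}_p)$-reductions $\mathbb{L}' \subset \mathbb{L}_\eta$ at relative position $\leq \nu$ from $\mathbb{L}$. Pro-étale locally on $S$, where $\mathbb{L}_\eta$ may be trivialized, this set becomes the finite discrete set
\[
\bigsqcup_{\nu' \leq \nu} \mathcal{G}(\mathbb{Z}_p) \, \nu'(p) \, \mathcal{G}(\mathbb{Z}_p) / \mathcal{G}(\mathbb{Z}_p),
\]
so $p_1$ is representable by a finite étale cover. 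Applying the same argument after swapping $(\mathcal{P}, \mathcal{P}', \gamma)$ with $(\mathcal{P}', \mathcal{P}, \gamma^{-1})$, so that $\nu$ is replaced by its Weyl-dual $-w_0\nu$, shows that $p_2$ is likewise finite étale.

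The step I expect to be the main obstacle is making precise the translation, used in the second paragraph, between the analytic notion of a modification of Shtukas bounded by $\nu$ at the characteristic $p$ untilt and the purely group-theoretic datum of a relative position of two $\mathcal{G}(\mathbb{Z}_p)$-reductions of $\mathbb{L}_\eta$. Carrying this out requires revisiting the proof of Theorem \ref{ffsht} (which rests on \cite{SW17} 23.3.1) and tracking how the equivalence sends the integral $\mathcal{G}$-torsor structure of a Shtuka on a formal neighborhood of the characteristic $p$ untilt onto the lattice $\mathbb{L} \subset \mathbb{L}_\eta$, in a manner compatible with modifications bounded by $\nu$. Once this compatibility is in place, the finite étale claim reduces to the elementary fact that the Hecke double cosets bounded by $\nu$ at an unramified place form a finite set.
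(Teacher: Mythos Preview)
Your proposal is correct and follows essentially the same approach as the paper: both arguments use Theorem \ref{ffsht} to translate the correspondence into the Fargues--Fontaine curve picture, identify the fiber of $p_1$ with the set of $\mathcal{G}(\mathbb{Z}_p)$-reductions $\mathbb{L}' \subset \mathbb{L}_\eta$ at relative position bounded by $\nu$ from $\mathbb{L}$, and then conclude finite \'etaleness by pro-\'etale trivialization of $\mathbb{L}_\eta$. The paper handles the translation step (your flagged obstacle) with the same brevity you anticipate, and invokes \cite{2017arXiv170907343S} 9.11 for the descent of finite \'etaleness along the pro-\'etale cover.
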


\begin{proof}
With the characterization of theorem \ref{ffsht}, we see that $\on{Sht}_{\mu|\mu,\eta}^{\nu}(S)$ parametrizes 

$\bullet$ 
$\{ \mathcal{E}_0, \mathcal{E}, \alpha, \mathbb{L}, S^{\sharp} \} \in \on{Sht}_{\mu,\eta}(S)$,

$\bullet$ 
$\{ \mathcal{E}'_0, \mathcal{E}', \alpha', \mathbb{L}', S^{\sharp} \} \in \on{Sht}_{\mu,\eta}(S)$,

$\bullet$
isomorphisms 
\[
\beta_0: \mathcal{E}_0 \cong \mathcal{E}'_0
\]
\[
\beta: \mathcal{E} \cong \mathcal{E}'
\]
such that 
$\alpha' \circ \beta_0 = \beta \circ \alpha$, and 
under the isomorphism 
\[
\mathbb{L}_{\eta} \cong \mathbb{L}'_{\eta}
\]
of pro-étale $G(\mathbb{Q}_p)$-torsors on $S$ induced by $\beta_0$, the sub-$\mathcal{G}(\mathbb{Z}_p)$-torsors $\mathbb{L}$ and $\mathbb{L}'$ have relative position bounded by $\nu$ at every rank 1 geometric point of $S$. 

Now for a given $S$-point of $\on{Sht}_{\mu,\eta}$ specified by the data $\{ \mathcal{E}_0, \mathcal{E}, \alpha, \mathbb{L}, S^{\sharp} \} \in \on{Sht}_{\mu,\eta}(S)$, we see that the fiber of $p_1$ at this point is the data of 
sub-$\mathcal{G}(\mathbb{Z}_p)$-torsors $\mathbb{L}'$ 
of $\mathbb{L}_{\eta}$ which have relative position with respect to $\mathbb{L}$ bounded by $\nu$ at every geometric point of $S$. Since pro-étale locally $\mathbb{L}_{\eta}$ is the trivial torsor $S \times G(\mathbb{Q}_p)$, the relevant sub-$\mathcal{G}(\mathbb{Z}_p)$-torsors correspond to a finite subset of $G(\mathbb{Q}_p)/\mathcal{G}(\mathbb{Z}_p)$ determined by $\nu$
\footnote{It is the $\mathbb{F}_p$-points of the proper Witt vector Schubert variety with respect to $\mathcal{G}$ and $\nu$.},
which is visibly finite étale over $S$. Thus we see that $p_1$ is finite étale after base change to a pro-étale cover, hence it is finite étale by \cite[Corollary 9.11 (iii)]{2017arXiv170907343S}.  

The same argument shows that $p_2$ is finite étale. 
\end{proof}

\subsection{Moduli spaces of Witt vector shtukas and Witt vector Hecke stacks} \label{wittt}

We will need the Witt vector version of moduli spaces of shtukas and Hecke stacks as well.

\begin{definition}
Let $I$ be a finite set, and $\mu_{\bullet}$ be a collection of dominant cocharacters of $G$ indexed by $I$.  
The moduli space of Witt vector  shtukas over 
$ \on{Spd}(\overline{\mathbb{F}_p})$ associated to $G$ and $\mu_{\bullet}$, denoted by
\[
\on{Sht}_{\mu_{\bullet}}^{W},
\]
is the stackification of the prestack sending a characteristic $p$ affinoid perfectoid space $S=Spa(R,R^+)$ over $\overline{\mathbb{F}_p}$ to the groupoid of the data of

$\bullet$  $\mathcal{G}$-torsors $\mathcal{P}_1, \cdots, \mathcal{P}_k$ over $\on{Spec}(W(R^+))$, and 

$\bullet$ modifications 
\[
\begin{tikzcd}
\mathcal{P}_1  \arrow[r,dashed, "\varphi_1"] &
\mathcal{P}_2 \arrow[r,dashed, "\varphi_2"] &
\cdots \cdots   \arrow[r,dashed,"\varphi_{k-1}" ]&
\mathcal{P}_k \arrow[r,dashed,"\varphi_k" ] &
\on{Frob}_S^*\mathcal{P}_1
\end{tikzcd}
\]
where the dotted arrows $\varphi_i$ are isomorphisms
\[
\varphi_i:  \mathcal{P}_i|_{\on{Spec}(W(R^+)[\frac{1}{p}])} \cong \mathcal{P}_{i+1}|_{\on{Spec}(W(R^+)[\frac{1}{p}])}
\]
with $\mathcal{P}_{k+1} =\on{Frob}_S^*\mathcal{P}_1$,  which are bounded by $\mu_i$ at every geometric point of $S$. 
\end{definition}

We have the local version as well, which is already a stack   by \cite[Corollary 17.1.9]{SW17}. 

\begin{definition}
Let $I$ be a finite set, and $\mu_{\bullet}$ be a collection of dominant cocharacters of $G$ indexed by $I$.  
The moduli space of Witt vector local shtukas  associated to $G$ and $\mu_{\bullet}$ 
is the stack 
\[
\on{Sht}_{\mu_{\bullet}}^{\on{loc},W}
\]
over 
$ \on{Spd}(\overline{\mathbb{F}_p})$ whose value at a characteristic $p$ affinoid perfectoid space $S$ over $\overline{\mathbb{F}_p}$ is the groupoid of the data of

$\bullet$  $\mathcal{G}$-torsors $\mathcal{P}_1, \cdots, \mathcal{P}_k$ over $\on{Spec}(\mathcal{O}^{\wedge}_{\mathcal{Y}_{[0,\infty)}(S), S} )$, and 

$\bullet$ modifications 
\[
\begin{tikzcd}
\mathcal{P}_1  \arrow[r,dashed, "\varphi_1"] &
\mathcal{P}_2 \arrow[r,dashed, "\varphi_2"] &
\cdots \cdots   \arrow[r,dashed,"\varphi_{k-1}" ]&
\mathcal{P}_k \arrow[r,dashed,"\varphi_k" ] &
\on{Frob}_S^*\mathcal{P}_1
\end{tikzcd}
\]
where the dotted arrows $\varphi_i$ are isomorphisms
\[
\varphi_i:  \mathcal{P}_i|_{\on{Spec}(\mathcal{O}^{\wedge}_{\mathcal{Y}_{[0,\infty)}(S), S})\setminus S} \cong \mathcal{P}_{i+1}|_{\on{Spec}(\mathcal{O}^{\wedge}_{\mathcal{Y}_{[0,\infty)}(S), S})\setminus S}
\]
with $\mathcal{P}_{k+1} =\on{Frob}_S^*\mathcal{P}_1$, which are bounded by $\mu_i$ at every geometric point of $S$. 
\end{definition}

The possibility of having such a definition comes from the fact that Frobenius fixes the characteristic $p$ legs, which is not the case for the characteristic 0 ones.

\begin{remark}
Note that there is no extra data on the partition of $I$ since we fix legs at the characteristic $p$ untilt, so essentially the only possibility of the partition is $(\{1\}, \cdots, \{k\})$. For example, moduli spaces of Witt vector shtukas with respect to $(\{1,2 \}, \{3\} )$ and cocharacters $\{\mu_1, \mu_2, \mu_3\}$ is the same as moduli spaces of Witt vector shtukas with respect to
$(\{1 \}, \{2\} )$ and cocharacters $\{\mu_1 +\mu_2, \mu_3\}$. 
\end{remark}

Again, there is the correspondence version.

\begin{definition}
Let $I, J$ be finite sets, and $\mu_{\bullet}, \mu'_{\bullet}$ be collections of dominant cocharacters of $G$ indexed by $I$ and $J$ respectively.  Let $\nu$ be a dominant cocharacter of $G$. Then we define 
 \[
 \on{Sht}^{\nu,W}_{\mu_{\bullet}|\mu_{\bullet}'}
 \]
to be the stackification of the prestack sending   a characteristic $p$ affinoid perfectoid space $S=Spa(R,R^+)$ over $\overline{\mathbb{F}_p}$ to the groupoid of the data of

$\bullet$ 
$\{\mathcal{P}_1, \cdots, \mathcal{P}_k, \varphi_1,\cdots, \varphi_k \} \in \on{Sht}^{W}_{\mu_{\bullet} }(S)$ 
and 
$\{\mathcal{P}_1', \cdots, \mathcal{P}_l', \varphi_1',
\cdots, \varphi_l'\} \in \on{Sht}^{W}_{\mu'_{\bullet} }(S)$,

$\bullet$ a modification bounded by $\nu$ 
\[
\gamma:
\mathcal{P}_1|_{\on{Spec}(W(R^+)[\frac{1}{p}])} \cong \mathcal{P}_1'|_{\on{Spec}(W(R^+)[\frac{1}{p}])} 
\]
as shtukas, i.e. the following diagram is commutative
\[
\begin{tikzcd}
 \arrow[d, dashed,"\gamma"] \mathcal{P}_1  \arrow[r,dashed, "\varphi_1"] &
\mathcal{P}_2 \arrow[r,dashed, "\varphi_2"] &
\cdots \cdots   \arrow[r,dashed,"\varphi_{k-1}" ]&
\mathcal{P}_k \arrow[r,dashed,"\varphi_k" ] &
\on{Frob}_S^*\mathcal{P}_1 \arrow[d,dashed,"\on{Frob}_S^*\gamma"] 
\\
\mathcal{P}_1'  \arrow[r,dashed, "\varphi_1'"] &
\mathcal{P}_2' \arrow[r,dashed, "\varphi_2'"] &
\cdots \cdots   \arrow[r,dashed,"\varphi'_{l-1}" ]&
\mathcal{P}'_l \arrow[r,dashed,"\varphi_l'" ] &
\on{Frob}_S^*\mathcal{P}_1'.
\end{tikzcd}
\]
 
\end{definition}

We now define the Witt vector Hecke stacks. 

\begin{definition}
Let $I$ be a finite set, and $\mu_{\bullet}$ be a collection of dominant cocharacters of $G$ indexed by $I$.   We define 
\[
\on{Hecke}_{\mu_{\bullet}}^{W}
\]
to be the stackification   of the prestack sending $S=Spa(R,R^+)$ over $\overline{\mathbb{F}_p}$ to the groupoid of

$\bullet$ 
$\mathcal{G}$-torsors $\mathcal{P}_1, \cdots, \mathcal{P}_{k+1}$  on 
$Spec(W(R^+))$,

$\bullet$
modifications
\[
\begin{tikzcd}
\mathcal{P}_1  \arrow[r,dashed, "\varphi_1"] &
\mathcal{P}_2 \arrow[r,dashed, "\varphi_2"] &
\cdots \cdots   \arrow[r,dashed,"\varphi_{k-1}" ]&
\mathcal{P}_k \arrow[r,dashed,"\varphi_k" ] &
\mathcal{P}_{k+1},
\end{tikzcd}
\]
where the dotted arrows $\varphi_i$ are isomorphisms
\[
\varphi_i:  \mathcal{P}_i|_{\on{Spec}(W(R^+)[\frac{1}{p}])} \cong 
\mathcal{P}_{i+1}|_{\on{Spec}(W(R^+)[\frac{1}{p}])}
\]
that are  bounded by $\mu_i$ at every geometric point of $S$.

\end{definition}

\begin{remark}
   One can also define a local version of the Witt vector Hecke stack, but this coincides with the special fiber of $\on{Hecke}^{\on{loc}}_{\mu_{\bullet}}$. 
\end{remark}

\begin{definition}
Let $I, J$ be finite sets, and $\mu_{\bullet}, \mu'_{\bullet}$ be collections of dominant cocharacters of $G$ indexed by $I$ and $J$ respectively. Let $\nu$ be a dominant cocharacter of $G$. Then we define 
 \[
 \on{Hecke}^{\nu,W}_{\mu_{\bullet}|\mu_{\bullet}'}
 \]
to be the stackification  of the prestack sending a characteristic $p$ affinoid perfectoid space $S=Spa(R,R^+)$ over $\overline{\mathbb{F}_p}$ to the groupoid of the data of

$\bullet$ 
$\{\mathcal{P}_1,\cdots, \mathcal{P}_{k+1}, \varphi_1, \cdots \varphi_k \}\in \on{Hecke}_{\mu_{\bullet} }^{W}(S)$ 
and 
$\{\mathcal{P}'_1, \cdots, \mathcal{P}_{l+1}',\varphi',\cdots \varphi_l'\} \in \on{Hecke}_{\mu'_{\bullet} }^{W}(S)$,

$\bullet$ modifications bounded by $\nu$
\[
\gamma_1:
\mathcal{P}_1|_{\on{Spec}(W(R^+)[\frac{1}{p}])} \cong \mathcal{P}'_1|_{\on{Spec}(W(R^+)[\frac{1}{p}])} 
\]
\[
\gamma_2:
\mathcal{P}_{k+1}|_{\on{Spec}(W(R^+)[\frac{1}{p}])} \cong \mathcal{P}'_{l+1}|_{\on{Spec}(W(R^+)[\frac{1}{p}])} 
\]
such that the following diagram is commutative
\[
\begin{tikzcd}
 \arrow[d, dashed,"\gamma_1"] \mathcal{P}_1  \arrow[r,dashed, "\varphi_1"] &
\mathcal{P}_2 \arrow[r,dashed, "\varphi_2"] &
\cdots \cdots   \arrow[r,dashed,"\varphi_{k-1}" ]&
\mathcal{P}_k \arrow[r,dashed,"\varphi_k" ] &
\mathcal{P}_{k+1} \arrow[d,dashed,"\gamma_2"] 
\\
\mathcal{P}_1'  \arrow[r,dashed, "\varphi_1'"] &
\mathcal{P}_2' \arrow[r,dashed, "\varphi_2'"] &
\cdots \cdots   \arrow[r,dashed,"\varphi_{l-1}'" ]&
\mathcal{P}'_l \arrow[r,dashed,"\varphi_l'" ] &
\mathcal{P}'_{l+1}.
\end{tikzcd}
\]

\end{definition}

We observe that the moduli spaces of Witt vector shtukas and Hecke stacks are analytifications of perfect stacks. Let $\on{Hecke}^{\on{loc}(\infty)}_{\mu_{\bullet}}$ be the perfect Hecke stack as defined in \cite[Definition 5.1.1]{2017arXiv170705700X}, see also appendix \ref{qrerewrewrf}, then it is easy to see that
\[
\on{Hecke}^{\on{loc}(\infty),\diamond}_{\mu_{\bullet}} = \on{Hecke}^W_{\mu_{\bullet}}
\]
and 
\[
\on{Hecke}^{\on{loc}(\infty),\diamond\diamond}_{\mu_{\bullet}} = \on{Hecke}^{\on{loc},(\{1\},\cdots,\{n\})}_{\mu_{\bullet},s_{\bullet}}
\]
with notations as in \ref{serepoir}. On the other hand, we  have
\[
\on{Sht}_{\mu_{\bullet}}^{\on{loc}(\infty), \diamond} = \on{Sht}_{\mu_{\bullet}}^W
\]
and 
\[
\on{Sht}_{\mu_{\bullet}}^{\on{loc}(\infty), \diamond\diamond} = \on{Sht}_{\mu_{\bullet}}^{\on{loc},W}
\]
where 
$\on{Sht}_{\mu_{\bullet}}^{\on{loc}(\infty)}$ 
is the untruncated moduli space of Witt vector shtukas defined in \cite[Definition 5.2.1]{2017arXiv170705700X}. Similarly for the correspondence version $\on{Hecke}^{\nu,W}_{\mu_{\bullet}|\mu_{\bullet}'}$
and 
$ \on{Sht}^{\nu,W}_{\mu_{\bullet}|\mu_{\bullet}'}$. 

As a sanity check, all the stacks defined above are small v-stacks.

\begin{proposition}
$\on{Hecke}_{\mu_{\bullet}}^{(I_1,\cdots,I_k)}$, 
$\on{Hecke}_{\mu_{\bullet}}^{\on{loc},(I_1,\cdots,I_k)}$,
$ \on{Hecke}^{(I_1, \cdots, I_k)|(J_1, \cdots, J_l), \nu}_{\mu_{\bullet}|\mu_{\bullet}'}$, $\on{Hecke}^W_{\mu_{\bullet}}$, 
$\on{Hecke}^{\nu,W}_{\mu_{\bullet}|\mu_{\bullet}'}$,
$\on{Sht}_{\mu_{\bullet}}^{(I_1,\cdots,I_k)}$,
$\on{Sht}_{\mu_{\bullet}}^{\on{loc},W}$,
$\on{Sht}_{\mu_{\bullet}}^W$,
$\on{Sht}^{\nu,W}_{\mu_{\bullet}|\mu_{\bullet}'}$
and $\on{Sht}^{(I_1, \cdots, I_k)|(J_1, \cdots, J_l),\nu}_{\mu_{\bullet}|\mu'_{\bullet}} $ are small v-stack.
\end{proposition}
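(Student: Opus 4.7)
The plan is to reduce every entry on the list to the one-leg local Hecke stack $Hecke^{\text{loc}}_{\mu}$, which the preceding discussion has already identified with the quotient $L^{+}G \backslash Gr_{\mathcal{G}, Spd(\mathcal{O}_{L}), \leq \mu}$ and shown to be a small v-stack via \cite{SW17} 20.5.4. All other stacks in the statement are built from these (and their multi-leg analogues) by iterating convolution, taking quotients by pro-smooth group v-sheaves, forming fiber products, and imposing closed Schubert boundedness conditions---operations under which smallness of v-stacks is preserved by the foundational results of \cite{2017arXiv170907343S}.

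First I would treat the multi-leg local Hecke stacks $Hecke^{\text{loc},(I_1,\cdots,I_k)}_{\mu_{\bullet}}$ via the standard iterated convolution picture: one presents them as a twisted product of single-leg local Hecke stacks along the positive loop group $L^{+}G$. The multi-leg Beilinson--Drinfeld Grassmannian $Gr_{\mathcal{G}, Spd(\mathcal{O}_{L})^{I}, \leq \mu_{\bullet}}$ is again representable in spatial diamonds by \cite{SW17} 20.5.4, and the successive quotient by products of $L^{+}G$ produces a small v-stack. The global Hecke stacks $Hecke^{(I_1,\cdots,I_k)}_{\mu_{\bullet}}$ have an entirely parallel description, where one replaces $L^{+}G$ by the larger group v-sheaf of automorphisms of the trivial $\mathcal{G}$-torsor on $\mathcal{Y}_{[0,\infty)}$, which remains pro-smooth.

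Next I would handle the Shtuka stacks. A point of $Sht^{(I_1,\cdots,I_k)}_{\mu_{\bullet}}$ is a point of $Hecke^{(I_1,\cdots,I_k)}_{\mu_{\bullet}}$ together with an identification $\mathcal{P}_{k+1} \cong \text{Frob}_{S}^{*}\mathcal{P}_{1}$. Hence $Sht^{(I_1,\cdots,I_k)}_{\mu_{\bullet}}$ is the fiber product of $Hecke^{(I_1,\cdots,I_k)}_{\mu_{\bullet}}$ with the graph of $\text{Frob}^{*}$ over the self-product of the v-stack of $\mathcal{G}$-torsors on $\mathcal{Y}_{[0,\infty)}$, so it is a small v-stack by stability of smallness under fiber products. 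For the correspondence versions $Sht^{(I_1,\cdots,I_k)|(J_1,\cdots,J_l),\nu}_{\mu_{\bullet}|\mu'_{\bullet}}$ and $Hecke^{(I_1,\cdots,I_k)|(J_1,\cdots,J_l),\nu}_{\mu_{\bullet}|\mu'_{\bullet}}$, the extra datum is the modification $\gamma$ at the characteristic $p$ untilt bounded by $\nu$, which lives naturally in a Witt vector local Hecke stack (see Remark \ref{u803r4u3}), and the commutativity of the diagram involving $\gamma,\varphi,\varphi'$ is a closed condition. Thus the correspondence stacks are fiber products of small v-stacks. The Witt vector variants $Sht^{W}_{\mu_{\bullet}}$, $Sht^{\text{loc},W}_{\mu_{\bullet}}$, $Hecke^{W}_{\mu_{\bullet}}$, $Sht^{\nu,W}_{\mu_{\bullet}|\mu'_{\bullet}}$ and $Hecke^{\nu,W}_{\mu_{\bullet}|\mu'_{\bullet}}$ fit in exactly the same pattern, and alternatively can be recognized as diamondifications of the perfect (ind-)stacks of \cite{2017arXiv170705700X} via the embedding of perfect schemes into v-sheaves from \cite{SW17} lecture 18.

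The main technical point is v-descent for $\mathcal{G}$-torsors on each of the base rings in play: $\mathcal{Y}_{[0,\infty)}(S)$, $\text{Spec}(W(R^{+}))$, the completions $\mathcal{O}^{\wedge}_{\mathcal{Y}_{[0,\infty)}(S), \sum S_{i}^{\sharp}}$, and their punctured localizations. For reductive $\mathcal{G}$ this reduces via a faithful representation and Tannakian reconstruction to v-descent of vector bundles, which is standard in the framework of \cite{2017arXiv170907343S}; Schubert boundedness is a closed condition that descends in the same framework and is used systematically in \cite{2021arXiv210213459F}. With these verifications all the assertions of the proposition follow uniformly from the quotient and fiber-product descriptions above.
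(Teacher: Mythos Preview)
Your approach is correct but considerably more elaborate than the paper's. The paper dispatches the proposition in two sentences: the v-stack property (i.e., v-descent) follows from \cite{SW17} proposition 19.5.3 (v-descent for $\mathcal{G}$-torsors on the relevant adic loci---exactly the ``main technical point'' you identify at the end), and smallness follows because all the moduli data involved are honest sets rather than large groupoids, so one may bound the presheaf by a set as in the proof of \cite{2021arXiv210213459F} proposition \rom{3}.1.3. You instead build every stack up from $Hecke^{\text{loc}}_{\mu}$ by convolution, quotienting, and fiber products, and appeal to closure of smallness under these operations. This buys you an explicit structural decomposition (which is indeed exploited elsewhere in the paper), but forces you to track smallness of several intermediate v-stacks---e.g.\ the stack of $\mathcal{G}$-torsors on $\mathcal{Y}_{[0,\infty)}$ and the automorphism group v-sheaf you quotient by---that the paper's direct argument bypasses entirely. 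Both arguments ultimately rest on the same descent input; the paper's shortcut is to observe that once descent is in hand, smallness is immediate from the set-valued nature of the data.
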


\begin{proof}
 See the proof of \cite[Proposition \rom{3}.1.3]{2021arXiv210213459F}.

\end{proof}

\section{The Geometric Satake} \label{satsection}

We follow the same notations as in the previous section. Moreover, we define the derived category of a small v-stack $X$ with $\overline{\mathbb{Q}}_\ell$-coefficient to be 
\[
D^b_{\text{ét}}(X,\overline{\mathbb{Q}}_\ell) := \underset{E}{\on{colim}} \ D^b_{\text{ét}}(X,E),
\]
where $E$ ranges over subfields of $\overline{\mathbb{Q}}_\ell$ that are finite extensions of $\mathbb{Q}_\ell$, $D^b_{\text{ét}}(X,E) $
is the idempotent completion of  $D^b_{\text{ét}}(X,\mathcal{O}_E) [\frac{1}{l}]$ 
and the colimit is taken in the $\infty$-categorical sense. Recall that the derived category $D^b_{\text{ét}}(X,\mathcal{O}_E)$ with adic coefficients is defined in \cite[Section 26]{2017arXiv170907343S}. The six functors formalism generalizes directly to this setting. 

\begin{remark}
The constructions in this paper work already with the coefficient ring $\mathcal{O}_{\mathbb{Q}_\ell(\sqrt{q})}$, and we can work with that instead. 
\end{remark}

\begin{remark}
The definition given here compares favorably with the constructible derived categories of schemes
\footnote{Indeed, it follows from \cite[Section 7.2]{2020arXiv201202853H} that for a qcqs scheme $X$, 
$D_{c}^b(X,\overline{\mathbb{Q}}_\ell) \cong  \underset{E}{\on{colim}} D_{c}^b(X,E)$,  
$D_{c}^b(X,E) $ is equivalent to the idempotent completion of $ D_{c}^b(X,\mathcal{O}_E)[\frac{1}{l}]$ 
and $D_{c}^b(X,\mathcal{O}_E) \cong \underset{n}{\on{lim}} D_{c}^b(X,\mathcal{O}_E/l^n)$.
}, see \cite[Section 6.8]{article}  or \cite{2020arXiv201202853H}, which suffices for our purpose in this paper. Moreover, the comparison functor from schemes to the associated diamonds is fully faithful, see \cite[Proposition 27.2]{2017arXiv170907343S} and the appendix. 
\end{remark}

Let us recall the geometric Satake proved by Fargues--Scholze.

\begin{theorem}  \label{ioiioewqw}
(Fargues--Scholze) 
Let $I $ be a finite set,  and $\mu_{\bullet}$ be a collection of  dominant geometric cocharacters of $G$ indexed by $I$.  Let $\{I_1, \cdots, I_k\}$ be a partition of $I$. There is a natural functor 
\[
U \longmapsto \mathscr{S}_U^{\on{loc},(I_1, \cdots, I_k)}
\]
from the category of representations of $\hat{G}^I$, the product of the dual group of $G$ (with $\overline{\mathbb{Q}}_\ell$-coefficient), to the category  $D_{\text{ét}}^b(\on{Hecke}^{\on{loc},(I_1, \cdots, I_k)}, \overline{\mathbb{Q}}_\ell)$. It has the following properties.

1) If $U$ is irreducible with highest weight $\mu_{\bullet}$, then $\mathscr{S}_U^{\on{loc},(I_1, \cdots, I_k)}$ is supported on $\on{Hecke}_{\mu_{\bullet}}^{\on{loc},(I_1, \cdots, I_k)}$.

2) Let 
$\zeta: I \rightarrow J$ 
be a map of finite sets, $(J_1, \cdots, J_k)$ be a partition of $J$, and $I_i = \zeta^{-1}J_i$,  then there exists a canonical isomorphism
\[
i_{\zeta,*}^{\on{loc}}p_{\zeta}^{\on{loc},*} \mathscr{S}_U^{\on{loc},(I_1, \cdots, I_k)}
\cong \mathscr{S}_{\Delta_{\zeta}^*U}^{\on{loc},(J_1, \cdots, J_k)},
\]
where 
\[
i^{\on{loc}}_{\zeta}: 
\on{Hecke}^{\on{loc},(I_1, \cdots, I_k)} \times_{\on{Spd}(\Breve{\mathbb{Z}}_p)^I, \zeta} \on{Spd}(\Breve{\mathbb{Z}}_p)^J \hookrightarrow
\on{Hecke}^{\on{loc},(J_1, \cdots, J_k)}
\]
is defined in (\ref{a4}),
\[
p_{\zeta}^{\on{loc}}:\on{Hecke}^{\on{loc},(I_1, \cdots, I_k)} \times_{\on{Spd}(\Breve{\mathbb{Z}}_p)^I, \zeta} \on{Spd}(\Breve{\mathbb{Z}}_p)^J \rightarrow  \on{Hecke}^{\on{loc},(I_1, \cdots, I_k)}
\]
is the projection on the first factor,
and
$\Delta_{\zeta}: \hat{G}^J \rightarrow \hat{G}^I$
is the map sending 
$(g_j)_{j\in J}$ to $(g_{\zeta(i)})_{i\in I}$.

3) 
There is a canonical isomorphism
\[
(\pi^{\on{loc},(I_1, \cdots, I_k)})_!  \mathscr{S}_U^{\on{loc},(I_1, \cdots, I_k)} \cong \mathscr{S}_U^{\on{loc},(I)},
\]
where 
\[
\pi^{\on{loc},(I_1, \cdots, I_k)} : \on{Hecke}^{\on{loc},(I_1, \cdots, I_k)} \longrightarrow \on{Hecke}^{\on{loc},(I)}
\]
is the convolution map defined in (\ref{a5}). 

4) Let $(J_1, \cdots, J_l)$ be a partition of $I$ such that $J_j = I_{i_j} \cup \cdots \cup I_{i_{j+1}-1}$ for some partition $1=i_1 < i_2 < \cdots < i_{l+1}= k+1$ of  $k+1$, 
then we have a canonical isomorphism
\[
(\kappa^{\on{loc},(I_1, \cdots, I_k)}_{(J_1, \cdots, J_l)})^* \underset{j}{\boxtimes} \mathscr{S}_{U_{j}}^{\on{loc},(I_{i_j}, \cdots, I_{i_{j+1}-1})} \cong
\mathscr{S}_{\boxtimes U_j}^{\on{loc},(I_1, \cdots, I_k)},
\]
where 
\[
\kappa^{\on{loc},(I_1, \cdots, I_k)}_{(J_1, \cdots, J_l)}: \on{Hecke}^{\on{loc},(I_1, \cdots, I_k)} \longrightarrow \underset{j}{\prod} \on{Hecke}^{\on{loc},(I_{i_j}, \cdots, I_{i_{j+1}-1})}
\]
is defined in (\ref{a6}), and $U_j$ is a representation of $\hat{G}^{J_j}$.

5)
We have a canonical isomorphism
\[
r^*c_X^*A_{\mu_{\bullet}} \cong \mathscr{S}_U^{\on{loc},(\{1\}, \cdots, \{n\})}
\]
compatible with the isomorphisms in 1), 2),3) and 4),  
where $r$ is the natural restriction map
\[
r: \on{Hecke}_{\mu_{\bullet}, s}^{\on{loc}, (\{1\}, \cdots, \{n\})} \longrightarrow \on{Hecke}^{\on{loc}(m),\diamond\diamond}_{\mu_{\bullet}}
\]
from the local Hecke stack to the truncated Witt vector Hecke stack
 $X=\on{Hecke}^{\on{loc}(m)}_{\mu_{\bullet}}$ 
as defined in \cite[Definition 5.1.2]{2017arXiv170705700X}  and recalled in appendix \ref{qrerewrewrf},
$U$ is irreducible with highest weight $\mu_{\bullet}$, $c_X$ is the comparison functor defined in appendix \ref{serepoir}, and $A_{\mu_{\bullet}}$ is the Satake sheaf on $\on{Hecke}^{\on{loc}(m)}_{\mu_{\bullet}}$. 
More precisely, $A_{\mu_{\bullet}}$ is the descent of the intersection sheaf $\on{IC}_{\mu_{\bullet}}$, as  defined in \cite[Section 3.4.1]{2017arXiv170705700X},
on the Witt vector affine Grassmannian $Gr_{\mu_{\bullet}}$,
which exists by the $L^+G$-equivariance of $\on{IC}_{\mu_{\bullet}}$.

\end{theorem}

\begin{proof}

It is enough to work with the coefficient of the ring of integers of $\mathbb{Q}_\ell[\sqrt{q}]$, which is treated in \cite{2021arXiv210213459F}. 
We follow the ideas in the proof of \cite[Theorem 1.17]{Lafforgue2018}.

Let us first consider the case that $U=\underset{j}{\boxtimes} U_j$, where $U_j$ is a representation of $\hat{G}^{I_j}$.
From the geometric Satake proved in \cite[Theorem \rom{6}.11.1]{2021arXiv210213459F}, there exists a canonical perverse sheaf $\mathscr{S}_V^{\on{loc},(J)}$ on $\on{Hecke}^{\on{loc},(J)}$ for each representation $V$ of $\hat{G}^J$. Let $\{ I_1, \cdots I_k\}$ be a partition of $I$,  and we define
\[
\mathscr{S}_U^{\on{loc},(I_1, \cdots, I_k)}:=( \kappa_{(I_1, \cdots, I_k)}^{\on{loc},(I_1, \cdots, I_k)})^* \underset{j}{\boxtimes} \mathscr{S}_{U_j}^{\on{loc},(I_j)}. 
\]

Let $\mathcal{R}_j$ be the algebra of regular functions on $\hat{G}^{I_j}$, which we view as an ind-(finite-dimensional) representation of $\hat{G}^{I_j}$ through the left regular representation, and we define 
\[
\mathscr{S}_{\underset{j}{\boxtimes} \mathcal{R}_j}^{\on{loc},(I_1, \cdots, I_k)}:= \on{colim }  \mathscr{S}_{\underset{j}{\boxtimes} U_j}^{\on{loc},(I_1, \cdots, I_k)},
\]
where the colimit is over all (finite-dimensional) sub-representations of $\underset{j}{\boxtimes} \mathcal{R}_j$ which are of the form
$\underset{j}{\boxtimes} U_j$ 
with $U_j$ representations of $\hat{G}^{I_j}$.

Now let $U$ be an arbitrary (finite-dimensional) representation of $\hat{G}^I$, we define 
\[
\mathscr{S}_{U}^{\on{loc},(I_1, \cdots, I_k)}:= (\mathscr{S}_{\underset{j}{\boxtimes} \mathcal{R}_j}^{\on{loc},(I_1, \cdots, I_k)} \otimes U)^{\hat{G}^{I}},
\]
where $\hat{G}^{I}$ acts diagonally on the tensor product, and 
$(-)^{\hat{G}^{I}}$ means taking $\hat{G}^{I}$-invariants. 
The action of $\hat{G}^{I}$ on
$\underset{j}{\boxtimes} \mathcal{R}_j$
is through right multiplication on $\hat{G}^{I}$, so it is an action in the category of representations of $\hat{G}^{I}$ (recall that we view $\mathcal{R}_j$ as a left regular representation), which gives rise to an action of $\hat{G}^{I}$ on 
$\mathscr{S}_{\underset{j}{\boxtimes} \mathcal{R}_j}^{\on{loc},(I_1, \cdots, I_k)} $
by the functoriality of the geometric Satake equivalence established in  \cite[Theorem \rom{6}.11.1]{2021arXiv210213459F}. 

Now it is enough to prove the claims in the case $U=\underset{j}{\boxtimes} U_j$, which we assume from now on.
Note that 4) is automatic from the definition, using the identification 
$\kappa_{(I_1, \cdots, I_k)}^{\on{loc},(I_1, \cdots, I_k)} =(\underset{j}{\prod} \kappa_{(I_{i_j}, \cdots, I_{i_{j+1}-1})}^{\on{loc},(I_{i_j}, \cdots, I_{i_{j+1}-1})} )\circ \kappa^{\on{loc},(I_1, \cdots, I_k)}_{(J_1, \cdots, J_l)}$. 

By the proof of \cite[Proposition \rom{6}.9.4]{2021arXiv210213459F}, 
$(\pi^{\on{loc},(I_1, \cdots, I_k)})_!\mathscr{S}_U^{\on{loc},(I_1, \cdots, I_k)}$
is the fusion product of $\mathscr{S}_{U_j}^{\on{loc},(I_j)}$, then \cite[Proposition \rom{6}.10.3]{2021arXiv210213459F}  tells us that the corresponding representation is $\underset{j}{\boxtimes} U_j =U$. In other words, 
\[
(\pi^{\on{loc},(I_1, \cdots, I_k)})_!\mathscr{S}_U^{\on{loc},(I_1, \cdots, I_k)} \cong 
\mathscr{S}_U^{\on{loc},(I)}, 
\]
which proves 3). 

For 1), we observe that 
\[
(\kappa_{(I_1, \cdots, I_k)}^{\on{loc},(I_1, \cdots, I_k)})^{-1} \underset{j}{\prod} \on{Hecke}_{\mu_{\bullet}}^{\on{loc},(I_j)} = \on{Hecke}_{\mu_{\bullet}}^{\on{loc},(I_1, \cdots, I_k)}.
\]
It follows from the proof of \cite[Theorem \rom{6}.11.1]{2021arXiv210213459F}  that $\mathscr{S}_U^{\on{loc},(\{1\})}$
is the intersection sheaf of $\on{Hecke}_{\mu}^{\on{loc},(\{1\})}$,
where $\mu$ is the highest weight of $U$. In particular, $\mathscr{S}_U^{\on{loc},(\{1\})}$ is supported on $\on{Hecke}_{\mu}^{\on{loc},(\{1\})}$, so the claim follows from our observation. 

To prove 2), we first prove that it holds for the trivial partition $(J)$, namely 
\begin{equation} \label{1asm,fdwe}
i_{\zeta,*}^{\on{loc}}p_{\zeta}^{\on{loc},*} \mathscr{S}_U^{\on{loc},(I)} \cong
\mathscr{S}_{\Delta_{\zeta}^*U}^{\on{loc},(J)}.
\end{equation}
We observe that both sides are natural with respect to compositions of $\zeta$. More precisely, 
let 
$\zeta_1: I \longrightarrow J$ 
and 
$\zeta_2: J \longrightarrow K $ 
be two maps between finite sets, then 
we see immediately from the definition that 
\[
i_{\zeta_2 \circ \zeta_1}^{\on{loc}} =
i^{\on{loc}}_{\zeta_2} \circ (i^{\on{loc}}_{\zeta_1} \times_{\on{Spd}(\Breve{\mathbb{Z}}_p)^J, \zeta_2} \on{Spd}(\Breve{\mathbb{Z}}_p)^K).
\]

We have the following lemma for the naturality of the left hand side of (\ref{1asm,fdwe}) with respect to $\zeta$.

\begin{lemma}
We have a canonical identification
\[
i_{\zeta_2\circ \zeta_1,*}^{\on{loc}}p_{\zeta_2\circ \zeta_1}^{\on{loc},*} \mathscr{S}_U^{\on{loc},(I)}
\cong 
i^{\on{loc}}_{\zeta_2,*}
p_{\zeta_2}^{\on{loc},*}
i^{\on{loc}}_{\zeta_1,*} 
p_{\zeta_1}^{\on{loc},*} \mathscr{S}_U^{\on{loc},(I)}. 
\]
\end{lemma}

\begin{proof}

We first note that the projection  
\[
p_{\zeta_2 \circ \zeta_1}^{\on{loc}}:\on{Hecke}^{\on{loc},(I)} \times_{\on{Spd}(\Breve{\mathbb{Z}}_p)^I, \zeta_2 \circ \zeta_1} \on{Spd}(\Breve{\mathbb{Z}}_p)^K \rightarrow  \on{Hecke}^{\on{loc},(I)}
\]
 factorizes through 
\[
\on{Hecke}^{\on{loc},(I)} \times_{\on{Spd}(\Breve{\mathbb{Z}}_p)^I, \zeta_2 \circ \zeta_1} \on{Spd}(\Breve{\mathbb{Z}}_p)^K
\overset{q}{\longrightarrow}
\on{Hecke}^{\on{loc},(I)} \times_{\on{Spd}(\Breve{\mathbb{Z}}_p)^I, \zeta_1} \on{Spd}(\Breve{\mathbb{Z}}_p)^J
\overset{p_{\zeta_1}^{\on{loc}}}{\longrightarrow} \on{Hecke}^{\on{loc},(I)},
\]
where
\[
q:
\on{Hecke}^{\on{loc},(I)} \times_{\on{Spd}(\Breve{\mathbb{Z}}_p)^I, \zeta_1} \on{Spd}(\Breve{\mathbb{Z}}_p)^J 
\times_{\on{Spd}(\Breve{\mathbb{Z}}_p)^J, \zeta_2} \on{Spd}(\Breve{\mathbb{Z}}_p)^K
\longrightarrow 
\on{Hecke}^{\on{loc},(I)} \times_{\on{Spd}(\Breve{\mathbb{Z}}_p)^I, \zeta_1} \on{Spd}(\Breve{\mathbb{Z}}_p)^J 
\]
is the projection onto the first factor. 

We have a commutative diagram 
\[
\begin{tikzcd}
\on{Hecke}^{\on{loc},(I)} \times_{\on{Spd}(\Breve{\mathbb{Z}}_p)^I, \zeta_2 \circ \zeta_1} \on{Spd}(\Breve{\mathbb{Z}}_p)^K
\arrow[d,"i^{\on{loc}}_{\zeta_1} \times_{\on{Spd}(\Breve{\mathbb{Z}}_p)^J, \zeta_2} \on{Spd}(\Breve{\mathbb{Z}}_p)^K"]
\arrow[r,"q"]
&
\on{Hecke}^{\on{loc},(I)} \times_{\on{Spd}(\Breve{\mathbb{Z}}_p)^I, \zeta_1} \on{Spd}(\Breve{\mathbb{Z}}_p)^J 
\arrow[d,"i^{\on{loc}}_{\zeta_1}"]
\arrow[r,"p_{\zeta_1}^{\on{loc}}"]
& 
\on{Hecke}^{\on{loc},(I)}
\\
\on{Hecke}^{\on{loc},(J)} \times_{\on{Spd}(\Breve{\mathbb{Z}}_p)^J, \zeta_2} \on{Spd}(\Breve{\mathbb{Z}}_p)^K
\arrow[d,"i^{\on{loc}}_{\zeta_2}"]
\arrow[r,"p_{\zeta_2}^{\on{loc}}"]
&
\on{Hecke}^{\on{loc},(J)}
&
\\
\on{Hecke}^{\on{loc},(K)}
& &
\end{tikzcd}
\]
with the middle square being Cartesian. 
Then we have
\[
i_{\zeta_2\circ \zeta_1,*}^{\on{loc}}p_{\zeta_2\circ \zeta_1}^{\on{loc},*} \mathscr{S}_U^{\on{loc},(I)}
\cong
i^{\on{loc}}_{\zeta_2,*}
(i^{\on{loc}}_{\zeta_1} \times_{\on{Spd}(\Breve{\mathbb{Z}}_p)^J, \zeta_2} \on{Spd}(\Breve{\mathbb{Z}}_p)^K)_* q^*p_{\zeta_1}^{\on{loc},*} \mathscr{S}_U^{\on{loc},(I)} 
\cong 
i^{\on{loc}}_{\zeta_2,*}
p_{\zeta_2}^{\on{loc},*}
i^{\on{loc}}_{\zeta_1,*} 
p_{\zeta_1}^{\on{loc},*} \mathscr{S}_U^{\on{loc},(I)},
\]
where the last identification is the base change isomorphism. 
\end{proof}

Therefore, if we have established the identification (\ref{1asm,fdwe}) for both $\zeta_1$ and $ \zeta_2$, then 
\[
i_{\zeta_2\circ \zeta_1,*}^{\on{loc}}p_{\zeta_2\circ \zeta_1}^{\on{loc},*} \mathscr{S}_U^{\on{loc},(I)}
\cong
i^{\on{loc}}_{\zeta_2,*}
p_{\zeta_2}^{\on{loc},*}
i^{\on{loc}}_{\zeta_1,*} 
p_{\zeta_1}^{\on{loc},*} \mathscr{S}_U^{\on{loc},(I)}
\cong
i^{\on{loc}}_{\zeta_2,*}
p_{\zeta_2}^{\on{loc},*}
\mathscr{S}_{\Delta_{ \zeta_1}^*U}^{\on{loc},(J)} 
\cong 
\mathscr{S}_{\Delta_{ \zeta_2}^*\Delta_{ \zeta_1}^*U}^{\on{loc},(J)} 
\cong 
\mathscr{S}_{\Delta_{\zeta_2 \circ  \zeta_1}^*U}^{\on{loc},(K)}, 
\]
proving  (\ref{1asm,fdwe}) for 
$\zeta_2 \circ \zeta_1$.

We know that $\zeta$ can be written as a composition of simple degenerations and injections, i.e. a simple degeneration is a surjective map collapsing only two elements, and a simple injection is an injective map adding only one element, and we have seen that it is enough to prove (\ref{1asm,fdwe}) for these two cases. Moreover, they can be written as the disjoint union of (identity map on) a finite set with either $\{1,2\} \twoheadrightarrow \{1 \}$
or $\emptyset \hookrightarrow \{1\}$.  Using \cite[Proposition \rom{6}.10.3]{2021arXiv210213459F}, we can further reduce to the case that $\zeta$ is either $\{1,2\} \twoheadrightarrow \{1 \}$
or $\emptyset \hookrightarrow \{1\}$. 

We check $\{1,2\} \twoheadrightarrow \{1 \}$ first. In this case, $i_{\zeta}^{\on{loc}}$ is an isomorphism and $p_{\zeta}^{\on{loc}}$ is the map 
\[
p^{\on{loc}}_{\zeta}: \on{Hecke}^{\on{loc},(\{1\})} \hookrightarrow \on{Hecke}^{\on{loc},(\{1,2\})}
\]
identifying $\on{Hecke}^{\on{loc},(\{1\})}$ with the substack of $\on{Hecke}^{\on{loc},(\{1,2\})}$ where the two legs are the same. We can assume that $U= U_1 \boxtimes U_2$ is decomposable, then there is a canonical identification
\[
p_{\zeta}^{\on{loc},*}\mathscr{S}_U^{\on{loc},(\{1,2\})} \cong \mathscr{S}^{\on{loc},(\{1\})}_{\Delta_{\zeta}^* U}
\]
where $\Delta_{\zeta}:  \hat{G} \hookrightarrow \hat{G}^2$ 
is the diagonal map and $U$ is a representation of $\hat{G}^2$. This is because $\Delta_{\zeta}^* U = U_1 \otimes U_2$ and 
$p_{\zeta}^*\mathscr{S}_U^{\on{loc},(\{1,2\})}$ is the convolution product of $\mathscr{S}_{U_1}^{\on{loc},(\{1\})}$
and $\mathscr{S}_{U_2}^{\on{loc},(\{2\})}$ (using $\mathscr{S}_U^{\on{loc},(\{1,2\})}$ is the fusion product of $\mathscr{S}_{U_1}^{\on{loc},(\{1\})}$
and $\mathscr{S}_{U_2}^{\on{loc},(\{2\})}$), see the paragraph following the proof of \cite[Proposition \rom{6}.9.4]{2021arXiv210213459F}. Then the identification follows from the geometric Satake (\cite[Theorem \rom{6}.11.1]{2021arXiv210213459F}). 

Next, we assume that $\zeta$ is  $\emptyset \hookrightarrow \{1\}$, then $p_{\zeta}^{\on{loc}}$ is 
\[
p^{\on{loc}}_{\zeta}: 
\on{Spd}(\Breve{\mathbb{Z}}_p) \rightarrow \ast,
\]
the unique map to the final object, 
and 
\[
i^{\on{loc}}_{\zeta}: \on{Spd}(\Breve{\mathbb{Z}}_p) \hookrightarrow \on{Hecke}^{\on{loc},(\{1\})}
\]
is the section of the structure map corresponding to the trivial modification, i.e. the modification is an isomorphism. Then 
\[
i_{\zeta,*}^{\on{loc}}p^{\on{loc},*}_{\zeta}\mathscr{S}_{1}^{\on{loc},(\emptyset)} \cong 
\mathscr{S}_1^{\on{loc},(\{1\})}
\]
where $1$ is the trivial representation (of either $\hat{G}$ or the trivial group $\hat{G}^{\emptyset}$). This follows from $\mathscr{S}_{1}^{\on{loc},(\emptyset)}$
being the constant local system, and $\mathscr{S}_1^{\on{loc},(\{1\})}$ being the constant local system supported on $\on{Hecke}^{\on{loc},(\{1\})}_0$ (by 1)). The general $U$ are direct sums of $1$, so the identification follows. 

Now for a general partition $(J_1, \cdots, J_k)$, we have a diagram with Cartesian squares
\[
\begin{tikzcd}
\on{Hecke}^{\on{loc},(I_1, \cdots, I_k)}
\arrow[d,"\kappa^{\on{loc},(I_1, \cdots, I_k)}_{(I_1, \cdots, I_k)}"]
&
\on{Hecke}^{\on{loc},(I_1, \cdots, I_k)} \times_{\on{Spd}(\Breve{\mathbb{Z}}_p)^I, \zeta} \on{Spd}(\Breve{\mathbb{Z}}_p)^J \arrow[r,hook, "i^{\on{loc}}_{\zeta}"] \arrow[d,"\kappa^{\on{loc},(I_1, \cdots, I_k)}_{(I_1, \cdots, I_k)}\times \on{id}"]
\arrow[l,"p_{\zeta}^{\on{loc}}"]
&
\on{Hecke}^{\on{loc},(J_1, \cdots, J_k)}
\arrow[d,"\kappa^{\on{loc},(J_1, \cdots, J_k)}_{(J_1, \cdots, J_k)}"]
\\
\underset{j}{\prod} \on{Hecke}^{\on{loc},(I_{j})}
&
\underset{j}{\prod} \on{Hecke}^{\on{loc},(I_{j})}\times_{\on{Spd}(\Breve{\mathbb{Z}}_p)^{I_j}, \zeta} \on{Spd}(\Breve{\mathbb{Z}}_p)^{J_j}
\arrow[r,hook, "\prod i_{\zeta}^{\on{loc}}"]
\arrow[l, "\prod p_{\zeta}^{\on{loc}}"]
 &
 \underset{j}{\prod} \on{Hecke}^{\on{loc},(J_{j})}
\end{tikzcd}
\]
and we can pullback the identification for trivial partition to obtain the desired one. 

For the comparison with Witt vector Hecke stacks. We note that when $I=\{1\}$, it follows from the paragraph before \cite[Proposition \rom{6}.7.5]{2017arXiv170907343S}  and the proof of $loc.cit.$ that under the inclusion 
$\on{Perv}(\on{Hecke}_{\mu,s}^{\on{loc},{(\{1\})}},\Lambda) \subset D_{\text{ét}}(Gr_{\mu}^{W,\diamond\diamond}, \Lambda)$ with $\Lambda = \mathcal{O}_{\mathbb{Q}_\ell(\sqrt{q})}$ (recall that $\on{Hecke}_{\mu,s}^{\on{loc},{(\{1\})}}=L^+G \setminus Gr_{\mu}^{W,\diamond\diamond}$), 
\[
c_X^{*} IC_{\mu} =\mathscr{S}_U^{\on{loc},(\{1\})} 
\]
where $X=Gr_{\mu}^W$ is the Witt vector affine Grassmannian. Now we work with the general $IC_{\mu_{\bullet
}}$, recall that   the construction of $IC_{\mu_{\bullet}}$ is by taking the twisted product of $IC_{\mu_i}$ on $Gr_{\mu_{\bullet}}=\overset{\sim}{\boxtimes} \ Gr_{\mu_i}$ ,
see \cite[Section 3.4.1]{2017arXiv170705700X}. By passing to Hecke stacks, this corresponds precisely to 
$( \kappa_{(\{1\}, \cdots, \{n\}}^{\on{loc},(\{1\}, \cdots, \{n\})})^* \underset{j}{\boxtimes} \mathscr{S}_{U_j}^{\on{loc},(\{1\})}$,
which is the definition of $\mathscr{S}_U^{\on{loc},(I_1,\cdots,I_k)}$.

\end{proof}

We also have the geometric Satake for global Hecke stacks.

\begin{theorem} \label{gsatake}
(Fargues--Scholze) Let $I $ be a finite set,  and $\mu_{\bullet}$ be a collection of  dominant geometric cocharacters of $G$ indexed by $I$.  Let $\{I_1, \cdots, I_k\}$ be a partition of $I$. There is a natural functor 
\[
U \longmapsto \mathscr{S}_U^{(I_1, \cdots, I_k)}
\]
from the category of representations of $\hat{G}^I$, the product of the dual group of $G$ (with $\overline{\mathbb{Q}}_\ell$-coefficient), to the category  $D_{\text{ét}}^b(\on{Hecke}^{(I_1, \cdots, I_k)}, \overline{\mathbb{Q}}_\ell)$. It has the following properties.

1) If $U$ is irreducible with highest weight $\mu_{\bullet}$, then $\mathscr{S}_U^{(I_1, \cdots, I_k)}$ is supported on $\on{Hecke}_{\mu_{\bullet}}^{(I_1, \cdots, I_k)}$.

2) Let 
$\zeta: I \rightarrow J$ 
be a map of finite sets, $(J_1, \cdots, J_k)$ be a partition of $J$, and $I_i = \zeta^{-1}J_i$,  then there exists a canonical isomorphism
\[
i_{\zeta,*}p_{\zeta}^* \mathscr{S}_U^{(I_1, \cdots, I_k)}
\cong \mathscr{S}_{\Delta_{\zeta}^*U}^{(J_1, \cdots, J_k)},
\]
where 
\[
i_{\zeta}: 
\on{Hecke}^{(I_1, \cdots, I_k)} \times_{\on{Spd}(\Breve{\mathbb{Z}}_p)^I, \zeta} \on{Spd}(\Breve{\mathbb{Z}}_p)^J \hookrightarrow
\on{Hecke}^{(J_1, \cdots, J_k)}
\]
is defined in (\ref{eiurwdncmn}),
\[
p_{\zeta}:\on{Hecke}^{(I_1, \cdots, I_k)} \times_{\on{Spd}(\Breve{\mathbb{Z}}_p)^I, \zeta} \on{Spd}(\Breve{\mathbb{Z}}_p)^J \rightarrow  \on{Hecke}^{(I_1, \cdots, I_k)}
\]
is the projection onto the first factor,
and
$\Delta_{\zeta}: \hat{G}^J \rightarrow \hat{G}^I$
is the map sending 
$(g_j)_{j\in J}$ to $(g_{\zeta(i)})_{i\in I}$.

3) 
There is a canonical isomorphism
\[
(\pi^{(I_1, \cdots, I_k)})_!  \mathscr{S}_U^{(I_1, \cdots, I_k)} \cong \mathscr{S}_U^{(I)},
\]
where 
\[
\pi^{(I_1, \cdots, I_k)} : \on{Hecke}^{(I_1, \cdots, I_k)} \longrightarrow \on{Hecke}^{(I)}
\]
is the convolution map defined in (\ref{7294389823}). 

4) Let $(J_1, \cdots, J_l)$ be a partition of $I$ such that $J_j = I_{i_j} \cup \cdots \cup I_{i_{j+1}-1}$ for some partition $1=i_1 < i_2 < \cdots < i_{l+1}= k+1$ of  $k+1$, 
then we have a canonical isomorphism
\[
(\kappa^{(I_1, \cdots, I_k)}_{(J_1, \cdots, J_l)})^* \underset{j}{\boxtimes} \mathscr{S}_{U_{j}}^{(I_{i_j}, \cdots, I_{i_{j+1}-1})} \cong
\mathscr{S}_{\boxtimes U_j}^{(I_1, \cdots, I_k)},
\]
where 
\[
\kappa^{(I_1, \cdots, I_k)}_{(J_1, \cdots, J_l)}: \on{Hecke}^{(I_1, \cdots, I_k)} \longrightarrow \underset{j}{\prod} \on{Hecke}^{(I_{i_j}, \cdots, I_{i_{j+1}-1})}
\]
is defined in (\ref{24939923-043-204}), and $U_j$ is a representation of $\hat{G}^{J_j}$.

5) Let $U$ be irreducible with highest weight $\mu_{\bullet}$, then the pullback of 
$\mathscr{S}_U^{(\{1\}, \cdots, \{n\})}$ 
restricted on $\on{Hecke}_{\mu_{\bullet}, s}^{(\{1\}, \cdots, \{n\})}$ 
through the canonical map
\[
\on{Hecke}^W_{\mu_{\bullet}} \longrightarrow 
\on{Hecke}_{\mu_{\bullet}, s}^{(\{1\}, \cdots, \{n\})}
\]
is identified with the analytification of $A_{\mu_{\bullet}}$ as defined in
theorem \ref{ioiioewqw} (5). 

 More precisely, as in appendix \ref{serepoir}, we have the complex of sheaves $a_X^*c_X^*A_{\mu_{\bullet}}$  on  $X=\on{Hecke}^{\on{loc}(m)}_{\mu_{\bullet}}$, which can be further viewed as a complex of sheaves  on $\on{Hecke}^W_{\mu_{\bullet}}$ through the identification in  appendix \ref{qrerewrewrf}, and this is canonically identified with the pullback of $\mathscr{S}_U^{(\{1\}, \cdots, \{n\})}$ 
along 
\[
\on{Hecke}^W_{\mu_{\bullet}} \longrightarrow 
\on{Hecke}_{\mu_{\bullet}, s}^{(\{1\}, \cdots, \{n\})}. 
\]

6) 
There is a natural restriction map 
\[
\on{Hecke}^{(I_1, \cdots, I_k)} \longrightarrow \on{Hecke}^{\on{loc},(I_1, \cdots, I_k)},
\]
along which the pullback of 
$\mathscr{S}_U^{\on{loc},(I_1,\cdots,I_k)}$
is identified with $\mathscr{S}_U^{(I_1,\cdots,I_k)}$.  Moreover, the isomorphisms in 1) to 5) are compatible under this restriction map with the corresponding isomorphisms in theorem \ref{ioiioewqw} for local Hecke stacks. 
\end{theorem}

\begin{remark}
We can state  3) for more general partitions on the target, at the cost of further complicating the notation. For example, we can consider the partial convolution map 
\[
\on{Hecke}^{(\{1\},\{2\},\{3\})} \longrightarrow \on{Hecke}^{(\{1,2\},\{3\})}
\]
sending 
\[
\begin{tikzcd}
\mathcal{P}_1  \arrow[r,dashed, "\varphi_1"] &
\mathcal{P}_2 \arrow[r,dashed, "\varphi_2"] &
\mathcal{P}_{3}
\arrow[r,dashed, "\varphi_3"] &
\mathcal{P}_{4}
\end{tikzcd}
\]
to 
\[
\begin{tikzcd}
\mathcal{P}_1  \arrow[r,dashed, "\varphi_2  \circ  \varphi_1"] &
\mathcal{P}_{3}
\arrow[r,dashed, "\varphi_3"] &
\mathcal{P}_{4}.
\end{tikzcd}
\]
The special case as stated above suffices for our purpose.  
\end{remark}

\begin{remark}
We can refine the theorem using Langlands $L$-groups, see \cite[Theorem VI.0.2]{2021arXiv210213459F}. More precisely, let $\on{Hecke}_{ \on{Spd}(\mathbb{Z}_p)^I}^{(I_1,\cdots,I_k)}$ be the Hecke stack over  $\on{Spd}(\mathbb{Z}_p)^I$, see remark \ref{ruieuivcdeieurie}. Then there is a functor from representations $U$ of the L-group $(^LG)^I$ to complexes of sheaves $\mathscr{S}_{U}^{(I_1,\cdots,I_k)}$ on $\on{Hecke}_{ \on{Spd}(\mathbb{Z}_p)^I}^{(I_1,\cdots,I_k)}$ with similar properties as in the theorem, which refines the functor in the theorem. 
\end{remark}

\begin{proof}
We define $\mathscr{S}_U^{(I_1,\cdots,I_k)}$ as the pullback of $\mathscr{S}_U^{\on{loc},(I_1,\cdots,I_k)}$
along the canonical map
\[
 \on{Hecke}^{(I_1, \cdots, I_k)} \longrightarrow \on{Hecke}^{\on{loc},(I_1, \cdots, I_k)} 
\]
from global Hecke stacks to local Hecke stacks, then (1), (2), (3), (4), and (6) follow immediately from the local version in theorem \ref{ioiioewqw}. 

For (5), we note that the composition of the canonical maps
\[
\on{Hecke}^W_{\mu_{\bullet}} \longrightarrow 
\on{Hecke}_{\mu_{\bullet}, s}^{(\{1\}, \cdots, \{n\})}
\overset{r}{\longrightarrow}
\on{Hecke}_{\mu_{\bullet}, s}^{\on{loc}, (\{1\}, \cdots, \{n\})} 
\]
is exactly the comparison map
\[
a_X: 
\on{Hecke}^{\on{loc}(\infty),\diamond}_{\mu_{\bullet}}
\longrightarrow
\on{Hecke}^{\on{loc}(\infty),\diamond\diamond}_{\mu_{\bullet}}
\]
in appendix \ref{serepoir} with $X=\on{Hecke}^{\on{loc}(\infty)}_{\mu_{\bullet}}$, and the claim follows readily from theorem \ref{ioiioewqw} (5) and the identifications in appendix \ref{qrerewrewrf}.

\end{proof}

Now we study morhpisms between the Satake sheaves  $\mathscr{S}_U^{(I_1,\cdots,I_k)}$. We aim to interpret the morphisms as cohomological correspondences, similar to \cite[Corollary 3.4.4]{2017arXiv170705700X}. The possibility of having multiple legs makes the situation more complicated. 

Let $U$ be a representation  of $\hat{G}^I$, we denote
\[
\on{Hecke}_U^{(I_1, \cdots, I_k)} := \underset{\mu_{\bullet}}{\cup} \on{Hecke}_{\mu_{\bullet}}^{(I_1, \cdots, I_k)},
\]
where the index $\mu_{\bullet}$ ranges over the highest weights of irreducible summands of $U$. Similarly for $\on{Hecke}_{U|U'}^{(I_1, \cdots, I_k)|(J_1, \cdots, J_l), 0}$.

\begin{proposition} \label{satcorr}
Let $I, J, K$ be finite sets,  and let $\{I_1, \cdots, I_k\}$ (resp. $\{J_1, \cdots, J_l\}$) be a partition of $I$  (resp.  $J$).  Let $\zeta: I \longrightarrow K$ and $\zeta': J \longrightarrow K$ be two maps. 

Let $U$ (resp. $U'$) be a representation of $\hat{G}^I$ (resp. $\hat{G}^J$), then there is a natural map
\[
\on{Hom}_{\hat{G}^K}(\Delta_{\zeta}^* U,  \Delta_{\zeta'}^* U') \longrightarrow  \on{Corr}_{
X
}
(p_{\zeta}^*\mathscr{S}_{U}^{(I_1, \cdots, I_k)}, p_{\zeta'}^* \mathscr{S}_{U'}^{(J_1, \cdots, J_l)})
\]
where $X$ is the space
\[
X:=
\on{Hecke}_{U|U'}^{(I_1, \cdots, I_k)|(J_1, \cdots, J_l), 0} \times_{\on{Spd}(\Breve{\mathbb{Z}}_p)^{I} \times  \on{Spd}(\Breve{\mathbb{Z}}_p)^{J}, \zeta \times \zeta'} \on{Spd}(\Breve{\mathbb{Z}}_p)^{K} ,
\]
and the right hand side denotes the set of cohomological correspondences from
\[
(\on{Hecke}_U^{(I_1, \cdots, I_k)} \times_{\on{Spd}(\Breve{\mathbb{Z}}_p)^{I}, \zeta} \on{Spd}(\Breve{\mathbb{Z}}_p)^{K} , p_{\zeta}^*\mathscr{S}_{U}^{(I_1, \cdots, I_k)})
\]
to 
\[
(\on{Hecke}_{U'}^{(J_1, \cdots, J_l)} \times_{\on{Spd}(\Breve{\mathbb{Z}}_p)^{J}, \zeta'} \on{Spd}(\Breve{\mathbb{Z}}_p)^{K} , p_{\zeta'}^*\mathscr{S}_{U'}^{(J_1, \cdots, J_l)})
\]
supported on 
$X$. 
\end{proposition}

Let us first explain where the correspondence 
$X$
comes from. The idea is that 
we want to  complete the correspondence diagram
\[
\begin{tikzcd} 
& \on{Hecke}_{U|U'}^{(I_1, \cdots, I_k)|(J_1, \cdots, J_l), 0}
\arrow[dr, ""] \arrow[dl,""'] & 
\\
\on{Hecke}^{(I_1, \cdots, I_k)}_U & & \on{Hecke}_{U'}^{(J_1, \cdots, J_l)}
\end{tikzcd}
\]
to a cartesian square. 

Observe that $\on{Hecke}_{U|U'}^{(I_1, \cdots, I_k)|(J_1, \cdots, J_l), 0} $
lies over 
$\on{Spd}(\Breve{\mathbb{Z}}_p)^{I} \times  \on{Spd}(\Breve{\mathbb{Z}}_p)^{J}$, 
and we want to use $\zeta$ and $\zeta'$ to pass to the same labeling set $K$, namely we base change along 
\[
\zeta \times \zeta': 
\on{Spd}(\Breve{\mathbb{Z}}_p)^{K} 
\longrightarrow
\on{Spd}(\Breve{\mathbb{Z}}_p)^{I} \times  \on{Spd}(\Breve{\mathbb{Z}}_p)^{J}
\]
to obtain the diagram
\[
\begin{tikzcd} 
X
\arrow[d, "p_1"] \arrow[r,"p_2"]  
& 
\on{Hecke}_{U'}^{(J_1, \cdots, J_l)} \times_{\on{Spd}(\Breve{\mathbb{Z}}_p)^{J}, \zeta'} \on{Spd}(\Breve{\mathbb{Z}}_p)^{K}
\\
\on{Hecke}^{(I_1, \cdots, I_k)}_{U} \times_{\on{Spd}(\Breve{\mathbb{Z}}_p)^{I}, \zeta} \on{Spd}(\Breve{\mathbb{Z}}_p)^{K}. & 
\end{tikzcd}
\]
Now there is a natural way to complete the diagram
\[
\begin{tikzcd} 
 X
\arrow[r, "p_2"] \arrow[d,"p_1"'] 
& 
\on{Hecke}_{U'}^{(J_1, \cdots, J_l)} \times_{\on{Spd}(\Breve{\mathbb{Z}}_p)^{J}, \zeta'} \on{Spd}(\Breve{\mathbb{Z}}_p)^{K}
\arrow[d, " \pi^{(J_1, \cdots, J_l)} \times \on{id}"]
\\
\on{Hecke}^{(I_1, \cdots, I_k)}_{U} \times_{\on{Spd}(\Breve{\mathbb{Z}}_p)^{I}, \zeta} \on{Spd}(\Breve{\mathbb{Z}}_p)^{K}
\arrow[d," \pi^{(I_1, \cdots, I_k)} \times \on{id}"]
& 
\on{Hecke}_{U'}^{(J)} \times_{\on{Spd}(\Breve{\mathbb{Z}}_p)^{J}, \zeta'} \on{Spd}(\Breve{\mathbb{Z}}_p)^{K}
\arrow[d,hook,"i_{\zeta'}"]
\\
\on{Hecke}_{U}^{(I)} \times_{\on{Spd}(\Breve{\mathbb{Z}}_p)^{I}, \zeta} \on{Spd}(\Breve{\mathbb{Z}}_p)^{K}
\arrow[r,hook,"i_{\zeta}"]
&
 \on{Hecke}_{}^{(K)},
\end{tikzcd}
\]
and it is easy to see that it gives rise to a Cartesian square
\begin{equation} \label{cnbfsbndas}
\begin{tikzcd} 
 X
\arrow[r, "p_2"] \arrow[d,"p_1"] 
& 
\on{Hecke}_{U'}^{(J_1, \cdots, J_l)} \times_{\on{Spd}(\Breve{\mathbb{Z}}_p)^{J}, \zeta'} \on{Spd}(\Breve{\mathbb{Z}}_p)^{K}
\arrow[d," i_{\zeta'} \circ (\pi^{(J_1, \cdots, J_l)} \times \on{id})"]
\\
\on{Hecke}^{(I_1, \cdots, I_k)}_{U} \times_{\on{Spd}(\Breve{\mathbb{Z}}_p)^{I}, \zeta} \on{Spd}(\Breve{\mathbb{Z}}_p)^{K} 
\arrow[r,hook, "i_\zeta \circ (\pi^{(I_1, \cdots, I_k)} \times \on{id})  "]
&
 \on{Hecke}_{}^{(K)}.
\end{tikzcd}
\end{equation}

\begin{proof}
We work on the derived level, and write $p^!$ and $p_!$ instead of $Rf^!$ and $Rf_!$.
From the Cartesian square
(\ref{cnbfsbndas}),
we compute
\[
\on{Hom}(p_1^*p_{\zeta}^*\mathscr{S}_U^{(I_1, \cdots, I_k)}, p_2^!p_{\zeta'}^*\mathscr{S}_{U'}^{(J_1, \cdots, J_l)}) 
= \on{Hom}(p_{2,!}p_1^*p_{\zeta}^*\mathscr{S}_U^{(I_1, \cdots, I_k)}, p_{\zeta'}^*\mathscr{S}_{U'}^{(J_1, \cdots, J_l)}) 
\]
\begin{align*}
= & \on{Hom}(p_{2,*}p_1^*p_{\zeta}^*\mathscr{S}_U^{(I_1, \cdots, I_k)}, p_{\zeta'}^*\mathscr{S}_{U'}^{(J_1, \cdots, J_l)})
\\
= &
\on{Hom}((\pi^{(J_1, \cdots, J_l)} \times \on{id})^*i^*_{\zeta'}
i_{\zeta,*} (\pi^{(I_1, \cdots, I_k)} \times \on{id})_* 
p_{\zeta}^*
\mathscr{S}_U^{(I_1, \cdots, I_k)}, p_{\zeta'}^*\mathscr{S}_{U'}^{(J_1, \cdots, J_l)})
\\
= &
\on{Hom}(i_{\zeta,*} (\pi^{(I_1, \cdots, I_k)} \times \on{id})_* 
p_{\zeta}^*
\mathscr{S}_U^{(I_1, \cdots, I_k)},
i_{\zeta',*}(\pi^{(J_1, \cdots, J_l)} \times \on{id})_*
p_{\zeta'}^*\mathscr{S}_{U'}^{(J_1, \cdots, J_l)})
\\
= &
\on{Hom}(i_{\zeta,*}
p_{\zeta}^* \pi^{(I_1, \cdots, I_k)}_*
\mathscr{S}_U^{(I_1, \cdots, I_k)},
i_{\zeta',*}
p_{\zeta'}^*\pi^{(J_1, \cdots, J_l)}_*\mathscr{S}_{U'}^{(J_1, \cdots, J_l)})
\\
= &
\on{Hom}(i_{\zeta,*}
p_{\zeta}^* 
\mathscr{S}_U^{(I)},
i_{\zeta',*}
p_{\zeta'}^*\mathscr{S}_{U'}^{(J)})
\\
= &
\on{Hom}( 
\mathscr{S}_{\Delta_{\zeta}^*U}^{(K)}, \mathscr{S}_{\Delta_{\zeta'}^* U'}^{(K)})
\end{align*}
where the second identity holds because $p_2$ is proper, the third and the fifth identity are by proper base change, the second to last identity is 3) of theorem \ref{gsatake} and the last identity is 2) of theorem \ref{gsatake}. Now  geometric Satake (\cite[Theorem \rom{6}.11.1]{2021arXiv210213459F}) tells us that 
\[
\on{Hom}_{\hat{G}^K}(\Delta_{\zeta}^* U,\Delta_{\zeta'}^*U')\cong 
\on{Hom}( 
\mathscr{S}_{\Delta_{\zeta}^*U}^{\on{loc},(K)}, \mathscr{S}_{\Delta_{\zeta'}^* U'}^{\on{loc}, (K)}),
\]
which pulls back to morphisms in 
$\on{Hom}( 
\mathscr{S}_{\Delta_{\zeta}^*U}^{(K)}, \mathscr{S}_{\Delta_{\zeta'}^* U'}^{(K)})$, giving rise to the desired map 
\[
\on{Hom}_{\hat{G}^K}(\Delta_{\zeta}^* U,\Delta_{\zeta'}^*U')
\longrightarrow 
\on{Hom}( 
\mathscr{S}_{\Delta_{\zeta}^*U}^{(K)}, \mathscr{S}_{\Delta_{\zeta'}^* U'}^{(K)}) 
=
\on{Hom}(p_1^*p_{\zeta}^*\mathscr{S}_U^{(I_1, \cdots, I_k)}, p_2^!p_{\zeta'}^*\mathscr{S}_{U'}^{(J_1, \cdots, J_l)}). 
\]
\end{proof}

We document some special cases of the proposition, which are variants of the constructions in \cite[chapter 5]{Lafforgue2018}.

\begin{example}
Let $I=J$ with the trivial partition $(I)$, $K=\{1\}$, and $\zeta=\zeta'$ be the unique map from $I$ to $K$. Let $U$ and $U'$ be irreducible with highest weights $\mu_{\bullet}$ and $\mu'_{\bullet}$, then the proposition says that
\begin{equation} \label{xmcnmadfrkj}
\on{Hom}_{\hat{G}}(U, U') :=
\on{Hom}_{\hat{G}}(\Delta^{*}_{\zeta}U,\Delta^{*}_{\zeta} U') \longrightarrow \on{Corr}_{\on{Hecke}_{U|U'}^{(I)|(I),0} \times_{\on{Spd}(\Breve{\mathbb{Z}}_p)^I \times \on{Spd}(\Breve{\mathbb{Z}}_p)^I, \zeta \times \zeta} \on{Spd}(\Breve{\mathbb{Z}}_p) }(p_{\zeta}^*\mathscr{S}_{U}^{(I)}, p_{\zeta}^*\mathscr{S}_{U'}^{(I)}).
\end{equation}
If we further restrict the legs at the characteristic $p$ untilt, then the support  becomes 
$\on{Hecke}_{\Delta^* U |\Delta^* U',s}^{(\{1\})|(\{1\}),0}$, and this is a version of \cite[Corollary 3.4.4]{2017arXiv170705700X}. 

More precisely, the $\diamond\diamond$-analytification of $loc.cit.$ is the version of (\ref{xmcnmadfrkj}) for local Hecke stacks (with fixed legs), and (\ref{xmcnmadfrkj}) is the pullback of the local version along the global to local map
$\on{Hecke}^{(I)} \longrightarrow \on{Hecke}^{\on{loc}, (I)}$. 
\end{example}

\begin{example} \label{examplecreation}
(creation correspondences)
Let $I= \{1,2\}$ with partition $(\{1\},\{2\})$, $J= \{1,2,3\}$ with partition $(\{1,2\},\{3\})$,  $K=\{1,2\}$, $\zeta= \on{id}$, and $\zeta'= \begin{cases}
\{1,3\} \rightarrow \{1\}
\\
\{2\} \rightarrow \{2\}
\end{cases}$. Let $U$ and $V$ be representations of $\hat{G}$, and we consider
\[
X=
\on{Hecke}_{1 \boxtimes U|V^* \boxtimes U \boxtimes V}^{(\{1\},\{2\})|(\{1,2\},\{3\}), 0} \times_{\on{Spd}(\Breve{\mathbb{Z}}_p)^{2} \times  \on{Spd}(\Breve{\mathbb{Z}}_p)^{3}, \zeta \times \zeta'} \on{Spd}(\Breve{\mathbb{Z}}_p)^{2}, 
\]
which parametrizes 
\[
\begin{tikzcd}
\mathcal{P}_1' \arrow[rr,dashed,"\varphi'"] \arrow[d,equal,"{\resizebox{0.9cm}{0.1cm}{$\sim$}}" labl1] & & 
\mathcal{P}_2' \arrow[d,equal, "{\resizebox{0.9cm}{0.1cm}{$\sim$}}" labl1]
\\
\mathcal{P}_1  \arrow[r,dashed,"\varphi_1"] &
\mathcal{P}_2 \arrow[r,dashed,"\varphi_2"] &
\mathcal{P}_3.
\end{tikzcd}
\]
More precisely, it parametrizes 3 legs labeled by 1,2 and 3 with the legs labeled by 1 and 2 being the same.  $\varphi'$ is a modification at the legs labeled by 1 and 2, with the modification at the leg labeled by 1 being  trivial, and the leg labeled by 2 bounded by $U$. $\varphi_1$ is a modification at the two legs labeled by 1 and 2 with the one labeled by 1  
being bounded by $V^*$ and the other  bounded by $U$, and $\varphi_2$ is a modification at the leg labeled by 3 bounded by $V$. 

The proposition then tells us that the canonical map 
\[
\sharp \boxtimes \on{id}: 
1 \boxtimes U \longrightarrow (V^* \otimes V)  \boxtimes U 
\]
gives rise to  a cohomological correspondence 
\begin{equation*} 
\mathscr{C}_{\sharp}^{++}: (\on{Hecke}_{1 \boxtimes U}^{(\{1\},\{2\})}, \mathscr{S}_{1\boxtimes U}^{(\{1\},\{2\})})
\longrightarrow 
(\on{Hecke}_{ V^* \boxtimes U \boxtimes V}^{(\{1,2\},\{3\})} \times_{\on{Spd}(\Breve{\mathbb{Z}}_p)^{3}, \zeta'} \on{Spd}(\Breve{\mathbb{Z}}_p)^{2}, p_{\zeta'}^*
\mathscr{S}_{ V^* \boxtimes U\boxtimes V}^{(\{1,2\},\{3\})})
\end{equation*}
supported on 
$X $, where 
$\sharp: 1 \rightarrow V^* \otimes V$ is the canonical map sending 1 to the identity map in $V^* \otimes V \cong \on{Hom}(V,V)$. 

In the application below, we will take a further base change along 
\[
s \times \on{id}:
\on{Spd}(\Breve{\mathbb{Z}}_p) \longrightarrow \on{Spd}(\Breve{\mathbb{Z}}_p)^2
\]
where $s: \on{Spd}(\overline{\mathbb{F}_p}) \longrightarrow \on{Spd}(\Breve{\mathbb{Z}}_p)$
is the section corresponding to the characteristic $p$ untilt. Then we obtain
\begin{equation} \label{ttytyry}
\mathscr{C}_{\sharp}^+: (\on{Hecke}_{U}^{(\{1\})}, \mathscr{S}_{ U}^{(\{1\})})
\longrightarrow 
(\on{Hecke}_{V^* \boxtimes U \boxtimes V}^{(\{1,2\},\{3\})}|_{\on{Spd}(\overline{\mathbb{F}_p})\times \on{Spd}(\Breve{\mathbb{Z}}_p) \times \on{Spd}(\overline{\mathbb{F}_p})},
\mathscr{S}_{ V^* \boxtimes U\boxtimes V}^{(\{1,2\},\{3\})}),
\end{equation}
supported on 
\[
\on{Hecke}_{1 \boxtimes U|V^* \boxtimes U \boxtimes V}^{(\{1\},\{2\})|(\{1,2\},\{3\}), 0} \times_{\on{Spd}(\Breve{\mathbb{Z}}_p)^{2} \times  \on{Spd}(\Breve{\mathbb{Z}}_p)^{3}, (\zeta \times \zeta') \circ (s \times \on{id})} \on{Spd}(\Breve{\mathbb{Z}}_p),
\]
where we abuse the notation by not distinguishing 
$\mathscr{S}_{ V^* \boxtimes U\boxtimes V}^{(\{1,2\},\{3\})}$, as a sheaf on $\on{Hecke}_{ V^* \boxtimes U \boxtimes V}^{(\{1,2\},\{3\})}$,
with its pullback to $\on{Hecke}_{ V^* \boxtimes U \boxtimes V}^{(\{1,2\},\{3\})} \times_{\on{Spd}(\Breve{\mathbb{Z}}_p)^{3}, \zeta' \circ (s \times \on{id})} \on{Spd}(\Breve{\mathbb{Z}}_p)^{}$. We hope the context will make it clear which  support the sheaf lives on. Moreover, we have identified 
$\on{Hecke}_{1 \boxtimes U}^{(\{1\},\{2\})} \times_{\on{Spd}(\Breve{\mathbb{Z}}_p)^{2}, (s \times \on{id})} \on{Spd}(\Breve{\mathbb{Z}}_p)^{}$
with
$\on{Hecke}_{U}^{(\{1\})}$, which is clear.  
\end{example}

\begin{example} \label{examannih}
(annihilation correspondences)
Let $I= \{1,2,3\}$ with partition $(\{1\},\{2,3\})$, $J= \{1,2\}$ with partition $(\{1\},\{2\})$,  $K=\{1,2\}$, $\zeta= \begin{cases}
\{1,2\} \rightarrow \{1\}
\\
\{3\} \rightarrow \{2\}
\end{cases}$, and $\zeta'= \on{id}$. Let $U$ and $V$ be representations of $\hat{G}$, and we consider 
\[
X=
\on{Hecke}_{V \boxtimes V^* \boxtimes U | 1 \boxtimes U}^{(\{1\},\{2,3\})|(\{1\},\{2\}), 0} \times_{\on{Spd}(\Breve{\mathbb{Z}}_p)^{3} \times  \on{Spd}(\Breve{\mathbb{Z}}_p)^{2}, \zeta \times \zeta'} \on{Spd}(\Breve{\mathbb{Z}}_p)^{2}.
\]
The proposition then tells us that the canonical map 
\[
\flat \boxtimes \on{id}: 
(V \otimes V^*)  \boxtimes U 
\longrightarrow 1 \boxtimes U
\]
gives rise to  a cohomological correspondence 
\begin{equation*} 
\mathscr{C}_{\flat}^{++}: (\on{Hecke}_{V \boxtimes V^* \boxtimes U }^{(\{1\},\{2,3\})} \times_{\on{Spd}(\Breve{\mathbb{Z}}_p)^{3}, \zeta } \on{Spd}(\Breve{\mathbb{Z}}_p)^{2}, p_{\zeta}^* \mathscr{S}_{V \boxtimes V^*\boxtimes U}^{(\{1\},\{2,3\})})
\longrightarrow 
(\on{Hecke}_{ 1 \boxtimes U}^{(\{1\},\{2\})}, 
\mathscr{S}_{ 1 \boxtimes U}^{(\{1\},\{2\})})
\end{equation*}
supported on 
$X $, where 
$\flat:  V \otimes V^* \rightarrow 1$ is the evaluation map.

In the application below, we will take a further base change along 
\[
s \times \on{id}:
\on{Spd}(\Breve{\mathbb{Z}}_p) \longrightarrow \on{Spd}(\Breve{\mathbb{Z}}_p)^2
\]
where $s: \on{Spd}(\overline{\mathbb{F}_p}) \longrightarrow \on{Spd}(\Breve{\mathbb{Z}}_p)$
is the section corresponding to the characteristic $p$ untilt, so we obtain
\begin{equation} \label{qpqpppq}
\mathscr{C}_{\flat}^+ : 
( \on{Hecke}_{V \boxtimes V^* \boxtimes U }^{(\{1\},\{2,3\})} |_{\on{Spd}(\overline{\mathbb{F}_p}) \times \on{Spd}(\overline{\mathbb{F}_p})\times \on{Spd}(\Breve{\mathbb{Z}}_p)},  \mathscr{S}_{ V \boxtimes V^* \boxtimes U}^{(\{1\},\{2,3\})})
\longrightarrow
(\on{Hecke}_{U}^{(\{1\})}, \mathscr{S}_{U}^{(\{1\})})
\end{equation}
supported on 
\[
\on{Hecke}_{V \boxtimes V^* \boxtimes U | 1 \boxtimes U}^{(\{1\},\{2,3\})|(\{1\},\{2\}), 0} \times_{\on{Spd}(\Breve{\mathbb{Z}}_p)^{3} \times  \on{Spd}(\Breve{\mathbb{Z}}_p)^{2}, (\zeta \times \zeta') \circ (s \times \on{id})} \on{Spd}(\Breve{\mathbb{Z}}_p)^{},
\]
where we abuse the notation by not distinguishing 
$\mathscr{S}_{ V \boxtimes V^* \boxtimes U}^{(\{1\},\{2,3\})}$, as a sheaf on 
$\on{Hecke}_{V \boxtimes V^* \boxtimes U }^{(\{1\},\{2,3\})}$,
with its pullback to 
$\on{Hecke}_{V \boxtimes V^* \boxtimes U }^{(\{1\},\{2,3\})} \times_{\on{Spd}(\Breve{\mathbb{Z}}_p)^{3}, \zeta \circ (s \times \on{id})} \on{Spd}(\Breve{\mathbb{Z}}_p)^{}$. We hope the context will make it clear which  support the sheaf lives on. Moreover, we have identified 
$\on{Hecke}_{1 \boxtimes U}^{(\{1\},\{2\})} \times_{\on{Spd}(\Breve{\mathbb{Z}}_p)^{2}, (s \times \on{id})} \on{Spd}(\Breve{\mathbb{Z}}_p)^{}$
with
$\on{Hecke}_{U}^{(\{1\})}$, which is clear.  
\end{example}

We will make use of the following simplified version of creation correspondences.

\begin{example}
Let $I= \{1\}$ with partition $(\{1\})$, $J= \{1,2\}$ with partition $(\{1\},\{2\})$,  $K=\{1\}$, $\zeta= \on{id}$, and $\zeta'$ be the unique map from $J$ to $K$. Let  $V$ be a representation of $\hat{G}$, and we consider
\[
X=
\on{Hecke}_{1 |V^* \boxtimes  V}^{(\{1\})|(\{1\},\{2\}), 0} \times_{\on{Spd}(\Breve{\mathbb{Z}}_p)^{} \times  \on{Spd}(\Breve{\mathbb{Z}}_p)^{2}, \zeta \times \zeta'} \on{Spd}(\Breve{\mathbb{Z}}_p)^{}, 
\]
which parametrizes 
\[
\begin{tikzcd}
\mathcal{P}_1' \arrow[d,equal,"{\resizebox{0.9cm}{0.1cm}{$\sim$}}" labl1] \arrow[rr,equal,"\sim"] & & 
\mathcal{P}_2' \arrow[d,equal, "{\resizebox{0.9cm}{0.1cm}{$\sim$}}" labl1]
\\
\mathcal{P}_1  \arrow[r,dashed,"\varphi_1"] &
\mathcal{P}_2 \arrow[r,dashed,"\varphi_2"] &
\mathcal{P}_3,
\end{tikzcd}
\]
where $\varphi_1$ is a modification at an untilt bounded by $V^*$, and $\varphi_1$ is a modification at the same untilt bounded by $V$.

We note that $X$ is naturally identified with the closed sub-stack
\[
i:
\on{Hecke}_{V^*\boxtimes V}^{(\{1\},\{2\}),0} \hookrightarrow \on{Hecke}_{V^*\boxtimes V}^{(\{1\},\{2\})} 
\]
of 
$\on{Hecke}_{V^*\boxtimes V}^{(\{1\},\{2\})} $
parametrizing 
\begin{equation} \label{nmcnmsqa;'}
\begin{tikzcd}
\mathcal{P}_1  \arrow[rd,dashed,"\varphi_1"] \arrow[rr,"\sim"] & & \mathcal{P}_3
\\
& 
\mathcal{P}_2  \arrow[ru,dashed,"\varphi_2"] &
\end{tikzcd}
\end{equation}
i.e. it is the closed substack of
$\on{Hecke}_{V^*\boxtimes V}^{(\{1\},\{2\})} $
defined by the condition that $\varphi_2 \circ \varphi_1$ is an isomorphism. The correspondence diagram is then 
\begin{equation} \label{lepp}
\begin{tikzcd}
& \on{Hecke}_{V^*\boxtimes V}^{(\{1\},\{2\}),0}  \arrow[dr,hook,"i" ] \arrow[dl, "\pi"'] & 
\\
\on{Hecke}_{1}^{(\{1\})}  & & 
\on{Hecke}_{V^*\boxtimes V}^{(\{1\},\{2\})},
\end{tikzcd}
\end{equation}
where $\pi$ maps (\ref{nmcnmsqa;'})  to the isomorphism $\varphi_2 \circ \varphi_1$, and $i$ is the closed immersion. 

The proposition then tells us that the canonical map 
\[
\sharp : 
1 \longrightarrow (V^* \otimes V)  
\]
gives rise to  a cohomological correspondence 
\begin{equation*} 
\mathscr{C}_{\sharp}^{++,0}: (\on{Hecke}_{1 }^{(\{1\})}, \mathscr{S}_{1}^{(\{1\})})
\longrightarrow 
(\on{Hecke}_{ V^*  \boxtimes V}^{(\{1\},\{2\})} \times_{\on{Spd}(\Breve{\mathbb{Z}}_p)^{2}, \zeta'} \on{Spd}(\Breve{\mathbb{Z}}_p)^{}, p_{\zeta'}^*
\mathscr{S}_{ V^* \boxtimes  V}^{(\{1\},\{2\})})
\end{equation*}
supported on 
$ \on{Hecke}_{V^*\boxtimes V}^{(\{1\},\{2\}),0}  $, where 
$\sharp: 1 \rightarrow V^* \otimes V$ is the canonical map sending 1 to the identity map in $V^* \otimes V \cong \on{Hom}(V,V)$. 

In the application below, we will take a further base change along 
\[
s :
\on{Spd}(\overline{\mathbb{F}_p}) \longrightarrow \on{Spd}(\Breve{\mathbb{Z}}_p)
\]
where $s$
is the section corresponding to the characteristic $p$ untilt. Then we obtain the cohomological correspondence
\begin{equation} \label{178924789}
\mathscr{C}_{\sharp}^{+,0}: (\on{Hecke}_{1,s}^{(\{1\})}, \mathscr{S}_{ 1}^{(\{1\})})
\longrightarrow 
(\on{Hecke}_{ V^*  \boxtimes V,s \times s}^{(\{1\},\{2\})},
\mathscr{S}_{ V^* \boxtimes  V}^{(\{1\},\{2\})})
\end{equation}
supported on 
\[
\on{Hecke}_{V^*\boxtimes V,s \times s}^{(\{1\},\{2\}),0},
\]
where we abuse the notation by not distinguishing 
$\mathscr{S}_{ V^* \boxtimes  V}^{(\{1\},\{2\})}$ (resp. $\mathscr{S}_{ 1}^{(\{1\})}$), as a sheaf on $\on{Hecke}_{ V^* \boxtimes  V}^{(\{1\},\{2\})}$ (resp. $ \on{Hecke}_{1}^{(\{1\})} $),
with its pullback to $\on{Hecke}_{ V^*  \boxtimes V,s \times s}^{(\{1\},\{2\})} $ (resp. $  \on{Hecke}_{1,s}^{(\{1\})} $). We hope the context will make it clear which  support the sheaf lives on.   
\end{example}

The construction in the last example can be performed on local Hecke stacks. Note that in the local case, it is important that we restrict to legs at the characteristic $p$ untilt, as otherwise the correspondence Hecke stacks $Hekce^{\on{loc}, 0}_{1|V^*\otimes V}$ need not exist, see remark \ref{u803r4u3}.

\begin{lemma} \label{lemaamamm}

(1) The canonical map
$\sharp : 
1 \longrightarrow (V^* \otimes V) $
gives rise to a cohomological correspondence
\begin{equation} \label{sureqhrjhrh}
\mathscr{C}_{\sharp}^{\on{loc},+,0}: 
(\on{Hecke}_{1,s}^{\on{loc},(\{1\})}, \mathscr{S}_1^{\on{loc},(\{1\})}) \longrightarrow 
(\on{Hecke}_{V^*\boxtimes V,s \times s}^{\on{loc},(\{1\},\{2\})}, \mathscr{S}_{V^* \boxtimes V}^{\on{loc},(\{1\},\{2\})})
\end{equation}
supported on 
\[
\begin{tikzcd}
& \on{Hecke}_{V^*\boxtimes V,s \times s}^{\on{loc},(\{1\},\{2\}),0}  \arrow[dr,hook,"i^{\on{loc}}" ] \arrow[dl, "\pi^{\on{loc}}"'] & 
\\
\on{Hecke}_{1,s}^{\on{loc},(\{1\})}  & & 
\on{Hecke}_{V^*\boxtimes V,s \times s}^{\on{loc},(\{1\},\{2\})},
\end{tikzcd}
\]
where 
$\on{Hecke}_{V^*\boxtimes V,s \times s}^{\on{loc},(\{1\},\{2\}),0} $
is the closed substack of 
$\on{Hecke}_{V^*\boxtimes V,s\times s}^{\on{loc},(\{1\},\{2\})}$
parametrizing 
\[
\begin{tikzcd}
\mathcal{P}_1  \arrow[rd,dashed,"\varphi_1"] \arrow[rr,"\sim"] & & \mathcal{P}_3
\\
& 
\mathcal{P}_2  \arrow[ru,dashed,"\varphi_2"], &
\end{tikzcd}
\]
$\pi^{\on{loc}}$ sends the data to the isomorphism $\varphi_2 \circ \varphi_1$, and $i^{\on{loc}}$ is the closed immersion.

(2) $\mathscr{C}_{\sharp}^{+,0}$
is the pullback of $\mathscr{C}_{\sharp}^{\on{loc},+,0}$
along the  diagram
\[
\begin{tikzcd}
& \on{Hecke}_{V^*\boxtimes V,s \times s}^{(\{1\},\{2\}),0}  \arrow[dr,hook,"i" ] \arrow[dl, "\pi"']  \arrow[dd]& 
\\
\on{Hecke}_{1,s}^{(\{1\})} \arrow[dd]  & & 
\on{Hecke}_{V^*\boxtimes V,s \times s}^{(\{1\},\{2\})} \arrow[dd]
\\
& \on{Hecke}_{V^*\boxtimes V,s \times s}^{\on{loc},(\{1\},\{2\}),0}  \arrow[dr,hook,"i^{\on{loc}}" ] \arrow[dl, "\pi^{\on{loc}}"'] & 
\\
\on{Hecke}_{1,s}^{\on{loc},(\{1\})}  & & 
\on{Hecke}_{V^*\boxtimes V,s \times s}^{\on{loc},(\{1\},\{2\})}.
\end{tikzcd}
\]

(3) We have a cohomological correspondence on truncated Witt vector Hecke stacks \begin{equation} \label{sureqhrjhrh}
\mathscr{C}_{\sharp}^{\on{loc}(m),+,0}: 
(\on{Hecke}_{1}^{\on{loc}(m),(\{1\})}, \mathscr{S}_1^{\on{loc}(m),(\{1\})}) \longrightarrow 
(\on{Hecke}_{V^*\boxtimes V}^{\on{loc}(m),(\{1\},\{2\})}, \mathscr{S}_{V^* \boxtimes V}^{\on{loc}(m),(\{1\},\{2\})})
\end{equation}
supported on 
\[
\begin{tikzcd}
& \on{Hecke}_{V^*\boxtimes V}^{\on{loc}(m),(\{1\},\{2\}),0}  \arrow[dr,hook,"i^{\on{loc}(m)}" ] \arrow[dl, "\pi^{\on{loc}(m)}"'] & 
\\
\on{Hecke}_{1}^{\on{loc}(m),(\{1\})}  & & 
\on{Hecke}_{V^*\boxtimes V}^{\on{loc}(m),(\{1\},\{2\})},
\end{tikzcd}
\] 
and we have  
\[
\mathscr{C}_{\sharp}^{\on{loc},+,0}=\mathscr{C}_{\sharp}^{\on{loc}(m,n),+,0,\diamond\diamond}
\]
under the identification in appendix \ref{qrerewrewrf} between sheaves on truncated and untruncated moduli spaces of shtukas, i.e. 
$\mathscr{C}_{\sharp}^{\on{loc},+,0}$
is the $\diamond\diamond$-analytification of $\mathscr{C}_{\sharp}^{\on{loc}(m,n),+,0}$, see appendix \ref{sdkkdkdkd} for the notation. 
\end{lemma}

\begin{proof}
As in the proof of proposition \ref{satcorr}, we have the Cartesian square 
\[
\begin{tikzcd}
 \on{Hecke}_{V^*\boxtimes V,s \times s}^{\on{loc},(\{1\},\{2\}),0}  \arrow[r,hook,"i^{\on{loc}}" ] \arrow[dd, "\pi^{\on{loc}}"'] & 
 \on{Hecke}_{V^*\boxtimes V,s \times s}^{\on{loc},(\{1\},\{2\})}
\arrow[d,"\pi^{(\{1\},\{2\})}"]
\\
& 
\on{Hecke}_{V^*\boxtimes V,s \times s}^{\on{loc},(\{1,2\})}
\arrow[d,"i_{\zeta}"]
\\
\on{Hecke}_{1,s}^{\on{loc},(\{1\})} 
\arrow[r,"i"]
&
\on{Hecke}_{s}^{\on{loc},(\{1\})},
\end{tikzcd}
\]
where $\zeta: \{1,2\} \longrightarrow \{1\}$ is the unique map. We can compute 
\footnote{We have used that $f_! \mathscr{S}= f_* \mathscr{S}$ with $f=i,i_{\zeta}, i^{\on{loc}}$ and $\mathscr{S}$ perverse in the computation and the rest of the paper, this needs explanation since $f$ as such is not a closed immersion between Hecke stacks, see the footnote of \cite[page 227]{2021arXiv210213459F}. However, the corresponding map on affine Grassmannians are closed immersion, and we obtain the desired equality by fully faithfulness of the pullback functor from perverse sheaves on Hecke stacks to equivariant perverse sheaves on affine Grassmannians. The author would like to thank an anonymous referee to point this out.}
\[
\on{Hom}(\pi^{\on{loc},*} \mathscr{S}_1^{\on{loc},(\{1\})},
i^{\on{loc},!}\mathscr{S}_{V \boxtimes V^*}^{\on{loc},(\{1\},\{2\})}  ) = 
    \on{Hom}(i^{\on{loc}}_!\pi^{\on{loc},*} \mathscr{S}_1^{\on{loc},(\{1\})},
\mathscr{S}_{V \boxtimes V^*}^{\on{loc},(\{1\},\{2\})}  ) 
\]
\begin{align*}
= &
\on{Hom}(i^{\on{loc}}_*\pi^{\on{loc},*} \mathscr{S}_1^{\on{loc},(\{1\})},
\mathscr{S}_{V \boxtimes V^*}^{\on{loc},(\{1\},\{2\})}  )
\\
= &
\on{Hom}(\pi^{(\{1\},\{2\}),*}i_{\zeta}^*i_* \mathscr{S}_1^{\on{loc},(\{1\})},
\mathscr{S}_{V \boxtimes V^*}^{\on{loc},(\{1\},\{2\})}  )
\\
= &
\on{Hom}(i_* \mathscr{S}_1^{\on{loc},(\{1\})},
i_{\zeta,*}
\pi^{(\{1\},\{2\})}_*
\mathscr{S}_{V \boxtimes V^*}^{\on{loc},(\{1\},\{2\})}  )
\\
= &
\on{Hom}(i_* \mathscr{S}_1^{\on{loc},(\{1\})},
i_{\zeta,*}
\mathscr{S}_{V \boxtimes V^*}^{\on{loc},(\{1,2\})}  )
\\
= &
\on{Hom}( \mathscr{S}_1^{\on{loc},(\{1\})},
i_{\zeta,*}
\mathscr{S}_{V \otimes V^*}^{\on{loc},(\{1\})}  )
\\
= &
\on{Hom}_{\hat{G}}( 1,
V\otimes V^* )
\end{align*}
where the fourth equality is the base change isomorphism, the sixth equality is 3) of theorem \ref{ioiioewqw}, the second to last equality is 1) and 2) of theorem \ref{ioiioewqw}, and the last equality is the geometric Satake (\cite[Theorem \rom{6}.11.1]{2021arXiv210213459F}). 

The construction given here is parallel to the construction in proposition \ref{satcorr}, and (2)
follows from 6) of theorem \ref{gsatake}.

Lastly, (3) follows from 5) of theorem \ref{ioiioewqw}. Indeed, it is a special case of \cite[Corollary 3.4.4]{2017arXiv170705700X}.
\end{proof}

Let us consider the correspondence
\[
X=
\on{Hecke}_{1 \boxtimes U|V^* \boxtimes U \boxtimes V}^{(\{1\},\{2\})|(\{1,2\},\{3\}), 0} \times_{\on{Spd}(\Breve{\mathbb{Z}}_p)^{2} \times  \on{Spd}(\Breve{\mathbb{Z}}_p)^{3}, (\zeta \times \zeta') \circ (s \times \on{id})} \on{Spd}(\Breve{\mathbb{Z}}_p)
\]
in example \ref{examplecreation}, we note that if we restrict to the generic fiber, we have a canonical identification
\[
X_{\eta} 
\cong 
\on{Hecke}_{V^*\boxtimes V\boxtimes U, s \times s \times \eta}^{(\{1\},\{2\},\{3\}),0},
\]
where 
\[
\on{Hecke}_{V^*\boxtimes V\boxtimes U,s \times s \times \eta}^{(\{1\},\{2\},\{3\}),0} 
\overset{i}{\hookrightarrow} 
\on{Hecke}_{V^*\boxtimes V\boxtimes U,s \times s \times \eta}^{(\{1\},\{2\},\{3\})} 
\]
is the substack parametrizing
\begin{equation} \label{nxadfa.w}
\begin{tikzcd}
\mathcal{P}_1  \arrow[rd,dashed,"\varphi_1"] \arrow[rr,"\sim"] & & \mathcal{P}_3 \arrow[r,dashed,"\varphi_3"] &
\mathcal{P}_4
\\
& 
\mathcal{P}_2  \arrow[ru,dashed,"\varphi_2"] &
 &
\end{tikzcd}
\end{equation}
with $\varphi_1$ and $\varphi_2$ bounded by (highest weights of irreducible summand of) $V^*$ and $V$ respectively, and $\varphi_3$ bounded by $\mu$. The correspondence diagram becomes
\[
\begin{tikzcd}
&
\on{Hecke}_{V^*\boxtimes V\boxtimes U,s \times s \times \eta}^{(\{1\},\{2\},\{3\}),0} 
\arrow[dr,hook, "i"] \arrow[dl,"p"] & 
\\
\on{Hecke}_{1\boxtimes U, s \times \eta}^{(\{1\},\{2\})}
&
&
\on{Hecke}_{V^*\boxtimes V\boxtimes U, s \times s \times \eta}^{(\{1\},\{2\},\{3\})}.  
\end{tikzcd}
\]
where the morphism $p$ sends (\ref{nxadfa.w}) to the first line 
\[
\begin{tikzcd}
\mathcal{P}_1   \arrow[rr,"\sim"] & & \mathcal{P}_3 \arrow[r,dashed,"\varphi_3"] &
\mathcal{P}_4.
\end{tikzcd}
\]

\begin{lemma} \label{lemmahecke}
The restriction of the cohomological correspondence 
$\mathscr{C}_{\sharp}^+$
in example \ref{examplecreation}
to the generic fiber, namely
\[
\mathscr{C}_{\sharp,\eta}^+:
(\on{Hecke}_{U,\eta}^{(\{1\})}, \mathscr{S}_{ U}^{(\{1\})})
\longrightarrow 
(\on{Hecke}_{ V^* \boxtimes U \boxtimes V,s \times s \times \eta}^{(\{1,2\},\{3\})},
\mathscr{S}_{ V^* \boxtimes U\boxtimes V}^{(\{1,2\},\{3\})})
\]
supported on $\on{Hecke}_{ V^* \boxtimes U \boxtimes V,s \times s \times \eta}^{(\{1,2\},\{3\}),0}$, 
is canonically identified with the pullback of 
\[
\mathscr{C}_{\sharp}^{+,0} \boxtimes \on{id}: 
(\on{Hecke}_{1,s}^{(\{1\})}  \times \on{Hecke}_{U,\eta}^{(\{1\})}, 
\mathscr{S}_1^{(\{1\})} \boxtimes \mathscr{S}_U^{(\{1\})} 
)
\longrightarrow
(\on{Hecke}_{V^*\boxtimes V,s \times s}^{(\{1\},\{2\})} \times \on{Hecke}_{U,\eta}^{(\{1\})},
 \mathscr{S}_{V^*\boxtimes V}^{(\{1\},\{2\})}\boxtimes \mathscr{S}_U^{(\{1\})}
)
\]
along the diagram
\[
\begin{tikzcd}
&
\on{Hecke}_{V^*\boxtimes V\boxtimes U,s \times s \times \eta}^{(\{1\},\{2\},\{3\}),0} \arrow[dr,hook,"i" ] \arrow[dl, "p"'] \arrow[dd,"\kappa^{(\{1\},\{2\},\{3\}),0}_{(\{1,2\},\{3\})}"]
& 
\\
\on{Hecke}_{1 \boxtimes U,s \times \eta}^{(\{1\},\{2\})} \arrow[dd,"\kappa^{(\{1\},\{2\})}_{(\{1\},\{2\})}"]
& & 
\on{Hecke}_{V^*\boxtimes V\boxtimes U,s \times s \times \eta}^{(\{1\},\{2\},\{3\})} \arrow[dd,"\kappa^{(\{1\},\{2\},\{3\})}_{(\{1,2\},\{3\})}"]
\\
& \on{Hecke}_{V^* \boxtimes V, s\times s}^{(\{1\},\{2\}),0}  \times \on{Hecke}_{U,\eta}^{(\{1\})} \arrow[dr,hook,"i\times \on{id}" ] \arrow[dl, "\pi \times \on{id}"'] 
& 
\\
\on{Hecke}_{1,s}^{(\{1\})}  \times \on{Hecke}_{U,\eta}^{(\{1\})}
& & 
\on{Hecke}_{V^*\boxtimes V,s \times s}^{(\{1\},\{2\})} \times \on{Hecke}_{U,\eta}^{(\{1\})}.
\end{tikzcd}
\]
\end{lemma}

\begin{proof}
 We note that the diagram can be completed into 
\begin{equation} \label{qy82367921783498}
\begin{tikzcd}
&
\on{Hecke}_{V^*\boxtimes V\boxtimes U,s \times s \times \eta}^{(\{1\},\{2\},\{3\}),0} \arrow[dr,hook,"i" ] \arrow[dl, "p"'] \arrow[dd,"\kappa^{(\{1\},\{2\},\{3\}),0}_{(\{1,2\},\{3\})}"]
& 
\\
\on{Hecke}_{1 \boxtimes U,s \times \eta}^{(\{1\},\{2\})} \arrow[dd,"\kappa^{(\{1\},\{2\})}_{(\{1\},\{2\})}"]
& & 
\on{Hecke}_{V^*\boxtimes V\boxtimes U,s \times s \times \eta}^{(\{1\},\{2\},\{3\})} \arrow[dd,"\kappa^{(\{1\},\{2\},\{3\})}_{(\{1,2\},\{3\})}"]
\\
& \on{Hecke}_{V^* \boxtimes V, s\times s}^{(\{1\},\{2\}),0}  \times \on{Hecke}_{U,\eta}^{(\{1\})} \arrow[dr,hook,"i\times \on{id}" ] \arrow[dl, "\pi \times \on{id}"'] 
& 
\\
\on{Hecke}_{1,s}^{(\{1\})}  \times \on{Hecke}_{U,\eta}^{(\{1\})}
\arrow[dr,"i' \times \on{id}"]
& & 
\on{Hecke}_{V^*\boxtimes V,s \times s}^{(\{1\},\{2\})} \times \on{Hecke}_{U,\eta}^{(\{1\})}
\arrow[dl, "\pi^{(\{1\},\{2\})} \times \on{id}"]
\\
& 
\on{Hecke}_{V^*\boxtimes V,s \times s}^{(\{1,2\})} \times \on{Hecke}_{U,\eta}^{(\{1\})}
&.
\end{tikzcd}
\end{equation}
where $i': \on{Hecke}_{1,s}^{(\{1\})} \hookrightarrow \on{Hecke}_{V^*\otimes V,s }^{(\{1\})}  \cong 
\on{Hecke}_{V^*\boxtimes V,s \times s}^{(\{1,2\})}$
is the canonical closed immersion. We note that the lower square is Cartesian, and is also the square used in the construction of $\mathscr{C}_{\sharp}^{+,0} $ ($\boxtimes \on{id}$).

On the other hand, we have the Cartesian square
\[
\begin{tikzcd}
&
\on{Hecke}_{V^*\boxtimes V\boxtimes U,s \times s \times \eta}^{(\{1\},\{2\},\{3\}),0} \arrow[dr,hook,"i" ] \arrow[dl, "p"'] 
& 
\\
\on{Hecke}_{1 \boxtimes U,s \times \eta}^{(\{1\},\{2\})} 
\arrow[dr,hook,"i''"]
& & 
\on{Hecke}_{V^*\boxtimes V\boxtimes U,s \times s \times \eta}^{(\{1\},\{2\},\{3\})} 
\arrow[dl,"\pi^{(\{1\},\{2\},\{3\})}"]
\\
&
\on{Hecke}_{V^*\boxtimes V\boxtimes U,s \times s \times \eta}^{(\{1,2,3\})}
&
\end{tikzcd}
\]
that  is used in the construction of 
$\mathscr{C}_{\sharp}^+$, 
where 
$i'':\on{Hecke}_{1 \boxtimes U,s \times \eta}^{(\{1\},\{2\})} \hookrightarrow 
\on{Hecke}_{V^*\boxtimes V\boxtimes U,s \times s \times \eta}^{(\{1,2,3\})}
$
is the map sending 
\[
\begin{tikzcd}
\mathcal{P}_1   \arrow[r,"\sim"',"\varphi_1"] & \mathcal{P}_2 \arrow[r,dashed,"\varphi_2"] &
\mathcal{P}_3
\end{tikzcd}
\]
to the composition
\[
\begin{tikzcd}
\mathcal{P}_1    \arrow[r,dashed,"\varphi_2 \circ \varphi_1"] &
\mathcal{P}_3.
\end{tikzcd}
\]

This two squares are compatible in the sense that we have a commutative cube
\begin{equation} \label{18274918903490-23}
\begin{tikzcd}[row sep=scriptsize, column sep=scriptsize]
&
\on{Hecke}_{V^*\boxtimes V\boxtimes U,s \times s \times \eta}^{(\{1\},\{2\},\{3\}),0} \arrow[dr,hook,"i" ] \arrow[dl, "p"'] 
\arrow[d,"\kappa^{(\{1\},\{2\},\{3\}),0}_{(\{1,2\},\{3\})}"]
& 
\\
\on{Hecke}_{1 \boxtimes U,s \times \eta}^{(\{1\},\{2\})} \arrow[dd,"\kappa^{(\{1\},\{2\})}_{(\{1\},\{2\})}"']
&
\on{Hecke}_{V^* \boxtimes V, s\times s}^{(\{1\},\{2\}),0}  \times \on{Hecke}_{U,\eta}^{(\{1\})} \arrow[ddr,hook,"i\times \on{id}" near start ] \arrow[ddl,  "\pi \times \on{id}"' near start]
& 
\on{Hecke}_{V^*\boxtimes V\boxtimes U,s \times s \times \eta}^{(\{1\},\{2\},\{3\})} \arrow[dd,"\kappa^{(\{1\},\{2\},\{3\})}_{(\{1,2\},\{3\})}"]
\\
&
&
\\
\on{Hecke}_{1,s}^{(\{1\})}  \times \on{Hecke}_{U,\eta}^{(\{1\})}
\arrow[ddr,"i' \times \on{id}"']
& \on{Hecke}_{V^*\boxtimes V\boxtimes U,s \times s \times \eta}^{(\{1,2,3\})}
\arrow[dd,"f"]
\arrow[from=uul,hook,crossing over,"i''"' near end]
\arrow[from=uur,crossing over,"\pi^{(\{1\},\{2\},\{3\})}" near end]
& 
\on{Hecke}_{V^*\boxtimes V,s \times s}^{(\{1\},\{2\})} \times \on{Hecke}_{U,\eta}^{(\{1\})}
\arrow[ddl, "\pi^{(\{1\},\{2\})} \times \on{id}"]
\\
& & 
\\
& 
\on{Hecke}_{V^*\boxtimes V,s \times s}^{(\{1,2\})} \times \on{Hecke}_{U,\eta}^{(\{1\})}
&.
\end{tikzcd}
\end{equation} 
where $f$ is the separating morphism
sending
\[
\begin{tikzcd}
\mathcal{P}_1 \arrow[r,dashed,"\varphi"] 
&
\mathcal{P}_2
\end{tikzcd}
\]
to 
\[
\begin{tikzcd}
\mathcal{P}_1 \arrow[r,dashed, "\varphi_1"] 
&
\mathcal{P}_3
\end{tikzcd}
\]
and 
\[
\begin{tikzcd}
\mathcal{P}_3 \arrow[r,dashed, "\varphi_2"] 
&
\mathcal{P}_2
\end{tikzcd}
\]
such that $\varphi =\varphi_2 \circ \varphi_1$, where 
$\varphi_1$ is the modification at the characteristic $p$ untilt, and $\varphi_2$ is the modification at the characteristic 0 untilt. 
Now the lemma follows readily from  4) of theorem \ref{gsatake} and the construction of $\mathscr{C}_{\sharp}^{+,0} $ and $\mathscr{C}_{\sharp}^{+} $. 
\end{proof}

\section{The Hecke Operators} \label{rieourioeu}

In this section, we introduce the Hecke operators. We follow the same notations as above. 

There is a canonical map 
\[
\epsilon_{\mu}^{(\{1\})} : \on{Sht}_{\mu, \eta} \longrightarrow \on{Hecke}_{\mu,\eta} 
\]
that sends a shtuka $\{ \mathcal{P},\varphi_{\mathcal{P}}, S^{\sharp}\}$
to 
$\{\mathcal{P}, \on{Frob}_S^*\mathcal{P}, \varphi_{\mathcal{P}},S^{\sharp}\}$,
and we can use it to pullback the canonical sheaves in geometric Satake to sheaves on moduli spaces of shtukas. 

Let $U$ be an irreducible representation of $\hat{G}$ with highest weight $\mu$. We have a  diagram
\begin{equation}
\begin{tikzcd} \label{heckdia}
& \on{Sht}^{\nu}_{\mu|\mu,\eta} \arrow[dr, "p_2"] \arrow[dl,"p_1"'] & 
\\
\on{Sht}_{\mu,\eta} \arrow[dr,"\epsilon_{\mu}^{(\{1\})}"'] & & \on{Sht}_{\mu,\eta}
\arrow[dl,"\epsilon_{\mu}^{(\{1\})}"]
\\
& \on{Hecke}_{\mu,\eta} \arrow[d,"r"] &
\\
& \on{Hecke}_{\mu,\eta}^{\on{loc}} &
\end{tikzcd}
\end{equation}
where the last vertical map is the canonical restriction map. The square in the diagram is not commutative, but they become commutative after composing with the vertical map. Indeed, 
recall from notation \ref{nota}, the legs of shtukas in the diagram are in characteristic zero, while the modification bounded by $\nu$ takes place at characteristic $p$, hence it does not affect the local type of $\varphi_{\mathcal{P}}$, i.e. the diagram is commutative after composing with the vertical map.  

Now we have a cohomological correspondence $\mathscr{C}_{\nu}$ from $(\on{Sht}_{\mu,\eta}, \epsilon_{\mu}^{(\{1\}),*}\mathscr{S}_U^{(\{1\})})$ to itself supported on $ \on{Sht}^{\nu}_{\mu|\mu,\eta}$, namely
\begin{equation} \label{1222}
\mathscr{C}_{\nu}: 
p_1^*\epsilon_{\mu}^{(\{1\}),*}\mathscr{S}_U^{(\{1\})} \cong
p_1^*\epsilon_{\mu}^{(\{1\}),*}r^*\mathscr{S}_U^{\on{loc},(\{1\})} 
\cong
p_2^* \epsilon_{\mu}^{(\{1\}),*}r^*\mathscr{S}_U^{\on{loc},(\{1\})}  \cong
p_2^*\epsilon_{\mu}^{(\{1\}),*}\mathscr{S}_U^{(\{1\})}
\cong 
p_2^!\epsilon_{\mu}^{(\{1\}),*}\mathscr{S}_U^{(\{1\})},
\end{equation}
where we use $r^*\mathscr{S}_U^{\on{loc},(\{1\})} =\mathscr{S}_U^{(\{1\})}$ (see 6) of theorem \ref{gsatake}), and the étaleness of $p_2$ (proposition \ref{oewuriueoiiowiox}) for the last isomorphism. 

Now let $V$ be a representation of $\hat{G}$, and
\[
h_V:= \on{Sat}^{\on{cl}} ([V]) \in C_c(G(\Breve{\mathbb{Q}}_p)//\mathcal{G}(\Breve{\mathbb{Z}}_p), \overline{\mathbb{Q}}_\ell)
\]
be the function corresponding to $V$ through the classical Satake equivalence 
\[
\on{Sat}^{\on{cl}}: R(\hat{G}) \longrightarrow C_c(G(\Breve{\mathbb{Q}}_p)//\mathcal{G}(\Breve{\mathbb{Z}}_p), \overline{\mathbb{Q}}_\ell)
\]
as defined in \cite[Section 3.5.2]{2017arXiv170705700X}, where $R(\hat{G}) := K_0(\on{Rep}(\hat{G}))$ is the representation ring of $\hat{G}$, and $C_c(G(\Breve{\mathbb{Q}}_p)//\mathcal{G}(\Breve{\mathbb{Z}}_p), \overline{\mathbb{Q}}_\ell) $ is the space of compactly supported, i.e. supported on a finite subset, $\overline{\mathbb{Q}}_\ell$-valued functions on the double quotient set $\mathcal{G}(\Breve{\mathbb{Z}}_p)\setminus G(\Breve{\mathbb{Q}}_p)/\mathcal{G}(\Breve{\mathbb{Z}}_p) $. Note that we use that $G$ is unramified here, i.e. $G_{\Breve{\mathbb{Q}}_p}$ is split, so the Langlands dual group of $G_{\Breve{\mathbb{Q}}_p}$ can be reduced to the dual group $\hat{G}$. We know that   
$C_c(G(\Breve{\mathbb{Q}}_p)//\mathcal{G}(\Breve{\mathbb{Z}}_p), \overline{\mathbb{Q}}_\ell)$
has a basis indexed by dominant cocharacters $\nu$ ($\mathcal{G}$ is  reductive over $\mathbb{Z}_p$), i.e. 
$1_{\mathcal{G}(\Breve{\mathbb{Z}}_p)\nu(p)\mathcal{G}(\Breve{\mathbb{Z}}_p)}$'s 
form a basis of 
$C_c(G(\Breve{\mathbb{Q}}_p)//\mathcal{G}(\Breve{\mathbb{Z}}_p), \overline{\mathbb{Q}}_\ell)$, so $C_c(G(\Breve{\mathbb{Q}}_p)//\mathcal{G}(\Breve{\mathbb{Z}}_p), \overline{\mathbb{Q}}_\ell) = C_c(G(K)//\mathcal{G}(\mathcal{O}_K), \overline{\mathbb{Q}}_\ell)$ with $K$ a finite unramified extension of $\mathbb{Q}_p$ over which $G$ splits.
Let $\Gamma_V$ be the union of $\on{Sht}^{\nu}_{\mu|\mu,\eta}$ 
(as closed subspaces of $\on{Sht}^{}_{\mu|\mu,\eta}$)
with $\nu$ appearing in $h_V$. Then we define the Hecke operator $T_V$ as the cohomological correspondence supported on $\Gamma_V$, and is $h_V(\nu(p))$ times the cohomological correspondence (\ref{1222}) on $\on{Sht}^{\nu}_{\mu|\mu,\eta}$. More precisely, we define
\[
T_V:= \underset{h_V(\nu(p))\neq 0}{\sum} h_V(\nu(p)) \cdot \mathscr{C}_{\nu} 
\]
as a cohomological correspondence from 
$(\on{Sht}_{\mu,\eta}, \epsilon_{\mu}^*\mathscr{S}_U)$
to itself supported on 
\[
\Gamma_V:= \underset{ h_V(\nu(p))\neq 0}{\cup} \on{Sht}^{\nu}_{\mu|\mu,\eta}.
\]
Note that when $V$ is irreducible with highest weight $\delta$, then 
$h_V(\nu(p))\neq 0$ 
precisely when $\nu \leq \delta$, hence $\Gamma_V= \on{Sht}^{\nu}_{\mu | \mu,\eta}$. Since $\on{Sht}^{\nu}_{\mu | \mu,\eta} \subset \on{Sht}^{\delta}_{\mu | \mu,\eta}$ as a closed substack, we can view $\mathscr{C}_{\nu}$ as being supported on $\on{Sht}^{\delta}_{\mu | \mu,\eta}$, whence the sum in the definition of $T_V$ makes sense. In general $\Gamma_V$ is the union of $\on{Sht}^{\delta}_{\mu | \mu,\eta}$ where $\delta$ ranges over highest weights of irreducible summands of $V$. 

We observe that when $\mu$ is minuscule, $\mathscr{S}_U = \overline{\mathbb{Q}}_\ell[d]$ is the shifted constant sheaf with $d$ being the dimension of the Schubert variety with respect to $\mu$, and the cohomological correspondence $\mathscr{C}_{\nu}$ is the canonical isomorphism 
\[
p_1^*\overline{\mathbb{Q}}_\ell[d] \cong \overline{\mathbb{Q}}_\ell[d] \cong p_2^*\overline{\mathbb{Q}}_\ell[d] \cong p_2^!\overline{\mathbb{Q}}_\ell[d].
\]

We assume from now on that $V$ is a representation of $^LG$, which, by the proof of \cite[Lemma A.3]{Zhurami}, is equivalent to a representation of $\hat{G}$ that we still denote by $V$, together with an isomorphism to its Frobenius twist $^{\sigma} V$, where $^{\sigma} V$ is the representation of $\hat{G}$ obtained by twisting the action of $G$ on $V$ with the inverse Frobenius action on $\hat{G}$, see \cite[Section 3.4.5]{2017arXiv170705700X}. Then the (unramified) classical Satake equivalence 
\[
\on{Sat}^{\on{cl}}: R(^L G) \longrightarrow C_c(G(\mathbb{Q}_p)//\mathcal{G}(\mathbb{Z}_p)), \overline{\mathbb{Q}}_\ell)
\]
defined in \cite[Section 3.5.2]{2017arXiv170705700X} 
provides us with $h_V:= \on{Sat}^{\on{cl}} ([V]) \in C_c(G(\mathbb{Q}_p)//\mathcal{G}(\mathbb{Z}_p)), \overline{\mathbb{Q}}_\ell)$.

\section{The excursion operators}  \label{e888888}

We now start to define the excursion operators. We follow the same notations as in section \ref{rieourioeu}. In particular, let $U$ be an irreducible representation of $\hat{G}$ with highest weight $\mu$, and $V$ be a representation of $^LG$, viewed as a representation of $\hat{G}$ together with an isomorphism $^{\sigma}V \cong V$ with its Frobenius twist $^{\sigma}V$.

We have a canonical map 
\[
\epsilon_{\mu_{\bullet}}^{(I_1, \cdots, I_k)} : \on{Sht}_{\mu_{\bullet} }^{(I_1, \cdots, I_k)}
\longrightarrow 
\on{Hecke}_{\mu_{\bullet}}^{(I_1, \cdots, I_k)}
\]
which is simply viewing $\on{Frob}_S^*\mathcal{P}_1$ as $\mathcal{P}_{k+1}$. Similarly we have 
\[
\epsilon_{\mu_{\bullet}|\mu_{\bullet}'}^{(I_1, \cdots, I_k)| (J_1, \cdots, J_l)} :
 \on{Sht}^{(I_1, \cdots, I_k)| (J_1, \cdots, J_l),0}_{\mu_{\bullet}|\mu_{\bullet}'} 
\longrightarrow 
\on{Hecke}^{(I_1, \cdots, I_k)| (J_1, \cdots, J_l),0}_{\mu_{\bullet}|\mu_{\bullet}'},
\]
which forms a commutative diagram
\[
\begin{tikzcd}
&
 \on{Sht}^{(I_1, \cdots, I_k)| (J_1, \cdots, J_l),0}_{\mu_{\bullet}|\mu_{\bullet}'}  \arrow[dr ] \arrow[dl] \arrow[dd,"\epsilon_{\mu_{\bullet}|\mu_{\bullet}'}^{(I_1, \cdots, I_k)| (J_1, \cdots, J_l)}"]
& 
\\
\on{Sht}_{\mu_{\bullet} }^{(I_1, \cdots, I_k)} \arrow[dd,"\epsilon_{\mu_{\bullet}}^{(I)} "]
& & 
\on{Sht}_{\mu'_{\bullet}}^{(J_1, \cdots, J_l)} \arrow[dd,"\epsilon_{\mu'_{\bullet}}^{(I)}"]
\\
& \on{Hecke}^{(I_1, \cdots, I_k)| (J_1, \cdots, J_l),0}_{\mu_{\bullet}|\mu_{\bullet}'}  \arrow[dr ] \arrow[dl] 
& 
\\
 \on{Hecke}_{\mu_{\bullet}}^{(I_1, \cdots, I_k)}
& & 
 \on{Hecke}_{\mu'_{\bullet}}^{(J_1, \cdots, J_l)}.
\end{tikzcd}
\]
with the two squares Cartesian. 
 
Let $W$ be a representation  of $\hat{G}^I$, we denote
\[
\on{Sht}_W^{(I_1, \cdots, I_k)} := \underset{\mu_{\bullet}}{\cup} \on{Sht}_{\mu_{\bullet}}^{(I_1, \cdots, I_k)},
\]
where the index $\mu_{\bullet}$ ranges over the highest weights of irreducible summands of $W$. Similarly for $\on{Sht}_{W|W'}^{(I_1, \cdots, I_k)|(J_1, \cdots, J_l), 0}$, and the Witt vector version.

\subsection{The creation correspondences}

First we define the creation correspondences.

Let $\zeta: \{1\} \rightarrow \{1,2,3\} $
be the map defined by $\zeta(1)=2$, we define
\begin{equation} \label{geryuey}
\on{Sht}_{U|V^* \boxtimes U \boxtimes V, -|s \times - \times s}^{(\{1\})|(\{1,2\},\{3\}),0} \subset \on{Sht}^{(\{1\})|(\{1,2\},\{3\}), 0}_{U|V^* \boxtimes U \boxtimes V} \times_{\on{Spd}(\Breve{\mathbb{Z}}_p) \times \on{Spd}(\Breve{\mathbb{Z}}_p)^3, \zeta \times \on{id}} \on{Spd}(\Breve{\mathbb{Z}}_p)^3
\end{equation}
as the closed v-substack parametrizing modifications
\[
\begin{tikzcd}
\mathcal{P} \arrow[rr,dashed] \arrow[d,equal,"{\resizebox{0.9cm}{0.1cm}{$\sim$}}" labl1] & & 
\on{Frob}_S^*\mathcal{P} \arrow[d,equal,"{\resizebox{0.9cm}{0.1cm}{$\sim$}}" labl1]
\\
\mathcal{P}_1  \arrow[r,dashed] &
\mathcal{P}_2 \arrow[r,dashed] &
\on{Frob}_S^*\mathcal{P}_1
\end{tikzcd}
\]
with the first line in $\on{Sht}_{U}$ and the second line in 
\[
\on{Sht}_{V^* \boxtimes U \boxtimes V, s \times - \times s}^{(\{1,2\},\{3\})} :=
\on{Sht}_{V^* \boxtimes U \boxtimes V}^{(\{1,2\},\{3\})}|_{\on{Spd}(\overline{\mathbb{F}_p})\times \on{Spd}(\Breve{\mathbb{Z}}_p) \times \on{Spd}(\overline{\mathbb{F}_p})}.
\]
Moreover, the legs indexed by $U$ in both the upper and lower modifications are the same. 

Similarly, we define 
\[
\on{Hecke}^{(\{1\})|(\{1,2\},\{3\}),0}_{U| V^* \boxtimes U \boxtimes V,-|s \times - \times s} 
\subset 
 \on{Hecke}^{(\{1\})|(\{1,2\},\{3\}), 0}_{U| V^* \boxtimes U \boxtimes V} \times_{\on{Spd}(\Breve{\mathbb{Z}}_p) \times \on{Spd}(\Breve{\mathbb{Z}}_p)^3, \zeta \times \on{id}} \on{Spd}(\Breve{\mathbb{Z}}_p)^3
\]
as the v-substack parametrizing modifications
\[
\begin{tikzcd}
\mathcal{P}_1' \arrow[rr,dashed] \arrow[d,equal,"{\resizebox{0.9cm}{0.1cm}{$\sim$}}" labl1] & & 
\mathcal{P}'_2 \arrow[d,equal,"{\resizebox{0.9cm}{0.1cm}{$\sim$}}" labl1]
\\
\mathcal{P}_1  \arrow[r,dashed] &
\mathcal{P}_2 \arrow[r,dashed] &
\mathcal{P}_3
\end{tikzcd}
\]
with the first line in $\on{Hecke}_{U}^{(\{1\})}$ and the second line in 
\[
\on{Hecke}_{V^* \boxtimes U \boxtimes V,s \times - \times s}^{(\{1,2\},\{3\})}: = 
\on{Hecke}_{V^* \boxtimes U \boxtimes V}^{(\{1,2\},\{3\})}|_{\on{Spd}(\overline{\mathbb{F}_p})\times \on{Spd}(\Breve{\mathbb{Z}}_p) \times \on{Spd}(\overline{\mathbb{F}_p})}.
\]
Moreover, the legs indexed by $U$ in both the upper and lower modifications are the same.

We view 
$\on{Hecke}^{(\{1\})|(\{1,2\},\{3\}),0}_{U| V^* \boxtimes U \boxtimes V,-|s \times - \times s}$, 
$\on{Sht}_{\mu|V^* \boxtimes U \boxtimes V,s \times - \times s}^{(\{1\})|(\{1,2\},\{3\}),0}$
and 
$\on{Hecke}_{V^*\boxtimes U \boxtimes V, s \times - \times s }^{(\{1,2\},\{3\})}$
as stacks over $\on{Spd}(\Breve{\mathbb{Z}}_p)$, and
we have a commutative diagram over $\on{Spd}(\Breve{\mathbb{Z}}_p)$
\begin{equation} \label{asssseee}
\begin{tikzcd}
&
\on{Sht}_{U|V^* \boxtimes U \boxtimes V, -|s \times - \times s}^{(\{1\})|(\{1,2\},\{3\}),0}
\arrow[dr ] \arrow[dl] \arrow[dd,"\epsilon_{U| V^* \boxtimes U\boxtimes V}^{(\{1\})|(\{1,2\},\{3\})}"]
& 
\\
\on{Sht}_{U}^{(\{1\})}
\arrow[dd,"\epsilon_{U}^{(\{1\})} "]
& & 
\on{Sht}_{V^* \boxtimes U \boxtimes V,s \times - \times s}^{(\{1,2\},\{3\})}
\arrow[dd,"\epsilon_{ V^* \boxtimes U \boxtimes V}^{(\{1,2\},\{3\})}"]
\\
& \on{Hecke}^{(\{1\})|(\{1,2\},\{3\}),0}_{U| V^* \boxtimes U \boxtimes V, -|s \times - \times s}  \arrow[dr ] \arrow[dl] 
& 
\\
 \on{Hecke}_{U}^{(\{1\})}
& & 
 \on{Hecke}_{V^* \boxtimes U \boxtimes V, s \times - \times s}^{(\{1,2\},\{3\})}.
\end{tikzcd}
\end{equation}

Now since the squares in diagram (\ref{asssseee}) are Cartesian, we can pullback the cohomological correspondence 
\begin{equation*} 
\mathscr{C}_{\sharp}^+: (\on{Hecke}_{U}^{(\{1\})}, \mathscr{S}_{ U}^{(\{1\})})
\longrightarrow 
(\on{Hecke}_{ V^* \boxtimes U \boxtimes V, s \times - \times s}^{(\{1,2\},\{3\})},
\mathscr{S}_{ V^* \boxtimes U\boxtimes V}^{(\{1,2\},\{3\})})
\end{equation*}
supported on 
$\on{Hecke}^{(\{1\})|(\{1,2\},\{3\}),0}_{U|V^* \boxtimes U \boxtimes V} $
defined in example \ref{examplecreation}  (\ref{ttytyry})
to obtain the desired creation correspondence 
\begin{equation} \label{lleleeell}
\mathscr{C}_{\sharp} : (\on{Sht}_{U}^{(\{1\})}, \epsilon_{U}^{(\{1\}),*} \mathscr{S}_{U}^{(\{1\})}) \longrightarrow 
(\on{Sht}_{ V^* \boxtimes U \boxtimes V,s \times - \times s}^{(\{1,2\},\{3\})}, \epsilon_{ V^* \boxtimes U\boxtimes V}^{(\{1,2\},\{3\}),*} \mathscr{S}_{V^* \boxtimes U \boxtimes V}^{(\{1,2\},\{3\})})
\end{equation}
supported on 
$\on{Sht}_{U|V^*\boxtimes U \boxtimes V, -| s \times - \times s}^{(\{1\})|(\{1,2\},\{3\}),0}$.

\begin{remark}
The reason we use $\on{Sht}_{V^* \boxtimes U \boxtimes V, s \times - \times s}^{(\{1,2\},\{3\})}$
instead of $\on{Sht}_{ V^* \boxtimes U \boxtimes V, s \times - \times s}^{(\{1\},\{2\},\{3\})}$
is explained in the paragraph before \cite[Remark 6.1.10]{2017arXiv170705700X}, namely, we want the corresponding (integral) excursion operators to have  the same support as the Hecke operators, so we can pullback the cohomological correspondences to Shimura varieties.  
\end{remark}

\subsubsection{The generic fiber}
We now want to identify the generic fiber of the creation correspondences.

We first observe that 
$\on{Sht}_{V^*\boxtimes V\boxtimes U, s\times s \times \eta }^{(\{1\},\{2\},\{3\})}$
is identified with the generic fiber of
$\on{Sht}_{V^*\boxtimes U \boxtimes V, s \times \eta \times s }^{(\{1,2\},\{3\})}$, and similarly for $\on{Hecke}_{V^*\boxtimes V\boxtimes U, s\times s \times \eta }^{(\{1\},\{2\},\{3\})}$.

\begin{lemma} \label{1749903579012}
There are natural identifications
\[
\on{Sht}_{V^*\boxtimes U \boxtimes V, s \times \eta \times s }^{(\{1,2\},\{3\})} 
\cong
\on{Sht}_{V^*\boxtimes V\boxtimes U,s\times s\times \eta }^{(\{1\},\{2\},\{3\})}
\]
and
\[
\on{Hecke}_{V^*\boxtimes U \boxtimes V, s \times \eta \times s }^{(\{1,2\},\{3\})} 
\cong
\on{Hecke}_{V^*\boxtimes V\boxtimes U,s \times s \times \eta }^{(\{1\},\{2\},\{3\})}.
\]
\end{lemma}

\begin{proof}
When the two legs of 
$\mathcal{P}_1 \dashrightarrow  \mathcal{P}_2$ 
are different, it is the same as the data of 
\[
\mathcal{P}_1 \dashrightarrow \mathcal{P}_3 \dashrightarrow \mathcal{P}_2
\]
where the first modification takes places at the characteristic $p$ untilt, and the second over characteristic 0 untilts. Now as the two legs of 
\[
\mathcal{P}_3 \dashrightarrow \mathcal{P}_2 \dashrightarrow \on{Frob}_S^* \mathcal{P}_1
\]
are different, and by Beauville–-Laszlo it is the same as the data of  
\[
\mathcal{P}_3 \dashrightarrow \mathcal{P}_2' \dashrightarrow \on{Frob}_S^* \mathcal{P}_1
\]
where the first modification takes place at the characteristic $p$ untilt, and the second takes place at characteristic zero untilts. Similarly for 
$\on{Hecke}_{V^*\boxtimes U \boxtimes V, s \times \eta \times s }^{(\{1,2\},\{3\})}$. 
\end{proof}

Let $\zeta: \{1\} \rightarrow \{1,2,3\}$ be the map defined by $\zeta(1)=3$, and 
\begin{equation} \label{geryuey}
\on{Sht}_{U|V^*\boxtimes V \boxtimes U,- |s \times s \times -, \eta}^{(\{1\})|(\{1\},\{2\},\{3\}),0} \subset \on{Sht}^{(\{1\})|(\{1\},\{2\},\{3\}), 0}_{U|V^*\boxtimes V \boxtimes U}
\times_{\on{Spd}(\Breve{\mathbb{Z}}_p) \times \on{Spd}(\Breve{\mathbb{Z}}_p)^3, \zeta \times \on{id}} \on{Spd}(\Breve{\mathbb{Z}}_p)^3
\end{equation} 
be the v-substack parametrizing modifications
\[
\begin{tikzcd}
\mathcal{P} \arrow[rrr,dashed] \arrow[d,equal,"{\resizebox{0.9cm}{0.1cm}{$\sim$}}" labl1] & & &
\on{Frob}_S^*\mathcal{P} \arrow[d,equal, "{\resizebox{0.9cm}{0.1cm}{$\sim$}}" labl1]
\\
\mathcal{P}_1  \arrow[r,dashed] &
\mathcal{P}_2 \arrow[r,dashed] &
\mathcal{P}_3 \arrow[r,dashed] &
\on{Frob}_S^*\mathcal{P}_1
\end{tikzcd}
\]
with the first line in $\on{Sht}_{U,\eta}$ and the second line in 
$\on{Sht}_{V^* \boxtimes V \boxtimes U,s \times s \times \eta}^{(\{1\},\{2\},\{3\})}$. 
Moreover, the legs indexed by $U$ in both the upper and lower modifications are the same. 
In other words, it is the generic fiber of 
$\on{Sht}_{U|V^* \boxtimes U\boxtimes V ,- |s \times - \times s}^{(\{1\})|(\{1,2\},\{3\}),0}$
under the identification in lemma \ref{1749903579012}. 

By our convention, $\on{Sht}_{U,\eta}$ has its leg in characteristic zero, while  
the first two modifications of the second line
take place at characteristic $p$, hence the third modification takes place at the same leg as the first line, and
the diagram is equivalent to 
\begin{equation} \label{pppi}
\begin{tikzcd}
\mathcal{P}  \arrow[rd,dashed, "\varphi_1"] \arrow[rr,"\sim"] & & \mathcal{P}_2' \arrow[r,dashed, "\varphi_3"] &
\on{Frob}_S^*\mathcal{P} 
\\
& 
\mathcal{P}_1',  \arrow[ru,dashed,"\varphi_2"] &
 &
\end{tikzcd}
\end{equation}
where the first two dashed arrows are modifications at the characteristic $p$ untilt bounded by $V^*$ and $V$ respectively whose composition is an isomorphism, and the last dashed arrow is a modification at characteristic zero untilts bounded by $U$.  
It means that
$\on{Sht}_{U|V^*\boxtimes V \boxtimes U,-|s \times s \times -, \eta}^{(\{1\})|(\{1\},\{2\},\{3\}),0}$
is a closed v-substack 
\[
\on{Sht}_{V^*\boxtimes V\boxtimes U,s \times s \times \eta }^{(\{1\},\{2\},\{3\}),0} 
\]
of 
$\on{Sht}_{V^*\boxtimes V\boxtimes U,s \times s \times \eta }^{(\{1\},\{2\},\{3\})}$,
which parametrizes diagrams as in (\ref{pppi}) without the isomorphism condition, and the correspondence diagram becomes
\[
\begin{tikzcd}
&
\on{Sht}_{V^*\boxtimes V\boxtimes U,s \times s \times \eta}^{(\{1\},\{2\},\{3\}),0} \arrow[dr,hook,"p_2" ] \arrow[dl, "p_1"'] 
& 
\\
\on{Sht}_{1\boxtimes U,s \times \eta}^{(\{1\},\{2\})}
& & 
\on{Sht}_{V^*\boxtimes V\boxtimes U,s\times s\times \eta}^{(\{1\},\{2\},\{3\})} 
\end{tikzcd}
\]
where $p_2$ is the closed immersion, $p_1$ sends (\ref{pppi}) to the first row
\[
\begin{tikzcd}
\mathcal{P}   \arrow[rr,"\sim"] & & \mathcal{P}_2' \arrow[r,dashed, "\varphi_3"] &
\on{Frob}_S^*\mathcal{P}. 
\end{tikzcd}
\]

The restriction of $\mathscr{C}_{\sharp}$ 
to the generic fiber then gives rise to the cohomological correspondence 
\begin{equation} \label{qasdk;aworeu}
\mathscr{C}_{\sharp,\eta}: (\on{Sht}_{ U, \eta}^{(\{1\})}, \epsilon^{(\{1\}),*}_{U} \mathscr{S}_U^{(\{1\})}
)
\longrightarrow 
(\on{Sht}_{V^*\boxtimes V\boxtimes U,s\times s\times \eta}^{(\{1\},\{2\},\{3\})}, \epsilon^{(\{1\},\{2\},\{3\}),*}_{V^*\boxtimes V\boxtimes U} \mathscr{S}_{V^*\boxtimes V\boxtimes U}^{(\{1\},\{2\},\{3\})}
)
\end{equation} 
supported on 
$\on{Sht}_{V^*\boxtimes V\boxtimes U,s\times s\times \eta}^{(\{1\},\{2\},\{3\}),0}$.

\begin{lemma} \label{znxmbwa./rq}
$\mathscr{C}_{\sharp,\eta}$
is the pullback of 
$\mathscr{C}_{\sharp}^{\on{loc},+,0} \boxtimes \on{id}$
defined in lemma \ref{lemaamamm} (1)
along the following diagram
\[
\begin{tikzcd}
&
\on{Sht}_{V^*\boxtimes V\boxtimes U,s \times s \times \eta}^{(\{1\},\{2\},\{3\}),0} \arrow[dr,hook,"p_2" ] \arrow[dl, "p_1"'] \arrow[dd,"\epsilon^{(\{1\},\{2\},\{3\}),0}_{V^*\boxtimes V\boxtimes \mu}"]
& 
\\
\on{Sht}_{1\boxtimes U,s \times \eta}^{(\{1\},\{2\})} \arrow[dd,"\epsilon^{(\{1\},\{2\})}_{1\boxtimes U}"]
& & 
\on{Sht}_{V^*\boxtimes V\boxtimes U,s\times s\times \eta}^{(\{1\},\{2\},\{3\})} \arrow[dd,"\epsilon^{(\{1\},\{2\},\{3\})}_{V^*\boxtimes V\boxtimes U}"]
\\
&
\on{Hecke}_{V^*\boxtimes V\boxtimes U,s\times s\times \eta}^{(\{1\},\{2\},\{3\}),0} \arrow[dr,hook,"i" ] \arrow[dl, "p"'] \arrow[dd,"\kappa^{(\{1\},\{2\},\{3\}),0}_{(\{1,2\},\{3\})}"]
& 
\\
\on{Hecke}_{1\boxtimes U,s\times \eta}^{(\{1\},\{2\})} \arrow[dd,"\kappa^{(\{1\},\{2\})}_{(\{1\},\{2\})}"]
& & 
\on{Hecke}_{V^*\boxtimes V\boxtimes U,s \times s \times \eta}^{(\{1\},\{2\},\{3\})} \arrow[dd,"\kappa^{(\{1\},\{2\},\{3\})}_{(\{1,2\},\{3\})}"]
\\
& \on{Hecke}_{V^*\boxtimes V, s\times s}^{(\{1\},\{2\}),0}  \times \on{Hecke}_{U,\eta}^{(\{1\})} \arrow[dr,hook,"i\times \on{id}" ] \arrow[dl, "\pi \times \on{id}"'] \arrow[dd]
& 
\\
\on{Hecke}_{1, s}^{(\{1\})}  \times \on{Hecke}_{U,\eta}^{(\{1\})} \arrow[dd]
& & 
\on{Hecke}_{V^*\boxtimes V, s\times s}^{(\{1\},\{2\})} \times \on{Hecke}_{U,\eta}^{(\{1\})}
\arrow[dd]
\\
& \on{Hecke}_{V^*\boxtimes V, s \times s}^{\on{loc},(\{1\},\{2\}),0}  \times \on{Hecke}_{U,\eta}^{\on{loc},(\{1\})} \arrow[dr,hook,"i^{\on{loc}}\times \on{id}" ] \arrow[dl, "\pi^{\on{loc}} \times \on{id}"'] 
& 
\\
\on{Hecke}_{1,s}^{\on{loc},(\{1\})}  \times \on{Hecke}_{U,\eta}^{\on{loc},(\{1\})}
& & 
\on{Hecke}_{V^*\boxtimes V,s \times s}^{\on{loc},(\{1\},\{2\})} \times \on{Hecke}_{U,\eta}^{\on{loc},(\{1\})},
\end{tikzcd}
\]
where the maps between the last two rows are the global to local restriction maps ($\times \on{id}$). 
\end{lemma}

\begin{proof}

Clearly the squares are Cartesian, so we can pullback cohomological correspondences. By definition of $\mathscr{C}_{\sharp}$,  the restriction of $\mathscr{C}_{\sharp}$ 
to the generic fiber, 
\[
\mathscr{C}_{\sharp,\eta}: (\on{Sht}_{ U, \eta}^{(\{1\})}, \epsilon^{(\{1\}),*}_{U} \mathscr{S}_U^{(\{1\})}
)
\longrightarrow 
(\on{Sht}_{V^*\boxtimes V\boxtimes U,s\times s\times \eta}^{(\{1\},\{2\},\{3\})}, \epsilon^{(\{1\},\{2\},\{3\}),*}_{V^*\boxtimes V\boxtimes U} \mathscr{S}_{V^*\boxtimes V\boxtimes U}^{(\{1\},\{2\},\{3\})}
),
\]
is the pullback of 
$\mathscr{C}_{\sharp,\eta}^+$ 
defined in lemma \ref{lemmahecke}
along the 
first two rows.

By lemma \ref{lemmahecke},  $\mathscr{C}_{\sharp}^+$ is  the pullback 
of 
$\mathscr{C}_{\sharp}^{+,0} \boxtimes \on{id}$
along the middle two rows in the diagram. 

Further, we know from lemma \ref{lemaamamm} (2) that $\mathscr{C}_{\sharp}^{+,0}$
is the pullback of $\mathscr{C}_{\sharp}^{\on{loc},+,0}$ 
along the global to local restriction maps, thus $\mathscr{C}_{\sharp, \eta}$
is the pullback of $\mathscr{C}_{\sharp}^{\on{loc},+,0} \boxtimes \on{id}$ all the way from the bottom of the diagram. 
\end{proof}

The key observation for us is that the above diagram factorizes through moduli spaces of Witt vector shtukas. 

\begin{lemma}
We have the following commutative diagram whose composition is the same as that of the  diagram in lemma \ref{znxmbwa./rq}, 
\[
\begin{tikzcd}
&
\on{Sht}_{V^*\boxtimes V\boxtimes U, s \times s \times \eta}^{(\{1\},\{2\},\{3\}),0} \arrow[dr,hook,"p_2" ] \arrow[dl, "p_1"'] \arrow[dd,""]
& 
\\
\on{Sht}_{1\boxtimes U, s\times \eta}^{(\{1\},\{2\})} \arrow[dd,""]
& & 
\on{Sht}_{V^*\boxtimes V\boxtimes U,s \times s \times \eta}^{(\{1\},\{2\},\{3\})} \arrow[dd,""]
\\
&
\on{Sht}_{V^*\boxtimes V }^{\on{loc}, W,0} \times  \on{Hecke}_{U,\eta}^{\on{loc},(\{1\})} \arrow[dr,hook,"p_2 \times \on{id}" ] \arrow[dl, "p_1 \times \on{id}"'] \arrow[dd,"\epsilon^{\on{loc},W,0}_{V^*\boxtimes V} \times \on{id}"]
& 
\\
\on{Sht}_1^{\on{loc},W} \times  \on{Hecke}_{U,\eta}^{\on{loc},(\{1\})}
\arrow[dd,"\epsilon^{\on{loc},W}_{0} \times \on{id}"]
& & 
\on{Sht}_{V^*\boxtimes V}^{\on{loc},W} \times  \on{Hecke}_{U,\eta}^{\on{loc},(\{1\})} \arrow[dd,"\epsilon^{\on{loc},W}_{V^*\boxtimes V} \times \on{id}"]
\\
& \on{Hecke}_{V^*\boxtimes V, s \times s}^{\on{loc},(\{1\},\{2\}),0}  \times \on{Hecke}_{\mu,\eta}^{\on{loc},(\{1\})} \arrow[dr,hook,"i^{\on{loc}}\times \on{id}" ] \arrow[dl, "\pi^{\on{loc}} \times \on{id}"'] 
& 
\\
\on{Hecke}_{1,s}^{\on{loc},(\{1\})}  \times \on{Hecke}_{U,\eta}^{\on{loc},(\{1\})}
& & 
\on{Hecke}_{V^*\boxtimes V, s \times s}^{\on{loc},(\{1\},\{2\})} \times \on{Hecke}_{U,\eta}^{\on{loc},(\{1\})}.
\end{tikzcd}
\]
where the vertical maps along the first lines are restriction to the moduli spaces of local shtukas at the special fiber. 

More precisely,
the restriction map
\[
\on{Sht}_{V^*\boxtimes V\boxtimes U, s \times s \times \eta}^{(\{1\},\{2\},\{3\})} \longrightarrow
\on{Sht}_{V^*\boxtimes V, }^{\on{loc},W} \times \on{Hecke}_{U,\eta}^{\on{loc},(\{1\})}
\]
in the diagram is a shtuka version of $\kappa_{(\{1,2\},\{3\})}^{(\{1\},\{2\},\{3\})}$, namely sending 
\[
\begin{tikzcd}
\mathcal{P}_1  \arrow[r,dashed, "\varphi_1"] &
\mathcal{P}_2 \arrow[r,dashed, "\varphi_2"] &
\mathcal{P}_3 \arrow[r,dashed, "\varphi_3"] &
\on{Frob}_S^*\mathcal{P}_1
\end{tikzcd}
\]
to 
\[
\begin{tikzcd}
\mathcal{P}_1|_{\on{Spec}(\mathcal{O}^{\wedge}_{\mathcal{Y}_{[0,\infty)}(S), S})}  \arrow[r,dashed, "\varphi_1"] &
\mathcal{P}_2|_{\on{Spec}(\mathcal{O}^{\wedge}_{\mathcal{Y}_{[0,\infty)}(S), S})} \arrow[r,dashed, "\varphi_3 \circ \varphi_2"] &
\on{Frob}_S^*\mathcal{P}_1|_{\on{Spec}(\mathcal{O}^{\wedge}_{\mathcal{Y}_{[0,\infty)}(S), S})}
\end{tikzcd}
\]
and 
\[
\begin{tikzcd}
\mathcal{P}_3|_{\on{Spec}(\mathcal{O}^{\wedge}_{\mathcal{Y}_{[0,\infty)}(S), S^{\sharp}})}  \arrow[r,dashed, "\varphi_3"] &
\on{Frob}_S^*\mathcal{P}_1|_{\on{Spec}(\mathcal{O}^{\wedge}_{\mathcal{Y}_{[0,\infty)}(S), S^{\sharp}})},
\end{tikzcd}
\]
and similarly for the other two.
Since the modification $\varphi_3$ has its leg at characteristic 0 untilts, it is an isomorphism after restricting to 
$\on{Spec}(\mathcal{O}^{\wedge}_{\mathcal{Y}_{[0,\infty)}(S), S})$.  

On the other hand, the map 
\[
\epsilon_{\mu_{\bullet}}^{\on{loc}, W}: \on{Sht}^{\on{loc},W}_{\mu_{\bullet}} \longrightarrow 
\on{Hecke}^{\on{loc},(\{1\},\cdots,\{k\})}_{\mu{\bullet},s}
\]
is the forgetful map
sending 
\[
\begin{tikzcd}
\mathcal{P}_1  \arrow[r,dashed, "\varphi_1"] &
\mathcal{P}_2 \arrow[r,dashed, "\varphi_2"] &
\cdots \cdots   \arrow[r,dashed,"\varphi_{k-1}" ]&
\mathcal{P}_k \arrow[r,dashed,"\varphi_k" ] &
\on{Frob}_S^*\mathcal{P}_1
\end{tikzcd}
\]
to 
\[
\begin{tikzcd}
\mathcal{P}_1  \arrow[r,dashed, "\varphi_1"] &
\mathcal{P}_2 \arrow[r,dashed, "\varphi_2"] &
\cdots \cdots   \arrow[r,dashed,"\varphi_{k-1}" ]&
\mathcal{P}_k \arrow[r,dashed,"\varphi_k" ] &
\mathcal{P}_{k+1}
\end{tikzcd}
\]
with 
$\mathcal{P}_{k+1}:=
\on{Frob}_S^*\mathcal{P}_1$. 
\end{lemma}

\begin{proof}
Obvious. 
\end{proof}

Similar to the $p$-adic shtukas case, we can pullback $\mathscr{C}_{\sharp}^{\on{loc},+,0}$ in (\ref{sureqhrjhrh}) along 
\begin{equation} \label{eiurieurioe}
\begin{tikzcd}
&
\on{Sht}_{V^*\boxtimes V}^{\on{loc}, W,0}  \arrow[dr,hook,"p_2^{\on{loc}}" ] \arrow[dl, "p_1^{\on{loc}}"'] \arrow[dd,"\epsilon^{\on{loc},W,0}_{V^*\boxtimes V}"]
& 
\\
\on{Sht}_1^{\on{loc},W} 
\arrow[dd,"\epsilon^{\on{loc},W}_{0}"]
& & 
\on{Sht}_{V^*\boxtimes V}^{\on{loc},W}  \arrow[dd,"\epsilon^{\on{loc},W}_{V^*\boxtimes V} "]
\\
& \on{Hecke}_{V^*\boxtimes V, s \times s }^{\on{loc},(\{1\},\{2\}),0}   \arrow[dr,hook,"i^{\on{loc}}" ] \arrow[dl, "\pi^{\on{loc}}"'] 
& 
\\
\on{Hecke}_{1,s}^{\on{loc},(\{1\})} 
& & 
\on{Hecke}_{V^*\boxtimes V, s \times s }^{\on{loc},(\{1\},\{2\})} 
\end{tikzcd}
\end{equation}
to obtain the Witt vector creation cohomological correspondence
\[
\mathscr{C}_{\sharp}^{\on{loc},0,W} : p_1^{\on{loc},*} \epsilon^{\on{loc},W,*}_{0} \mathscr{S}_1^{(\{1\})} \longrightarrow 
p_2^{\on{loc},!} \epsilon^{\on{loc},W,*}_{V^*\boxtimes V} \mathscr{S}_{V^*\boxtimes V}^{(\{1\},\{2\})}
\]
supported on 
$\on{Sht}_{V^*\boxtimes V}^{\on{loc}, W,0}$.

\begin{proposition} \label{creprop}
The creation cohomological correspondence $\mathscr{C}_{\sharp, \eta}$ in (\ref{qasdk;aworeu}) is naturally identified with the pullback of 
$\mathscr{C}_{\sharp}^{\on{loc},0,W} \boxtimes \on{id}$ 
along 
\[
\begin{tikzcd}
&
\on{Sht}_{V^*\boxtimes V\boxtimes U,s \times s \times \eta}^{(\{1\},\{2\},\{3\}),0} \arrow[dr,hook,"p_2" ] \arrow[dl, "p_1"'] \arrow[dd,""]
& 
\\
\on{Sht}_{1\boxtimes U,s\times \eta}^{(\{1\},\{2\})} \arrow[dd,""]
& & 
\on{Sht}_{V^*\boxtimes V\boxtimes U,s \times s \times \eta}^{(\{1\},\{2\},\{3\})} \arrow[dd,""]
\\
&
\on{Sht}_{V^*\boxtimes V}^{\on{loc}, W,0} \times  \on{Hecke}_{U,\eta}^{\on{loc},(\{1\})} \arrow[dr,hook,"p_2^{\on{loc}} \times \on{id}" ] \arrow[dl, "p_1^{\on{loc}} \times \on{id}"']
& 
\\
\on{Sht}_1^{\on{loc},W} \times  \on{Hecke}_{U,\eta}^{\on{loc},(\{1\})}
& & 
\on{Sht}_{V^*\boxtimes V}^{\on{loc},W} \times  \on{Hecke}_{U,\eta}^{\on{loc},(\{1\})}. 
\end{tikzcd}
\]

Moreover, $\mathscr{C}_{\sharp}^{\on{loc},0,W}$ is the $\diamond\diamond$-analytification of the corresponding truncated Witt vector version as considered in 
\cite[example 6.1.9]{2017arXiv170705700X}. 
\end{proposition}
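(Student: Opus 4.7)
The plan is to chase the defining construction of $\mathscr{C}_{\sharp}$ through the tower of commutative diagrams already laid out in the text, and to observe that at each level the relevant squares are Cartesian so that pullback of cohomological correspondences is well-behaved.

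First I would recall from the construction surrounding (\ref{cccrea}) that $\mathscr{C}_{\sharp}$ was defined as the pullback of $\mathscr{C}_{\sharp}^{+,0} \boxtimes \mathrm{id}$ along the composition $\kappa^{(\{1\},\{2\},\{3\}),0}_{(\{1,2\},\{3\})} \circ \epsilon^{(\{1\},\{2\},\{3\}),0}_{V^*\boxtimes V\boxtimes \mu}$, and that in turn $\mathscr{C}_{\sharp}^{+,0}$ was obtained by pullback of $\mathscr{C}_{\sharp}^{\mathrm{loc},+,0}$ along the restriction map from global to local Hecke stacks. Stacking these pullbacks, one sees that $\mathscr{C}_{\sharp}$ is already the pullback of $\mathscr{C}_{\sharp}^{\mathrm{loc},+,0} \boxtimes \mathrm{id}$ starting from the bottom row of the big commutative diagram preceding the proposition. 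The task is then to reroute this pullback through the intermediate row involving $Sht_{V^*\boxtimes V}^{\mathrm{loc},W,0}$ and $Hecke_{\mu,\eta}^{\mathrm{loc},(\{1\})}$.

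The key step is to verify that the two squares in the alternative commutative diagram (the one factoring through local Witt-vector Shtukas at the special fiber) are Cartesian: the upper squares express the identification of $Sht_{V^*\boxtimes V\boxtimes \mu,\eta}^{(\{1\},\{2\},\{3\}),0}$ as the fiber product of $Sht_{V^*\boxtimes V\boxtimes \mu,\eta}^{(\{1\},\{2\},\{3\})}$ and $Sht_{V^*\boxtimes V}^{\mathrm{loc},W,0}$ over $Sht_{V^*\boxtimes V}^{\mathrm{loc},W}$, which is tautological from the definitions (the substack cut out by the condition that $\varphi_2 \circ \varphi_1$ is an isomorphism at the characteristic-$p$ leg only involves the restriction to $\mathrm{Spec}(\mathcal{O}^{\wedge}_{\mathcal{Y}_{[0,\infty)}(S),S})$); the lower squares are the defining diagram of $\mathscr{C}_{\sharp}^{\mathrm{loc},0,W}$ in (\ref{eiurieurioe}). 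Since $\mathscr{C}_{\sharp}^{\mathrm{loc},0,W}$ is by construction the pullback of $\mathscr{C}_{\sharp}^{\mathrm{loc},+,0}$ along (\ref{eiurieurioe}), the functoriality of pullback of cohomological correspondences along composites of Cartesian squares (cited from \cite{2017arXiv170705700X} A.2) gives exactly the identification claimed.

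For the moreover part, the required $\diamond\diamond$-analytification statement is already contained in the discussion immediately above the proposition: the diagram (\ref{eiurieurioe}), after truncation, is the $\diamond\diamond$-analytification of the perfect-stack diagram defining $\mathscr{C}_{\sharp}^{\mathrm{loc}(m,n),0}$ via pullback of $\mathscr{C}_{\sharp}^{\mathrm{loc}(m),+,0}$, and the equality $\mathscr{C}_{\sharp}^{\mathrm{loc}(m),+,0,\diamond\diamond}=\mathscr{C}_{\sharp}^{\mathrm{loc},+,0}$ combined with proposition \ref{fjdklfsjdkeie} (which guarantees compatibility of analytification with pullback of correspondences) yields $\mathscr{C}_{\sharp}^{\mathrm{loc},0,W} = \mathscr{C}_{\sharp}^{\mathrm{loc}(m,n),0,\diamond\diamond}$. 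The mildest obstacle is bookkeeping: one must be careful that the squares labelled ``Cartesian'' really are so after restricting the $\mu$-leg to the generic fiber while keeping the $V^*,V$-legs in the special fiber, but this separation of legs is precisely why the factorization through $Sht_{V^*\boxtimes V}^{\mathrm{loc},W,0} \times Hecke_{\mu,\eta}^{\mathrm{loc},(\{1\})}$ is available in the first place, and no further input is needed.
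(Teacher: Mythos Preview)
Your proposal is correct and follows essentially the same approach as the paper: the proposition is stated immediately after the phrase ``Then by what we have just seen, we have'', so the paper's proof is precisely the discussion preceding it, namely that $\mathscr{C}_{\sharp}$ is the pullback of $\mathscr{C}_{\sharp}^{\mathrm{loc},+,0}\boxtimes\mathrm{id}$ from the bottom of the big tower, that this tower factorizes through the local Witt-vector Shtuka row, and that $\mathscr{C}_{\sharp}^{\mathrm{loc},0,W}$ is by definition the pullback of $\mathscr{C}_{\sharp}^{\mathrm{loc},+,0}$ along (\ref{eiurieurioe}), whence functoriality of pullback gives the claim. You have in fact made explicit the Cartesianness of the upper squares in the factorized diagram, which the paper leaves implicit; this is a helpful clarification but not a different argument.
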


\begin{proof}

We use the truncated version of Shtukas and Hecke stacks as defined in \cite[section 5.3]{2017arXiv170705700X}, and recalled in appendix \ref{qrerewrewrf}. The truncated perfect stacks as defined in appendix \ref{qrerewrewrf} can be defined directly in the world of v-stacks, either by writing out the obvious moduli problem, or taking the corresponding $\diamond\diamond$-analytification.

We note that the truncated version of diagram (\ref{eiurieurioe}) is really the $\diamond\diamond$-analytification of the diagram below of perfect stacks 
\[
\begin{tikzcd}
&
\on{Sht}_{V^*\boxtimes V}^{\on{loc}(m,n),0}  \arrow[dr,hook,"p_2^{\on{loc}(m,n)}" ] \arrow[dl, "p_1^{\on{loc}(m,n)}"'] \arrow[dd,"\epsilon^{\on{loc}(m,n),0}_{V^*\boxtimes V}"]
& 
\\
\on{Sht}_0^{\on{loc}(m,n)} 
\arrow[dd,"\epsilon^{\on{loc}(m,n)}_{0}"']
& & 
\on{Sht}_{V^*\boxtimes V}^{\on{loc}(m,n)}  \arrow[dd,"\epsilon^{\on{loc}(m,n)}_{V^*\boxtimes V} "]
\\
& \on{Hecke}_{V^*\boxtimes V}^{\on{loc}(m),0}   \arrow[dr,hook,"i^{\on{loc}(m)}" ] \arrow[dl, "\pi^{\on{loc}(m)}"'] 
& 
\\
\on{Hecke}_{0}^{\on{loc}(m)} 
& & 
\on{Hecke}_{V^*\boxtimes V}^{\on{loc}(m)} 
\end{tikzcd}
\]
which defines  $\mathscr{C}_{\sharp}^{\on{loc}(m,n),0}$
by pullback of $\mathscr{C}_{\sharp}^{\on{loc}(m),+,0}$. We know that $\mathscr{C}_{\sharp}^{\on{loc}(m),+,0,\diamond\diamond} = \mathscr{C}_{\sharp}^{\on{loc},+,0}$
so it follows from proposition \ref{fjdklfsjdkeie} that 
\[
\mathscr{C}_{\sharp}^{\on{loc},0,W} = \mathscr{C}_{\sharp}^{\on{loc}(m,n),0,\diamond\diamond}
\]
under the identification in appendix \ref{qrerewrewrf}.

\end{proof}

\subsubsection{The special fiber}

Lastly, we  identify the creation correspondence at the special fiber. We want to prove that the pullback of $\mathscr{C}_{\sharp}$ (restricted on the special fiber) along the restriction map 
$\on{Sht}^W_{U} \longrightarrow \on{Sht}_{U,s}$ is precisely the $\diamond\diamond$-analytification of the creation correspondence constructed in \cite[Example 6.1.9]{2017arXiv170705700X}.

To simplify the notation, we denote
\[
\on{Sht}_{U|V^* \boxtimes U \boxtimes V,s}^{(\{1\})|(\{1,2\},\{3\}),0} 
:=
\on{Sht}_{U|V^* \boxtimes U \boxtimes V, s|s\times s\times s}^{(\{1\})|(\{1,2\},\{3\}),0},
\]
and 
\[
\on{Hecke}_{U|V^* \boxtimes U \boxtimes V,s}^{(\{1\})|(\{1,2\},\{3\})}
:=
\on{Hecke}_{U|V^* \boxtimes U \boxtimes V, s|s\times s\times s}^{(\{1\})|(\{1,2\},\{3\})}. 
\]
Similarly, we denote
\[
\on{Sht}_{V^* \boxtimes U \boxtimes V,s}^{(\{1,2\},\{3\})} 
:=
\on{Sht}_{V^* \boxtimes U \boxtimes V, s\times s\times s}^{(\{1,2\},\{3\})},
\]
and 
\[
\on{Hecke}_{V^* \boxtimes U \boxtimes V,s}^{(\{1,2\},\{3\})}
:=
\on{Hecke}_{V^* \boxtimes U \boxtimes V, s\times s\times s}^{(\{1,2\},\{3\})}. 
\]

As before, there is the diagram
\[
\begin{tikzcd}
&
 \on{Sht}^{0,W}_{\mu_{\bullet}|\mu_{\bullet}'}  \arrow[dr ] \arrow[dl] \arrow[dd,"\epsilon_{\mu_{\bullet}|\mu_{\bullet}'}^{W}"]
& 
\\
\on{Sht}_{\mu_{\bullet} }^{W} \arrow[dd,"\epsilon_{\mu_{\bullet}}^{W} "]
& & 
\on{Sht}_{\mu'_{\bullet}}^{W} \arrow[dd,"\epsilon_{\mu'_{\bullet}}^{W}"]
\\
& \on{Hecke}^{0,W}_{\mu_{\bullet}|\mu_{\bullet}'}  \arrow[dr ] \arrow[dl] 
& 
\\
 \on{Hecke}_{\mu_{\bullet}}^{W}
& & 
 \on{Hecke}_{\mu'_{\bullet}}^{W},
\end{tikzcd}
\]
where the vertical maps are identifying $\mathcal{P}_{k+1}$ as $\on{Frob}_S^*\mathcal{P}_1$. 

On the other hand, we have the map
\[
\on{Sht}^{W}_{\mu_{\bullet}} \longrightarrow \on{Sht}_{\mu_{\bullet},s}^{(\{1\}, \{2\}, \cdots, \{k\})}
\]
which is the
restriction of moduli spaces of shtukas from $W(R^+)$ to $\mathcal{Y}_{[0,\infty)}(S)$, and similarly for $\on{Sht}_{\mu_{\bullet}|\mu_{\bullet}}^{0,W}$. They form a commutative diagram
\[
\begin{tikzcd}
&
 \on{Sht}^{0,W}_{\mu_{\bullet}|\mu_{\bullet}'}  \arrow[dr ] \arrow[dl] \arrow[dd,""]
& 
\\
\on{Sht}_{\mu_{\bullet} }^{W} \arrow[dd," "]
& & 
\on{Sht}_{\mu'_{\bullet}}^{W} \arrow[dd,""]
\\
& \on{Sht}^{(\{1\},  \cdots, \{k\})|(\{1\},  \cdots, \{l\}), 0}_{\mu_{\bullet}|\mu_{\bullet}',s}  \arrow[dr ] \arrow[dl] 
& 
\\
 \on{Sht}_{\mu_{\bullet},s}^{(\{1\}, \{2\}, \cdots, \{k\})}
& & 
 \on{Sht}_{\mu'_{\bullet},s}^{(\{1\}, \{2\}, \cdots, \{l\})}.
\end{tikzcd}
\]

The diagram (\ref{asssseee}) restricted to the special fiber is 
\[
\begin{tikzcd}
&
\on{Sht}_{U|V^* \boxtimes U \boxtimes V,s}^{(\{1\})|(\{1,2\},\{3\}),0}
\arrow[dr ] \arrow[dl] \arrow[dd,"\epsilon^{(\{1\})|(\{1,2\},\{3\}),0}_{U| V^* \boxtimes \mu\boxtimes V}"]
& 
\\
\on{Sht}_{U,s}^{(\{1\})}
\arrow[dd,"\epsilon_{U}^{(\{1\})} "]
& & 
\on{Sht}_{V^* \boxtimes U \boxtimes V,s}^{(\{1,2\},\{3\})}
\arrow[dd,"\epsilon_{ V^* \boxtimes U \boxtimes V}^{(\{1,2\},\{3\})}"]
\\
& \on{Hecke}^{(\{1\})|(\{1,2\},\{3\}),0}_{U| V^* \boxtimes U \boxtimes V,s}  \arrow[dr ] \arrow[dl] 
& 
\\
 \on{Hecke}_{U,s}^{(\{1\})}
& & 
 \on{Hecke}_{V^* \boxtimes U \boxtimes V,s}^{(\{1,2\},\{3\})}.
\end{tikzcd}
\]
We can compose it with the previous one to obtain
\begin{equation} \label{fjiewurpo}
\begin{tikzcd}
&
 \on{Sht}^{0,W}_{U|(V^* \otimes U) \boxtimes V}  \arrow[dr ] \arrow[dl] \arrow[dd,""]
& 
\\
\on{Sht}_{U}^{W} \arrow[dd," "]
& & 
\on{Sht}_{(V^*\otimes U) \boxtimes  V}^{W} \arrow[dd,""]
\\
&
\on{Sht}_{U|V^* \boxtimes U \boxtimes V,s}^{(\{1\})|(\{1,2\},\{3\}),0}
\arrow[dr ] \arrow[dl] \arrow[dd,"\epsilon_{U| V^* \boxtimes U\boxtimes V}^{(\{1\})|(\{1,2\},\{3\}),0}"]
& 
\\
\on{Sht}_{U,s}^{(\{1\})}
\arrow[dd,"\epsilon_{U}^{(\{1\})} "]
& & 
\on{Sht}_{V^* \boxtimes U \boxtimes V,s}^{(\{1,2\},\{3\})}
\arrow[dd,"\epsilon_{ V^* \boxtimes U \boxtimes V}^{(\{1,2\},\{3\})}"]
\\
& \on{Hecke}^{(\{1\})|(\{1,2\},\{3\}),0}_{U| V^* \boxtimes U \boxtimes V,s}  \arrow[dr ] \arrow[dl] 
& 
\\
 \on{Hecke}_{U,s}^{(\{1\})}
& & 
 \on{Hecke}_{V^* \boxtimes U \boxtimes V,s}^{(\{1,2\},\{3\})},
\end{tikzcd}
\end{equation}
and we want to understand the pullback of the (restriction to the special fiber of)
creation correspondence $\mathscr{C}_{\sharp}$ in  (\ref{lleleeell}) along the upper half of the diagram.
We note that the diagram factorizes as
\begin{equation} \label{iweureyryei}
\begin{tikzcd}
&
 \on{Sht}^{0,W}_{U|(V^* \otimes U)\boxtimes V}  \arrow[dr ] \arrow[dl] \arrow[dd,"\epsilon_{U| (V^* \otimes U)\boxtimes V}^{W}"]
& 
\\
\on{Sht}_{U}^{W} \arrow[dd," \epsilon_{U}^{W} "]
& & 
\on{Sht}_{(V^* \otimes U)\boxtimes V}^{W} \arrow[dd,"\epsilon_{ (V^* \otimes U)\boxtimes V}^{W}"]
\\
&
\on{Hecke}_{U|(V^* \otimes U)\boxtimes V}^{0,W}
\arrow[dr ] \arrow[dl] \arrow[dd,"r_{(U|V^* \otimes U)\boxtimes V}"]
& 
\\
\on{Hecke}_{U}^{W}
\arrow[dd,"r_{U}"]
& & 
\on{Hecke}_{(V^* \otimes U)\boxtimes V}^{W}
\arrow[dd,"r_{(V^* \otimes U)\boxtimes V}"]
\\
& \on{Hecke}^{(\{1\})|(\{1,2\},\{3\}),0}_{U| V^* \boxtimes U \boxtimes V,s}  \arrow[dr ] \arrow[dl] 
& 
\\
 \on{Hecke}_{U,s}^{(\{1\})}
& & 
 \on{Hecke}_{V^* \boxtimes U \boxtimes V,s}^{(\{1,2\},\{3\})},
\end{tikzcd}
\end{equation}
where the vertical maps of the second row are restriction of torsors from $W(R^+)$ to $\mathcal{Y}_{[0,\infty)}(R,R^+)$.  

 We can pullback the  cohomological correspondence $\mathscr{C}_{\sharp}^+$ (restricted to the special fiber)  defined in example \ref{examplecreation} (\ref{ttytyry}) along the last diagram, which defines the creation correspondence on the global Witt vector shtuka
\begin{equation} \label{weewreerrr}
\mathscr{C}_{\sharp}^{W} : (\on{Sht}_{U}^{W}, \epsilon_{U}^{W,*}r_{U}^* \mathscr{S}_{U}^{(\{1\})}) \longrightarrow 
(\on{Sht}_{(V^* \otimes U)\boxtimes V}^{W}, (\epsilon_{ (V^* \otimes U)\boxtimes V}^{W})^*(r_{(V^* \otimes U)\boxtimes V})^* \mathscr{S}_{(V^* \otimes U )\boxtimes V}^{(\{1\},\{2\})}). 
\end{equation}
The importance of $\mathscr{C}_{\sharp}^{W}$
is that it is the $\diamond\diamond$-analytification of the creation correspondence on moduli spaces of Witt vector shtukas constructed in \cite[Example 6.1.9]{2017arXiv170705700X}.

\begin{proposition} \label{192089280932}
$\mathscr{C}_{\sharp}^{W}$ is the pullback of
the restriction of the creation correspondence $\mathscr{C}_{\sharp}^+$ in (\ref{lleleeell})  to $\on{Spd}(\overline{\mathbb{F}_p})$
along 
\[
\begin{tikzcd}
&
 \on{Sht}^{0,W}_{U|(V^* \otimes U)\boxtimes V}  \arrow[dr ] \arrow[dl] \arrow[dd,""]
& 
\\
\on{Sht}_{U}^{W} \arrow[dd," "]
& & 
\on{Sht}_{(V^* \otimes U)\boxtimes V}^{W} \arrow[dd,""]
\\
&
\on{Sht}_{U|V^* \boxtimes U \boxtimes V,s}^{(\{1\})|(\{1,2\},\{3\}),0}
\arrow[dr ] \arrow[dl]
& 
\\
\on{Sht}_{U,s}^{(\{1\})}
& & 
\on{Sht}_{V^* \boxtimes U \boxtimes V,s}^{(\{1,2\},\{3\})}.
\end{tikzcd}
\]

Moreover, we have 
\[
\mathscr{C}_{\sharp}^W=\mathscr{C}_{\sharp}^{\on{loc}(m,n), \diamond},
\]
the analytification of the creation correspondence in \cite[Example 6.1.9]{2017arXiv170705700X}. 
\end{proposition}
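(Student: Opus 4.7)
The proposition has essentially been assembled in the paragraphs preceding it; what remains is to organize the identifications cleanly and verify that the relevant squares are Cartesian so that pullback of cohomological correspondences is well-defined. My plan is to prove the two assertions in sequence.

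For the first assertion, I would begin by observing that $\mathscr{C}_{\sharp}^W$ was defined in (\ref{weewreerrr}) as the pullback of the cohomological correspondence (\ref{ttytyry}), restricted to the special fiber, along the three-story diagram (\ref{iweureyryei}). The key is therefore to show that the factorization in (\ref{iweureyryei}) can be extended one more story upward by inserting the row of Shtukas $Sht_{\mu|V^*\boxtimes\mu\boxtimes V,s}^{0}$, $Sht_{\mu,s}^{(\{1\})}$, $Sht_{V^*\boxtimes\mu\boxtimes V,s}^{(\{1,2\},\{3\})}$ between the Witt-vector row and the Hecke-stack row, in such a way that (i) the composite diagram still commutes, (ii) the inserted squares are Cartesian, and (iii) the restriction of $\mathscr{C}_{\sharp}^{+}$ from (\ref{ttytyry}) to the special fiber is recovered by pulling back from the Hecke-stack row to the Shtuka row via $\epsilon_{\mu|V^*\boxtimes\mu\boxtimes V}$ and its analogues (this is the special-fiber version of diagram (\ref{asssseee})). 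Once this is set up, $\mathscr{C}_{\sharp}^{W}$ factors as the claimed two-step pullback: first pull $\mathscr{C}_{\sharp}^{+}|_{s}$ down from the Hecke row to the Shtuka-at-special-fiber row (this is (\ref{lleleeell}) restricted to $\mathrm{Spd}(\overline{\mathbb{F}_p})$), then further pull along the restriction maps from $Sht^{W}$ to $Sht_{s}^{(\{1\},\ldots)}$. Compatibility of pullback with composition of such diagrams, which holds in any six-functor formalism and is recorded in the convention section, then gives the first assertion.

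For the second assertion I would propagate the identification all the way down to the local Witt-vector Hecke stacks. Concretely, 6) of Theorem \ref{gsatake} gives a local correspondence $\mathscr{C}_{\sharp}^{\mathrm{loc},+}$ on $Hecke^{\mathrm{loc},0}_{\mu|V^{*}\boxtimes\mu\boxtimes V,s}$ such that $\mathscr{C}_{\sharp}^{+}|_{s}$ is its pullback along the Cartesian restriction diagram displayed in the paragraph after (\ref{weewreerrr}). Composing with (\ref{iweureyryei}) I obtain a four-story diagram ending in $Hecke^{\mathrm{loc},(\{1\})}_{\mu,s}$ and $Hecke^{\mathrm{loc},(\{1,2\},\{3\})}_{V^{*}\boxtimes\mu\boxtimes V,s}$, and $\mathscr{C}_{\sharp}^{W}$ is identified with the total pullback of $\mathscr{C}_{\sharp}^{\mathrm{loc},+}$.

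To finish, I would pass to the truncated world via appendices \ref{qrerewrewrf} and \ref{sdkkdkdkd}. The local diagram, after truncation at level $m$, is the $\diamond\diamond$-analytification of a diagram of perfect stacks, and the Witt-vector Shtuka diagram, truncated at level $(m,n)$, is the $\diamond$-analytification of a diagram of perfect stacks. Under the identifications between derived categories on truncated and untruncated stacks, 6) of Theorem \ref{gsatake} gives $\mathscr{C}_{\sharp}^{\mathrm{loc}(m),+,\diamond\diamond}=\mathscr{C}_{\sharp}^{\mathrm{loc},+}$, hence $\mathscr{C}_{\sharp}^{\mathrm{loc}(m),+,\diamond}=\mathscr{C}_{\sharp}^{+}$. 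Commutation of $\diamond$-analytification with pullback of cohomological correspondences (proposition \ref{fjdklfsjdkeie}) then yields $\mathscr{C}_{\sharp}^{\mathrm{loc}(m,n),\diamond}=\mathscr{C}_{\sharp}^{W}$, where $\mathscr{C}_{\sharp}^{\mathrm{loc}(m,n)}$ is by construction exactly the Xiao–Zhu creation correspondence on truncated Witt-vector Shtukas. The anticipated main obstacle is purely bookkeeping: verifying that the inserted squares in the enlarged diagram really are Cartesian (so that the iterated pullback of cohomological correspondences is unambiguously defined) and checking that the various $\epsilon_{?}$, $\kappa_{?}$, and restriction maps commute strictly rather than merely up to a 2-isomorphism that could introduce unwanted signs when composing correspondences.
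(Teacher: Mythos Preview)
Your proposal is correct and follows essentially the same route as the paper: the argument is indeed assembled in the paragraphs preceding the proposition, factoring the defining diagram (\ref{iweureyryei}) through the special-fiber Shtuka row, then descending to the local Hecke stacks via 6) of Theorem~\ref{gsatake}, and finally invoking the truncated comparison $\mathscr{C}_{\sharp}^{\mathrm{loc}(m),+,\diamond\diamond}=\mathscr{C}_{\sharp}^{\mathrm{loc},+}$ together with Proposition~\ref{fjdklfsjdkeie} to obtain $\mathscr{C}_{\sharp}^{\mathrm{loc}(m,n),\diamond}=\mathscr{C}_{\sharp}^{W}$. Your closing remark about the Cartesian squares is the only point where you are slightly more cautious than the paper, which simply asserts the factorization without further comment.
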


\begin{proof}
The factorization (\ref{fjiewurpo}) tells us that $\mathscr{C}_{\sharp}^{W} $ is the pullback of the creation correspondence (\ref{lleleeell}) along
\[
\begin{tikzcd}
&
 \on{Sht}^{0,W}_{\mu|(V^* \otimes U)\boxtimes V}  \arrow[dr ] \arrow[dl] \arrow[dd,""]
& 
\\
\on{Sht}_{U}^{W} \arrow[dd," "]
& & 
\on{Sht}_{(V^* \otimes U)\boxtimes V}^{W} \arrow[dd,""]
\\
&
\on{Sht}_{U|V^* \boxtimes U \boxtimes V,s}^{(\{1\})|(\{1,2\},\{3\}),0}
\arrow[dr ] \arrow[dl]
& 
\\
\on{Sht}_{U,s}^{(\{1\})}
& & 
\on{Sht}_{V^* \boxtimes U \boxtimes V,s}^{(\{1,2\},\{3\})}. 
\end{tikzcd}
\]

Next, we observe that the restriction of  $\mathscr{C}_{\sharp}^+$, as defined in example \ref{examplecreation} (\ref{ttytyry}), to the special fiber has a local counter part $\mathscr{C}_{\sharp}^{\on{loc},+}$, and $\mathscr{C}_{\sharp}^+$  is the pullback of $\mathscr{C}_{\sharp}^{\on{loc},+}$ along the canonical restriction diagram
\[
\begin{tikzcd}
&
 \on{Hecke}^{(\{1\})|(\{1,2\},\{3\}),0}_{U| V^* \boxtimes U\boxtimes V,s}
\arrow[dr ] \arrow[dl] \arrow[dd,""]
& 
\\
\on{Hecke}_{U,s}^{(\{1\})}
\arrow[dd,""]
& & 
\on{Hecke}_{V^* \boxtimes U \boxtimes V,s}^{(\{1,2\},\{3\})}
\arrow[dd,""]
\\
& \on{Hecke}^{\on{loc},(\{1\})|(\{1,2\},\{3\}),0}_{U| V^* \boxtimes U \boxtimes V,s}  \arrow[dr ] \arrow[dl] 
& 
\\
 \on{Hecke}_{U,s}^{\on{loc},(\{1\})}
& & 
 \on{Hecke}_{V^* \boxtimes U \boxtimes V,s}^{\on{loc},(\{1,2\},\{3\})}.
\end{tikzcd}
\]
The restriction to the special fiber is important here, as otherwise $\on{Hecke}^{\on{loc},(\{1\})|(\{1,2\},\{3\}),0}_{U| V^* \boxtimes U \boxtimes V,s} $ does not exist, see remark \ref{u803r4u3}. As always, $\mathscr{C}_{\sharp}^{+}$ being the pullback of $\mathscr{C}_{\sharp}^{\on{loc},+}$
follows ultimately from 6) of theorem \ref{gsatake}.  Now
we can compose the diagram (\ref{iweureyryei}) with the above restriction diagram, and obtain 
\[
\begin{tikzcd}
&
 \on{Sht}^{0,W}_{U|(V^* \otimes U)\boxtimes V}  \arrow[dr ] \arrow[dl] \arrow[dd,"\epsilon_{U| (V^* \otimes U)\boxtimes V}^{W}"]
& 
\\
\on{Sht}_{U}^{W} \arrow[dd," \epsilon_{U}^{W} "]
& & 
\on{Sht}_{(V^* \otimes U)\boxtimes V}^{W} \arrow[dd,"\epsilon_{ (V^* \otimes U)\boxtimes V}^{W}"]
\\
&
\on{Hecke}_{U|(V^* \otimes U)\boxtimes V}^{0,W}
\arrow[dr ] \arrow[dl] \arrow[dd,""]
& 
\\
\on{Hecke}_{U}^{W}
\arrow[dd,""]
& & 
\on{Hecke}_{(V^* \otimes U)\boxtimes V}^{W}
\arrow[dd,""]
\\
& \on{Hecke}^{\on{loc},(\{1\})|(\{1,2\},\{3\})0}_{U| V^* \boxtimes U \boxtimes V,s}  \arrow[dr ] \arrow[dl] 
& 
\\
 \on{Hecke}_{U,s}^{\on{loc},(\{1\})}
& & 
 \on{Hecke}_{V^* \boxtimes U \boxtimes V,s}^{\on{loc},(\{1,2\},\{3\})},
\end{tikzcd}
\]
and $\mathscr{C}_{\sharp}^{W}$ is the pullback of $\mathscr{C}_{\sharp}^{\on{loc},+}$ along the diagram. 
Up to truncation, this is precisely
\[
\begin{tikzcd}
&
 \on{Sht}^{0,\on{loc}(m,n),\diamond}_{U|(V^* \otimes U)\boxtimes V}  \arrow[dr ] \arrow[dl] \arrow[dd,"\epsilon_{U| (V^* \otimes U)\boxtimes V}^{\on{loc}(m,n),\diamond}"]
& 
\\
\on{Sht}_{U}^{\on{loc}(m,n),\diamond} \arrow[dd," \epsilon_{U}^{\on{loc}(m,n),\diamond} "]
& & 
\on{Sht}_{(V^* \otimes U)\boxtimes V}^{\on{loc}(m,n),\diamond} \arrow[dd,"\epsilon_{(V^* \otimes U)\boxtimes V}^{\on{loc}(m,n),\diamond}"]
\\
&
\on{Hecke}_{U|(V^* \otimes U)\boxtimes V}^{0,\on{loc}(m),\diamond}
\arrow[dr ] \arrow[dl] \arrow[dd,"a_{\on{Hecke}_{U|(V^* \otimes U)\boxtimes V}^{0,\on{loc}(m)}}"]
& 
\\
\on{Hecke}_{U}^{\on{loc}(m),\diamond}
\arrow[dd,"a_{\on{Hecke}_{U}^{\on{loc}(m)}}"]
& & 
\on{Hecke}_{(V^* \otimes U)\boxtimes V}^{\on{loc}(m),\diamond}
\arrow[dd,"a_{\on{Hecke}_{(V^* \otimes U)\boxtimes V}^{\on{loc}(m)}}"]
\\
& \on{Hecke}^{0,\on{loc}(m),\diamond\diamond}_{U| (V^* \otimes U)\boxtimes V}  \arrow[dr ] \arrow[dl] 
& 
\\
 \on{Hecke}_{U}^{\on{loc}(m),\diamond\diamond}
& & 
 \on{Hecke}_{(V^* \otimes U)\boxtimes V}^{\on{loc}(m),\diamond\diamond},
\end{tikzcd}
\]
see appendix \ref{serepoir} for the notation of $a_{?}$, and \ref{qrerewrewrf} for the truncated Witt vector Hecke stacks and moduli spaces of Witt vector shtukas. We know from appendix \ref{qrerewrewrf} that the truncated version does not affect cohomology, so we can naturally view the cohomological correspondences on the (analytification of) truncated Witt vector Hecke stacks or moduli spaces of Witt vector shtukas  as being defined on the untruncated ones.  

We can perform the same construction as of $\mathscr{C}_{\sharp}^+$ (in the theory of perfect stacks) to obtain 
\[
\mathscr{C}_{\sharp}^{\on{loc}(m), +}: (\on{Hecke}_{U}^{\on{loc}(m)}, IC_{U}) \longrightarrow (\on{Hecke}_{(V^* \otimes U)\boxtimes V}^{\on{loc}(m)}, IC_{(V^* \otimes U)\boxtimes V})
\]
supported on 
\[
\begin{tikzcd}
& \on{Hecke}^{0,\on{loc}(m)}_{U| (V^* \otimes U)\boxtimes V}  \arrow[dr ] \arrow[dl] 
& 
\\
 \on{Hecke}_{U}^{\on{loc}(m)}
& & 
 \on{Hecke}_{(V^* \otimes U)\boxtimes V}^{\on{loc}(m)},
\end{tikzcd}
\]
which is precisely the creation correspondence constructed in \cite[Example 6.1.9]{2017arXiv170705700X}.  Now 6) of theorem \ref{gsatake} (and the comparison results in appendices \ref{serepoir} and \ref{sdkkdkdkd}) tells us that 
\[
\mathscr{C}_{\sharp}^{\on{loc}(m),+,\diamond\diamond} = \mathscr{C}_{\sharp}^{\on{loc},+}
\]
under the identification of sheaves on truncated and untruncated Hecke stacks in \ref{qrerewrewrf}. By definition of $\mathscr{C}_{\sharp}^{\on{loc}(m),+,\diamond}$,
see appendix \ref{sdkkdkdkd}, and $\mathscr{C}_{\sharp}^{+}$ being the pullback of $\mathscr{C}_{\sharp}^{\on{loc},+}$, we have
\[
\mathscr{C}_{\sharp}^{\on{loc}(m),+,\diamond} = \mathscr{C}_{\sharp}^{+}.
\]
 Then the commutation between the analytification and pullback of cohomological correspondences (proposition \ref{fjdklfsjdkeie}) tells us that 
\[
\mathscr{C}_{\sharp}^{\on{loc}(m,n),\diamond} = \mathscr{C}_{\sharp}^W
\]
again under the identification of sheaves on truncated and untruncated Witt vector Hecke stacks and moduli spaces of Witt vector shtukas, where 
$\mathscr{C}_{\sharp}^{\on{loc}(m,n)}$
is the pullback of $\mathscr{C}_{\sharp}^{\on{loc}(m)}$ 
along 
\[
\begin{tikzcd}
&
 \on{Sht}^{0,\on{loc}(m,n)}_{U|(V^* \otimes U)\boxtimes V}  \arrow[dr ] \arrow[dl] \arrow[dd,"\epsilon_{U| (V^* \otimes U)\boxtimes V}^{\on{loc}(m,n)}"]
& 
\\
\on{Sht}_{U}^{\on{loc}(m,n)} \arrow[dd," \epsilon_{U}^{\on{loc}(m,n)} "]
& & 
\on{Sht}_{(V^* \otimes U)\boxtimes V}^{\on{loc}(m,n)} \arrow[dd,"\epsilon_{ (V^* \otimes U)\boxtimes V}^{\on{loc}(m,n)}"]
\\
&
\on{Hecke}_{U|(V^* \otimes U)\boxtimes V}^{0,\on{loc}(m)}
\arrow[dr ] \arrow[dl]
& 
\\
\on{Hecke}_{U}^{\on{loc}(m)}
& & 
\on{Hecke}_{(V^* \otimes U)\boxtimes V}^{\on{loc}(m)}.
\end{tikzcd}
\]
As usual, we use $\epsilon_?$ to denote the (in the sense of perfect stack) forgetting map (viewing the Frobenius twist of $\mathcal{P}_1$ as $\mathcal{P}_2$). $\mathscr{S}_{\sharp}^{\on{loc}(m,n)}$ is exactly the construction of the creation correspondence on moduli spaces of Witt vector shtukas in \cite[Example 6.1.9]{2017arXiv170705700X}, so we have proved that $\mathscr{C}_{\sharp}^W$ is the analytification of the creation correspondence in $loc.cit$.  
\end{proof}

\subsection{The annihilation correspondences}

Dually, we have the annihilation correspondences.

 Let $\zeta: \{1\} \longrightarrow \{1,2,3\}$ be a map defined by $\zeta(1)=3$,
and
\[
\on{Sht}_{V \boxtimes V^* \boxtimes U | U , s\times s\times -|-}^{(\{1\},\{2,3\}) |(\{1\}),0} \subset \on{Sht}^{(\{1\},\{2,3\}) |(\{1\}), 0}_{V \boxtimes V^* \boxtimes U | U }
\times_{\on{Spd}(\Breve{\mathbb{Z}}_p)^3 \times \on{Spd}(\Breve{\mathbb{Z}}_p),\on{id} \times \zeta } \on{Spd}(\Breve{\mathbb{Z}}_p)^3
\]
be the v-substack parametrizing modifications
\[
\begin{tikzcd}
\mathcal{P}_1  \arrow[r,dashed] \arrow[d,equal] &
\mathcal{P}_2 \arrow[r,dashed] &
\on{Frob}_S^*\mathcal{P}_1
\arrow[d,equal]
\\
\mathcal{P}_1 \arrow[rr,dashed]  & & 
\on{Frob}_S^*\mathcal{P}_1 
\end{tikzcd}
\]
with the first line in 
\[
\on{Sht}_{V\boxtimes V^* \boxtimes U, s\times s \times - }^{(\{1\},\{2,3\})} := \on{Sht}_{V\boxtimes V^* \boxtimes U }^{(\{1\},\{2,3\})}|_{\on{Spd}(\overline{\mathbb{F}_p}) \times \on{Spd}(\overline{\mathbb{F}_p}) \times \on{Spd}(\Breve{\mathbb{Z}}_p)},
\]
and the second line in $\on{Sht}_{U}^{(\{1\})}$. Moreover, the legs indexed by $\mu$ on the first and the second row are the same. 

Similarly, let
\[
\on{Hecke}_{V \boxtimes V^* \boxtimes U | U, s\times s\times -|- }^{(\{1\},\{2,3\}) |(\{1\}),0} \subset \on{Sht}^{(\{1\},\{2,3\}) |(\{1\}), 0}_{V \boxtimes V^* \boxtimes U | U}
\times_{\on{Spd}(\Breve{\mathbb{Z}}_p)^3 \times \on{Spd}(\Breve{\mathbb{Z}}_p),\on{id} \times \zeta } \on{Spd}(\Breve{\mathbb{Z}}_p)^3
\]
be the v-substack parametrizing modifications
\[
\begin{tikzcd}
\mathcal{P}_1  \arrow[r,dashed] \arrow[d,equal] &
\mathcal{P}_2 \arrow[r,dashed] &
\mathcal{P}_3
\arrow[d,equal]
\\
\mathcal{P}_1' \arrow[rr,dashed]  & & 
\mathcal{P}_2'
\end{tikzcd}
\]
with the first line in 
\[
\on{Hecke}_{V\boxtimes V^* \boxtimes U,s\times s\times - }^{(\{1\},\{2,3\})}
:=\on{Hecke}_{V\boxtimes V^* \boxtimes U }^{(\{1\},\{2,3\})}|_{\on{Spd}(\overline{\mathbb{F}_p}) \times \on{Spd}(\overline{\mathbb{F}_p}) \times \on{Spd}(\Breve{\mathbb{Z}}_p)},
\]
and the second line in $\on{Hecke}_{U}^{(\{1\})}$. Moreover, the legs indexed by $U$ on the first and the second row are the same. 

The annihilation correspondence is defined to be the cohomological correspondence 
\begin{equation} \label{weqqwww}
\mathscr{C}_{\flat} : 
(\on{Sht}_{ V \boxtimes V^* \boxtimes U, s\times s \times -}^{(\{1\},\{2,3\})}, \epsilon_{ V \boxtimes V^* \boxtimes U}^{(\{1\},\{2,3\}),*} \mathscr{S}_{ V \boxtimes V^* \boxtimes U}^{(\{1\},\{2,3\})})
\longrightarrow
(\on{Sht}_{U}^{(\{1\})}, \epsilon_{\mu}^{(\{1\}),*} \mathscr{S}_{U}^{(\{1\})})
\end{equation}
supported on 
$\on{Sht}_{ V \boxtimes V^* \boxtimes U| U, s\times s\times - |-}^0$ that is   the pullback
of 
\begin{equation} \label{qpqpppq}
\mathscr{C}_{\flat}^+ : 
( \on{Hecke}_{V \boxtimes V^* \boxtimes U,s\times s\times - }^{(\{1\},\{2,3\})},  \mathscr{S}_{ V \boxtimes V^* \boxtimes U}^{(\{1\},\{2,3\})})
\longrightarrow
(\on{Hecke}_{\mu}^{(\{1\})}, \mathscr{S}_{U}^{(\{1\})})
\end{equation}
defined in example \ref{examannih}
along 
\[
\begin{tikzcd}
&
\on{Sht}_{V \boxtimes V^* \boxtimes U | U,s\times s\times - |- }^{(\{1\},\{2,3\}) |(\{1\}),0}
\arrow[dr ] \arrow[dl] \arrow[dd,"\epsilon_{V \boxtimes V^* \boxtimes U|U}^{(\{1\},\{2,3\})|(\{1\})}"]
& 
\\
\on{Sht}_{V \boxtimes V^* \boxtimes U,s\times s\times -  }^{(\{1\},\{2,3\})}
\arrow[dd,"\epsilon_{V \boxtimes V^* \boxtimes U }^{(\{1\},\{2,3\})}"]
& &
\on{Sht}_{U}^{(\{1\})}
\arrow[dd,"\epsilon_{U}^{(\{1\})} "]
\\
& \on{Hecke}^{(\{1\},\{2,3\}) |(\{1\}),0}_{V \boxtimes V^* \boxtimes U | U,s\times s\times - |-}  \arrow[dr ] \arrow[dl] 
& 
\\
 \on{Hecke}_{V \boxtimes V^* \boxtimes U,s\times s\times -  }^{(\{1\},\{2,3\})}
 & &
 \on{Hecke}_{U}^{(\{1\})}.
\end{tikzcd}
\]

\subsubsection{The generic fiber}
We can identify the generic fiber of the annihilation correspondence, which is

\begin{equation} \label{annih}
\mathscr{C}_{\flat, \eta} : 
p_1^* \epsilon^{(\{1\},\{2\},\{3\}),*}_{V \boxtimes V^* \boxtimes U} \mathscr{S}_{V \boxtimes V^* \boxtimes U}^{(\{1\},\{2\},\{3\})}
\longrightarrow
p_2^! \epsilon_{U}^{(\{1\}),*} \mathscr{S}_U^{(\{1\})} 
\end{equation}
supported on
\[
\begin{tikzcd}
&
\on{Sht}_{V \boxtimes V^* \boxtimes U,s \times s \times \eta}^{(\{1\},\{2\},\{3\}),0} \arrow[dl,hook,"p_1"' ] \arrow[dr, "p_2"] 
& 
\\
\on{Sht}_{V \boxtimes V^* \boxtimes U,s\times s
\times\eta}^{(\{1\},\{2\},\{3\})} 
& &
\on{Sht}_{1\boxtimes U,s\times\eta}^{(\{1\},\{2\})}. 
\end{tikzcd}
\]
As before, we have the local Witt vector version, i.e. the cohomological correspondence 
\[
\mathscr{C}_{\flat}^{\on{loc},0,W} : p_1^*\epsilon^{\on{loc},W,*}_{V \boxtimes V^*} \mathscr{S}_{V \boxtimes V^*}^{\on{loc},(\{1\},\{2\})}  \longrightarrow 
p_2^!\epsilon^{\on{loc},W,*}_{1} \mathscr{S}_1^{\on{loc},(\{1\})}
\]
supported on
\[
\begin{tikzcd}
&
\on{Sht}_{V \boxtimes V^*}^{\on{loc}, W,0}  \arrow[dr,"p_2^{\on{loc}}" ] \arrow[dl,hook, "p_1^{\on{loc}}"'] 
& 
\\
\on{Sht}_{V \boxtimes V^*}^{\on{loc},W} 
& &
\on{Sht}_1^{\on{loc},W}
\end{tikzcd}
\]
corresponding to the evaluation map
\[
V \otimes V^* \longrightarrow 1.
\]

\begin{proposition} \label{annprop}
The annihilation cohomological correspondence $\mathscr{C}_{\flat}$ of (\ref{annih}) is naturally identified with the pullback of 
\[
\mathscr{C}_{\flat}^{\on{loc},0,W} \boxtimes \on{id}:
(\on{Sht}_{V \boxtimes V^*}^{\on{loc},W} \times  \on{Hecke}_{U,\eta}^{\on{loc},(\{1\})}, (\epsilon_{V \boxtimes V^*}^{\on{loc},W,*}
\mathscr{S}_{V \boxtimes V^*}^{\on{loc},(\{1\},\{2\})}) \boxtimes \mathscr{S}_U^{\on{loc},(\{1\})}) 
\longrightarrow
\]
\[
(\on{Sht}_1^{\on{loc},W} \times  \on{Hecke}_{U,\eta}^{\on{loc},(\{1\})} ,(\epsilon_1^{\on{loc},W,*} \mathscr{S}_1^{\on{loc},(\{1\})}) \boxtimes \mathscr{S}_U^{\on{loc},(\{1\})})
\]
along the restriction diagram
\[
\begin{tikzcd}
&
\on{Sht}_{V \boxtimes V^* \boxtimes U, s\times s\times \eta}^{(\{1\},\{2\},\{3\}),0} \arrow[dr,"p_2" ] \arrow[dl,,hook, "p_1"'] \arrow[dd,""]
& 
\\
\on{Sht}_{V \boxtimes V^* \boxtimes U,s\times s\times \eta}^{(\{1\},\{2\},\{3\})} \arrow[dd,""]
& &
\on{Sht}_{1\boxtimes U,s
\times \eta}^{(\{1\},\{2\})} \arrow[dd,""]
\\
&
\on{Sht}_{V \boxtimes V^*}^{\on{loc}, W,0} \times  \on{Hecke}_{U,\eta}^{\on{loc},(\{1\})} \arrow[dr,"p_2^{\on{loc}} \times \on{id}" ] \arrow[dl,hook, "p_1^{\on{loc}} \times \on{id}"']
& 
\\
\on{Sht}_{V \boxtimes V^*}^{\on{loc},W} \times  \on{Hecke}_{U,\eta}^{\on{loc},(\{1\})}
& &
\on{Sht}_1^{\on{loc},W} \times  \on{Hecke}_{U, \eta}^{\on{loc},(\{1\})}. 
\end{tikzcd}
\]

Moreover, $\mathscr{C}_{\flat}^{\on{loc},0,W}$ is the $\diamond\diamond$-analytification of the 
corresponding truncated Witt vector version $\mathscr{C}_{\flat}^{\on{loc}(m,n),0}$ constructed in \cite[Example 6.1.9]{2017arXiv170705700X}. 
\end{proposition}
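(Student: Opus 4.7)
The plan is to mirror the proof of Proposition~\ref{192089280932} for the creation correspondence, dualizing the construction throughout. The only new input is to replace the unit morphism $1 \to V^* \otimes V$ of $\hat{G}^2$-representations by the evaluation morphism $V \otimes V^* \to 1$; since this only reverses the direction of an intertwiner and does not alter any underlying geometry, the full formal apparatus used for $\mathscr{C}_{\sharp}$ applies verbatim.

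Concretely, I would first construct the local Hecke-level annihilation correspondence
\[
\mathscr{C}_{\flat}^{\text{loc},+,0} : i^{\text{loc},*}\mathscr{S}_{V \boxtimes V^*}^{\text{loc},(\{1\},\{2\})} \longrightarrow \pi^{\text{loc},!}\mathscr{S}_1^{\text{loc},(\{1\})}
\]
supported on $Hecke_{V \boxtimes V^*}^{\text{loc},(\{1\},\{2\}),0}$, by feeding the evaluation map into the Hom--Corr identification together with parts 2) and 3) of Theorem~\ref{gsatake}. Relative to the creation case the roles of the two projections from the support swap: $i^{\text{loc}}$ is now the source projection (a closed immersion) and $\pi^{\text{loc}}$ the target projection (the convolution).

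I would then box with the identity correspondence on $(\mathscr{S}_U^{\text{loc},(\{1\})}, Hecke_{\mu,\eta}^{\text{loc},(\{1\})})$ and pull back along the annihilation analog of the large diagram from the creation case, which is obtained by exchanging the two outer factors of the support everywhere. The squares there remain Cartesian by the same argument as in the creation case (the base change property is insensitive to the direction of the cohomological morphism), so pullback of cohomological correspondences (Appendix A.2 of \cite{2017arXiv170705700X}) applies and yields precisely $\mathscr{C}_{\flat}$ of (\ref{annih}) together with its claimed factorization through $Sht_{V \boxtimes V^*}^{\text{loc},W,0} \times Hecke_{\mu,\eta}^{\text{loc},(\{1\})}$.

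Finally, the analytification statement is obtained in the same three-step cascade as in Proposition~\ref{192089280932}: part 6) of Theorem~\ref{gsatake} together with the comparison results in Appendices~\ref{serepoir}, \ref{qrerewrewrf}, \ref{sdkkdkdkd} identifies $\mathscr{C}_{\flat}^{\text{loc},+,0}$ with the $\diamond\diamond$-analytification of the truncated perfect-stack correspondence built from the evaluation map, and Proposition~\ref{fjdklfsjdkeie} on the commutation of analytification with pullback then transports this identification to the Shtuka level, yielding $\mathscr{C}_{\flat}^{\text{loc},0,W} = \mathscr{C}_{\flat}^{\text{loc}(m,n),0,\diamond\diamond}$. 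The main potential obstacle is purely bookkeeping: tracking which projections become closed immersions after dualization and verifying that the restriction of legs to the characteristic $p$ untilt is preserved throughout, so that the local correspondence stacks $Hecke^{\text{loc},0}$ remain defined (cf.\ Remark~\ref{u803r4u3}).
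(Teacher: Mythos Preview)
Your proposal is correct and matches the paper's approach: the paper gives no separate proof of Proposition~\ref{annprop} but simply introduces the annihilation correspondence with ``Dually, we have\ldots'' and then states ``We have the similar proposition,'' leaving the reader to dualize the construction leading to Proposition~\ref{creprop}. One small bookkeeping point: the direct analog you should be mirroring is Proposition~\ref{creprop} (the generic-fiber pullback statement for $\mathscr{C}_{\sharp}$), not Proposition~\ref{192089280932} (which concerns the special-fiber version $\mathscr{C}_{\sharp}^{W}$ and the $\diamond$-analytification rather than the $\diamond\diamond$-analytification); your step-by-step description in fact already matches the argument for~\ref{creprop}, so only the label needs correcting.
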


\begin{proof}
    Same as the proof of proposition \ref{creprop}.
\end{proof}

\subsubsection{The special fiber}

We can identify the annihilation correspondence at the special fiber as well.

\begin{proposition} \label{2303490092}
$\mathscr{C}_{\flat}^{W}$  is the pullback of the restriction of the annihilation correspondence (\ref{weqqwww})  to $\on{Spd}(\overline{\mathbb{F}_p})$
along the diagram
\[
\begin{tikzcd}
&
 \on{Sht}^{0,W}_{V\boxtimes (V^* \otimes U)|U}  \arrow[dr ] \arrow[dl] \arrow[dd,""]
& 
\\
\on{Sht}_{V\boxtimes (V^* \otimes U)}^{W} \arrow[dd," "]
& & 
\on{Sht}_{U}^{W} \arrow[dd,""]
\\
&
\on{Sht}_{V \boxtimes V^* \boxtimes U | U,s}^{(\{1\},\{2,3\})|(\{1\}),0}
\arrow[dr ] \arrow[dl] 
& 
\\
\on{Sht}_{V \boxtimes V^* \boxtimes U ,s}^{(\{1\},\{2,3\})}
& &
\on{Sht}_{U, s}^{(\{1\})}
\end{tikzcd}
\]
where 
\begin{equation} \label{mm,sdmd}
\mathscr{C}_{\flat}^{W} : (\on{Sht}_{V\boxtimes (V^* \otimes U) }^{W},(\epsilon^{W}_{V\boxtimes (V^* \otimes U) } )^*(r_{V\boxtimes (V^* \otimes U)})^*\mathscr{S}_{V \boxtimes (V^*\otimes U) }^{(\{1\},\{2\})} ) \longrightarrow
(\on{Sht}_{U}^W, (\epsilon_{U}^{W})^*r_{U}^*\mathscr{S}_U^{(\{1\})})
\end{equation}
is the pullback of (\ref{qpqpppq}) (restricted to $\on{Spd}(\overline{\mathbb{F}_p})$) along
\[
\begin{tikzcd}
&
\on{Sht}_{V\boxtimes (V^* \otimes U)| U}^{0, W}  \arrow[dr,"p_2" ] \arrow[dl, "p_1"'] \arrow[dd,"\epsilon^{W}_{V\boxtimes (V^* \otimes U)| U}"]
& 
\\
\on{Sht}_{V\boxtimes (V^* \otimes U)}^{W}  \arrow[dd,"\epsilon^{W}_{V\boxtimes (V^* \otimes U)} "]
& &
\on{Sht}_{U}^{W} 
\arrow[dd,"\epsilon_{U}^{W}"]
\\
&
\on{Hecke}_{V\boxtimes (V^* \otimes U)|U}^{0,W}
\arrow[dr ] \arrow[dl] \arrow[dd,"r_{V\boxtimes (V^* \otimes U)|U}"]
& 
\\
\on{Hecke}_{V\boxtimes (V^* \otimes U)}^{W}
\arrow[dd,"r_{V\boxtimes (V^* \otimes U)}"]
& & 
\on{Hecke}_{U}^{W}
\arrow[dd,"r_{U}"]
\\
& \on{Hecke}^{(\{1\},\{2\})|(\{1\}),0}_{V\boxtimes (V^* \otimes U) | U,s}  \arrow[dr ] \arrow[dl] 
& 
\\ 
 \on{Hecke}_{ V\boxtimes (V^* \otimes U),s }^{(\{1\},\{2\})}
 & &
  \on{Hecke}_{U,s}^{(\{1\})}.
  \end{tikzcd}
\]

 Moreover, $\mathscr{C}_{\flat}^W$  is the analytification of the annihilation correspondence in \cite[Example 6.1.9]{2017arXiv170705700X}, i.e.
 \[
 \mathscr{C}_{\flat}^{W} = \mathscr{C}_{\flat}^{\on{loc}(m,n), \diamond}
 \]
 under the identification in \ref{sdkkdkdkd} between sheaves on truncated and untruncated moduli spaces of Witt vector shtukas.

\end{proposition}

\begin{proof}
    Same as the proof of proposition \ref{192089280932}. 
\end{proof}

\subsection{The partial Frobenius}
Lastly, we define the partial Frobenius morphism. Let $ \boxtimes U_i$ be a representation of $\hat{G}^I$, with each $U_i$ a representation of $\hat{G}^{I_i}$. Moreover, we assume that 
$U_1$ is a representation of $(^L G)^{I_1}$, so $U_1$ is isomorphic to its Frobenius twist $^{\sigma}U_1$, and we fix such an isomorphism from now on. 

We define the partial Frobenius to be the map
\[
F: \on{Sht}_{U_1\boxtimes \cdots \boxtimes U_k}^{(I_1,\cdots,I_k)} 
\longrightarrow 
\on{Sht}_{U_2\boxtimes \cdots \boxtimes U_k \boxtimes U_1}^{(I_2,\cdots,I_k,I_1)}
\]
sending 
\[
\begin{tikzcd}
\mathcal{P}_1  \arrow[r,dashed, "\varphi_1"] &
\mathcal{P}_2 \arrow[r,dashed, "\varphi_2"] &
\cdots \cdots   \arrow[r,dashed,"\varphi_{k-1}" ]&
\mathcal{P}_k \arrow[r,dashed,"\varphi_k" ] &
\on{Frob}_S^*\mathcal{P}_1
\end{tikzcd}
\]
to 
\[
\begin{tikzcd}
\mathcal{P}_2 \arrow[r,dashed, "\varphi_2"] &
\cdots \cdots   \arrow[r,dashed,"\varphi_{k-1}" ]&
\mathcal{P}_k \arrow[r,dashed,"\varphi_k" ] &
\on{Frob}_S^*\mathcal{P}_1
\arrow[r,dashed, "\on{Frob}_S^*\varphi_1"] 
&
\on{Frob}_S^*\mathcal{P}_2. 
\end{tikzcd}
\]
Note that we have used the assumption on $U_1$ so that $\on{Frob}_S^*\varphi_1$ is still bounded by $U_1$. In general, $\on{Frob}_S^*\varphi_1 $ 
would be bounded by $^{\sigma}U_1$. 

We have a commutative diagram
\[
\begin{tikzcd}
 \on{Sht}_{U_1\boxtimes \cdots \boxtimes U_k}^{(I_1,\cdots,I_k)} 
\arrow[r, "F"]
\arrow[d, "\kappa^{(I_1,\cdots,I_k)}_{(I_1,I_2 \cup \cdots \cup I_k)} \circ  \epsilon_{}^{(I_1,\cdots,I_k)}"]
& 
\on{Sht}_{U_2\boxtimes \cdots \boxtimes U_k\boxtimes U_1}^{(I_2,\cdots,I_k,I_1)} 
\arrow[d, "\kappa_{(I_2 \cup \cdots \cup I_k,I_1)}^{(I_2,\cdots,I_k,I_1)} \circ  \epsilon_{}^{(I_2,\cdots,I_k,I_1)}"]
\\
\on{Hecke}^{(I_1)}_{U_1} \times \on{Hecke}_{U_2 \boxtimes \cdots \boxtimes U_k }^{(I_2,\cdots,I_k)}  
\arrow[r, "\on{Frob} \times \text{id}"]
&
\on{Hecke}^{(I_1)}_{U_1} \times \on{Hecke}_{U_2 \boxtimes \cdots \boxtimes U_k }^{(I_2,\cdots,I_k)}. 
\end{tikzcd}
\]

We have a canonical isomorphism 
\[
\mathscr{C}_F^+: \on{Frob}^* \mathscr{S}_{U_1}^{(I_1)} \cong 
 \mathscr{S}_{U_1}^{(I_1)},
\]
from which we obtain 
\[
F^* \epsilon_{}^{(I_2,\cdots,I_k,I_1),*} \mathscr{S}_{U_2\boxtimes \cdots \boxtimes U_k\boxtimes U_1}^{(I_2,\cdots,I_k,I_1)} 
\cong 
F^* \epsilon_{}^{(I_2,\cdots,I_k,I_1),*} \kappa_{(I_2 \cup \cdots \cup I_k,I_1)}^{(I_2,\cdots,I_k,I_1),*} (\mathscr{S}_{U_1}^{(I_1)} \boxtimes \mathscr{S}_{ U_2\boxtimes \cdots \boxtimes U_k}^{(I_2,\cdots,I_k)})
\]
\begin{align*}
 \cong & 
  \epsilon_{}^{(I_1,\cdots,I_k),*}
\kappa^{(I_1,\cdots,I_k),*}_{(I_1,I_2 \cup \cdots \cup I_k)} (\on{Frob} \times \text{id})^*(\mathscr{S}_{U_1}^{(I_1)} \boxtimes \mathscr{S}_{U_2\boxtimes \cdots \boxtimes U_k}^{(I_2,\cdots,I_k)})
\\
 \overset{\mathscr{C}_F^+ \boxtimes \text{id}}{\cong} &
\epsilon_{}^{(I_1,\cdots,I_k),*}
\kappa^{(I_1,\cdots,I_k),*}_{(I_1,I_2 \cup \cdots \cup I_k)}
(\mathscr{S}_{U_1}^{(I_1)} \boxtimes \mathscr{S}_{U_2\boxtimes \cdots \boxtimes U_k}^{(I_2,\cdots,I_k)})
\\
 \cong &
\epsilon_{}^{(I_1,\cdots,I_k),*}
\mathscr{S}_{U_1\boxtimes  \cdots \boxtimes U_k}^{(I_1,\cdots,I_k)},
\end{align*}
where the first and the last isomorphism are 4) of theorem \ref{gsatake}.
It defines a pullback cohomological correspondence 
\[
\mathscr{C}_{F} :
(\on{Sht}_{U_2\boxtimes \cdots \boxtimes U_k\boxtimes U_1}^{(I_2,\cdots,I_k,I_1)} , \epsilon_{U_2\boxtimes \cdots \boxtimes U_k\boxtimes U_1}^{(I_2,\cdots,I_k,I_1),*} \mathscr{S}_{U_2\boxtimes \cdots \boxtimes U_k\boxtimes U_1}^{(I_2,\cdots,I_k,I_1)})
\longrightarrow
(\on{Sht}_{U_1\boxtimes \cdots \boxtimes U_k}^{(I_1,\cdots,I_k)} ,\epsilon_{U_1\boxtimes \cdots \boxtimes U_k}^{(I_1,\cdots,I_k),*}
\mathscr{S}_{U_1\boxtimes \cdots \boxtimes U_k}^{(I_1,\cdots,I_k)}  )
\]
supported on 
\[
\begin{tikzcd}
& \on{Sht}_{U_1\boxtimes \cdots \boxtimes U_k}^{(I_1,\cdots,I_k)}
\arrow[dr,equal ] \arrow[dl,"F"'] & 
\\
\on{Sht}_{U_2\boxtimes \cdots \boxtimes U_k\boxtimes U_1}^{(I_2,\cdots,I_k,I_1)}  & & 
\on{Sht}_{U_1\boxtimes \cdots \boxtimes U_k}^{(I_1,\cdots,I_k)} .
\end{tikzcd}
\]

Similarly, we have partial Frobenius on moduli spaces of local Witt vector shtukas,   
\[
F^{\on{loc},W}: \on{Sht}_{U_1\boxtimes \cdots \boxtimes U_k}^{\on{loc},W} 
\longrightarrow 
\on{Sht}_{U_2\boxtimes \cdots \boxtimes U_k \boxtimes U_1}^{\on{loc},W}
\]
sending 
\[
\begin{tikzcd}
\mathcal{P}_1  \arrow[r,dashed, "\varphi_1"] &
\mathcal{P}_2 \arrow[r,dashed, "\varphi_2"] &
\cdots \cdots   \arrow[r,dashed,"\varphi_{k-1}" ]&
\mathcal{P}_k \arrow[r,dashed,"\varphi_k" ] &
\on{Frob}_S^*\mathcal{P}_1
\end{tikzcd}
\]
to 
\[
\begin{tikzcd}
\mathcal{P}_2 \arrow[r,dashed, "\varphi_2"] &
\cdots \cdots   \arrow[r,dashed,"\varphi_{k-1}" ]
&
\mathcal{P}_k \arrow[r,dashed,"\varphi_k" ] &
\on{Frob}_S^*\mathcal{P}_1
\arrow[r,dashed, "\on{Frob}_S^*\varphi_1"] 
&
\on{Frob}_S^*\mathcal{P}_2,
\end{tikzcd}
\]
where we view each $U_i$ as a representation of $\hat{G}$ through the diagonal embedding $\hat{G} \hookrightarrow \hat{G}^{I_i}$. 

There is a commutative diagram
\[
\begin{tikzcd}
\on{Sht}^{\on{loc},W}_{U_1\boxtimes \cdots \boxtimes U_k} 
\arrow[r, "F^{\on{loc},W}"]
\arrow[d, "\kappa_{(I_1,I_2 \cup\cdots \cup I_k)}^{\on{loc},(I_1,\cdots,I_k)} \circ  \epsilon_{U_1\boxtimes \cdots \boxtimes U_k}^{\on{loc},W}"]
& 
\on{Sht}^{\on{loc},W}_{U_2\boxtimes \cdots \boxtimes U_k \boxtimes U_1}
\arrow[d, "\kappa_{(I_2 \cup\cdots \cup I_k,I_1)}^{\on{loc},(I_1,\cdots,I_k)} \circ  \epsilon_{U_2\boxtimes \cdots \boxtimes U_k \boxtimes U_1}^{\on{loc},W}"]
\\
\on{Hecke}_{U_1,s}^{\on{loc},(I_1)} \times \on{Hecke}_{U_2\boxtimes \cdots \boxtimes U_k,s }^{\on{loc},(I_2,\cdots , I_k)}  \arrow[r, "\on{Frob} \times \text{id}"]
&
\on{Hecke}_{U_1,s}^{\on{loc},(I_1)} \times \on{Hecke}_{ U_2\boxtimes \cdots \boxtimes U_k,s}^{\on{loc},(I_2,\cdots , I_k)}.
\end{tikzcd}
\]
As for the case of moduli space of shtukas, the diagram induces a pullback cohomological correspondence 
\[
\mathscr{C}_F^{\on{loc},W} : F^{\on{loc},W*} 
\epsilon_{U_2\boxtimes \cdots \boxtimes U_k \boxtimes U_1}^{\on{loc},W,*} \mathscr{S}_{U_2\boxtimes \cdots \boxtimes U_k \boxtimes U_1}^{\on{loc},(I_2,\cdots,I_k,I_1)}
\cong 
\epsilon_{U_1\boxtimes \cdots \boxtimes U_k}^{\on{loc},W,*} \mathscr{S}_{U_1\boxtimes \cdots \boxtimes U_k}^{\on{loc},(I_1,\cdots,I_k)} 
\]
supported on 
\[
\begin{tikzcd}
& \on{Sht}^{\on{loc},W}_{U_1\boxtimes \cdots \boxtimes U_k}  \arrow[dr,equal ] \arrow[dl,"F^{\on{loc},W}"'] & 
\\
\on{Sht}^{\on{loc},W}_{U_2\boxtimes \cdots \boxtimes U_k \boxtimes U_1}
& & 
\on{Sht}^{\on{loc},W}_{U_1\boxtimes \cdots \boxtimes U_k}.
\end{tikzcd}
\]
which is the $\diamond\diamond$-analytification of the  partial Frobenius correspondence constructed in \cite[Lemma 6.1.11]{2017arXiv170705700X}, namely, 
\[
\mathscr{C}_{F}^W=\mathscr{C}_{F}^{\on{loc}(m,n), \diamond\diamond}
\] 
under the identification in \ref{qrerewrewrf} between sheaves on truncated and untruncated moduli spaces of Witt vector shtukas,
where $\mathscr{C}_{F}^{\on{loc}(m,n)}$ is the partial Frobenius cohomological correspondence considered in \cite[Lemma 6.1.11]{2017arXiv170705700X}. Note that the inverse of the partial Frobenius is used in $loc.cit.$ as it is better suited with the truncated moduli spaces of Witt vector shtukas, after passing to the analytic untruncated version it agrees with our construction. 

We now specialize to the situation that is relevant to us.

\begin{example}
We consider 
\[
F: \on{Sht}_{V\boxtimes V^*\boxtimes U,s\times s\times -}^{(\{1\},\{2,3\})} 
\longrightarrow 
\on{Sht}_{V^*\boxtimes U \boxtimes V, s\times - \times s}^{(\{1,2\},\{3\})}.
\]
Note that we have used our assumption that $V$ is a representation of $^LG$ here. We have the partial Frobenius cohomological correspondence 
\begin{equation} \label{mvbcbvc}
\mathscr{C}_F : ( \on{Sht}_{ V^*\boxtimes U \boxtimes V,s\times - \times s}^{(\{1,2\},\{3\})},\epsilon_{V^*\boxtimes U \boxtimes V}^{(\{1,2\},\{3\}),*}
\mathscr{S}_{V^* \boxtimes U \boxtimes V}^{(\{1,2\},\{3\})}  )
\longrightarrow 
(\on{Sht}_{V \boxtimes V^*\boxtimes U,s\times s\times -}^{(\{1\},\{2,3\})} , \epsilon_{V\boxtimes V^*\boxtimes U}^{(\{1\},\{2,3\}),*} \mathscr{S}_{V \boxtimes V^* \boxtimes U }^{(\{1\},\{2,3\})})
\end{equation}
supported on 
\[
\begin{tikzcd}
& \on{Sht}_{V\boxtimes V^*\boxtimes U,s\times s\times -}^{(\{1\},\{2,3\})} \arrow[dr,equal ] \arrow[dl,"F"'] & 
\\
\on{Sht}_{V^*\boxtimes U \boxtimes V,s\times - \times s}^{(\{1,2\},\{3\})}   & & 
\on{Sht}_{V\boxtimes V^*\boxtimes U,s\times s\times -}^{(\{1\},\{2,3\})}.
\end{tikzcd}
\]
If we restrict to the generic fiber, then under the identification in lemma \ref{1749903579012}, we have
\[
\mathscr{C}_{F,\eta} :
(\on{Sht}_{V^*\boxtimes U \boxtimes V,s\times\eta \times s}^{(\{1\},\{2\},\{3\})} , \epsilon_{V\boxtimes V^*\boxtimes U}^{(\{1\},\{2\},\{3\}),*} \mathscr{S}_{V^* \boxtimes U \boxtimes V}^{(\{1\},\{2\},\{3\})})
\longrightarrow
( \on{Sht}_{V\boxtimes V^*\boxtimes U,s\times s\times\eta}^{(\{1\},\{2\},\{3\})},\epsilon_{V\boxtimes V^*\boxtimes U}^{(\{1\},\{2\},\{3\}),*}
\mathscr{S}_{V^* \boxtimes U \boxtimes V}^{(\{1\},\{2\},\{3\})}  )
\]
supported on 
\[
\begin{tikzcd}
& \on{Sht}_{V\boxtimes V^*\boxtimes U,s\times s\times\eta}^{(\{1\},\{2\},\{3\})} \arrow[dr,equal ] \arrow[dl,"F"'] & 
\\
\on{Sht}_{V^*\boxtimes U \boxtimes V,s\times\eta \times s}^{(\{1\},\{2\},\{3\})}   & & 
\on{Sht}_{V\boxtimes V^*\boxtimes U,s\times s\times\eta}^{(\{1\},\{2\},\{3\})}.
\end{tikzcd}
\]

We want to relate it with the local Witt vector partial Frobenius correspondence
\begin{equation} \label{xajgralkdflk}
\mathscr{C}_F^{\on{loc},0,W} : F^{\on{loc},W*} 
\epsilon_{V^*\boxtimes V}^{\on{loc},W,*} \mathscr{S}_{V^*\boxtimes V}^{\on{loc},(\{1\},\{2\})}
\cong 
\epsilon_{V\boxtimes V^*}^{\on{loc},W,*} \mathscr{S}_{V\boxtimes V^*}^{\on{loc},(\{1\},\{2\})} 
\end{equation}
supported on 
\[
\begin{tikzcd}
& \on{Sht}^{\on{loc},W}_{V \boxtimes V^*}  \arrow[dr,equal ] \arrow[dl,"F^{\on{loc},W}"'] & 
\\
\on{Sht}^{\on{loc},W}_{V^* \boxtimes V}
& & 
\on{Sht}^{\on{loc},W}_{V \boxtimes V^*}.
\end{tikzcd}
\]
As usual, this is the $\diamond\diamond$-analytification of the corresponding truncated Witt vector version $\mathscr{C}_F^{\on{loc}(m,n),0}$. 
\end{example}

\begin{proposition} \label{parfprop}
The cohomological correspondence 
$\mathscr{C}_{F,\eta}$ is the same as the pullback of $\mathscr{C}_F^{\on{loc},0,W} \boxtimes \on{id}$ 
along the diagram 
\[
\begin{tikzcd}
&
\on{Sht}_{V\boxtimes V^*\boxtimes U,s\times s\times\eta}^{(\{1\},\{2\},\{3\})} \arrow[dr,equal ] \arrow[dl, "F"'] \arrow[dd,""]
& 
\\
\on{Sht}_{V^*\boxtimes U \boxtimes V,s\times \eta \times s}^{(\{1\},\{2\},\{3\})}  \arrow[dd,""]
& & 
\on{Sht}_{V\boxtimes V^*\boxtimes U,s\times s\times\eta}^{(\{1\},\{2\},\{3\})} \arrow[dd,""]
\\
&
\on{Sht}^{\on{loc},W}_{V \boxtimes V^*} \times \on{Hecke}_{U,\eta}^{\on{loc},(\{1\})}  \arrow[dr,equal  ] \arrow[dl, "F^{\on{loc},W} \times \on{id}"']
& 
\\
\on{Sht}^{\on{loc},W}_{V^* \boxtimes V} \times  \on{Hecke}_{U,\eta}^{\on{loc},(\{1\})}
& & 
\on{Sht}_{V\boxtimes V^*}^{\on{loc},W} \times  \on{Hecke}_{U,\eta}^{\on{loc},(\{1\})}, 
\end{tikzcd}
\]
where the vertical arrows are as before restriction maps.

Moreover, \[
\mathscr{C}_F^{\on{loc},0,W} = \mathscr{C}_F^{\on{loc}(m,n),0,\diamond\diamond},
\]
being the $\diamond\diamond $-analytification of the truncated Witt vector version considered in \cite[Lemma 6.1.11]{2017arXiv170705700X}. 
\end{proposition}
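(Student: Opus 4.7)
The plan is to assemble the observations made in the discussion immediately preceding the statement into a proof proceeding in two steps, one for each assertion of the proposition.

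For the first assertion, the key is to factorize the displayed two-row diagram further through a third row given by $Hecke_V^{\text{loc},(\{1\})} \times Hecke_{V^*}^{\text{loc},(\{1\})} \times Hecke_{\mu,\eta}^{\text{loc},(\{1\})}$, with the middle-to-bottom vertical arrows being the products of the canonical $\epsilon_?$ forgetting maps times the identity, and the bottom horizontal being $\text{Frob} \times \text{id} \times \text{id}$. This yields a two-stage diagram in which the top square is the one appearing in the statement and the bottom square is precisely the one that defines $\mathscr{C}_F^{\text{loc},0,W}$ as the pullback of $\mathscr{C}_F^+ \boxtimes \text{id}$. I would then verify that both squares are Cartesian. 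For the lower square this is immediate from the presentations of the Witt vector Shtuka spaces as quotients of Beilinson--Drinfeld affine Grassmannians by loop groups. For the upper square, this expresses the moduli-theoretic fact that a global Shtuka of the given type is equivalent to its restriction to the formal neighborhood of the characteristic $p$ untilt (which is a Witt vector Shtuka with legs $V, V^*$) together with an independent characteristic zero modification of type $\mu$, the Frobenius compatibility on the global side being uniquely determined by its restriction to the characteristic $p$ formal neighborhood because the $\mu$-modification is invisible there.

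Once both squares are Cartesian, the compatibility of pullback of cohomological correspondences with composition (from \cite{2017arXiv170705700X} appendix A.2, as recalled in the conventions section) identifies the pullback of $\mathscr{C}_F^{\text{loc},0,W} \boxtimes \text{id}$ along the top square with the pullback of $\mathscr{C}_F^+ \boxtimes \text{id}$ along the full composite. By the construction recalled in diagram (\ref{pafr}), this composite pullback is exactly $\mathscr{C}_{F,\eta}$, giving the desired identification.

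For the second assertion, I would apply theorem \ref{gsatake} 6) combined with the appendix \ref{qrerewrewrf} identification to see that the Satake sheaves $\mathscr{S}_{V\boxtimes V^*}^{\text{loc},(\{1\},\{2\})}$ pull back to the $\diamond\diamond$-analytifications of the intersection sheaves on the truncated Witt vector local Hecke stacks used in \cite{2017arXiv170705700X}. The partial Frobenius isomorphism $\mathscr{C}_F^+$ is constructed purely functorially in $V$, using our standing assumption that $V$ is a representation of $^LG$ so that $V \cong {}^\sigma V$, and this functoriality is respected on both sides. Proposition \ref{fjdklfsjdkeie}, which gives the commutation of $\diamond\diamond$-analytification with pullback of cohomological correspondences, then delivers the identification $\mathscr{C}_F^{\text{loc},0,W} = \mathscr{C}_F^{\text{loc}(m,n),0,\diamond\diamond}$ under the sheaf identifications from appendix \ref{sdkkdkdkd}. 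The main obstacle is the Cartesian-square verification in the first part --- in particular tracking the Frobenius-compatibility condition built into the definition of a Shtuka through restriction to local Witt vector data --- after which everything reduces to formal manipulations within the six-functor formalism, entirely parallel to the analogous identifications carried out for creation and annihilation in propositions \ref{creprop}, \ref{192089280932}, \ref{annprop}, and \ref{2303490092}.
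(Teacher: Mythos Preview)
Your overall strategy matches the paper's: factor through the triple Hecke product $Hecke_V^{\text{loc},(\{1\})} \times Hecke_{V^*}^{\text{loc},(\{1\})} \times Hecke_{\mu,\eta}^{\text{loc},(\{1\})}$ and identify both $\mathscr{C}_{F,\eta}$ and $\mathscr{C}_F^{\text{loc},0,W}\boxtimes\text{id}$ as pullbacks of the same correspondence $\mathscr{C}_F^+\boxtimes\text{id}\boxtimes\text{id}$ coming from $\text{Frob}^*\mathscr{S}_V^{\text{loc},(\{1\})}\cong\mathscr{S}_V^{\text{loc},(\{1\})}$, then invoke compatibility of pullback with composition. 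The second assertion is handled exactly as you describe.

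There is, however, a genuine error in your justification of Cartesianness. You claim that the upper square is Cartesian because ``a global Shtuka of the given type is equivalent to its restriction to the formal neighborhood of the characteristic $p$ untilt \dots\ together with an independent characteristic zero modification of type $\mu$.'' This is false: the vertical restriction map from $Sht_{V\boxtimes V^*\boxtimes\mu,\eta}^{(\{1\},\{2\},\{3\})}$ to $Sht_{V\boxtimes V^*}^{\text{loc},W}\times Hecke_{\mu,\eta}^{\text{loc},(\{1\})}$ is \emph{not} an isomorphism --- a global $\mathcal{G}$-torsor on $\mathcal{Y}_{[0,\infty)}(S)$ carries far more data than its two local completions. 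The actual reason the squares are Cartesian is much simpler and has nothing to do with moduli descriptions: the right square has identity maps as both horizontal arrows, so is trivially Cartesian, and the left square has the partial Frobenii $F$ and $F^{\text{loc},W}\times\text{id}$ as horizontal arrows, both of which are \emph{isomorphisms}, so the square is Cartesian automatically (base change along an isomorphism). This is what the paper uses. Once you replace your moduli-equivalence argument with this observation, the rest of your proof goes through and coincides with the paper's.
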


\begin{proof}

It is clear that the two squares in the diagram are Cartesian, so we can pullback cohomological correspondences. We have the commutative diagram
\begin{equation*} 
\begin{tikzcd}
\on{Sht}_{V\boxtimes V^*\boxtimes U, s\times s\times \eta}^{(\{1\},\{2,3\})} 
\arrow[r, "F"]
\arrow[d, "\kappa_{(\{1\},\{2,3\})}^{(\{1\},\{2,3\})} \circ  \epsilon_{V\boxtimes V^*\boxtimes U}^{(\{1\},\{2,3\})}"]
& 
\on{Sht}_{V^*\boxtimes U \boxtimes V, s\times \eta \times s}^{(\{1,2\},\{3\})} 
\arrow[d, "\kappa_{(\{1,2\},\{3\})}^{(\{1,2\},\{3\})} \circ  \epsilon_{ V^*\boxtimes U \boxtimes V}^{(\{1,2\},\{3\})}"]
\\
\on{Hecke}_{V,s}^{(\{1\})} \times \on{Hecke}_{ V^*\boxtimes U,s\times \eta}^{(\{1,2\})}  \arrow[r, "\on{Frob} \times \text{id}"]
\arrow[d,"\text{id} \times \kappa_{(\{1\},\{2\})}^{(\{1\},\{2\})}"]
&
\on{Hecke}_{V,s}^{(\{1\})} \times \on{Hecke}_{ V^*\boxtimes U,s\times \eta}^{(\{1,2\})}
\arrow[d,"\text{id} \times \kappa_{(\{1\},\{2\})}^{(\{1\},\{2\})}"]
\\
\on{Hecke}_{V,s}^{(\{1\})} \times \on{Hecke}_{ V^*,s}^{(\{1\})} \times \on{Hecke}_{U,\eta}^{(\{1\})}  \arrow[r, "\on{Frob} \times \text{id}\times \text{id} "]
\arrow[d]
&
 \on{Hecke}_{V,s}^{(\{1\})} \times \on{Hecke}_{ V^*,s}^{(\{1\})} \times \on{Hecke}_{U,\eta}^{(\{1\})}
 \arrow[d]
\\
\on{Hecke}_{V,s}^{\on{loc},(\{1\})} \times \on{Hecke}_{ V^*,s}^{\on{loc},(\{1\})} \times \on{Hecke}_{U,\eta}^{\on{loc},(\{1\})}  \arrow[r, "\on{Frob} \times \text{id}\times \text{id} "]
&
 \on{Hecke}_{V,s}^{\on{loc},(\{1\})} \times \on{Hecke}_{ V^*,s}^{\on{loc},(\{1\})} \times \on{Hecke}_{U,\eta}^{\on{loc},(\{1\})}
\end{tikzcd}
\end{equation*}
where the bottom vertical maps are global to local restriction maps. 
We observe that it
factorizes through
\[
\begin{tikzcd}
\on{Sht}_{V\boxtimes V^*\boxtimes U,s\times s\times\eta}^{(\{1\},\{2\},\{3\})} 
\arrow[r, "F"]
\arrow[d, ""]
& 
\on{Sht}_{V^*\boxtimes U \boxtimes V,s\times\eta \times s}^{(\{1\},\{2\},\{3\})} 
\arrow[d, ""]
\\
\on{Sht}^{\on{loc},W}_{V \boxtimes V^*} \times  \on{Hecke}_{U,\eta}^{\on{loc},(\{1\})}
\arrow[r, "F^{\on{loc},W}  \times \text{id}"]
\arrow[d, "\kappa_{(\{1\},\{2\})}^{\on{loc},(\{1\},\{2\})} \circ  \epsilon_{V\boxtimes V^*}^{\on{loc},W}\times \text{id}"']
& 
\on{Sht}^{\on{loc},W}_{V^* \boxtimes V}\times  \on{Hecke}_{U,\eta}^{\on{loc},(\{1\})}
\arrow[d, "\kappa_{(\{1\},\{2\})}^{\on{loc},(\{1\},\{2\})} \circ  \epsilon_{V\boxtimes V^*}^{\on{loc},W} \times \text{id}"']
\\
\on{Hecke}_{V,s}^{\on{loc},(\{1\})} \times \on{Hecke}_{ V^*,s}^{\on{loc},(\{1\})} \times \on{Hecke}_{U,\eta}^{\on{loc},(\{1\})}  \arrow[r, "\on{Frob} \times \text{id}\times \text{id} "]
&
 \on{Hecke}_{V,s}^{\on{loc},(\{1\})} \times \on{Hecke}_{ V^*,s}^{\on{loc},(\{1\})} \times \on{Hecke}_{U,\eta}^{\on{loc},(\{1\})},
\end{tikzcd}
 \]
 where the top vertical arrows are restriction maps,
and the proposition follows from the construction of $\mathscr{C}_F$ and $\mathscr{C}_F^{\on{loc},W}$. Note that in the construction of $\mathscr{C}_F$, we can also use the local Hecke stacks. 
\end{proof}

Lastly, we want to identify the special fiber of the partial Frobenius $\mathscr{C}_F$, i.e.
\[
\mathscr{C}_{F,s} :
(\on{Sht}_{U_2\boxtimes \cdots \boxtimes U_k\boxtimes U_1,s}^{(I_2,\cdots,I_k,I_1)} , \epsilon_{U_2\boxtimes \cdots \boxtimes U_k\boxtimes U_1}^{(I_2,\cdots,I_k,I_1),*} \mathscr{S}_{U_2\boxtimes \cdots \boxtimes U_k\boxtimes U_1}^{(I_2,\cdots,I_k,I_1)})
\longrightarrow
(\on{Sht}_{U_1\boxtimes \cdots \boxtimes U_k,s}^{(I_1,\cdots,I_k)} ,\epsilon_{U_1\boxtimes \cdots \boxtimes U_k}^{(I_1,\cdots,I_k),*}
\mathscr{S}_{U_1\boxtimes \cdots \boxtimes U_k}^{(I_1,\cdots,I_k)}  )
\]
supported on 
\[
\begin{tikzcd}
& \on{Sht}_{U_1\boxtimes \cdots \boxtimes U_k,s}^{(I_1,\cdots,I_k)}
\arrow[dr,equal ] \arrow[dl,"F"'] & 
\\
\on{Sht}_{U_2\boxtimes \cdots \boxtimes U_k\boxtimes U_1,s}^{(I_2,\cdots,I_k,I_1)}  & & 
\on{Sht}_{U_1\boxtimes \cdots \boxtimes U_k,s}^{(I_1,\cdots,I_k)} .
\end{tikzcd}
\]

We first define the partial Frobenius on moduli spaces of  Witt vector shtukas. Similar to the other two cases, it is defined to be   
\[
F^W: \on{Sht}_{U_1\boxtimes \cdots \boxtimes U_k}^{W} 
\longrightarrow 
\on{Sht}_{U_2\boxtimes \cdots \boxtimes U_k \boxtimes U_1}^{W}
\]
sending 
\[
\begin{tikzcd}
\mathcal{P}_1  \arrow[r,dashed, "\varphi_1"] &
\mathcal{P}_2 \arrow[r,dashed, "\varphi_2"] &
\cdots \cdots   \arrow[r,dashed,"\varphi_{k-1}" ]&
\mathcal{P}_k \arrow[r,dashed,"\varphi_k" ] &
\on{Frob}_S^*\mathcal{P}_1
\end{tikzcd}
\]
to 
\[
\begin{tikzcd}
\mathcal{P}_2 \arrow[r,dashed, "\varphi_2"] &
\cdots \cdots   \arrow[r,dashed,"\varphi_{k-1}" ]&
\mathcal{P}_k \arrow[r,dashed,"\varphi_k" ] &
\on{Frob}_S^*\mathcal{P}_1
\arrow[r,dashed, "\on{Frob}_S^*\varphi_1"] 
&
\on{Frob}_S^*\mathcal{P}_2. 
\end{tikzcd}
\]

We have the commutative diagram
\begin{equation} \label{rpoeirope}
\begin{tikzcd}
\on{Sht}_{U_1\boxtimes \cdots \boxtimes U_k}^{W} \arrow[d] \arrow[r,"F^{W}"]
&
\on{Sht}_{U_2\boxtimes \cdots \boxtimes U_k \boxtimes U_1}^{W}
\arrow[d]
\\
\on{Sht}_{U_1\boxtimes \cdots \boxtimes U_k, s}^{(I_1,\cdots,I_k)}
\arrow[r, "F"]
\arrow[d, "\kappa_{(I_1, I_2 \cup \cdots \cup I_k)}^{(I_1,\cdots,I_k)} \circ  \epsilon_{U_1\boxtimes \cdots \boxtimes U_k}^{(I_1,\cdots,I_k)}"']
& 
\on{Sht}_{U_1\boxtimes \cdots \boxtimes U_k,s}^{(I_2,\cdots,I_k,I_1)}  
\arrow[d, "\kappa_{(I_2 \cup \cdots \cup I_k,I_1)}^{(I_2,\cdots,I_k,I_1)} \circ  \epsilon_{U_1\boxtimes \cdots \boxtimes U_k}^{(I_2,\cdots,I_k,I_1)} "]
\\
 \on{Hecke}_{W_1}^{(I_1)} \times \on{Hecke}_{ W_2\boxtimes \cdots \boxtimes W_k,s}^{(I_2,\cdots, I_k)}  \arrow[r, "\on{Frob} \times \text{id} "]
&
 \on{Hecke}_{W_1}^{(I_1)} \times \on{Hecke}_{ W_2\boxtimes \cdots \boxtimes W_k,s}^{(I_2,\cdots, I_k)} 
\end{tikzcd}
 \end{equation}
 which factorizes as 
 \[
 \begin{tikzcd}
 \on{Sht}_{U_1\boxtimes \cdots \boxtimes U_k}^{W}
 \arrow[d,"\kappa_{(I_1, I_2 \cup \cdots \cup I_k)}^{(I_1,\cdots,I_k),W} \circ \epsilon_{U_1\boxtimes \cdots \boxtimes U_k}^{W}"] \arrow[r,"F^{W}"]
&
\on{Sht}_{U_2\boxtimes \cdots \boxtimes U_k \boxtimes U_1}^{W}
\arrow[d,"\kappa_{(I_2 \cup \cdots \cup I_k,I_1)}^{(I_2,\cdots,I_k, I_1),W} \circ \epsilon_{U_2\boxtimes \cdots \boxtimes U_k \boxtimes U_1}^{W}"]
\\
 \on{Hecke}_{U_1}^{W} \times \on{Hecke}_{ U_2\boxtimes \cdots \boxtimes U_k}^{W}  \arrow[r, "\on{Frob} \times \text{id} "]
 \arrow[d,"r_{U_1} \times r_{U_2\boxtimes \cdots \boxtimes U_k}"]
&
 \on{Hecke}_{U_1}^{W} \times \on{Hecke}_{ U_2\boxtimes \cdots \boxtimes U_k}^{W} 
 \arrow[d,"r_{U_1} \times r_{U_2\boxtimes \cdots \boxtimes U_k}"]
\\
 \on{Hecke}_{U_1,s}^{(I_1)} \times \on{Hecke}_{ U_2\boxtimes \cdots \boxtimes U_k,s}^{(I_2,\cdots,I_k)}  \arrow[r, "\on{Frob} \times \text{id} "]
&
 \on{Hecke}_{U_1,s}^{(I_1)} \times \on{Hecke}_{U_2\boxtimes \cdots \boxtimes U_k,s}^{(I_2,\cdots,I_k)}. 
\end{tikzcd}
 \]
 
The last diagram defines the Witt vector partial Frobenius correspondence
\begin{equation*} 
\mathscr{C}_F^{W} : F^{W,*}  
\epsilon_{U_2\boxtimes \cdots \boxtimes U_k \boxtimes U_1}^{W,*}
\kappa_{(I_2 \cup \cdots \cup I_k,I_1)}^{(I_2,\cdots,I_k, I_1),W,*}  
(r_{U_1} \times r_{U_2\boxtimes \cdots \boxtimes U_k})^*
\mathscr{S}_{U_1}^{(I_1)} \boxtimes \mathscr{S}_{U_2\boxtimes \cdots \boxtimes U_k}^{(I_2,\cdots,I_k)}
\end{equation*}
\[
\cong 
\epsilon_{U_1\boxtimes \cdots \boxtimes U_k}^{W,*}
\kappa_{(I_1, I_2 \cup \cdots \cup I_k)}^{(I_1,\cdots,I_k),W,*}  
(r_{U_1} \times r_{U_2\boxtimes \cdots \boxtimes U_k})^*
\mathscr{S}_{U_1}^{(I_1)} \boxtimes \mathscr{S}_{U_2\boxtimes \cdots \boxtimes U_k}^{(I_2,\cdots,I_k)}, 
\]
which, similar as before, is the $\diamond$-analytification of the  partial Frobenius correspondence constructed in \cite[Lemma 6.1.11]{2017arXiv170705700X}.

Now the diagram (\ref{rpoeirope}) tells us that $\mathscr{C}_F^{W} $ is the pullback of $\mathscr{C}_{F,s}$
along the first square of (\ref{rpoeirope}). Specializing to the case of our interest, we have

\begin{proposition} \label{3984938498}
The correspondence
\begin{equation} 
\label{erjweewwe}
\mathscr{C}_F^{W} : F^{W,*}  
\epsilon_{(V^* \otimes U)\boxtimes V}^{W,*}
\kappa_{(\{1\},\{2\})}^{(\{1\},\{2\}),W,*} 
(r_V \times r_{(V^* \otimes U)})^*
\mathscr{S}_{(V^*\otimes U)\boxtimes V}^{(\{1\},\{2\})}
\end{equation}
\[
\cong 
\epsilon_{V\boxtimes (V^* \otimes U)}^{W,*}
\kappa_{(\{1\},\{2\})}^{(\{1\},\{2\}),W,*} 
(r_V \times r_{(V^* \otimes U)})^*
\mathscr{S}_{V\boxtimes (V^*\otimes U)}^{(\{1\},\{2\})}
\]
supported on
\[
\begin{tikzcd}
&
\on{Sht}_{V\boxtimes (V^* \otimes U)}^{W}  \arrow[dr,equal  ] \arrow[dl, "F^{W} "'] 
& 
\\
\on{Sht}_{(V^* \otimes U)\boxtimes V}^{W}
& & 
\on{Sht}_{V\boxtimes (V^* \otimes U)}^{W}
\end{tikzcd}
\]
 is the pullback of
$\mathscr{C}_{F,s}$ 
 along the diagram 
\[
\begin{tikzcd}
&
\on{Sht}_{V\boxtimes (V^* \otimes U)}^{W}  \arrow[dr,equal  ] \arrow[dl, "F^{W} "'] 
\arrow[dd,""]
& 
\\
\on{Sht}_{(V^* \otimes U)\boxtimes V}^{W}
\arrow[dd,""]
& & 
\on{Sht}_{V\boxtimes (V^* \otimes U)}^{W}
\arrow[dd,""]
\\
&
\on{Sht}_{V\boxtimes V^*\boxtimes U, s}^{(\{1\},\{2,3\})} \arrow[dr,equal ] \arrow[dl, "F"'] 
& 
\\
\on{Sht}_{V^*\boxtimes U \boxtimes V,s}^{(\{1,2\},\{3\})}  
& & 
\on{Sht}_{V\boxtimes V^*\boxtimes U, s}^{(\{1\},\{2,3\})}. 
\end{tikzcd}
\]

Moreover, $\mathscr{C}_{F}^W$ is the $\diamond$-analytification of the partial Frobenius cohomological  correspondence considered in \cite[Lemma 6.1.11]{2017arXiv170705700X}. 
\end{proposition}

\subsection{The excursion operators}
We can now define the excursion operator associated to $V$ as the cohomological correspondence 
\[
S_V:= \mathscr{C}_{\flat} \circ \mathscr{C}_F \circ \mathscr{C}_{\sharp}
\]
from 
$(\on{Sht}_{\mu}, \epsilon_{\mu}^{(\{1\}),*} \mathscr{S}_U^{(\{1\})})$ 
to itself supported on 
\[
\on{Sht}_{\mu | \mu}^{V} := 
\underset{\nu}{\bigcup} \on{Sht}_{\mu | \mu}^{\nu},
\]
where $\nu$ ranges over highest weights of irreducible components of $V$.
Indeed, we have 
\begin{lemma}
The support of $S_V$ is $\on{Sht}_{\mu | \mu}^{V}$.
\end{lemma}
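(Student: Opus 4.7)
The plan is to unwind the composed support $\mathcal{Z}$ of $S_V$ as an iterated fiber product of the supports of the three building-block correspondences $\mathscr{C}_{\sharp}$, $\mathscr{C}_F$ and $\mathscr{C}_{\flat}$, and then identify $\mathcal{Z}$ with $Sht^V_{\mu \mid \mu}$ by direct inspection of the moduli problems. By construction,
\[
\mathcal{Z} \;=\; Sht^{0}_{\mu \mid V^{*}\boxtimes\mu\boxtimes V} \underset{Sht^{(\{1,2\},\{3\})}_{V^{*}\boxtimes\mu\boxtimes V}}{\times} Sht^{(\{1\},\{2,3\})}_{V\boxtimes V^{*}\boxtimes\mu} \underset{Sht^{(\{1\},\{2,3\})}_{V\boxtimes V^{*}\boxtimes\mu}}{\times} Sht^{0}_{V\boxtimes V^{*}\boxtimes\mu \mid \mu},
\]
where the first fiber product is taken along $p_{2}$ of the creation support against the partial Frobenius $F$, and the second along the identity against $p_{1}$ of the annihilation support.

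Unwinding the three moduli problems defined earlier in this section, a point of $\mathcal{Z}(S)$ amounts to the data of two Shtukas $\mathcal{P}, \mathcal{P}' \in Sht_\mu(S)$ together with modifications
\[
\psi_1 : \mathcal{P}' \dashrightarrow \mathcal{P}, \qquad \psi_2 : \mathcal{P} \dashrightarrow \text{Frob}_S^{*}\mathcal{P}',
\]
where $\psi_1$ has its unique leg at the characteristic $p$ untilt and is bounded by $V$, and $\psi_2$ has one leg at the characteristic $p$ untilt bounded by $V^{*}$ and one leg at the characteristic $0$ untilt bounded by $\mu$, subject to $\varphi_{\mathcal{P}'} = \psi_2 \circ \psi_1$ and $\varphi_{\mathcal{P}} = \text{Frob}_S^{*}\psi_1 \circ \psi_2$. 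The projection $\mathcal{Z} \to Sht_\mu \times Sht_\mu$ sends such a tuple to $(\mathcal{P}, \mathcal{P}')$.

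I would next identify this moduli description with $Sht^V_{\mu \mid \mu}$ in both directions. Since the only leg of $\psi_1$ lies at the characteristic $p$ untilt, $\psi_1$ restricts to an isomorphism on $\mathcal{Y}_{(0,\infty)}(S)$, so $\gamma := \psi_1^{-1}$ is a well-defined modification $\mathcal{P} \dashrightarrow \mathcal{P}'$ bounded by $V$, and the Shtuka--Frobenius compatibility $\text{Frob}_S^{*}\gamma \circ \varphi_{\mathcal{P}} = \varphi_{\mathcal{P}'} \circ \gamma$ reduces by substitution of the two composition relations to the tautology $\psi_2 = \psi_2$. Conversely, from $(\mathcal{P}, \mathcal{P}', \gamma) \in Sht^V_{\mu \mid \mu}(S)$ one recovers $\psi_1 := \gamma^{-1}$ and $\psi_2 := \varphi_{\mathcal{P}'} \circ \gamma$; the required boundedness of $\psi_2$ at its two legs follows because the potential pole of $\gamma$ sits at the characteristic $p$ untilt while that of $\varphi_{\mathcal{P}'}$ sits at the characteristic $0$ untilt, so the two bounds combine on disjoint divisors, and the two composition relations hold tautologically. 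These assignments are mutually inverse, so $\mathcal{Z} \cong Sht^V_{\mu \mid \mu}$.

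The main obstacle in carrying this out is the partial-Frobenius bookkeeping: one must match the creation output $(\mathcal{P}, \mathcal{P}_2, \varphi_1', \varphi_2')$ against $F(\mathcal{Q}_1, \mathcal{Q}_2, \psi_1, \psi_2) = (\mathcal{Q}_2, \text{Frob}_S^{*}\mathcal{Q}_1, \psi_2, \text{Frob}_S^{*}\psi_1)$, and then track that matching through the annihilation support via the identity projection, so as to see that the output Shtuka $\mathcal{P}'$ is $\mathcal{Q}_1$ rather than the more obvious $\mathcal{P}_2 = \text{Frob}_S^{*}\mathcal{Q}_1$. In particular, the identification $\text{Frob}_S^{*}\psi_1 = \varphi_2'$ of modifications bounded by $V$ uses the standing assumption that $V$ is a representation of $^{L}G$, so that $\text{Frob}^{*}V \cong V$ as representations of $\hat{G}$ and the bound on $\text{Frob}_S^{*}\psi_1$ is canonically the same as that on $\varphi_2'$. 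Once this pairing of torsors is traced correctly, the boundedness conditions and the two displayed relations defining $\mathcal{Z}$ are forced by the definitions of the three correspondences.
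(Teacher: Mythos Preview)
Your approach is essentially identical to the paper's: both unwind the composed support as the iterated fiber product of the three correspondence supports and then identify the resulting moduli problem with $Sht^V_{\mu\mid\mu}$ by inspection. One small slip: if $\psi_1:\mathcal{P}'\dashrightarrow\mathcal{P}$ is bounded by $V$, then $\psi_1^{-1}$ is bounded by $V^*$, not $V$; the clean fix is simply to take $\gamma=\psi_1$ (so the two legs land on $(\mathcal{P}',\mathcal{P})$), which already exhibits the point as an element of $Sht^V_{\mu\mid\mu}$ without inverting.
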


\begin{proof}
By definition, the support of $S_V$ is the fiber product of
\[
\begin{tikzcd}
& & &
\on{Sht}_{V\boxtimes V^*\boxtimes U | U}^{0}
\arrow[dl]
\\
\on{Sht}_{U|  V^*\boxtimes U \boxtimes V}^{0}
\arrow[dr ]
& &
\on{Sht}_{V\boxtimes V^*\boxtimes U}^{(\{1\},\{2,3\})}
\arrow[dl,"F"']
\\
&
\on{Sht}_{V^*\boxtimes U \boxtimes V}^{(\{1,2\},\{3\})} 
&
\end{tikzcd}
\]
which by definition parametrizes 
\[
\begin{tikzcd}
\mathcal{P}_1   \arrow[r,dashed] \arrow[d,dashed, "\varphi_1"]  & 
\on{Frob}_S^*\mathcal{P}_1  
\arrow[d, dashed, "\on{Frob}_S^*\varphi_1"]
& 
\\
\mathcal{P}_2   \arrow[r,dashed]  \arrow[ur, dashed]
&
\on{Frob}_S^*\mathcal{P}_2
\end{tikzcd}
\]
where the upper left triangle lies in 
$\on{Sht}_{V\boxtimes V^*\boxtimes U | U}^{0}$, 
and the lower right triangle lies in 
$\on{Sht}_{U|  V^*\boxtimes U \boxtimes V}^{0}$. In other words, $\varphi_1$ has a leg at the characteristic $p$ untilt and is bounded by $V$, the two horizontal modifications have legs varying in $\on{Spd}(\Breve{\mathbb{Z}}_p)$ and is bounded by $U$, while the diagonal one has two legs, with one fixed at characteristic $p$ untilt bounded by $V^*$, and the other varying in $\on{Spd}(\Breve{\mathbb{Z}}_p)$  bounded by $U$. 
This is then the same as 
\[
\begin{tikzcd}
\mathcal{P}_1 \arrow[r,"\psi", dashed]
\arrow[d,"\varphi_1",dashed]
& 
\on{Frob}_S^*\mathcal{P}_1
\arrow[d,dashed, "\on{Frob}_S^*\varphi_1"]
\\
\mathcal{P}_2 \arrow[r, dashed]
& 
\on{Frob}_S^*\mathcal{P}_2
\end{tikzcd}
\]
with the two lines lying in $\on{Sht}_{\mu}$ and $\varphi_1$ bounded by  $V$,
which is nothing but what 
$\on{Sht}_{\mu | \mu}^{V} $ parametrizes. Indeed, the diagonal is just $\psi \circ \varphi_1^{-1}$, and the modification is taking place at the legs of $\varphi_1$ and $\psi$, bounded by the corresponding cocharcaters of $\varphi_1^{-1}$ and $\psi$. 
\end{proof}

Note that we have 
\[
\on{Sht}^V_{\mu |\mu,\eta}  = \Gamma_V, 
\]
thus $S_V$ restricted on $\on{Sht}_{\mu,\eta}$ is supported on $\Gamma_V$, the same as the support of the Hecke correspondence. 

Similarly, we have the local Witt vector version cohomological correspondence
\[
S_V^{\on{loc},0,W}:= \mathscr{C}_{\flat}^{\on{loc},0,W} \circ \mathscr{C}_F^{\on{loc},0,W} \circ \mathscr{C}_{\sharp}^{\on{loc},0,W}
\]
from 
$(\on{Sht}_{0}^{\on{loc},W}, \epsilon_{0}^{\on{loc},(\{1\}),*} \mathscr{S}_1^{\on{loc},(\{1\})})$ 
to itself supported on $\Gamma_V^{\on{loc},W}$, which can be identified with $ \on{Sht}^{\on{loc},V, W}_{0|0}$. Indeed, similar to the proof of previous lemma, we have that the support of $S_V^{\on{loc},0,W}$ parametrizes diagrams
\[
\begin{tikzcd}
\mathcal{P}_1  \arrow[d,dashed, "\varphi_1"] \arrow[r,"\sim"]  & 
\on{Frob}_S^*\mathcal{P}_1  
\arrow[d, dashed, "\on{Frob}_S^*\varphi_1"]
& &
\\
\mathcal{P}_2  \arrow[ru,dashed,"\varphi_2"] 
\arrow[r, "\sim"]
 &
\on{Frob}_S^*\mathcal{P}_2
\end{tikzcd}
\]
which is nothing but
\[
\begin{tikzcd}
\mathcal{P}_1 \arrow[rr, "\sim"', "\varphi_2\varphi_1"]
\arrow[d,"\varphi_1",dashed]
& &
\on{Frob}_S^*\mathcal{P}_1
\arrow[d,dashed, "\on{Frob}_S^*\varphi_1"]
\\
\mathcal{P}_2 \arrow[rr, "\sim"', "(\on{Frob}_S^*\varphi_1)\varphi_2"]
& &
\on{Frob}_S^*\mathcal{P}_2
\end{tikzcd}
\]
whence 
$\Gamma_V^{\on{loc},W} = \on{Sht}^{\on{loc},V, W}_{0|0} $.

\begin{theorem} \label{ssw}
The cohomological correspondence $S_{V,\eta}$, the restriction of $S_V$ on $\on{Sht}_{\mu,\eta}$, is the pullback of 
\[
S_V^{\on{loc},0,W} \boxtimes \on{id} : (\on{Sht}_0^{\on{loc},W} \times  \on{Hecke}_{\mu,\eta}^{\on{loc},(\{1\})}, (\epsilon_0^{\on{loc},W,*}\mathscr{S}_1^{\on{loc},(\{1\})}) \boxtimes \mathscr{S}_U^{\on{loc},(\{1\})} ) 
\longrightarrow
\]
\[
 (\on{Sht}_0^{\on{loc},W} \times  \on{Hecke}_{\mu,\eta}^{\on{loc},(\{1\})}, (\epsilon_0^{\on{loc},W,*}\mathscr{S}_1^{\on{loc},(\{1\})}) \boxtimes \mathscr{S}_U^{\on{loc},(\{1\})} ) 
\]
along the diagram
\[
\begin{tikzcd}
&
\Gamma_V \arrow[dr,"p_2" ] \arrow[dl, "p_1"'] \arrow[dd,""]
& 
\\
\on{Sht}_{\mu,\eta} \arrow[dd,""]
& & 
\on{Sht}_{\mu,\eta} \arrow[dd,""]
\\
&
\Gamma_V^{\on{loc},W} \times  \on{Hecke}_{\mu,\eta}^{\on{loc},(\{1\})} \arrow[dr,"p_2^{\on{loc}} \times \on{id}" ] \arrow[dl, "p_1^{\on{loc}} \times \on{id}"']
& 
\\
\on{Sht}_0^{\on{loc},W} \times  \on{Hecke}_{\mu,\eta}^{\on{loc},(\{1\})}
& & 
\on{Sht}_0^{\on{loc},W} \times  \on{Hecke}_{\mu,\eta}^{\on{loc},(\{1\})}. 
\end{tikzcd}
\]
where the vertical arrows are restriction maps, in particular, the middle one sends 
\[
\begin{tikzcd}
\mathcal{P}_1 \arrow[rr, dashed, "\varphi_1"]
\arrow[d,"\gamma",dashed]
& &
\on{Frob}_S^*\mathcal{P}_1
\arrow[d,dashed, "\on{Frob}_S^*\gamma"]
\\
\mathcal{P}_2 \arrow[rr, dashed, "\varphi_2"]
& &
\on{Frob}_S^*\mathcal{P}_2
\end{tikzcd}
\]
to
\[
\begin{tikzcd}
\mathcal{P}_1|_{\on{Spec}(\mathcal{O}^{\wedge}_{\mathcal{Y}_{[0,\infty)}(S), S})}  \arrow[rr,dashed, "\gamma|_{\on{Spec}(\mathcal{O}^{\wedge}_{\mathcal{Y}_{[0,\infty)}(S), S})} "] & &
\mathcal{P}_2|_{\on{Spec}(\mathcal{O}^{\wedge}_{\mathcal{Y}_{[0,\infty)}(S), S})} 
\end{tikzcd}
\]
and
\[
\begin{tikzcd}
\mathcal{P}_1|_{\on{Spec}(\mathcal{O}^{\wedge}_{\mathcal{Y}_{[0,\infty)}(S), S^{\sharp}})} \arrow[rr, dashed, "\varphi_1"]
& &
\on{Frob}_S^*\mathcal{P}_1|_{\on{Spec}(\mathcal{O}^{\wedge}_{\mathcal{Y}_{[0,\infty)}(S), S^{\sharp}})}.
\end{tikzcd}
\]

Moreover, $S_V^{\on{loc},0,W}$ is the $\diamond\diamond$-analytification of 
\[
S_V^{\on{loc}(m,n),0}:= \mathscr{C}_{\flat}^{\on{loc}(m,n,0)}\circ \mathscr{C}_{F}^{\on{loc}(m,n,0)} \circ
\mathscr{C}_{\sharp}^{\on{loc}(m,n,0)},
\]
the truncated Witt vector excursion operator  in \cite[Theorem 6.0.1 (1)]{2017arXiv170705700X}.  
\end{theorem}

\begin{proof}
    Since we know that pullback of cohomological correspondences respects composition, the theorem follows from proposition \ref{creprop}, \ref{annprop} and \ref{parfprop}.  
\end{proof}

\begin{remark}
The theorem holds with $\on{Sht}_{\mu,\eta}$ replaced by arbitary $\on{Sht}_{\mu_{\bullet},\eta}^{(I_1, \cdots, I_k)}$. The proof goes without change. We state it only for shtukas with a single leg to simplify the notation and that is the only case matters for Shimura varieties.   
\end{remark}

Lastly, we want to study the excursion operators at the special fiber. We have the global Witt vector excursion operator, which is the composition of (\ref{weewreerrr}), (\ref{erjweewwe}) and (\ref{mm,sdmd}), i.e. \[
S_V^{W}:= \mathscr{C}_{\flat}^{W} \circ \mathscr{C}_F^{W} \circ \mathscr{C}_{\sharp}^{W}
\]
is the cohomological correspondence from 
$(\on{Sht}^{W}_{\mu}, \epsilon_{\mu}^{W,*} r_{\mu}^* \mathscr{S}_U^{(\{1\})})$
to itself supported on 
$\on{Sht}_{\mu | \mu}^{V, W}$. 

\begin{proposition} \label{oeurieurio}
$S_V^{W}$ is the  pullback of $S_{V,s}$ along 
\[
\begin{tikzcd}
&
\on{Sht}_{\mu | \mu}^{V,W} \arrow[dr,"" ] \arrow[dl, ""] \arrow[dd]
& 
\\
\on{Sht}_{\mu}^{W} \arrow[dd]
& & 
\on{Sht}_{\mu}^{W} \arrow[dd]
\\
&
\on{Sht}_{\mu | \mu, s}^V \arrow[dr,"p_2" ] \arrow[dl, "p_1"] 
& 
\\
\on{Sht}_{\mu,s} 
& & 
\on{Sht}_{\mu,s}.
\end{tikzcd}
\]

Moreover, $S_V^W$ is the analytification of the truncated Witt vector excursion operator considered in \cite{2017arXiv170705700X}, under the equivalence between sheaves on truncatd and untruncated moduli spaces of Witt vector shtukas in \ref{qrerewrewrf}. 
\end{proposition}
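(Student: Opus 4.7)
The plan is to combine the three preceding propositions, namely Proposition \ref{192089280932} for the creation correspondence, Proposition \ref{3984938498} for the partial Frobenius, and Proposition \ref{2303490092} for the annihilation correspondence, each of which identifies the corresponding Witt vector piece ($\mathscr{C}_{\sharp}^W$, $\mathscr{C}_F^W$, $\mathscr{C}_{\flat}^W$) as the pullback of the integral $p$-adic piece (restricted to the special fiber) along an explicit commutative diagram. Since pullback of cohomological correspondences is compatible with composition (recalled in the Convention section of the introduction), it will follow formally that $S_V^W = \mathscr{C}_{\flat}^W \circ \mathscr{C}_F^W \circ \mathscr{C}_{\sharp}^W$ is the pullback of $S_{V,s} = \mathscr{C}_{\flat,s} \circ \mathscr{C}_{F,s} \circ \mathscr{C}_{\sharp,s}$.

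First, I would verify that the three pullback diagrams from Propositions \ref{192089280932}, \ref{3984938498}, \ref{2303490092} can be assembled into a single commutative tower. The key point is that the intermediate Shtuka stacks appearing in the composition (namely $Sht_{V^* \boxtimes \mu \boxtimes V}^{(\{1,2\},\{3\})}$ and $Sht_{V \boxtimes V^* \boxtimes \mu}^{(\{1\},\{2,3\})}$ on the $p$-adic side, and $Sht_{V^*+\mu,V}^W$ and $Sht_{V,V^*+\mu}^W$ on the Witt vector side) must match up under the restriction maps from Witt vector to $p$-adic Shtukas at the special fiber. This is a direct check based on the fact that the restriction map sends a global Witt vector Shtuka to the corresponding $p$-adic Shtuka by pullback along $\text{Spec}(W(R^+)) \leftarrow \mathcal{Y}_{[0,\infty)}(S)$, and this restriction is compatible with all the forgetting, convolution, and partial Frobenius morphisms appearing in the definitions. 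Once the tower is assembled, a further direct computation (analogous to the identification of the support of $S_V$ as $Sht^V_{\mu|\mu}$ in the preceding lemma) shows that the composite support agrees with $Sht^{V,W}_{\mu|\mu}$, and the pullback of $S_{V,s}$ along the stated diagram equals $S_V^W$.

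For the second claim, each of the three building blocks is identified with the $\diamond\diamond$-analytification of the corresponding truncated Witt vector correspondence of \cite{2017arXiv170705700X} (this is part of the conclusion of each of the three cited propositions). Since analytification of cohomological correspondences is compatible with composition by Proposition \ref{fjdklfsjdkeie}, the composite $S_V^W$ is the $\diamond\diamond$-analytification of the truncated composite $S_V^{\text{loc}(m,n),0} = \mathscr{C}_{\flat}^{\text{loc}(m,n),0} \circ \mathscr{C}_F^{\text{loc}(m,n),0} \circ \mathscr{C}_{\sharp}^{\text{loc}(m,n),0}$, which is precisely the excursion operator constructed by Xiao--Zhu.

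The main obstacle, as usual in such arguments, is the bookkeeping of the diagrams and support conditions when composing three cohomological correspondences with different partition structures (the creation increases the number of legs from $1$ to $3$ via the partition $(\{1\},\{2,3\})$, the partial Frobenius permutes the legs to the partition $(\{1,2\},\{3\})$, and the annihilation collapses back to $1$ leg). Once one sets up the correct composite correspondence diagram carefully, the actual content is fully contained in the three previous propositions plus the general functoriality recalled from \cite{2017arXiv170705700X} appendix A.2, so no new geometric input is required.
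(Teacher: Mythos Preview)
Your approach is essentially identical to the paper's: the proposition is stated immediately after the sentence ``Then proposition \ref{3984938498}, proposition \ref{2303490092} and proposition \ref{192089280932} tells us that $S_V^{W}$ is the pullback of $S_{V,s}$,'' and the analytification claim was already argued just before that via Proposition \ref{fjdklfsjdkeie}, exactly as you outline.

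One small correction: in your second paragraph you write that $S_V^W$ is the $\diamond\diamond$-analytification of $S_V^{\text{loc}(m,n),0}$. This conflates two different objects. The superscript ``$\text{loc}(m,n),0$'' refers to the truncated \emph{local} correspondence on $Sht_0^{\text{loc}(m,n)}$, whose $\diamond\diamond$-analytification is $S_V^{\text{loc},0,W}$ (the object appearing in Theorem \ref{ssw}). The global Witt vector operator $S_V^W$ is instead the $\diamond$-analytification of the composite $\mathscr{C}_{\flat}^{\text{loc}(m,n)} \circ \mathscr{C}_F^{\text{loc}(m,n)} \circ \mathscr{C}_{\sharp}^{\text{loc}(m,n)}$ on the truncated global Witt vector Shtukas $Sht_{\mu}^{\text{loc}(m,n)}$; compare the statements $\mathscr{C}_{\sharp}^W = \mathscr{C}_{\sharp}^{\text{loc}(m,n),\diamond}$, $\mathscr{C}_F^W = \mathscr{C}_F^{\text{loc}(m,n),\diamond}$, $\mathscr{C}_{\flat}^W = \mathscr{C}_{\flat}^{\text{loc}(m,n),\diamond}$ in Propositions \ref{192089280932}, \ref{3984938498}, \ref{2303490092}. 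This does not affect the logic of your argument, only the labels.
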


\begin{proof}
    As $\mathscr{C}_{\sharp}^W$, $\mathscr{C}_{F}^W$ and $\mathscr{C}_{\flat}^W$ are all analytification of the truncated Witt vector counter part considered in \cite{2017arXiv170705700X}, so does the composite $S_V^W$ by proposition \ref{fjdklfsjdkeie}.    Then proposition \ref{3984938498}, proposition \ref{2303490092} and proposition \ref{192089280932} tells us that $S_V^{W}$ is the pullback of $S_{V,s}$, the restriction of $S_V$  to the special fiber. 
\end{proof}

\section{$S=T$ for moduli spaces of shtukas}

Let us first look more closely at $S_V^{\on{loc},W}$. Recall that it is supported on $\Gamma_V^{\on{loc},W} = \on{Hecke}_V^{\on{loc}} = L^{+,W}\mathcal{G} \setminus \text{Gr}^W_{G, \leq V}$, where $L^{+,W}\mathcal{G}$ is the v-sheaf whose value at a characteristic $p$ affinoid perfectoid space $\text{Spa}(R,R^+)$ is $\mathcal{G}(W(R))$, and $\text{Gr}^W_{G, \leq V}$
is the analytification of the Witt vector Schubert variety. 

Similar to Hecke operators on $\on{Sht}_{\mu,\eta}$, we also have them on $\on{Sht}_0^{\on{loc},W}$. There is the diagram
\[
\begin{tikzcd}
& \on{Sht}^{\on{loc},\nu,W}_{0|0}  \arrow[dr, "p_2"] \arrow[dl,"p_1"'] & 
\\
\on{Sht}_{0}^{\on{loc},W}  \arrow[dr,"\epsilon_{0}^{(\{1\})}"'] & & \on{Sht}_{0}^{\on{loc},W} 
\arrow[dl,"\epsilon_{0}^{(\{1\})}"]
\\
& \on{Hecke}_0^{\on{loc}} 
\arrow[d]
&
\\
& * & .
\end{tikzcd}
\]

\begin{lemma}
$p_1$ and $p_2$ are finite étale.
\end{lemma}

\begin{proof}

Given a morphism $\text{Spa}(R,R^+) \rightarrow \on{Sht}^{\on{loc},W}_0$ from an affinoid characteristic $p$ perfectoid space into $\on{Sht}_0^{\on{loc},W}$, which is determined by the data 
\[
\varphi_2: \mathcal{P}_2 \cong
\on{Frob}_S^*\mathcal{P}_2.
\]
Its fiber product with $p_2$ is the (analytification of) refined Witt vector affine Deligne-Lusztig variety $X^W_{0,\nu}(1)$ \footnote{See \cite[section 4.1.6]{2017arXiv170705700X}, where the refined affine Deligne-Lusztig variety $X_{\mu, \nu}(b)$ is defined as the intersection of the usual affine Deligne-Lusztig variety $X_{\mu}(b)$ with the Schubert cells $\on{Gr}_{\nu}$, i.e. $X_{\mu, \nu}(b)=X_{\mu}(b) \cap \on{Gr}_{\nu}$. } (base change to $R$), which parametrizes 
\[
\begin{tikzcd}
\mathcal{P}_1 \arrow[rr, "\sim"', "\varphi_1"]
\arrow[d,"\gamma",dashed]
& &
\on{Frob}_S^*\mathcal{P}_1
\arrow[d,dashed, "\on{Frob}_S^*\gamma"]
\\
\mathcal{P}_2 \arrow[rr, "\sim"', "\varphi_2"]
& &
\on{Frob}_S^*\mathcal{P}_2
\end{tikzcd}
\]
with $\gamma$ bounded by $\nu$ (the second line is fixed). This can be also characterized as the fiber product
\[
\begin{tikzcd}
X^W_{0,\nu}(1) \arrow[r,"a"]  \arrow[d, "b"]
&
Gr_{\nu}^W \arrow[d,"\Delta"]
\\
Gr_{\nu}^W \arrow[r,"\text{id} \times \varphi_2^{-1}\on{Frob}"]
&
Gr_{\nu}^W \times Gr_{\nu}^W
\end{tikzcd}
\]
where $a$ sends the above diagram to $\gamma\varphi_1^{-1}$, $b$ maps it to $\gamma$ 
and $\varphi_2^{-1} \on{Frob}$ is the map sending $\gamma : \mathcal{P}_1 \dashrightarrow \mathcal{P}_2$ to $\varphi_2^{-1}\on{Frob}_S^*\gamma$
(fixing a trivialization of $\mathcal{P}_2$), as $\varphi_2$ is an isomorphism, we see that $X^W_{0,\nu}(1) \cong Gr_{\nu}^W(\mathbb{F}_p)$ 
as zero dimensional varieties, which is obviously finite étale. Thus $p_2$ is finite étale, and similarly for $p_1$. 
\end{proof}

Moreover, we see that 
\[
\on{Sht}_{0|0}^{\on{loc},W} = L^{+,W}\mathcal{G} \setminus X^W_{0,\nu}(1) \cong L^{+,W}\mathcal{G} \setminus Gr_{\nu}^W(\mathbb{F}_p)
\]
which can be identified with the substack
\[
\Gamma_{\nu}^{\on{loc},W} := \on{Hecke}_{\nu}(\mathbb{F}_p) \subset \mathcal{G}(\mathbb{Z}_p) \setminus G(\mathbb{Q}_p) / \mathcal{G}(\mathbb{Z}_p)
\]
defined by the bounded by $\nu$ condition. Moreover, we have $\on{Sht}_0^{\on{loc},W} = L^{+,W}\mathcal{G} \setminus \bullet$ and $\mathscr{S}_1^{(\{1\})}= \overline{\mathbb{Q}}_\ell$, so any function $f \in C(\Gamma_{\nu}^W, \overline{\mathbb{Q}}_\ell) $ defines a cohomological correspondence $\mathscr{C}_f^{\on{loc},W}$ from 
$(\on{Sht}_{0}^{\on{loc},W}, \epsilon_{0}^{(\{1\}),*}\mathscr{S}_1^{\on{loc},(\{1\})})$ 
to itself supported on $\Gamma_{\nu}^{\on{loc},W}$
which is multiplication by $f(x)$ at $x \in \Gamma_{\nu}^{\on{loc},W}$. See \cite[Proposition 5.4.4]{2017arXiv170705700X}  for more details. 

We can do the same construction with truncated moduli spaces of Witt vector shtukas as in \cite{2017arXiv170705700X} to obtain $\mathscr{C}_{f}^{\on{loc}(m,n)}$, and we have
\[
\mathscr{C}_f^{\on{loc}(m,n),\diamond\diamond}= \mathscr{C}_f^{\on{loc},W}
\]
under the identification between sheaves on truncated and untruncated moduli spaces of Witt vector shtukas in appendix \ref{qrerewrewrf}. Note that this identification only holds at the sheaf level, and not at the space level. 

We have $\Gamma_V^{\on{loc},W} = \cup \Gamma_{\nu}^{\on{loc},W}$ where $\nu$ ranges over highest weights of irreducible summands of $V$, and we can similarly define $\mathscr{C}_f^{\on{loc},W}$ for $f \in C(\Gamma_{V}^{\on{loc},W}, \overline{\mathbb{Q}}_\ell)$
and 
\[
T_V^{\on{loc},W} := \mathscr{C}_{h_V}^{\on{loc},W}
\]
as cohomological correspondence from 
$(\on{Sht}_{0}^{\on{loc},W}, \epsilon_{0}^{\on{loc},(\{1\}),*}\mathscr{S}_1^{\on{loc},(\{1\})})$ 
to itself supported on $\Gamma_{V}^{\on{loc},W}$,
where $h_V$ is the function corresponding to $V$ under classical Satake isomorphism. Similarly, we can define $T_V^{\on{loc}(m,n)} = \mathscr{C}_{h_V}^{\on{loc}(m,n)}$, and we have 
\[
T_V^{\on{loc},W} = T_V^{\on{loc}(m,n),\diamond\diamond}
\]
under the canonical identification in \ref{qrerewrewrf} between sheaves on truncated and untruncated moduli spaces of Witt vector shtukas.

\begin{theorem} (Xiao--Zhu) \label{xiaozhu}
We have an equality 
\[
T_V^{\on{loc},W} = S_V^{\on{loc},0,W}
\]
as cohomological correspondence from 
$(\on{Sht}_{0}^{\on{loc},W}, \epsilon_{0}^{\on{loc},(\{1\}),*}\mathscr{S}_1^{\on{loc},(\{1\})})$ 
to itself supported on $\Gamma_{V}^{\on{loc},W}$.
\end{theorem}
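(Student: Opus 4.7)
\begin{innerproof}[Proof proposal]
The plan is to descend the identity from the untruncated local Witt vector level to the level of truncated perfect Witt vector stacks, at which point the statement is precisely the S=T theorem of \cite{2017arXiv170705700X}.

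First I would assemble the compatibilities already established for the three building blocks. The construction of Proposition \ref{creprop}, together with the identification $\mathscr{C}_{\sharp}^{\text{loc},+,0} = \mathscr{C}_{\sharp}^{\text{loc}(m,n),+,0,\diamond\diamond}$ coming from $6)$ of Theorem \ref{gsatake} and the appendix, gives $\mathscr{C}_{\sharp}^{\text{loc},0,W} = \mathscr{C}_{\sharp}^{\text{loc}(m,n),0,\diamond\diamond}$. The analogous statements for annihilation and partial Frobenius, which are given respectively by the discussion around Proposition \ref{annprop} and Proposition \ref{parfprop}, yield $\mathscr{C}_{\flat}^{\text{loc},0,W} = \mathscr{C}_{\flat}^{\text{loc}(m,n),0,\diamond\diamond}$ and $\mathscr{C}_F^{\text{loc},0,W} = \mathscr{C}_F^{\text{loc}(m,n),0,\diamond\diamond}$. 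Since the $\diamond\diamond$-analytification commutes with composition of cohomological correspondences (Proposition \ref{fjdklfsjdkeie}), I conclude
\[
S_V^{\text{loc},0,W} \;=\; S_V^{\text{loc}(m,n),0,\diamond\diamond}.
\]
On the Hecke side, $T_V^{\text{loc},W} = T_V^{\text{loc}(m,n),\diamond\diamond}$ holds by construction. Hence, using that analytification is fully faithful on the relevant derived categories via \ref{qrerewrewrf}, the theorem is reduced to the equality
\[
S_V^{\text{loc}(m,n),0} \;=\; T_V^{\text{loc}(m,n)}
\]
of cohomological correspondences on the truncated Witt vector local Shtuka $Sht_0^{\text{loc}(m,n)}$, supported on $\Gamma_V^{\text{loc}(m,n)} \subset L^+\mathcal{G} \backslash \mathrm{Gr}^W_{G,\le V}$.

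Second, this last identity is precisely the main S=T statement of Xiao--Zhu in \cite{2017arXiv170705700X}. On the zero-dimensional stack $\Gamma_V^{\text{loc}(m,n)} \cong L^+\mathcal{G}\backslash \mathrm{Gr}^W_{G,\le V}(\overline{\mathbb{F}_p})$, the function computing the trace of the composite $\mathscr{C}_{\flat}^{\text{loc}(m,n),0}\circ \mathscr{C}_F^{\text{loc}(m,n),0}\circ \mathscr{C}_{\sharp}^{\text{loc}(m,n),0}$ at a point $g \in G(\mathbb{Q}_p)/\mathcal{G}(\mathbb{Z}_p)$ is given by $\mathrm{tr}(g\sigma \mid V)$, which by the classical (unramified) Satake isomorphism is exactly $h_V(g)$. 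This is the zero-dimensional case of Lafforgue's S=T and is the content of \cite{2017arXiv170705700X}, from which the equality of cohomological correspondences follows.

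The main obstacle is not mathematical novelty but bookkeeping: one must check that the building-block correspondences defined here via Fargues--Scholze geometric Satake agree, after truncation and analytification, with those that Xiao--Zhu construct via Xinwen Zhu's Witt vector geometric Satake. This compatibility is the content of Propositions \ref{192089280932}, \ref{2303490092} and \ref{3984938498}, and it rests ultimately on property $6)$ of Theorem \ref{gsatake}, which compares the two Satake sheaves on the Witt vector affine Grassmannian. Once these identifications are in hand, the theorem follows by concatenation and citation.
\end{innerproof}
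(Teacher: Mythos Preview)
Your proposal is correct and follows the same approach as the paper: both reduce the statement to the truncated perfect level via the identifications $S_V^{\text{loc},0,W} = S_V^{\text{loc}(m,n),0,\diamond\diamond}$ and $T_V^{\text{loc},W} = T_V^{\text{loc}(m,n),\diamond\diamond}$ (already recorded in Theorem \ref{ssw} and the paragraph preceding the statement), and then invoke \cite{2017arXiv170705700X} theorem 6.0.1 (2). The paper's proof is the one-line citation, whereas you have spelled out the reduction more explicitly; the only minor slip is that Propositions \ref{192089280932}, \ref{2303490092}, \ref{3984938498} concern the global Witt vector correspondences $\mathscr{C}^W_?$ rather than the local ones $\mathscr{C}^{\text{loc},0,W}_?$ needed here, but you had already cited the correct references (Propositions \ref{creprop}, \ref{annprop}, \ref{parfprop}) earlier.
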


\begin{proof}
This is the analytification of \cite[Theorem 6.0.1 (2)]{2017arXiv170705700X}. More precisely, Xiao--Zhu prove the equality as cohomological correspondences on perfect stacks, and we take the $\diamond\diamond$-analytification of the corresponding cohomological correspondences to get an equality of cohomological correspondences on v-stacks. The comparison results (between cohomological correspondences on perfect stacks and their $\diamond\diamond$-analytification) we proved in  section \ref{e888888}, recalled in the previous paragraph,  tells us that the cohomological correspondences obtained by analytification coincide with the ones in the statement of the theorem. 
\end{proof}

We can now prove the desired $S=T$ theorem for moduli spaces of $p$-adic shtukas. 

\begin{theorem} \label{840238493}
There is a canonical identification
\[
S_{V,\eta} = T_V
\]
as cohomological correspondence from 
$(\on{Sht}_{\mu,\eta}, \epsilon_{\mu}^{(\{1\}),*}\mathscr{S}_U^{(\{1\})})$ 
to itself supported on $\Gamma_V$. 
\end{theorem}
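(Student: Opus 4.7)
The strategy is to reduce the identity $S_{V,\eta} = T_V$ to Xiao-Zhu's zero-dimensional Witt vector $S=T$ (Theorem \ref{xiaozhu}) by showing that both $S_{V,\eta}$ and $T_V$ are pullbacks of the same local Witt vector cohomological correspondence along the same diagram. Theorem \ref{ssw} already provides this for $S_{V,\eta}$, exhibiting it as the pullback of $S_V^{\text{loc},0,W} \boxtimes \mathrm{id}$ along the specified restriction diagram. The plan is to establish the parallel statement on the Hecke side: that $T_V$ is the pullback of $T_V^{\text{loc},W} \boxtimes \mathrm{id}$ along the same diagram.

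By $\overline{\mathbb{Q}_l}$-linearity in $V$, it suffices to treat each elementary summand $h_V(\nu(p)) \cdot \mathscr{C}_\nu$ in the definition of $T_V$. First I would verify that the square
\[
\begin{tikzcd}
Sht^{\nu}_{\mu|\mu,\eta} \arrow[r,"p_i"] \arrow[d] & Sht_{\mu,\eta} \arrow[d] \\
Sht_{0|0}^{\text{loc},\nu,W} \times Hecke_{\mu,\eta}^{\text{loc},(\{1\})} \arrow[r,"p_i^{\text{loc}} \times \mathrm{id}"] & Sht_0^{\text{loc},W} \times Hecke_{\mu,\eta}^{\text{loc},(\{1\})}
\end{tikzcd}
\]
is Cartesian for $i = 1, 2$, where the vertical maps restrict torsors on $\mathcal{Y}_{[0,\infty)}(S)$ to the formal neighborhoods of the characteristic $p$ and $S^{\sharp}$ untilts respectively. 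This holds because the modification $\gamma$ parametrized by $Sht^{\nu}_{\mu|\mu,\eta}$ occurs at the characteristic $p$ untilt (which is disjoint from the leg $S^{\sharp}$ of $Sht_{\mu,\eta}$) and is therefore rigidified by its restriction to $\mathrm{Spec}(W(R))$, while the data at $S^{\sharp}$ is untouched by the Hecke correspondence. Next, using part 6) of Theorem \ref{gsatake}, the sheaf $\mathscr{S}_U^{(\{1\})}$ on $Hecke_{\mu,\eta}$ is canonically the $r^*$-pullback of $\mathscr{S}_U^{\text{loc},(\{1\})}$ on $Hecke_{\mu,\eta}^{\text{loc}}$, so the canonical isomorphism defining $\mathscr{C}_\nu$ in (\ref{1222}) factors through this pullback. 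Since $\mathscr{C}_\nu^{\text{loc},W}$ on $(Sht_0^{\text{loc},W}, \mathscr{S}_1^{\text{loc},(\{1\})})$ is (scaled by the value of $h_V$) just the identity correspondence pulled back along the étale map $Sht_{0|0}^{\text{loc},\nu,W} \to Sht_0^{\text{loc},W}$, the pullback of $\mathscr{C}_\nu^{\text{loc},W} \boxtimes \mathrm{id}$ along the Cartesian diagram above matches $\mathscr{C}_\nu$ term by term. Reassembling with weights $h_V(\nu(p))$ exhibits $T_V$ as the pullback of $T_V^{\text{loc},W} \boxtimes \mathrm{id}$.

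Putting the three inputs together concludes the argument: $T_V$ and $S_{V,\eta}$ are pullbacks of the same cohomological correspondence $T_V^{\text{loc},W} \boxtimes \mathrm{id} = S_V^{\text{loc},0,W} \boxtimes \mathrm{id}$ (equality by Theorem \ref{xiaozhu}) along the same diagram, and hence agree. The main obstacle is the compatibility verification in the middle paragraph, which requires carefully tracking how the non-constant geometric Satake sheaf on the $Hecke_{\mu,\eta}^{\text{loc}}$ factor (corresponding to the leg at $S^{\sharp}$, fixed under the Hecke correspondence) and the constant sheaf on the $Sht_0^{\text{loc},W}$ factor (corresponding to the characteristic $p$ part, where $\gamma$ acts) combine correctly under the box product. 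This is essentially a diagram chase in the six-functor formalism for v-stacks, but it is exactly where the compatibility of geometric Satake with the restriction $r$ from global to local Hecke stacks (part 6 of Theorem \ref{gsatake}) does the crucial work of matching the characteristic-zero Hecke data with its characteristic-$p$ Witt vector shadow.
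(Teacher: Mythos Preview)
Your proposal is correct and follows essentially the same approach as the paper: both use Theorem~\ref{ssw} and Theorem~\ref{xiaozhu} to identify $S_{V,\eta}$ with the pullback of $T_V^{\text{loc},W}\boxtimes\text{id}$, and then verify that this pullback equals $T_V$ by working summand-by-summand with the $\mathscr{C}_\nu$. The paper carries out the last step via an explicit chain of isomorphisms through the Cartesian diagram, using \'etaleness of $p_2$ and $p_2^{\text{loc}}\times\text{id}$ to identify the base change map with the tautological $p_2^*\cong p_2^!$; your sketch of this step is more verbal but identifies the same mechanism.
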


\begin{remark}
Again, there is no difficulty generalizing the theorem to shtukas with several legs (in characteristic zero). 
\end{remark}

\begin{proof}
Theorem \ref{ssw} and \ref{xiaozhu}  together imply that $S_{V,\eta}$ is the pullback of $T_V^{\on{loc},W} \boxtimes \text{id}$ along
\[
\begin{tikzcd}
&
\Gamma_V \arrow[dr,"p_2" ] \arrow[dl, "p_1"'] \arrow[dd,""]
& 
\\
\on{Sht}_{\mu,\eta} \arrow[dd,""]
& & 
\on{Sht}_{\mu,\eta} \arrow[dd,""]
\\
&
\Gamma_V^{\on{loc},W} \times  \on{Hecke}_{\mu,\eta}^{\on{loc},(\{1\})} \arrow[dr,"p_2^{\on{loc}} \times \text{id}" ] \arrow[dl, "p_1^{\on{loc}} \times \text{id}"']
& 
\\
\on{Sht}_0^{\on{loc},W} \times  \on{Hecke}_{\mu,\eta}^{\on{loc},(\{1\})}
& & 
\on{Sht}_0^{\on{loc},W} \times  \on{Hecke}_{\mu,\eta}^{\on{loc},(\{1\})},
\end{tikzcd}
\]
which is the same as $T_V$. Indeed, we have a commutative diagram with the upper two  squares Cartesian
\[
\begin{tikzcd}
& \on{Sht}^{\nu}_{\mu|\mu,\eta} \arrow[dr, "p_2"] \arrow[dl,"p_1"'] \arrow[dd,"\pi"] & 
\\
\on{Sht}_{\mu,\eta} \arrow[dd] & & \on{Sht}_{\mu,\eta}
\arrow[dd,"\pi_1"]
\\
& \on{Sht}^{\on{loc},\nu,W}_{0|0} \times \on{Hecke}_{\mu,\eta}^{\on{loc},(\{1\})} \arrow[dr, "p_2^{\on{loc}}\times \text{id}"] \arrow[dl,"p_1^{\on{loc}}\times \text{id}"'] & 
\\
\on{Sht}_{0}^{\on{loc},W} \times \on{Hecke}_{\mu,\eta}^{\on{loc},(\{1\})} \arrow[dr,"\epsilon_{0}^{\on{loc},(\{1\})}\times \text{id}"'] 
& & 
\on{Sht}_{0}^{\on{loc},W} \times \on{Hecke}_{\mu,\eta}^{\on{loc},(\{1\})}
\arrow[dl,"\epsilon_{0}^{\on{loc},(\{1\})}\times \text{id}"]
\\
& * \times  \on{Hecke}_{\mu,\eta}^{\on{loc},(\{1\})} &
\end{tikzcd}
\]
where we abuse the notation by writing $\epsilon_{0}^{\on{loc},(\{1\})}$ the unique map from $\on{Sht}_{0}^{\on{loc},W}$ to the final point $*$. Then under the canonical identification $* \times  \on{Hecke}_{\mu,\eta}^{\on{loc},(\{1\})} = \on{Hecke}_{\mu,\eta}^{\on{loc},(\{1\})}$, the composition is precisely (\ref{heckdia}). Let $\mathscr{C}_{\nu}^{\on{loc},W} = \mathscr{C}_{1_{\Gamma_{\nu}^W}}^{\on{loc},W}$ with 
$1_{\Gamma_{\nu}^{\on{loc},W}}$ the characteristic function of $\Gamma_{\nu}^{\on{loc},W}$.
Then the pullback of $\mathscr{C}_{\nu}^{\on{loc},W} \boxtimes \text{id}$ is
\[
p_1^* \epsilon_{\mu}^{(\{1\}),*} \mathscr{S}_U^{(\{1\})} \cong 
\pi^* (p_1^{\on{loc}}\times \text{id})^* (\epsilon_{0}^{\on{loc},(\{1\})}\times \text{id})^* \mathscr{S}_1^{\on{loc},(\{1\})} \boxtimes \mathscr{S}_U^{\on{loc},(\{1\})} 
\]
\[
\cong
\pi^* (p_2^{\on{loc}}\times \text{id})^* (\epsilon_{0}^{\on{loc},(\{1\})}\times \text{id})^* \mathscr{S}_1^{\on{loc},(\{1\})} \boxtimes \mathscr{S}_U^{\on{loc},(\{1\})} 
\]
\[
\cong
\pi^* (p_2^{\on{loc}}\times \text{id})^! (\epsilon_{0}^{\on{loc},(\{1\})}\times \text{id})^* \mathscr{S}_1^{\on{loc},(\{1\})} \boxtimes \mathscr{S}_U^{\on{loc},(\{1\})} 
\]
\[
\overset{\text{BC}}{\longrightarrow} 
p_2^! \pi_1^*  (\epsilon_{0}^{\on{loc},(\{1\})}\times \text{id})^* \mathscr{S}_1^{\on{loc},(\{1\})} \boxtimes \mathscr{S}_U^{\on{loc},(\{1\})}
\cong
p_2^!  \epsilon_{\mu}^{\on{loc},(\{1\}),*} \mathscr{S}_U^{\on{loc},(\{1\})}.
\]
Using that $p_2$ and $p_2^{\on{loc}}\times \text{id}$ are étale, we see that 
\[
\pi^* (p_2^{\on{loc}}\times \text{id})^* (\epsilon_{0}^{\on{loc},(\{1\})}\times \text{id})^* \mathscr{S}_1^{\on{loc},(\{1\})} \boxtimes \mathscr{S}_U^{\on{loc},(\{1\})} 
\cong
\pi^* (p_2^{\on{loc}}\times \text{id})^! (\epsilon_{0}^{\on{loc},(\{1\})}\times \text{id})^* \mathscr{S}_1^{\on{loc},(\{1\})} \boxtimes \mathscr{S}_U^{\on{loc},(\{1\})} 
\]
\[
\overset{\text{BC}}{\longrightarrow} 
p_2^! \pi_1^*  (\epsilon_{0}^{\on{loc},(\{1\})}\times \text{id})^* \mathscr{S}_1^{\on{loc},(\{1\})} \boxtimes \mathscr{S}_U^{\on{loc},(\{1\})}
\]
is nothing but 
\[
\pi^* (p_2^{\on{loc}}\times \text{id})^* (\epsilon_{0}^{\on{loc},(\{1\})}\times \text{id})^* \mathscr{S}_1^{\on{loc},(\{1\})} \boxtimes \mathscr{S}_U^{\on{loc},(\{1\})} 
\cong
p_2^* \pi_1^* (\epsilon_{0}^{\on{loc},(\{1\})}\times \text{id})^* \mathscr{S}_1^{\on{loc},(\{1\})} \boxtimes \mathscr{S}_U^{\on{loc},(\{1\})}
\]
\[
\cong
p_2^! \pi_1^*  (\epsilon_{0}^{\on{loc},(\{1\})}\times \text{id})^* \mathscr{S}_1^{\on{loc},(\{1\})} \boxtimes \mathscr{S}_U^{\on{loc},(\{1\})},
\]
thus the pullback of 
$\mathscr{C}_{\nu}^{\on{loc},W} \boxtimes \text{id}$ 
is simply $\mathscr{C}_{\nu}$, hence the pullback of $T_V^{\on{loc},W}\boxtimes \text{id}$ is $T_V$ by adding them together. 
\end{proof}

\section{$S=T$ for Shimura varieties}

In this section, we assume that $G / \mathbb{Q}$ is a reductive group equipped with a $G_{\mathbb{R}}$-conjugacy class $X$ of homomorphisms
$h: \text{Res}_{\mathbb{C}/\mathbb{R}} \mathbb{G}_m \rightarrow
 G_{\mathbb{R}}$
 that gives rise to a Hodge type Shimura variety. Namely, there is a closed embedding $G \hookrightarrow \text{GSp}(Q, \psi)$ such that $X$ is mapped to the canonical Siegel conjugacy class, where $Q$ is a finite dimensional $\mathbb{Q}$-vector space, $\psi$ is a non-degenerate alternating form on $Q$, and $\text{GSp}(Q, \psi)$ is the similitude sympletic group with respect to $(Q, \psi)$, i.e. the automorphism of $Q$ preserving $\psi$ up to a constant. We fix such an embedding throughout, then by \cite{Deligne1982}, there is a set of tensors 
 $(s_{\alpha}) $ in 
 \[
 Q^{\otimes} :=\underset{s,t \in \mathbb{N}}{\bigoplus} Q^{\otimes s} \otimes (Q^{\vee})^{\otimes t } 
 \]
such that $G$ is the stabilizer of $(s_{\alpha})$, which we also fix from now on.

We can base change $h$ to $\mathbb{C}$ to obtain $h_{\mathbb{C}}: \mathbb{G}_m \times \mathbb{G}_m \rightarrow G_{\mathbb{C}}$, where the two factors $\mathbb{G}_m$ correspond to the identity embedding and the conjugation embedding respectively. Let $\mu: \mathbb{G}_m \rightarrow G_{\mathbb{C}}$ be the restriction of $h_{\mathbb{C}}$ to the first factor, whose conjugacy class has field of definition $E$, which is a number field. 

Moreover, we fix a prime $p$ such that $G$ is hyperspecial at $p$, i.e. there is a reductive group $\mathcal{G}$ over $\mathbb{Z}_{(p)}$ with generic fiber $G$. Let $K=K^p K_p \subset G(\mathbb{A}_f)$ be a compact open subgroup where $K_p = \mathcal{G}(\mathbb{Z}_p) \subset G(\mathbb{Q}_p)$  and $K^p \subset G(\mathbb{A}^p_f)$ are compact open. We assume that $K^p$ is neat for convenience, so that the corresponding Shimura variety is a scheme instead of a stack. By enlarging $Q$, we may assume that $Q$ has a lattice $Q_{\mathbb{Z}_{(p)}}$ over $\mathbb{Z}_{(p)}$ on which $\psi$ induces a perfect pairing 
$Q_{\mathbb{Z}_{(p)}} \times Q_{\mathbb{Z}_{(p)}} \rightarrow \mathbb{Z}_{(p)}$. Moreover, we can assume that the embedding 
$G \hookrightarrow \text{GSp}(Q, \psi)$
induces an embedding
\[
\mathcal{G} \hookrightarrow \text{GSp}(Q_{\mathbb{Z}_{(p)}}, \psi)
\]
as reductive groups over $\mathbb{Z}_{(p)}$, and the tensors $s_{\alpha}$'s extend to tensors of $Q_{\mathbb{Z}_{(p)}}$ whose stabilizer group is $\mathcal{G}$,  see \cite[Section 2.3.2]{Kisin2010}  for details. 

By \cite[Lemma 2.1.2]{Kisin2010}, there is a compact open subgroup $\Tilde{K}^p \subset \text{GSp}(Q, \psi)(\mathbb{A}_f^p)$ such that 
$K^p = \Tilde{K}^p \cap G(\mathbb{A}_f^p)$, and the injection of $G$ into $\text{GSp}(Q, \psi)$ induces a closed embedding 
\[
S_K \hookrightarrow \Tilde{S}_{\Tilde{K}^p\Tilde{K}_p} \times_{\mathbb{Q}} E
\]
of Shimura varieties, where 
$\Tilde{K}_p := \text{GSp}(Q_{\mathbb{Z}_{(p)}}, \psi)(\mathbb{Z}_p)$,
$S_K$ is the canonical model over $E$ of the Shimura variety associated to $(G, X)$ with level $K$, and $\Tilde{S}_{\Tilde{K}^p\Tilde{K}_p}$
is the Siegel variety associated to $\text{GSp}(Q, \psi)$ with level $\Tilde{K}^p\Tilde{K}_p$ (over $\mathbb{Q}$). 

Let us now fix a prime $\mathfrak{p}$ of $E$ lying above $p$. 
By \cite{Kisin2010}, $S_K$ has a canonical smooth integral  model $\mathfrak{S}_K$ over $\Breve{\mathbb{Z}}_p$. It is defined as the normalization of the closure of $S_K$ in $\mathfrak{A}_{\Tilde{K}^p\Tilde{K}_p}$, where 
$\mathfrak{A}_{\Tilde{K}^p\Tilde{K}_p}$ is the canonical integral model of $\Tilde{S}_{\Tilde{K}^p\Tilde{K}_p}$
as moduli space of polarized abelian varieties (over $\Breve{\mathbb{Z}}_p$). By construction, there is a canonical abelian scheme $A$ over $\mathfrak{S}_K$, being the pullback of the universal one on $\mathfrak{A}_{\Tilde{K}^p\Tilde{K}_p}$. We denote $\mathfrak{X}_K$ the $p$-adic completion of $\mathfrak{S}_K$, and $\mathcal{X}_K$ the adic generic fiber of $\mathfrak{X}_K$, i.e. the good reduction locus in $(S_K \times_{E} \Breve{\mathbb{Q}}_p)^{\text{ad}}$. Moreover, we denote 
$\bar{\mathfrak{X}}_K := \mathfrak{S}_{K,\overline{\mathbb{F}_p}}$
the special fiber of $\mathfrak{S}_K$.

\begin{remark}
The integral model $\mathfrak{S}_K$ is really defined over $\mathcal{O}_{E_v}$, where $v$ is a place of $E$ lying over $p$, and similarly for the other spaces in the previous paragraph. We can upgrade everything below to this situation. 
\end{remark}

By construction, the representation $Q$ defines a Betti local system 
\[
\mathcal{V}_B := Q \times^{G(\mathbb{Q})} (X \times G(\mathbb{A}_f)/K)
\]
on 
\[
S_K(\mathbb{C}) \cong G(\mathbb{Q}) \setminus (X \times G(\mathbb{A}_f)/K)
\]
which can be identified with the first homology of $A$ over $S_K(\mathbb{C})$, i.e. $\mathcal{V}_B =(R^1h_{B,*} \mathbb{Q})^{\vee}$
where $h: A \rightarrow S_K$ is the structure map of $A$, and $h_{B,*}$ means that we take  pushforward in the classical topology.  

We can similarly define 
\[
\mathcal{V}_B^{\otimes} := \underset{s,t \in \mathbb{N}}{\bigoplus} \mathcal{V}_B^{\otimes s} \otimes (\mathcal{V}_B^{\vee})^{\otimes t } 
\]
with duals and tensor products in the category of Betti $\mathbb{Q}$-local systems over $S_K$. Similar notation will apply to other categories of local systems, namely étale, de Rham (filtered vector bundles with flat connections) and crystalline ($F$-cyrstals).
Now the tensors $s_{\alpha}$'s define global sections
\[
s_{\alpha,B}: 1 \longrightarrow \mathcal{V}_B^{\otimes}
\]
of $\mathcal{V}_B^{\otimes}$ since they are $G(\mathbb{Q})$-invariant, where $1$ denotes the unit object in the rigid Tannakian category, i.e. the 1 dimensional constant local system. 

Let $\ell$ be a prime that can be either equal or not equal to $p$. 
By \cite[Lemma 2.2.1]{Kisin2010}, $s_{\alpha,B}$'s determine global sections
\[
s_{\alpha,\ell}: 1 \longrightarrow \mathcal{V}_{\ell}^{\otimes} 
\]
with 
$\mathcal{V}_{\ell}:= (R^1h_{\text{ét},*} \mathbb{Q}_\ell )^{\vee}$,
characterized by $s_{\alpha,\ell}$  and $s_{\alpha, B}$ matching under the comparison isomorphism between Betti and étale cohomology, where $h_{\text{ét},*}$ denotes pushforward in the étale topology of $h: A\rightarrow S_K$ (over $E$). Note that the real content of the statement is that the sections defined by the comparison isomorphism descend to $E$.  

We also have the de Rham tensors
\[
s_{\alpha,\text{dR}}: 1 \longrightarrow \mathcal{V}_{\text{dR}}^{\otimes}
\]
where
$\mathcal{V}_{\text{dR}}$ := $(R^1h_{*} \Omega^*_{A/\mathfrak{S}_K})^{\vee}$
is the relative de Rham homology, which is a filtered vector bundle with flat connection. Over $\mathbb{C}$, these are defined by the comparison between the de Rham and Betti cohomology, which descend to $E$ by Deligne's theory of absolute Hodge and the equivalence of analytic vector bundles with a flat connection and algebraic vector bundles with a flat connection with regular singularities, see \cite[Corollary 2.2.2]{Kisin2010}  for details. Moreover, it is proved in $loc.cit.$ that $s_{\alpha, \text{dR}}$'s extend to the integral model. 

We can restrict (the analytification of) $\mathcal{V}_p$ to the good reduction locus $\mathcal{X}_K$, then by \cite[Theorem 1.10]{SCHOLZE2013},  it is a de Rham local system in the sense of \cite{SCHOLZE2013} with the associated filtered vector bundle with flat connection $\mathcal{V}_{\text{dR}}$ (restricted on $\mathcal{X}_K$), i.e. there is an isomorphism
\begin{equation} \label{2840384930}
\mathcal{V}_p \otimes_{\hat{\mathbb{Z}}_p} \mathcal{O} \mathbb{B}_{\text{dR}}  
\cong \mathcal{V}_{\text{dR}} \otimes_{\mathcal{O}_{\mathcal{X}_K}} \mathcal{O} \mathbb{B}_{\text{dR}},
\end{equation}
where $\hat{\mathbb{Z}}_p$ is the sheaf on
the proétale site $\mathbb{X}_{\text{proét}}$
associated to the topological ring $\mathbb{Z}_p$, and 
$\mathcal{O}\mathbb{B}_{\text{dR}}$ 
is the period sheaf as defined in \cite[Definition 6.8]{SCHOLZE2013}  and corrected in \cite{scholze_2016}. See \cite[Section 6]{SCHOLZE2013}  for the definition of other period sheaves used in the following. We also have similar comparison isomorphism between $\mathcal{V}_{p}^{\otimes}$ and $\mathcal{V}_{\text{dR}}^{\otimes}$. By the result of Blasius and Wintenberger (\cite{Blauis}), the isomorphism takes $s_{\alpha, p} \otimes 1$ 
to 
$s_{\alpha, \text{dR}} \otimes 1$ at points defined over number fields, so they are matched globally by the flatness of these sections. 

We have two $\mathbb{B}_{\text{dR}}^+$-local systems 
\[
\mathbb{M}:= \mathcal{V}_p \otimes_{\hat{\mathbb{Z}}_p} \mathbb{B}_{\text{dR}}^+ 
\]
and 
\[
\mathbb{M}_0 :=( \mathcal{V}_{\text{dR}} \otimes_{\mathcal{O}_{\mathcal{X}_K}} \mathcal{O} \mathbb{B}_{\text{dR}}^+)^{\nabla =0}
\]
such that 
\begin{equation} \label{uiwuriwur}
\mathbb{M}\otimes_{\mathbb{B}_{\text{dR}}^+} \mathbb{B}_{\text{dR}} \cong \mathbb{M}_0 \otimes_{\mathbb{B}_{\text{dR}}^+} \mathbb{B}_{\text{dR}},
\end{equation}
which is taking $\nabla =0$ on both sides of (\ref{2840384930}). Moreover, we
have
$\mathbb{M} \subset \mathbb{M}_0$, see \cite[Proposition 7.9]{SCHOLZE2013}. Note that we have used homology instead of cohomology, which corresponds to a reverse of the inclusion. Since the identification (\ref{2840384930}) matches $s_{\alpha,p} \otimes 1$ with $s_{\alpha,\text{dR}} \otimes 1$, we see that the same holds for the identification (\ref{uiwuriwur}). 

Moreover,  by \cite[Corollary 2.2.4]{Caraiani_2017}, we can take the 0th graded piece of (\ref{2840384930}) to obtain the Hodge--Tate sequence
\[
0 \longrightarrow 
(\text{Lie} \ A) \otimes_{\mathcal{O}_{\mathcal{X}_K}} \hat{\mathcal{O}}_{\mathcal{X}_K} (1)
\longrightarrow 
\mathcal{V}_p \otimes_{\mathbb{Z}_p} \hat{\mathcal{O}}_{\mathcal{X}_K}
\longrightarrow 
(\text{Lie} \ A^{\vee} )^{\vee} \otimes_{\mathcal{O}_{\mathcal{X}_K}} \hat{\mathcal{O}}_{\mathcal{X}_K}
\longrightarrow 0,
\]
where 
$\hat{\mathcal{O}}_{\mathcal{X}_K} $ 
is the $p$-adic completion of $\nu^* \mathcal{O}_{\mathcal{X}_{K,\text{ét}}}$ 
with $\nu: \mathcal{X}_{K,\text{proét}} \longrightarrow \mathcal{X}_{K,\text{ét}}$ 
the structure morphism from the proétale site to the étale site.

Let $D(A)$ be the covariant $F$-crystal associated to $A$ on the special fiber $\Bar{\mathfrak{X}}_K$. 
As explained in \cite[Section 7.1.5]{2017arXiv170705700X}, the de Rham tensors give rise to 
\[
s_{\alpha,0} : 1 \longrightarrow D(A)^{\otimes}.
\]
Indeed, as $\mathfrak{S}_K$ is smooth over $\Breve{\mathbb{Z}}_p$ (and $\Breve{\mathbb{Z}}_p$ is unramified), the crystal $D(A)$ is equivalent to the vector bundle with flat connections $\mathcal{V}_{\on{dR}}$, so $s_{\alpha, \on{dR}}$'s give rise to $s_{\alpha, 0}$'s as arrows of crystals. Then it is proved in \cite{Kisin2010} that pointwisely $s_{\alpha,0}$'s are arrows of F-crystals, which gives what we want.

We now want to construct a map 
\[
\on{loc}_p: \mathfrak{S}_K^{\diamond} \longrightarrow 
\on{Sht}_{\mu}
\]
over $\on{Spd}(\Breve{\mathbb{Z}}_p)$. Recall that $\mathfrak{S}_K^{\diamond}$ is the sheafification of the presheaf sending  an affinoid perfectoid  $S=\on{Spa}(R,R^+)$ over $\overline{\mathbb{F}_p}$ to the set of untilts $S^{\sharp}=\on{Spa}(R^{\sharp},R^{\sharp,+})$ over $\on{Spa}(\Breve{\mathbb{Z}}_p,\Breve{\mathbb{Z}}_p)$ together with a map $\on{Spec}(R^{\sharp,+}) \longrightarrow \mathfrak{S}_K$ of schemes over $\Breve{\mathbb{Z}}_p$. 

Given 
$f: \on{Spec}(R^{\sharp,+}) \rightarrow \mathfrak{S}_K$, 
we have the $p$-divisible group 
$f^*A[p^{\infty}]$
over $\on{Spec}(R^{\sharp,+})$,
which by \cite[Theorem 17.5.2]{SW17}  is equivalent to a  finite projective $A_{\on{inf}}(R^{\sharp,+}):= W(R^+)$-module $\mathcal{M}$ together with an isomorphism
\[
\varphi_{\mathcal{M}}:
\varphi^* \mathcal{M} [\frac{1}{\varphi(\xi)}] \overset{\sim}{\rightarrow}
\mathcal{M}[\frac{1}{\varphi(\xi)}]
\]
such that $\mathcal{M} \subset \varphi_{\mathcal{M}}(\varphi^* \mathcal{M}) \subset \frac{1}{\varphi(\xi)}\mathcal{M}$, 
where $\xi \in A_{\on{inf}}(R^{\sharp,+})$ 
is a generator of the kernel of $A_{\on{inf}}(R^{\sharp,+}) \rightarrow R^{\sharp,+}$, and $\varphi$ is the canonical lift of Frobenius on $A_{\on{inf}}(R^{\sharp,+})$. It gives rise to a minuscule $GL_n$-shtuka with one leg over $\on{Spa}(R^{\sharp}, R^{\sharp,+})$ by restricting on $\mathcal{Y}_{[0, \infty)}(R,R^+)$ (and twist by Frobenius), where $n$ is twice the relative dimension of $A$ over $\mathfrak{S}_K$. 

Let us first recall how $\mathcal{M}$ is constructed in \cite[Theorem 17.5.2]{SW17}. Let $G$ be a $p$-divisible group over an integral perfectoid ring  $\mathcal{S}$ (in the sense of \cite[Definition 17.5.1]{SW17}). If $p=0$ in $\mathcal{S}$, then $\mathcal{M}(G)$ is given by Dieudonn\'e theory. If $p\neq 0$ in $\mathcal{S}$,  then Dieudonn\'e theory applied to the restriction of $G$ to $\mathcal{S}/p$ gives us an $A_{\on{crys}}(\mathcal{S})$-module $\mathcal{M}_{\on{crys}}(G)$ together with an isomorphism
\[
\varphi^*\mathcal{M}_{\on{crys}}(G) [\frac{1}{p}] \overset{\sim}{\rightarrow} \mathcal{M}_{\on{crys}}(G)[\frac{1}{p}],
\] 
where $A_{\on{crys}}(\mathcal{S})$
is the universal $p$-adically complete divided power thickening of $\mathcal{S}/p$. Now $\mathcal{M}(G) \subset \mathcal{M}_{\on{crys}}(G)$ is characterized by being the largest submodule of $\mathcal{M}_{\on{crys}}(G)$  such that for every map $\mathcal{S} \rightarrow V$ with $V$ an integral perfectoid valuation ring with algebraically closed fraction field, its image in $\mathcal{M}_{\on{crys}}(G_V)$
lies in $\mathcal{M}(G_V)$, where $\mathcal{M}(G_V)$ is the Dieudonn\'e module if $V$ is of characteristic $p$. If $p\neq 0$ in $V$ and $V=\mathcal{O}_{C}$ with $C$ the fraction field of $V$, $\mathcal{M}(G_V)$ is constructed as in \cite[Theorem 14.4.1]{SW17}. For general $V$ not of characteristic $p$, let $k$ be the residue field of $\mathcal{O}_{C}$ with $C$ the fraction field of $V$, and $\Bar{V}\subset k$ the image of $V$ in $k$, then $\mathcal{M}(G_V)$ is defined as $\mathcal{M}(G_{\mathcal{O}_{C}})\times_{\mathcal{M}({G_k})} \mathcal{M}(G_{\Bar{V}})$, in other words, $\mathcal{M}(G_V)$ is the largest submodule of $\mathcal{M}(G_{\mathcal{O}_{C}})$ 
whose image in $\mathcal{M}(G_k)$ lies in $\mathcal{M}(G_{\Bar{V}})$. 

In summary, the construction of $\mathcal{M}(G)$ is to use v-descent to reduce to Dieudonn\'e theory and the classification over $\mathcal{O}_{C}$. The key input is then the classification in terms of Hodge--Tate periods as in \cite{ScholzeWe2013}, and Fargues' theorem  (\cite[Theorem 14.1.1]{SW17}) that establishes the equivalence between de Rham pairs and Breuil--Kisin--Fargues modules. 

We now prove that the $s_{\alpha}$'s give rise to tensors on $\mathcal{M}^{\otimes}$.

\begin{proposition} \label{orwueirueo}
Let $R^{\sharp,+}$ be an integral perfectoid ring over $\Breve{\mathbb{Z}}_p$, $f: \on{Spec}(R^{\sharp,+}) \rightarrow \mathfrak{S}_K$ a map over $\Breve{\mathbb{Z}}_p$,  and $G:=f^*A[p^{\infty}]$ the induced $p$-divisible group over $R^{\sharp,+}$. Let $\mathcal{M}$ be the $\varphi$-$A_{\on{inf}}(R^{\sharp,+})$-module corresponding to $G$, then there exists
\[
s_{\alpha, \prism}: 1 \longrightarrow \mathcal{M}^{\otimes} 
\]
such that under the identification (see \cite[Theorem 17.5.2]{SW17})
\[
\mathcal{M} \otimes_{A_{\on{inf}}(R^{\sharp,+})} A_{\on{crys}}(R^{\sharp,+}) \cong \mathcal{M}_{\on{crys}}
\]
 $s_{\alpha,\prism} \otimes 1$ is mapped to $s_{\alpha, 0}$ (restricted from $\Bar{\mathfrak{X}}_K$ to $R^{\sharp,+}/p$),
 where $\mathcal{M}_{\on{crys}}$ is the Dieudonn\'e crystal of $G$ restricted to $R^{\sharp,+}/p$. Moreover, when $R^{\sharp,+} = \mathcal{O}_{C}$, then under the identification
 \[
 \mathcal{M}\otimes_{A_{\on{inf}}(\mathcal{O}_{C})}A_{\on{inf}}(\mathcal{O}_{C}) [\frac{1}{\mu}]  \cong T_pG \otimes_{\mathbb{Z}_p} A_{\on{inf}}(\mathcal{O}_{C}) [\frac{1}{\mu}] 
 \]
from (the dual of) \cite[Theorem 14.9.1 (2)]{SW17}, 
 $s_{\alpha,\prism} \otimes 1$ is sent to $s_{\alpha,p}$.
\end{proposition}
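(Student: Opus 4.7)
\begin{innerproof}[Proof plan]
The plan is to exploit the v-descent characterization of $\mathcal{M}$ inside $\mathcal{M}_{\text{crys}}$ recalled in the paragraph before the proposition: $\mathcal{M} \subset \mathcal{M}_{\text{crys}}$ is the largest submodule whose image in $\mathcal{M}_{\text{crys}}(G_V)$ lies in $\mathcal{M}(G_V)$ for every map $R^{\sharp,+}\to V$ with $V$ an integral perfectoid valuation ring with algebraically closed fraction field. Since the tensor $s_{\alpha,0}$ is an arrow of $F$-crystals on $\bar{\mathfrak{X}}_K$, evaluating the Dieudonn\'e crystal of $G|_{R^{\sharp,+}/p}$ on the PD-thickening $A_{\text{crys}}(R^{\sharp,+})\twoheadrightarrow R^{\sharp,+}/p$ immediately produces a Frobenius-equivariant tensor
\[
s_{\alpha,\text{crys}}\colon 1 \longrightarrow \mathcal{M}_{\text{crys}}^{\otimes},
\]
and it suffices to check that this tensor descends to $\mathcal{M}^{\otimes}$, i.e.\ that its image in $\mathcal{M}_{\text{crys}}(G_V)^{\otimes}$ lies in $\mathcal{M}(G_V)^{\otimes}$ for each such $V$. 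Once this is done, the compatibility statement with $s_{\alpha,0}$ on $A_{\text{crys}}(R^{\sharp,+})$ will hold by construction.

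The verification splits into two cases. If $V$ is of characteristic $p$, then $\mathcal{M}(G_V)$ is the classical Dieudonn\'e module, which is exactly the $W(V)$-lattice inside $\mathcal{M}_{\text{crys}}(G_V)$ obtained by evaluating the crystal on $W(V)\twoheadrightarrow V$; since $s_{\alpha,\text{crys}}$ is by definition the image of the crystalline tensor $s_{\alpha,0}$ under this same evaluation, the tensor lies in $\mathcal{M}(G_V)^{\otimes}$ automatically. If $V=\mathcal{O}_{\mathbb{C}}$, then by \cite{SW17} theorem 14.9.1 the BKF module $\mathcal{M}(G_V)$ is the unique $A_{\text{inf}}(\mathcal{O}_{\mathbb{C}})$-lattice that becomes $T_pG\otimes A_{\text{inf}}[\tfrac{1}{\mu}]$ after inverting $\mu$ and becomes $\mathcal{M}_{\text{crys}}(G_V)$ after base change to $A_{\text{crys}}$. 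Thus checking that $s_{\alpha,\text{crys}}$ lies in $\mathcal{M}(G_V)^{\otimes}$ is equivalent to showing that under the crystalline comparison $B_{\text{crys}}\otimes T_pG \cong B_{\text{crys}}\otimes \mathcal{M}_{\text{crys}}(G_V)$ the crystalline tensor $s_{\alpha,\text{crys}}$ matches the \'etale tensor $s_{\alpha,p}$, for then $s_{\alpha,p} \in T_pG^{\otimes}$ lives tautologically inside $\mathcal{M}(G_V)^{\otimes}\otimes A_{\text{inf}}[\tfrac{1}{\mu}]$, and Frobenius-equivariance forces it to lie in $\mathcal{M}(G_V)^{\otimes}$ itself.

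The compatibility $s_{\alpha,\text{crys}} = s_{\alpha,p}$ under the crystalline comparison at $\mathcal{O}_{\mathbb{C}}$-valued points of $\mathfrak{S}_K$ is the essential content of Blasius--Wintenberger \cite{Blauis} (as used throughout Kisin \cite{Kisin2010}): one first reduces, by spreading out and specializing, to $\mathbb{C}_p$-points of $S_K$ coming from $\bar{\mathbb{Q}}$-points where $A$ has CM, and there the compatibility of $s_{\alpha,\text{dR}}$ and $s_{\alpha,p}$ under the de Rham comparison \eqref{2840384930} (which already appears in the discussion preceding the proposition) passes to the crystalline comparison via the canonical isomorphism relating $\mathcal{M}_{\text{crys}}$ to $\mathcal{V}_{\text{dR}}$ on the smooth integral model. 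Granting this matching, the descent of $s_{\alpha,\text{crys}}$ gives the desired $s_{\alpha,\prism}$, and the second assertion of the proposition (matching $s_{\alpha,p}$ at $R^{\sharp,+}=\mathcal{O}_{\mathbb{C}}$) is immediate from the same identity.

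The main obstacle is the $\mathcal{O}_{\mathbb{C}}$-case: producing, in a functorial way over the (possibly very large) perfectoid ring $R^{\sharp,+}$, a single tensor that simultaneously matches $s_{\alpha,0}$ on the crystalline side and $s_{\alpha,p}$ on the \'etale side. The uniqueness built into the BKF classification of \cite{SW17} chapter 14 reduces this to the pointwise Blasius--Wintenberger compatibility at $\mathcal{O}_{\mathbb{C}}$-points, but one must be careful that these pointwise compatibilities glue into a tensor on $\mathcal{M}$ rather than on $\mathcal{M}[\tfrac{1}{p}]$ or $\mathcal{M}[\tfrac{1}{\mu}]$; this is exactly where the v-descent characterization of $\mathcal{M}$ is used, and it is the only non-formal step in the argument.
\end{innerproof}
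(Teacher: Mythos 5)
Your overall plan is sound and tracks the paper's: both start from $s_{\alpha,0}\colon 1\to\mathcal{M}_{\mathrm{crys}}^{\otimes}$, both reduce via the v-descent characterization of $\mathcal{M}\subset\mathcal{M}_{\mathrm{crys}}$ to checking that the tensor descends at each valuation ring $V$, and both recognize that the $V=\mathcal{O}_{\mathbb{C}}$ case hinges on a compatibility between $s_{\alpha,0}$ and $s_{\alpha,p}$ under a $p$-adic comparison isomorphism. However there are two concrete gaps.

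First, you omit the case of $V$ of mixed characteristic with $V\ne\mathcal{O}_{\mathbb{C}}$. The v-descent criterion quantifies over \emph{all} integral perfectoid valuation rings with algebraically closed fraction field, and for such $V$ the module $\mathcal{M}(G_V)$ is defined as the fiber product $\mathcal{M}(G_{\mathcal{O}_{\mathbb{C}}})\times_{\mathcal{M}(G_k)}\mathcal{M}(G_{\bar V})$ with $k$ the residue field of $\mathcal{O}_{\mathbb{C}}$ and $\bar V$ the image of $V$ in $k$. One still has to check that the three tensors produced on $\mathcal{M}(G_{\mathcal{O}_{\mathbb{C}}})^{\otimes}$, $\mathcal{M}(G_{\bar V})^{\otimes}$ and $\mathcal{M}(G_k)^{\otimes}$ agree along the two restriction maps to $W(k)$; the paper handles this by observing that all three are pullbacks of $s_{\alpha,0}$ along a single commutative diagram into $\mathfrak{S}_K$ and by exhibiting the canonical identification $\mathcal{M}(G_{\mathcal{O}_{\mathbb{C}}})\otimes_{A_{\inf}(\mathcal{O}_{\mathbb{C}})}W(k)\cong\mathcal{M}_{\mathrm{crys}}(G_k)$. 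Without this step the proof is incomplete.

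Second, in the $\mathcal{O}_{\mathbb{C}}$ case your claim that, once $s_{\alpha,\mathrm{crys}}$ matches $s_{\alpha,p}$ under the crystalline comparison, ``Frobenius-equivariance forces it to lie in $\mathcal{M}(G_V)^{\otimes}$'' is not a complete argument: it amounts to an intersection statement about lattices inside $T_pG\otimes\mathbb{B}_{\mathrm{dR}}$ and inside $\mathcal{M}^{\otimes}\otimes A_{\inf}[\tfrac1\mu]$ that you do not justify. The paper instead packages the two tensors $s_{\alpha,p}$ and $s_{\alpha,0}\otimes1$ into a morphism of \emph{de Rham pairs} $1\to(\Xi,T_pG_V)^{\otimes}$, where $\Xi\subset T_pG_V\otimes\mathbb{B}_{\mathrm{dR}}^+$, and invokes the equivalence of categories of \cite{SW17} theorem 14.4.1 to obtain $s_{\alpha,\prism}$; the categorical equivalence is precisely what legitimates the gluing. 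Related to this, the matching of $s_{\alpha,0}$ with $s_{\alpha,p}$ itself is established in the paper by a separate lemma that lifts the $\mathcal{O}_{\mathbb{C}}$-point of $\mathfrak{S}_K$ through $A_{\mathrm{crys}}$ (using smoothness) and compares with $\mathbb{B}_{\mathrm{dR}}^+$-crystalline cohomology as in \cite{Bhatt2018}; your proposed reduction to CM points and spreading out is a different (and unverified) route to the same compatibility, and would require more care since the lattice structure, not just the rational tensor, must be preserved.
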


\begin{proof}
We have 
\[
s_{\alpha,0} : 1 \longrightarrow \mathcal{M}_{\on{crys}}^{\otimes}
\]
by pulling back 
\[
s_{\alpha,0} : 1 \longrightarrow D(A)^{\otimes}
\]
along
$\on{Spec}(R^{\sharp,+}/p) \longrightarrow \bar{\mathfrak{X}}_K$, the restriction of $f$ to the characteristic $p$ fibers, and
we are aiming to show that 
$s_{\alpha,0} $
factorizes through $\mathcal{M}^{\otimes}$. By construction of $\mathcal{M}$ that we have recalled above, we need to show that for every map $R^{\sharp,+} \rightarrow V$ with V an integral perfectoid valuation ring with algebraically closed fraction field,
\[
s_{\alpha,0}|_{V}: 1 \longrightarrow \mathcal{M}_{\on{crys}}(G_V)^{\otimes}
\]
factorizes through $\mathcal{M}(G_V)^{\otimes}$. If $V$ has characteristic $p$, then
$W(V)$ is a (pro-)PD-thickening and $\mathcal{M}(G_V)=D(A_V)(W(V))$. We can evaluate the crystal $D(A_V)$ at $W(V)$  
to obtain the desired factorization. 

If $V$ has mixed characteristic and $V=\mathcal{O}_{C}$ with $C$ the fraction field of $V$, then $\mathcal{M}(G_V)$ is the Breuil--Kisin--Fargues module corresponding to the Hodge--Tate periods of $G_V$. Let us recall its construction first. We have the Hodge--Tate exact sequence
\[
0 \longrightarrow \on{Lie} \ G_V \otimes_{\mathcal{O}_{C}} C(1) \longrightarrow T_pG_V\otimes_{\mathbb{Z}_p} C \longrightarrow (\on{Lie} \ G_V^*)^* \otimes_{\mathcal{O}_{C}} C \longrightarrow 0,
\]
which gives rise to a $\mathbb{B}_{\on{dR}}^+(C)$-submodule
$\Xi \subset T_p G_V \otimes_{\mathbb{Z}_p} \mathbb{B}_{\on{dR}}^+(C)$ 
characterized by 
$\Xi \ \on{mod} \ t \subset T_p G_V \otimes_{\mathbb{Z}_p} C$ 
being 
$\on{Lie} \ G_V \otimes_{\mathcal{O}_{C}} C(1) \subset T_p G_V \otimes_{\mathbb{Z}_p} C$, where $t \in \mathbb{B}_{\on{dR}}^+(C)$ is a generator of the kernel $\mathbb{B}_{\on{dR}}^+(C) \rightarrow C$. We have now a de Rham pair $(\Xi, T_p G_V)$, which by \cite[Theorem 14.1.1]{SW17}  is equivalent to a minuscule Breuil--Kisin--Fargues module $\mathcal{M}(G_V)$. Moreover, we have by  \cite[Proposition 14.8.1]{SW17} that 
\[
\Xi \subset T_pG_V \otimes_{\mathbb{Z}_p} \mathbb{B}_{\on{dR}}(C)
\]
is the same as 
\[
\mathcal{M}_{\on{crys}}(G_V) \otimes_{A_{\on{crys}}(V)} \mathbb{B}_{\on{dR}}^+(C) \subset T_pG_V \otimes_{\mathbb{Z}_p} \mathbb{B}_{\on{dR}}(C)
\]
under a canonical identification 
\[
\mathcal{M}_{\on{crys}}(G_V) \otimes_{\on{A}_{\on{crys}}(V)} \mathbb{B}_{\on{dR}}(C) \cong T_pG_V \otimes_{\mathbb{Z}_p} \mathbb{B}_{\on{dR}}(C),
\]
where the identification actually holds over a smaller period ring $\on{B}_{\on{crys}}$. 

Now as $G_V$ is the pullback of $A[p^{\infty}]$ along a map $\on{Spec}(V) \rightarrow \mathfrak{S}_K$, we have 
\[
s_{\alpha,p} : 1 \longrightarrow (T_pG_V)^{\otimes}
\]
which by the following lemma matches with 
\[
s_{\alpha,0} : 1 \longrightarrow \mathcal{M}_{\on{crys}} (G_V)^{\otimes}
\]
under the identification 
\[
\mathcal{M}_{\on{crys}}(G_V) \otimes_{\on{A}_{\on{crys}}(V)} \mathbb{B}_{\on{dR}}(C) \cong T_pG_V \otimes_{\mathbb{Z}_p} \mathbb{B}_{\on{dR}}(C).
\]
It follows that $s_{\alpha,0}$ and $s_{\alpha,p}$ glue to an arrow of de Rham pairs
\[
s_{\alpha, \on{BdR}}: 1 \longrightarrow (\Xi,T_pG_V)^{\otimes},
\]
which by \cite[Theorem 14.1.1]{SW17} is equivalent to 
\[
s_{\alpha, \prism}: 1 \longrightarrow \mathcal{M}(G_V)^{\otimes}.
\]
The relation with $s_{\alpha,0}$ is clear from the construction. 

Lastly, if $V$ is of mixed characteristic but not necessarily equals to $\mathcal{O}_{C}$, then $V$ is the pullback of $\Bar{V} \subset k$ along $\mathcal{O}_{C} \rightarrow k$, where $k$ is the residue field of $\mathcal{O}_{C}$ and $\Bar{V}$ is the image of $V$ in $k$. We have a similar description of $\mathcal{M}(G_V)$, being 
$\mathcal{M}(G_{\mathcal{O}_{C}})\times_{\mathcal{M}({G_k})} \mathcal{M}(G_{\Bar{V}})$. We have already produced $s_{\alpha, \prism}$ for $\mathcal{M}(G_{\mathcal{O}_{C}})$, $\mathcal{M}(G_{\Bar{V}})$ and $\mathcal{M}(G_{k})$ individually, it remains to see that they glue to $\mathcal{M}(G_V)$. This is clear as they are pullback from the commutative diagram
\[
\begin{tikzcd}
& \on{Spec}(\mathcal{O}_{C}) \arrow[rd] &
\\
\on{Spec}(k) \arrow[rr] \arrow[ru] \arrow[rd] & &  \mathfrak{S}_K
\\
& \on{Spec}(\Bar{V}) \arrow[ru] &
\end{tikzcd}
\]
and we have the identification 
\[
\mathcal{M}(G_{\mathcal{O}_{C}}) \otimes_{\on{A}_{\on{inf}}(\mathcal{O}_{C})} W(k) \cong 
(\mathcal{M}(G_{\mathcal{O}_{C}}) \otimes_{\on{A}_{\on{inf}}(\mathcal{O}_{C})} \on{A}_{\on{crys}}(\mathcal{O}_{C})) \otimes_{\on{A}_{\on{crys}}} W(k)
\]
\[
\cong
\mathcal{M}_{\on{crys}}(G_{\mathcal{O}_{C}/p})  \otimes_{\on{A}_{\on{crys}}} W(k)
\cong 
\mathcal{M}_{\on{crys}}(G_k).
\]

\end{proof}

\begin{lemma}
Let $f: \on{Spec}(\mathcal{O}_{C}) \longrightarrow \mathfrak{S}_K$ be a map over $\Breve{\mathbb{Z}}_p$, and $H_{\mathcal{O}_{C}}$ be the pullback of $A[p^{\infty}]$ along $f$, then (pullback of) $s_{\alpha,0} \otimes 1$ is mapped to $s_{\alpha, p} \otimes 1$
under the canonical isomorphism 
\[
\mathcal{M}_{\on{crys}}(H_V) \otimes_{A_{\on{crys}}(V)} \mathbb{B}_{\on{dR}}(C) \cong T_pH_V \otimes_{\mathbb{Z}_p} \mathbb{B}_{\on{dR}}(C). 
\]
\end{lemma}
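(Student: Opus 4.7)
The plan is to pass to the generic fibre $f_{\mathbb{C}}\colon \mathrm{Spec}(\mathbb{C}) \to S_K$ of $f$, use the already-recorded matching of $s_{\alpha,p}$ and $s_{\alpha,\mathrm{dR}}$ under Scholze's relative de Rham comparison, and glue this with the standard identification between the crystalline and de Rham realizations of a $p$-divisible group with good reduction. Concretely, the three pieces that need to be assembled are Scholze's comparison (\ref{2840384930}), Grothendieck--Messing for $G_V$, and the crystalline comparison $\mathcal{M}_{\mathrm{crys}}(G_V)\otimes_{A_{\mathrm{crys}}(V)} \mathbb{B}_{\mathrm{dR}}(\mathbb{C}) \cong T_pG_V\otimes_{\mathbb{Z}_p}\mathbb{B}_{\mathrm{dR}}(\mathbb{C})$ of \cite{SW17} 14.8.1 itself; the point is to show they form a commutative triangle.

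First I would restrict (\ref{2840384930}) along $f_{\mathbb{C}}$ to obtain a $\mathbb{B}_{\mathrm{dR}}(\mathbb{C})$-linear isomorphism
\[
T_pG_V \otimes_{\mathbb{Z}_p} \mathbb{B}_{\mathrm{dR}}(\mathbb{C}) \;\cong\; H_1^{\mathrm{dR}}(A_{\mathbb{C}}/\mathbb{C}) \otimes_{\mathbb{C}} \mathbb{B}_{\mathrm{dR}}(\mathbb{C}),
\]
which by the Blasius--Wintenberger matching (invoked just after (\ref{2840384930})) takes $s_{\alpha,p}\otimes 1$ to $s_{\alpha,\mathrm{dR}}\otimes 1$.

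Second, for $G_V$ over $\mathcal{O}_{\mathbb{C}}$ there is a canonical identification $\mathcal{M}_{\mathrm{crys}}(G_V)\otimes_{A_{\mathrm{crys}}(V)} \mathbb{C} \cong H_1^{\mathrm{dR}}(A_{\mathbb{C}}/\mathbb{C})$, coming from the fact that the Dieudonn\'e crystal of an abelian scheme evaluated on the trivial pd-thickening computes the de Rham homology. Under this identification $s_{\alpha,0}$ corresponds to $f_{\mathbb{C}}^{*}s_{\alpha,\mathrm{dR}}$: this is essentially the definition of $s_{\alpha,0}$ recalled in the paragraph before the proposition, using that $\mathfrak{S}_K$ is smooth over the unramified base $\mathcal{O}_{L}$ so that the Dieudonn\'e crystal $D(A)$ on $\bar{\mathfrak{X}}_K$ is tautologically the reduction mod $p$ of $\mathcal{V}_{\mathrm{dR}}$. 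Tensoring up to $\mathbb{B}_{\mathrm{dR}}(\mathbb{C})$ gives an isomorphism $\mathcal{M}_{\mathrm{crys}}(G_V)\otimes_{A_{\mathrm{crys}}(V)}\mathbb{B}_{\mathrm{dR}}(\mathbb{C}) \cong H_1^{\mathrm{dR}}(A_{\mathbb{C}}/\mathbb{C})\otimes_{\mathbb{C}}\mathbb{B}_{\mathrm{dR}}(\mathbb{C})$ which sends $s_{\alpha,0}\otimes 1$ to $s_{\alpha,\mathrm{dR}}\otimes 1$.

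The crux is then to show that the crystalline comparison $\mathcal{M}_{\mathrm{crys}}(G_V)\otimes\mathbb{B}_{\mathrm{dR}}\cong T_pG_V\otimes \mathbb{B}_{\mathrm{dR}}$ and the composition of the two isomorphisms just produced agree as $\mathbb{B}_{\mathrm{dR}}(\mathbb{C})$-linear maps $\mathcal{M}_{\mathrm{crys}}(G_V)\otimes \mathbb{B}_{\mathrm{dR}} \to T_pG_V\otimes \mathbb{B}_{\mathrm{dR}}$. This is the classical crystalline-versus-de Rham compatibility for abelian schemes with good reduction; at the level of $p$-divisible groups it is a routine consequence of Grothendieck--Messing together with the construction of the crystalline comparison, which factors through the Breuil--Kisin--Fargues module $\mathcal{M}(G_V)$. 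The cleanest way I see is to observe that both isomorphisms arise from evaluating $\mathcal{M}(G_V)$ on the $\mathbb{B}_{\mathrm{dR}}^{+}$-thickening and then base-changing along the natural ring maps $A_{\mathrm{inf}}\to A_{\mathrm{crys}}\to\mathbb{B}_{\mathrm{dR}}^{+}$; since both sides are computed from the same $\mathbb{B}_{\mathrm{dR}}^{+}$-lattice inside $T_pG_V\otimes\mathbb{B}_{\mathrm{dR}}$, commutativity is automatic. Once the triangle commutes, chasing $s_{\alpha,0}\otimes 1\leftrightarrow s_{\alpha,\mathrm{dR}}\otimes 1\leftrightarrow s_{\alpha,p}\otimes 1$ yields the claim. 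The main obstacle is verifying this compatibility cleanly in the form needed; this should reduce, after untangling definitions, to the absolute statement for abelian schemes over $\mathcal{O}_{\mathbb{C}}$, which is well known.
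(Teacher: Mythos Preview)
Your overall strategy---factor the crystalline comparison through a de Rham intermediary and track the Hodge tensors through each leg---is the same as the paper's, but there is a genuine gap in how you set up the intermediary.

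In both steps 1 and 2 you take $H_1^{\mathrm{dR}}(A_{\mathbb{C}}/\mathbb{C})\otimes_{\mathbb{C}}\mathbb{B}_{\mathrm{dR}}(\mathbb{C})$ as the bridge. But neither side of the comparison is canonically isomorphic to this object. Restricting Scholze's comparison to $f_{\mathbb{C}}$ produces $f^*\mathbb{M}_0\otimes_{\mathbb{B}_{\mathrm{dR}}^+}\mathbb{B}_{\mathrm{dR}}$, and $\mathbb{M}_0$ is a $\mathbb{B}_{\mathrm{dR}}^+$-lattice whose reduction mod $\xi$ is $H_1^{\mathrm{dR}}(A_{\mathbb{C}}/\mathbb{C})$, not a base-change of the latter along a ring map $\mathbb{C}\to\mathbb{B}_{\mathrm{dR}}^+$ (no such canonical map exists). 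Likewise, in step 2 the identification $\mathcal{M}_{\mathrm{crys}}(G_V)\otimes_{A_{\mathrm{crys}}}\mathbb{C}\cong H_1^{\mathrm{dR}}(A_{\mathbb{C}}/\mathbb{C})$ is along $\theta:A_{\mathrm{crys}}\to\mathbb{C}$, whereas the map $A_{\mathrm{crys}}(V)\to\mathbb{B}_{\mathrm{dR}}(\mathbb{C})$ in the statement is the natural inclusion; these do not commute, so you cannot ``tensor up''.

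The paper repairs this by working over $\mathbb{B}_{\mathrm{dR}}^+$ and $A_{\mathrm{crys}}$ rather than $\mathbb{C}$. Using smoothness of $\mathfrak{S}_K$ one lifts $f$ to $g:\mathrm{Spec}(A_{\mathrm{crys}}(\mathcal{O}_{\mathbb{C}}))\to\mathfrak{S}_K$ and sets $h=g\circ r$ with $r:\mathrm{Spec}(\mathbb{B}_{\mathrm{dR}}^+)\to\mathrm{Spec}(A_{\mathrm{crys}})$. Then $\mathcal{M}_{\mathrm{crys}}(G_V)\cong g^*\mathcal{V}_{\mathrm{dR}}^{\vee}$ and the BMS $\mathbb{B}_{\mathrm{dR}}^+$-crystalline cohomology $H^1_{\mathrm{crys}}(A_{\mathbb{C}}/\mathbb{B}_{\mathrm{dR}}^+)\cong h^*\mathcal{V}_{\mathrm{dR}}^{\vee}$; under these identifications the BMS base-change isomorphism (\cite{Bhatt2018} lemma 13.11) is literally $r^*g^*=h^*$, so $s_{\alpha,0}=g^*s_{\alpha,\mathrm{dR}}$ goes to $h^*s_{\alpha,\mathrm{dR}}$. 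One then identifies $H^1_{\mathrm{crys}}(A_{\mathbb{C}}/\mathbb{B}_{\mathrm{dR}}^+)$ with $f^*\mathbb{M}_0^{\vee}$, where (\ref{uiwuriwur}) already matches $s_{\alpha,\mathrm{dR}}$ with $s_{\alpha,p}$, and the composite with the previous step is the crystalline comparison (\cite{SW17} 14.7.2). The missing ingredient in your argument is precisely this lift to $A_{\mathrm{crys}}$ and $\mathbb{B}_{\mathrm{dR}}^+$.
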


\begin{proof}
Using the canonical duality between crystalline cohomology (resp. étale cohomology) of abelian varieties and  Dieudonn\'e modules (resp. Tate modules) of the associated $p$-divisible groups (see \cite[Proposition 14.9.3]{SW17}), it is enough to prove the corresponding statements for abelian varieties, namely $(s_{\alpha,p})^{\vee}\otimes 1$ is mapped to $(s_{\alpha,0})^{\vee} \otimes 1$ under the étale-crystalline comparison
\[
H^1_{\text{ét}}(A_{C}, \mathbb{Z}_p) \otimes_{\mathbb{Z}_p} \mathbb{B}_{\on{dR}}(C) \cong 
H^1_{\on{crys}}(A_{\mathcal{O}_{C}/p}/ A_{\on{crys}}(\mathcal{O}_{C})) \otimes_{A_{\on{crys}}(\mathcal{O}_{C})} \mathbb{B}_{\on{dR}}(C).
\]

Since $\mathcal{O}_{C}$ is local, $f$ factorizes through an affine open $\on{Spec}(T)$ of $\mathfrak{S}_K$, which is smooth over $\Breve{\mathbb{Z}}_p$. By smoothness, we can lift $f$ to 
\[
g: \on{Spec}(A_{\on{crys}}(\mathcal{O}_{C})) \longrightarrow \on{Spec}(T),
\]
then 
\begin{equation} \label{rehjhrejhr}
H^1_{\on{crys}}(A_{\mathcal{O}_{C}/p}/ A_{\on{crys}}(\mathcal{O}_{C})) \cong 
H^1_{\on{dR}}(g^*A) 
\cong g^* (\mathcal{V}_{\on{dR}})^{\vee}
\end{equation}
where $A_{\mathcal{O}_{C}/p}$ is the restriction of $f^*A$ to $\on{Spec}(\mathcal{O}_{C}/p)$. 

Moreover, let 
\[
h: \on{Spec}(\mathbb{B}_{\on{dR}}^+(C)) \longrightarrow \on{Spec}(T)
\]
be the composition of $g$ with the canonical map 
\[
r: \on{Spec}(\mathbb{B}_{\on{dR}}^+(C)) \longrightarrow
\on{Spec}(A_{\on{crys}}(\mathcal{O}_{C})),
\]
which is a lift of (restriction of) $f $. It follows from the proof of \cite[Theorem 13.8]{Bhatt2018}   that 
\begin{equation} \label{urieuorewuir}
H^1_{\on{crys}}(A_{C}/\mathbb{B}_{\on{dR}}^+(C)) \cong
H^1_{\on{dR}}(h^*A) \cong h^* (\mathcal{V}_{\on{dR}})^{\vee}
\end{equation}
where $A_{C} := (f^*A)_{C}$, and $H^1_{\on{crys}}(A_{C}/\mathbb{B}_{\on{dR}}^+(C))$
is defined as in \cite[Theorem 13.1]{Bhatt2018}. 

Now under the identification (\ref{urieuorewuir}) and (\ref{rehjhrejhr}), the canonical isomorphism 
\begin{equation} \label{37403894380}
H^1_{\on{crys}}(A_{C}/\mathbb{B}_{\on{dR}}^+(C)) \cong 
H^1_{\on{crys}}(A_{\mathcal{O}_{C}/p}/ A_{\on{crys}}(\mathcal{O}_{C})) \otimes_{A_{\on{crys}}(\mathcal{O}_{C})} \mathbb{B}_{\on{dR}}^+(C) 
\end{equation}
from \cite[Lemma 13.11]{Bhatt2018}  is nothing but
\[
r^* g^*(\mathcal{V}_{\on{dR}})^{\vee} \cong h^*(\mathcal{V}_{\on{dR}})^{\vee}. 
\]
We have 
\[
H^1_{\on{crys}}(A_{C}/\mathbb{B}_{\on{dR}}^+(C)) \cong 
f^* \mathbb{M}_0^{\vee},
\]
under which $f^*(s_{\alpha,\on{dR}})^{\vee}$ is mapped to $h^*(s_{\alpha, \on{dR}})^{\vee}$ using the identification  (\ref{urieuorewuir}). We now abuse the notation by writing $(s_{\alpha, \on{dR}})^{\vee}$  the corresponding tensors on $H^1_{\on{crys}}(A_{C}/\mathbb{B}_{\on{dR}}^+(C))$. 
Since $(s_{\alpha,0})^{\vee}$ is $g^*(s_{\alpha,\on{dR}})^{\vee}$,
it follows that the isomorphism (\ref{37403894380}) takes $(s_{\alpha, \on{dR}})^{\vee}$ to $(s_{\alpha,0})^{\vee} \otimes 1$. 

On the other hand, we have the canonical isomorphism 
\begin{equation} \label{12380270832}
H^1_{\text{ét}}(A_{C}, \mathbb{Z}_p) \otimes_{\mathbb{Z}_p} \mathbb{B}_{\on{dR}}(C) \cong 
H^1_{\on{crys}}(A_{C}/\mathbb{B}_{\on{dR}}^+(C)) \otimes_{\mathbb{B}_{\on{dR}}^+(C)} \mathbb{B}_{\on{dR}}(C)
\end{equation}
from \cite[Theorem 13.1]{Bhatt2018}, which is nothing but the pullback of isomorphism (\ref{uiwuriwur}) along $f$. Hence $(s_{\alpha,p})^{\vee} \otimes 1$ is mapped to $(s_{\alpha,\on{dR}})^{\vee} \otimes 1$ through (\ref{12380270832}). Now the composition of isomorphisms (\ref{12380270832}) and (\ref{37403894380}) is 
\[
H^1_{\text{ét}}(A_{C}, \mathbb{Z}_p) \otimes_{\mathbb{Z}_p} \mathbb{B}_{\on{dR}}(C) \cong 
H^1_{\on{crys}}(A_{\mathcal{O}_{C}/p}/ A_{\on{crys}}(\mathcal{O}_{C})) \otimes_{A_{\on{crys}}(\mathcal{O}_{C})} \mathbb{B}_{\on{dR}}(C),
\]
which is exactly the crystalline comparison isomorphism, see \cite[Proposition 14.7.2]{SW17}. It 
matches $(s_{\alpha,p})^{\vee}\otimes 1$ with $(s_{\alpha,0})^{\vee} \otimes 1$ as the two intermediate isomorphisms do.  
\end{proof}

We now have a $\mathcal{G} \times_{\mathbb{Z}_{(p)}} A_{\on{inf}}(R^{\sharp,+})$-quasitorsor $\mathcal{E}$ over $\on{Spec}(A_{\on{inf}}(R^{\sharp,+}))$ that parametrizes isomorphisms between $Q_{\mathbb{Z}_{(p)}} \otimes_{\mathbb{Z}_{(p)}} A_{\on{inf}}(R^{\sharp,+})$ 
and $\mathcal{M}$ that preserves the Hodge tensors, i.e. for any $A_{\on{inf}}(R^{\sharp,+})$-algebra $T$,
\[
\mathcal{E}(T) = 
\{\beta: Q_{\mathbb{Z}_{(p)}} \otimes_{\mathbb{Z}_{(p)}} T \cong
\mathcal{M} \otimes_{A_{\on{inf}}(R^{\sharp,+})} T  \ \ | \beta(s_{\alpha} \otimes 1) = s_{\alpha, \prism} \otimes 1
\}.
\]

\begin{proposition}
$\mathcal{E}$ is a $\mathcal{G} \times_{\mathbb{Z}_{(p)}} A_{\on{inf}}(R^{\sharp,+})$-torsor over $\on{Spec}(A_{\on{inf}}(R^{\sharp,+}))$. 
\end{proposition}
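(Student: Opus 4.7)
The plan is as follows. First observe that $\mathcal{E}$ sits inside $\underline{\mathrm{Isom}}(Q_{\mathbb{Z}_{(p)}} \otimes_{\mathbb{Z}_{(p)}} A_{\mathrm{inf}}(R^{\sharp,+}), \mathcal{M})$, which is a trivial $GL(Q_{\mathbb{Z}_{(p)}})$-torsor since $\mathcal{M}$ is finite projective over $A_{\mathrm{inf}}(R^{\sharp,+})$, as the closed subscheme cut out by the equations $\beta(s_{\alpha}\otimes 1)=s_{\alpha,\prism}\otimes 1$. Since by construction $\mathcal{G}$ is the scheme-theoretic stabilizer of the tensors $(s_{\alpha})$ inside $GL(Q_{\mathbb{Z}_{(p)}})$, the right action of $\mathcal{G}\times A_{\mathrm{inf}}(R^{\sharp,+})$ on $\mathcal{E}$ is free, and picking any section over an $A_{\mathrm{inf}}$-algebra $T$ identifies $\mathcal{E}_T$ with $\mathcal{G}_T$. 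So the problem reduces to showing that $\mathcal{E}$ is nonempty fpqc-locally on $\mathrm{Spec}(A_{\mathrm{inf}}(R^{\sharp,+}))$.

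Next, I would reduce to the case where $R^{\sharp,+}=V$ is an integral perfectoid valuation ring with algebraically closed fraction field, following the v-descent strategy used in the construction of $\mathcal{M}$ itself (\cite{SW17} Thm.~17.5.2). Such $V$'s form a v-cover of $R^{\sharp,+}$, and both $A_{\mathrm{inf}}(-)$ and $\mathcal{M}$, together with the tensors $s_{\alpha,\prism}$, are compatible with pullback along $R^{\sharp,+}\to V$ by the very definition of $s_{\alpha,\prism}$ given in the previous proposition.

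For such $V$, I would then split into the two cases that already structure the construction of $\mathcal{M}$. If $V$ has characteristic $p$, then $A_{\mathrm{inf}}(V)=W(V)$, the module $\mathcal{M}$ coincides with the covariant Dieudonn\'e module $\mathcal{M}_{\mathrm{crys}}$ of the special fiber, and $s_{\alpha,\prism}=s_{\alpha,0}$; in this case the torsor property is precisely Kisin's theorem on the crystalline realization of the Hodge tensors on the Shimura variety, used in the Tannakian form of \cite{2017arXiv170705700X} \S 7. If $V=\mathcal{O}_{\mathbb{C}}$ is of mixed characteristic, then the previous proposition identifies $s_{\alpha,\prism}\otimes 1$ with $s_{\alpha,p}\otimes 1$ under $\mathcal{M}\otimes_{A_{\mathrm{inf}}}A_{\mathrm{inf}}[\tfrac{1}{\mu}]\cong T_pG\otimes_{\mathbb{Z}_p} A_{\mathrm{inf}}[\tfrac{1}{\mu}]$, so $\mathcal{E}[\tfrac{1}{\mu}]$ is a $\mathcal{G}$-torsor because the \'etale tensors $s_{\alpha,p}$ already cut out a $\mathcal{G}$-torsor inside $\underline{\mathrm{Isom}}(Q_{\mathbb{Z}_{(p)}}\otimes\mathbb{Z}_p, T_pG)$ by the very definition of the integral Shimura datum. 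The general mixed-characteristic $V$ is then handled by writing $\mathcal{M}(G_V)$ as the fibre product $\mathcal{M}(G_{\mathcal{O}_{\mathbb{C}}})\times_{\mathcal{M}(G_k)}\mathcal{M}(G_{\bar V})$ and gluing, exactly as was done for $s_{\alpha,\prism}$.

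The main obstacle will be to pass from a $\mathcal{G}$-torsor already defined over the open locus $\mathrm{Spec}(A_{\mathrm{inf}}(V))\setminus V([\varpi])$, where the \'etale picture is available, to a $\mathcal{G}$-torsor on all of $\mathrm{Spec}(A_{\mathrm{inf}}(V))$, in particular across the ``corner point'' $V(p,\xi)$. For this I would invoke an Ansch\"utz-type purity/extension theorem for reductive-group torsors on $A_{\mathrm{inf}}$ of a perfectoid valuation ring, combined with the observation that across the special fibre $V([\varpi])$ the torsor is already pinned down by the Dieudonn\'e-theoretic construction treated in the characteristic $p$ case. Uniqueness of such an extension, together with the Frobenius equivariance of both $\mathcal{M}$ and the tensors $s_{\alpha,\prism}$, will then force the characteristic $0$ and characteristic $p$ constructions to glue, yielding the desired global $\mathcal{G}$-torsor structure on $\mathcal{E}$.
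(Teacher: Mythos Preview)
Your overall architecture matches the paper's: reduce by v-descent (using that vector bundles and $\mathcal{G}$-torsors over $W(S)$ form a v-stack on perfect $R$-algebras $S$) to the case of a perfectoid valuation ring $V^{\sharp}$ with algebraically closed fraction field, handle the characteristic $p$ case via the crystalline torsor of \cite{2017arXiv170705700X} Corollary~7.1.14, handle the mixed characteristic $\mathcal{O}_{\mathbb{C}}$ case on the punctured spectrum and then extend across the closed point by Ansch\"utz's theorem, and finally treat general $V^{\sharp}$ by the fibre-product description.

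The genuine gap is in your mixed characteristic step. The \'etale comparison $\mathcal{M}\otimes A_{\mathrm{inf}}[\tfrac{1}{\mu}]\cong T_pG\otimes A_{\mathrm{inf}}[\tfrac{1}{\mu}]$ trivializes $\mathcal{E}$ only over $\mathrm{Spec}(A_{\mathrm{inf}}[\tfrac{1}{\mu}])$, which is \emph{not} all of $\mathrm{Spec}(A_{\mathrm{inf}})\setminus V([\varpi])$; for instance the de Rham point $\theta:A_{\mathrm{inf}}\to\mathcal{O}_{\mathbb{C}}$ has $\theta(\mu)=0$ but $\theta([\varpi])\neq 0$. More importantly, Ansch\"utz's extension result only applies once you already have a torsor on the full punctured spectrum $\mathrm{Spec}(A_{\mathrm{inf}}[\tfrac{1}{\mu}])\cup\mathrm{Spec}(A_{\mathrm{inf}}[\tfrac{1}{p}])$, and your proposal gives no mechanism to obtain the torsor property over $A_{\mathrm{inf}}[\tfrac{1}{p}]$. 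Pointing to the Dieudonn\'e picture ``across $V([\varpi])$'' does not suffice: that only controls a closed sublocus, not the open set $\mathrm{Spec}(A_{\mathrm{inf}}[\tfrac{1}{p}])$, and neither Frobenius equivariance nor uniqueness of an extension produces the missing existence statement.

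The paper fills this gap as follows. The crystalline comparison $\mathcal{M}\otimes_{A_{\mathrm{inf}}}A_{\mathrm{crys}}\cong\mathcal{M}_{\mathrm{crys}}$ (matching $s_{\alpha,\prism}$ with $s_{\alpha,0}$) shows $\mathcal{E}_{A_{\mathrm{crys}}}$ is a torsor, again by \cite{2017arXiv170705700X} 7.1.14. Since $A_{\mathrm{inf}}\to A_{\mathrm{inf}}[\tfrac{1}{p}]^{\wedge}_{\mu}$ factors through $A_{\mathrm{crys}}$ (\cite{Bhatt2018} Lemma~4.19), $\mathcal{E}$ is a torsor over $A_{\mathrm{inf}}[\tfrac{1}{p}]^{\wedge}_{\mu}$; Beauville--Laszlo then glues this with the \'etale trivialization over $A_{\mathrm{inf}}[\tfrac{1}{p},\tfrac{1}{\mu}]$ to give a torsor on $A_{\mathrm{inf}}[\tfrac{1}{p}]$. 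Now the punctured spectrum is covered and Ansch\"utz applies. You should incorporate this crystalline-plus-Beauville--Laszlo step explicitly.
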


\begin{proof}
We write $\mathcal{E}_{T}$ as the base change of $\mathcal{E}$ to $\on{Spec}(T)$ for any $A_{\on{inf}}(R^{\sharp,+})$-algebra $T$.

Let us consider the ringed site $(\mathcal{C}_{R^+},\mathcal{O})$ consists of perfect $R^+$-algebras with v-topology and sheaf of rings $\mathcal{O} (-) := W(-)$. We can view $\mathcal{E}$ as a sheaf 
\footnote{It is a sheaf as v-topology is subcanonical on perfect schemes, see  \cite[Remark 4.2]{Bhatt2016}. } 
on $(\mathcal{C}_{R^+}, \mathcal{O})$ 
with value $\mathcal{E}(W(S))$ for a perfect $R^+$-algebra $S$.  

It follows from \cite[Theorem 4.1 (ii)]{Bhatt2016}  that (finite) locally free  sheaves on $(\mathcal{C}_{R^+},\mathcal{O})$ are the same as vector bundles on $W(R^+)$, thus $\mathcal{G}$-torsors on $(\mathcal{C}_{R^+},\mathcal{O})$ (in the sense of \cite[Tag 03AH]{stacks-project}) are the same as $\mathcal{G}$-torsors on $W(R^+)$ by the Tannakian formalism.   Thus it is enough to prove that $\mathcal{E}$ is a $\mathcal{G}$-torsor on $(\mathcal{C}_{R^+},\mathcal{O})$, i.e. $\mathcal{E}_{W(S)} $
has a section for a v-cover $S$ of $R^+$. By the equivalence between torsors on  $(\mathcal{C}_S,\mathcal{O})$ and $W(S)$, it is enough to prove that  $\mathcal{E}_{W(S)} $
is a torsor for some v-cover $S$ of $R$, which 
 we can choose to be a (possibly infinite) product of perfect valuation rings with algebraically closed fraction fields, and we quickly reduce to $S=V$ a perfect valuation ring with algebraically closed fraction field.

Let $\xi \in A_{\on{inf}}(R^{\sharp,+})$ be a generator of the kernel of $A_{\on{inf}}(R^{\sharp,+}) \twoheadrightarrow R^{\sharp,+} $, then $\xi \in W(V)$ using the map $R \rightarrow V$. Taking the completion if necessary, we can assume that $V$ is $\xi \ \on{mod} \ p$-adically complete, so $V^{\sharp} =W(V)/\xi$ is integral perfectoid, which is also a valuation ring with algebraically closed fraction field. We have $R^{\sharp,+} \rightarrow V^{\sharp}$, and let $G_{V^{\sharp}}$ be the pullback of $A[p^{\infty}]$ along 
\[
\on{Spec}(V^{\sharp}) \longrightarrow \on{Spec}(R^{\sharp,+}) \longrightarrow \mathfrak{S}_K,
\]
then 
\[
\mathcal{E}_{W(V)} = \on{Isom}_{s_{\alpha}} (Q_{\mathbb{Z}_{(p)}}\otimes_{\mathbb{Z}_{(p)}} W(V), \mathcal{M}(G_{V^{\sharp}})).
\]

If the characteristic of $V^{\sharp}$ is $p$, then $\mathcal{E}_{W(V)}$ is the pullback of (the homology version and not mesmerizing trivilization of Tate twist) $\mathcal{E}_{\on{crys}}$ as defined in \cite[Corollary 7.1.14]{2017arXiv170705700X}, which is proved in $loc.cit.$ to be a torsor. 

If $V^{\sharp}$ is of mixed characteristic, and $V^{\sharp} = \mathcal{O}_{C}$ with $C$ the fraction field of $V^{\sharp}$,  then we have 
\[
\mathcal{M}(G_{V^{\sharp}}) \otimes_{A_{\on{inf}}(V^{\sharp})} 
A_{\on{inf}}(V^{\sharp}) [\frac{1}{\mu}]
\cong 
T_pG_{V^{\sharp}} \otimes_{\mathbb{Z}_p} A_{\on{inf}}(V^{\sharp}) [\frac{1}{\mu}]
\]
that matches $s_{\alpha,\prism} \otimes 1$ with $s_{\alpha, p} \otimes 1$, where $\mu = [\epsilon] -1 \in A_{\on{inf}}(V^{\sharp})$ with $\epsilon =(1, \zeta_{p}, \zeta_{p^2},\cdots) \in \mathcal{O}_{C}^{\flat}$ 
for a chosen compatible $p$-power roots of unity $\zeta_{p^{\bullet}}$. But we already know that $s_{\alpha, p}$ matches with $s_{\alpha, B}$ under Betti-étale comparison, from which we easily deduce an isomorphism 
\[
T_pG_{V^{\sharp}} \cong Q_{\mathbb{Z}_{(p)}}\otimes_{\mathbb{Z}_{(p)}} \mathbb{Z}_p
\]
that matches $s_{\alpha, p}$ with $s_{\alpha} \otimes 1$. Thus 
$\mathcal{E}_{A_{\on{inf}}(V^{\sharp}) [\frac{1}{\mu}]}$ 
is a torsor. On the other hand, 
we have another canonical isomorphism
\[
\mathcal{M}(G_{V^{\sharp}}) \otimes_{A_{\on{inf}}(V^{\sharp})} A_{\on{crys}}(V^{\sharp}) \cong 
\mathcal{M}_{\on{crys}}(G_{V^{\sharp}}), 
\]
matching $s_{\alpha,\prism} \otimes 1$ with $s_{\alpha,0}$,
so $\mathcal{E}_{A_{\on{crys}}(V^{\sharp})}$ is a torsor as it is again the pullback of torsor $\mathcal{E}_{\on{crys}}$ in \cite[Corollary 7.1.14]{2017arXiv170705700X}. We know from the proof of \cite[Corollary 7.1.14]{Bhatt2018}  that $A_{\on{inf}}(V^{\sharp}) \rightarrow A_{\on{inf}}(V^{\sharp})[\frac{1}{p}]^{\wedge}_{\mu}$ factors through $A_{\on{crys}}(V^{\sharp})$, so $\mathcal{E}_{A_{\on{inf}}(V^{\sharp})[\frac{1}{p}]^{\wedge}_{\mu}}$
is a torsor. Now Beauville--Laszlo (see \cite[Lemma 7.2]{2018arXiv180406356A}  for example) applied to $A_{\on{inf}}(V^{\sharp})[\frac{1}{p}]$ tells us that $\mathcal{E}_{A_{\on{inf}}(V^{\sharp})[\frac{1}{p}]}$ is a torsor. 

We have seen that $\mathcal{E}_{A_{\on{inf}}(V^{\sharp})}$ 
is a torsor over the open subset 
\[
U:=
\on{Spec}(A_{\on{inf}}(V^{\sharp})[\frac{1}{p}]) \bigcup \on{Spec}(A_{\on{inf}}(V^{\sharp})[\frac{1}{\mu}]),
\]
whose complement is the unique closed point of $\on{Spec}(A_{\on{inf}}(V^{\sharp}))$ 
since $A_{\on{inf}}(V^{\sharp})/(p,\mu) \cong \mathcal{O}_{C}^{\flat} / (\epsilon-1)$ where $\epsilon -1$ is a pseudouniformizer in $\mathcal{O}_{C}^{\flat}$. Now it follows from \cite[Proposition 8.5]{2018arXiv180406356A}  and \cite[Lemma 14.2.3]{SW17}  that torsors (with respect to a reductive group) on $U$ are equivalent to torsors on $\on{Spec}(A_{\on{inf}}(V^{\sharp}))$. Alternatively, we can use the Tannakian formalism,  \cite[Lemma 14.2.3]{SW17}  and the proof of \cite[Proposition 7.3]{2018arXiv180406356A}  (which shows the exactness of equivalence in  \cite[Lemma 14.2.3]{SW17}) to obtain the equivalence between torsors (without the reductive hypothesis) on U and $\on{Spec}(A_{\on{inf}}(V^{\sharp}))$. It follows that $\mathcal{E}_U$ is a trivial torsor as
$\on{Spec}(A_{\on{inf}}(V^{\sharp}))$
is strictly Henselian.

We can now conclude that 
$\mathcal{E}_{A_{\on{inf}}(V^{\sharp})}$
is a trivial torsor. As $\mathcal{E}_{A_{\on{inf}}(V^{\sharp})}$ is by construction a quasi-torsor, it is enough to show that it has a section over $A_{\on{inf}}(V^{\sharp})$. We have seen that $\mathcal{E}_{U}$ is a trivial torsor, so there is a section 
$U \overset{s}{\longrightarrow }\mathcal{E}_{A_{\on{inf}}(V^{\sharp})}$ giving rise to a map of $A_{\on{inf}}(V^{\sharp})$-algebras
\[
s^*: 
\Gamma(\mathcal{E}_{A_{\on{inf}}(V^{\sharp})}, \mathcal{O}_{\mathcal{E}_{A_{\on{inf}}(V^{\sharp})}}) \longrightarrow \Gamma(U,\mathcal{O}_U)=A_{\on{inf}}(V^{\sharp}),
\]
where the last equality follows from the proof of \cite[Lemma 4.6]{Bhatt2018}. 
Now we note that $\mathcal{E}$ 
is affine over $\on{Spec}(A_{\on{inf}}(R^{\sharp,+}))$
since it is a closed subscheme of a trivial $\on{GL}(\mathcal{M})$-torsor, so the map $s^*$ defines a section
\[
\on{Spec}(A_{\on{inf}}(V^{\sharp})) \longrightarrow \mathcal{E}_{A_{\on{inf}}(V^{\sharp})}
\]
proving the claim. 

Lastly, if $V^{\sharp}$ is of mixed characteristic without being equal to $\mathcal{O}_{C}$, then $V^{\sharp} $ is the pullback of $\overline{V^{\sharp}} \hookrightarrow k$ along $\mathcal{O}_{C} \rightarrow k$, where $k$ is the residue field of $\mathcal{O}_{C}$ and $\overline{V^{\sharp}}$ is the image of $V^{\sharp}$ in $k$. Now $\mathcal{E}_{A_{\on{inf}}(V^{\sharp})}$
is equivalent to 
\footnote{See the beginning of the proof of \cite[Theorem 17.5.2]{SW17}. }
$\mathcal{E}_{W(\overline{V^{\sharp}})} $ and 
$ \mathcal{E}_{\on{A}_{\on{inf}}(\mathcal{O}_{C})}$ 
together with an identification $(\mathcal{E}_{W(\overline{V^{\sharp}})})|_{W(k)} \cong (\mathcal{E}_{\on{A}_{\on{inf}}(\mathcal{O}_{C})})|_{W(k)}$,
which is a torsor as we have already seen $\mathcal{E}_{\on{A}_{\on{inf}}(\mathcal{O}_{C})}$ and $\mathcal{E}_{W(\overline{V^{\sharp}})}$
are torsors. 
\end{proof}

We now come back to our construction of the map 
\[
\on{loc}_p : \mathfrak{S}_K^{\diamond} \longrightarrow \on{Sht}_{\mu}.
\]
Given a characteristic $p$ affinoid perfectoid space $S=\on{Spa}(R,R^+)$ over $\Breve{\mathbb{Z}}_p$,  and an untilt $S^{\sharp}=\on{Spa}(R^{\sharp},R^{\sharp,+})$ together with a map $\on{Spec}(R^{\sharp,+}) \rightarrow \mathfrak{S}_K$ over $\Breve{\mathbb{Z}}_p$, we have constructed a $\mathcal{G} \times_{\mathbb{Z}_p} A_{\on{inf}}(R^{\sharp,+})$-torsor
over $\on{Spec}(A_{\on{inf}}(R^{\sharp,+}))$ 
parametrizing isomorphisms between $Q_{\mathbb{Z}_{(p)}} \otimes_{\mathbb{Z}_{(p)}} A_{\on{inf}}(R^{\sharp,+})$
and $\mathcal{M}$ preserving Hodge tensors. Now the $\varphi$-module structure on $\mathcal{M}$ (and $s_{\alpha, \prism}$ are compatible with the $\varphi$-structure) gives us a modification
\[
\varphi_{\mathcal{E}} : 
\varphi^*\mathcal{E} \dashrightarrow \mathcal{E}
\]
at $\varphi(\xi)$, where $\xi \in A_{\on{inf}}(R^{\sharp,+})$ is a generator of the kernel $A_{\on{inf}}(R^{\sharp,+}) \rightarrow R^{\sharp,+}$. In other words, $\varphi_{\mathcal{E}}$ is an isomorphism on $A_{\on{inf}}(R^{\sharp,+})[\frac{1}{\varphi(\xi)}]$ (meromorphicity at $\xi$ is automatic at the algebraic setting). We then obtain a $\mathcal{G}_{\mathbb{Z}_p}$-shtuka by twisting $\mathcal{E}$ by $\on{Frob}_S^{-1}$ and restricting to $\mathcal{Y}_{[0,\infty)}(S)$. More precisely, let
\[
\mathcal{P}:= (\on{Frob}_S^{-1})^* \mathcal{E} |_{\mathcal{Y}_{[0,\infty)}(S)},
\]
then $\varphi_{\mathcal{E}}$
gives rise to a modification
\[
\varphi_{\mathcal{P}}: \on{Frob}_S^*(\mathcal{P}) \dashrightarrow \mathcal{P}
\]
at the untilt $S^{\sharp}$, i.e. a shtuka with one leg at $S^{\sharp}$.

\begin{lemma}
The modification $\varphi_{\mathcal{P}}$ is bounded by $\mu$, where $\mu$ is, by abuse of notation, a dominant cocharacter in the conjugacy class of $\mu$, the restriction of the Deligne homomorphism $h_{\mathbb{C}}$ to the first factor. 
\end{lemma}

\begin{proof}
Since $\mathcal{G}$ is a subgroup of $\on{GSp}(Q,\psi)$, it is enough to prove that the corresponding modification between  $\on{GSp}(Q,\psi)$-torsors  is bounded by $\mu$ (viewed as the a cocharacter of $\on{GSp}(Q,\psi)$). In other words,  we need to prove that the resulting sympletic shtuka is bounded by the dominant cocharacter $(1^{(g)},0^{(g)})$ of $\on{Gsp(Q,\psi))}$ with $2g=\on{dim} Q$. 

Since boundedness  is a pointwise condition on $R^{\sharp,+}$, we can assume that $R^{\sharp,+} = \mathcal{O}_{C}$ for an algebraically closed field $C$. If $C$ is of mixed characteristic, then we know from \cite[Proposition 12.4.6]{SW17}  and remark 12.4.7 that the relative position of $\varphi_{\mathcal{E}}$ is governed by the Hodge--Tate period $\text{Lie} \ G_{\mathcal{O}_{C}} \otimes_{\mathcal{O}_{C}} C(1) \subset T_p  G_{\mathcal{O}_{C}} \otimes_{\mathbb{Z}_p} C$ (and the corresponding Hodge tensors), but $\text{Lie} \ G_{\mathcal{O}_{C}}$ corresponds to the Hodge filtration on the Betti homology of $A_{C}$, and there are isomorphisms respecting Hodge tensors and filtrations, see \cite[Lemma 2.3.6]{Caraiani_2017}  for example. 

If $C$ is of equal characteristic,  then $\mathcal{M}$ is the (covariant) Dieudonn\'e crystal associated to $G_{\mathcal{O}_{C}}$ evaluated at the initial  PD-thickening $W(\mathcal{O}_{C})$ of $\mathcal{O}_{C}$, which is the de Rham homology of $A_{W(\mathcal{O}_{C})}$. We can further reduce to geometric points $k$ of $\mathcal{O}_{C}$, then it is well-known that the relative position of the $F$-structure on $\mathcal{M}_{W(k)}$ is governed by the Hodge filtration on the de Rham cohomology of $A_{W(k)}$, which again can be compared to the Hodge filtration on Betti-cohomology respecting Hodge tensors. 
\end{proof} 

We have now associated a $\mathcal{G}_{\mathbb{Z}_p}$-shtuka $\mathcal{P}$ with one leg at $S^{\sharp}$ bounded by $\mu$ to each element of $\mathfrak{S}_K(R^{\sharp,+})$. This defines a functor from the presheaf on affinoid perfectoid spaces over $\overline{\mathbb{F}_p}$, which sends $S$ to untilts $S^{\sharp}=\on{Spa}(R^{\sharp}, R^{\sharp,+})$ together with elements in 
$\mathfrak{S}_K(R^{\sharp,+})$, to the v-stack of moduli spaces of $\mathcal{G}_{\mathbb{Z}_p}$-shtukas over $\Breve{\mathbb{Z}}_p$. Since $\mathfrak{S}_K^{\diamond}$ is the sheafification of the presheaf, we have  the desired
\[
\on{loc}_p : \mathfrak{S}_K^{\diamond} \longrightarrow \on{Sht}_{\mu}.
\]

Let $p-\on{Isog}$ be the presheaf on schemes over $\Breve{\mathbb{Z}}_p$  sending a scheme $T$ to the set of triples $(x,y,f)$ with two points $x,y \in \mathfrak{S}_K(T)$, and $f: A_x \rightarrow A_y$ is a $p$-quasi-isogeny from $A_x$ to $A_y$, which are the pullback of $A$ along $x$ and $y$. Moreover, we require that at every geometric point $s$, $f$ preserves the level structure, $s_{\alpha, \on{dR}}$ and $s_{\alpha, l}$ with $l\neq p$. Further, it preserves $s_{\alpha,0}$ (resp. $s_{\alpha,p}$) if $s$ is of  characteristic $p$ (resp. 0). Then $p-\on{Isog}$ is represented by a scheme, and we have a diagram
\[
\begin{tikzcd} 
& p-\on{Isog} \arrow[dr,"t"] \arrow[dl,"s"'] &
\\
\mathfrak{S}_K & & \mathfrak{S}_K
\end{tikzcd}
\]
where $s$ (resp. $t$) remembers $x$ (resp. $y$). Moreover, $s$ and $t$ are proper surjevtive, and étale on the generic fiber. See \cite[Ssection 6]{2020arXiv200611745L}  for example. 

Similar to $\on{loc}_p$, we can construct a map
\[
\on{loc}_p^{\on{H}}: p-\on{Isog}^{\diamond} \longrightarrow \on{Sht}_{\mu|\mu}.
\]
Given $\on{Spa}(R,R^+)$ affinoid pefectoid of characteristic $p$,  $\on{Spa}(R^{\sharp},R^{\sharp,+})$ an untilt and $(x,y,f)$ a point of $p-\on{Isog}(R^{\sharp,+})$, let $\mathcal{M}_x$ (resp. $\mathcal{M}_y$) be the $\varphi$-$A_{\on{inf}}(R^{\sharp,+})$-module associated to $A_x[p^{\infty}]$ (resp. $A_y[p^{\infty}]$). Then $f$ induces an isomorphism $\mathcal{M}_x[\frac{1}{p}] \overset{\sim}{\rightarrow} \mathcal{M}_y[\frac{1}{p}]$ (as $f$ is a $p$-quasi-isogeny), which  preserves $s_{\alpha,\prism}$  by proposition \ref{orwueirueo} and   $f$ preserving $s_{\alpha,0}$, so it induces a modification 
\[
f_{\mathcal{P}}: \mathcal{P}_x \dashrightarrow \mathcal{P}_y
\]
at the characteristic $p$ untilt, where $\mathcal{P}_x$ and $\mathcal{P}_y$ denotes the image of $x$ and $y$ under $\on{loc}_p$. Now $\on{loc}_p^H$ is defined by (sheafification of) sending the data of $(x,y,f)$ to $f_{\mathcal{P}}$. 

Let $\nu$ be a cocharacter of $G$. 
We note that if we restrict to the subscheme $p-\on{Isog}_{\nu}$ parametrizing quasi-isogenies of types bounded by $\nu$, i.e. the corresponding maps between shtukas assoicated to the abelian schemes with Hodge tensors are bounded by $\nu$,  then $\on{loc}_p^H$ restricts to 
\[
\on{loc}_p^{\on{H}}: p-\on{Isog}_{\nu}^{\diamond} \longrightarrow \on{Sht}_{\mu|\mu}^{\nu}.
\]

Now we have a commutative diagram
\begin{equation} \label{uriweuriouwoeu}
\begin{tikzcd} 
& p-\on{Isog}_{\nu}^{\diamond} \arrow[dr,"t"] \arrow[dl,"s"'] \arrow[dd,"\on{loc}_p^H"]&
\\
\mathfrak{S}_K^{\diamond} \arrow[dd,"\on{loc}_p"'] & & \mathfrak{S}_K^{\diamond} \arrow[dd,"\on{loc}_p"]
\\
&  \on{Sht}_{\mu|\mu}^{\nu} \arrow[dr,"p_2"] \arrow[dl,"p_1"'] &
\\
\on{Sht}_{\mu} & & \on{Sht}_{\mu}.
\end{tikzcd}
\end{equation}
By the following lemma, we can pullback cohomological correspondence along the diagram, so we have the cohomological correspondence
\begin{equation} \label{dhfoeifoie}
\on{loc}_p^* S_V: (\mathfrak{S}_K^{\diamond}, \overline{\mathbb{Q}}_\ell) \longrightarrow 
(\mathfrak{S}_K^{\diamond}, \overline{\mathbb{Q}}_\ell)
\end{equation}
supported on $p-\on{Isog}_{\nu}^{\diamond}$, 
where we have used that $\mu$ is minuscule so that the structure sheaf $\mathscr{S}_U$ on $\on{Sht}_{\mu}$ is $\overline{\mathbb{Q}}_\ell$.

\begin{lemma} \label{8993993}
The diagram factorizes through 
\[
\begin{tikzcd} 
& p-\on{Isog}_{\nu}^{\diamond} \arrow[dr,""] \arrow[dl,""] \arrow[dd,hook]&
\\
\mathfrak{S}_K^{\diamond} \arrow[dd,hook] & & \mathfrak{S}_K^{\diamond} \arrow[dd,hook]
\\
& p-\on{Isog}_{\nu}^{\dagger}
\arrow[dr,"t"] \arrow[dl,"s"'] \arrow[dd,"\overline{\on{loc}_p^H}"]&
\\
\mathfrak{S}_K^{\dagger}
\arrow[dd,"\overline{\on{loc}_p}"'] 
& & 
\mathfrak{S}_K^{\dagger}\arrow[dd,"\overline{\on{loc}_p}"]
\\
&  \on{Sht}_{\mu|\mu}^{\nu} \arrow[dr,"p_2"] \arrow[dl,"p_1"'] &
\\
\on{Sht}_{\mu} & & \on{Sht}_{\mu},
\end{tikzcd}
\]
where $\mathfrak{S}_K^{\dagger}$ 
(resp.
$p-\on{Isog}_{\nu}^{\dagger}$)
is the canonical compactification \footnote{This is different from the toroidal compactifications or any algebraic compactifications of the scheme.} of 
$\mathfrak{S}_K^{\diamond}$
(resp.
$p-\on{Isog}_{\nu}^{\diamond}$), and the vertical maps along the first row are the canonical maps into the compactifications, see \cite[Proposition 18.6]{2017arXiv170907343S}  for the definition of these compactifications.  Moreover, the vertical maps along the first line are open immersions, and
the two lower squares in the diagram are Cartesian, so we can pullback cohomological correspondences (we can pullback along smooth maps).
\end{lemma}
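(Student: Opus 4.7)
My plan is to establish the lemma in three steps: (i) construct the extensions $\overline{\text{loc}_p}$ and $\overline{\text{loc}_p^H}$ to the canonical compactifications; (ii) verify that the canonical maps into the compactifications are open immersions; (iii) show the two lower squares are Cartesian.

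For (i), the essential input is partial properness of $Sht_{\mu}$ and $Sht_{\mu|\mu}^{\nu}$ over $Spd(\mathcal{O}_{L})$. This follows from the valuative criterion applied directly to the Shtuka data: given a Shtuka over $\text{Spa}(K,\mathcal{O}_K)$, its $\mathcal{G}$-torsor and Frobenius structure on $\mathcal{Y}_{[0,\infty)}$ extend uniquely to $\text{Spa}(K,K^+)$ via the inclusion of the corresponding Witt-vector rings, and the pointwise boundedness condition at geometric rank 1 points is preserved under this extension. Given partial properness of the targets, the universal property of the canonical compactification (\cite{2017arXiv170907343S} 18.6) produces unique extensions $\overline{\text{loc}_p}$ and $\overline{\text{loc}_p^H}$.

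For (ii), note that $\mathfrak{S}_K$ is quasi-projective over $\mathcal{O}_{L}$ (being the normalization of the closure of $S_K$ inside the Siegel integral model), hence separated; and $p-\text{Isog}_{\nu} \to \mathfrak{S}_K$ is proper under both $s$ and $t$, so $p-\text{Isog}_{\nu}$ is separated over $\text{Spec}(\mathcal{O}_{L})$. Passing to diamonds preserves separatedness, so by \cite{2017arXiv170907343S} 18.6 the canonical inclusions into the compactifications are open immersions.

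For (iii), the key identity is
\[
\overline{p-\text{Isog}_{\nu}^{\diamond}}^{/Spd(\mathcal{O}_{L})} \cong \overline{\mathfrak{S}_K^{\diamond}}^{/Spd(\mathcal{O}_{L})} \times_{Sht_{\mu}} Sht_{\mu|\mu}^{\nu}
\]
(together with its analogue for $t, p_2$). First I would show that $p-\text{Isog}_{\nu}^{\diamond} \cong \mathfrak{S}_K^{\diamond} \times_{Sht_{\mu}} Sht_{\mu|\mu}^{\nu}$ as v-sheaves: the map from left to right is $\text{loc}_p^H$, and the inverse reconstructs the pair $(y,f)$ from a tensor-preserving type-$\nu$ modification of $\mathcal{P}_x$, using proposition \ref{orwueirueo} and the classification \cite{SW17} 17.5.2 to produce the modified $p$-divisible group together with its tensors, and then lifting $y$ to $\mathfrak{S}_K^{\diamond}$ via Kisin's characterization of $\mathfrak{S}_K$ as the normalization of the closure of $S_K$ inside the Siegel integral model. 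Second, since $p_1$ is finite étale (hence partially proper) and $Sht_{\mu} \to Spd(\mathcal{O}_{L})$ is partially proper, the canonical compactification of $\mathfrak{S}_K^{\diamond} \times_{Sht_{\mu}} Sht_{\mu|\mu}^{\nu}$ over $Spd(\mathcal{O}_{L})$ agrees with its canonical compactification over $Sht_{\mu|\mu}^{\nu}$ and then over $Sht_{\mu}$, and the latter commutes with the étale base change along $p_1$, yielding
\[
\overline{\mathfrak{S}_K^{\diamond} \times_{Sht_{\mu}} Sht_{\mu|\mu}^{\nu}}^{/Spd(\mathcal{O}_{L})} \cong \overline{\mathfrak{S}_K^{\diamond}}^{/Spd(\mathcal{O}_{L})} \times_{Sht_{\mu}} Sht_{\mu|\mu}^{\nu}.
\]
The right-hand square is entirely analogous.

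The main obstacle is the set-theoretic identification $p-\text{Isog}_{\nu}^{\diamond} \cong \mathfrak{S}_K^{\diamond} \times_{Sht_{\mu}} Sht_{\mu|\mu}^{\nu}$. The forward direction is formal, but inverting it requires both the Scholze--Weinstein classification to produce the $p$-divisible group quotient from an abstract Shtuka modification, and Kisin's integral-model theory to ensure that this quotient actually lifts to a point of $\mathfrak{S}_K^{\diamond}$ with the associated $p$-quasi-isogeny preserving the Hodge tensors; proposition \ref{orwueirueo} (and the compatibility of $s_{\alpha,\prism}$ with $s_{\alpha,0}$ and $s_{\alpha,p}$ that it encodes) is the essential ingredient here.
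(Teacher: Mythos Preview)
Your steps (i) and (ii) are fine and close to the paper's argument; the paper handles (i) by observing directly that the Shtuka stacks are independent of $R^+$ (so the map factors through $\mathfrak{S}_K^{\diamond}(R,R^{\circ})$), which is equivalent to your partial properness statement.

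Step (iii), however, has a genuine gap. Your strategy is to first prove that the \emph{uncompactified} square
\[
p\text{-Isog}_{\nu}^{\diamond} \;\cong\; \mathfrak{S}_K^{\diamond} \times_{Sht_{\mu}} Sht_{\mu|\mu}^{\nu}
\]
is Cartesian, and then deduce Cartesianness after compactification from a base-change property. But this first identification is \emph{false}: the paper explicitly remarks (immediately after the proof) that the uncompactified diagram is not Cartesian, because the restriction functor from Breuil--Kisin--Fargues modules over $W(R^+)$ to Shtukas over $\mathcal{Y}_{[0,\infty)}(\mathrm{Spa}(R,R^+))$ is not fully faithful for general $R^+$. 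Concretely, a modification of Shtukas need not lift uniquely to a modification of BKF modules over $W(R^+)$, so you cannot recover the $p$-divisible group (and hence the point $y \in \mathfrak{S}_K^{\diamond}(R^{\sharp,+})$) from the Shtuka-side data alone. Your second step is also problematic: $p_1$ is only shown to be finite \'etale on the generic fibre $Sht_{\mu|\mu,\eta}^{\nu} \to Sht_{\mu,\eta}$, not over all of $Spd(\mathcal{O}_L)$, so the compactification-commutes-with-\'etale-base-change argument does not apply integrally.

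The paper's approach is to work directly with the compactified square. The point is that both $Sht_{\mu|\mu}^{\nu}$ and $\overline{p\text{-Isog}_{\nu}^{\diamond}}^{/Spd(\mathcal{O}_L)}$ are independent of $R^+$ (the latter because the compactification replaces $R^+$ by $R^{\circ}$). One then checks Cartesianness after v-descent to $S^{\sharp}=\mathrm{Spa}(C,C^+)$ with $C$ an algebraically closed perfectoid field; independence of $R^+$ lets one assume $C^+=\mathcal{O}_C$, and in that case the required full faithfulness (BKF modules $\leadsto$ Shtukas, up to $p$-isogeny) is supplied by \cite{SW17} Theorems 13.2.1 and 14.2.1. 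So the whole reason for introducing the canonical compactification is precisely to circumvent the failure you are implicitly assuming away.
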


\begin{proof}
Recall that $\mathfrak{S}_K^{\dagger}$ is the v-sheaf sending $S=\on{Spa}(R,R^+)$ to 
\[
\mathfrak{S}_K^{\diamond}(\on{Spa}(R,R^{\circ})),
\]
in other words, it is the sheafification of the presheaf sending $\on{Spa}(R,R^+)$ to the set of untilts $\on{Spa}(R^{\sharp},R^{\sharp,+})$ of  $\on{Spa}(R,R^+)$
over $\Breve{\mathbb{Z}}_p$
together with morphisms $\on{Spec}(R^{\sharp,\circ}) \rightarrow \mathfrak{S}_K$ 
of schemes over $\Breve{\mathbb{Z}}_p$, and similarly for 
$p-\on{Isog}_{\nu}^{\dagger}$. The canonical map $\mathfrak{S}_K^{\diamond} \longrightarrow  \mathfrak{S}_K^{\dagger}$
is simply induced by the composition of the restriction map from $\on{Spec}(R^{\sharp,\circ}) \rightarrow \on{Spec}(R^{\sharp,+}) $. Since the v-stacks $\on{Sht}_{\mu}$ and $\on{Sht}_{\mu|\mu}^{\nu}$ are independent of $R^+$, the factorization follows. Indeed, it follows from the proof of \cite[Proposition 11.2.1]{SW17}  that the structure sheaf $\mathcal{O}_{\mathcal{Y}_{[0,\infty)}(S)}$ 
on 
$\mathcal{Y}_{[0,\infty)}(S)$ 
only depends on $R$ (rather than $R^+$), so the category of vector bundles on $\mathcal{Y}_{[0,\infty)}(S)$
is independent of $R^+$, so does the category of torsors on $\mathcal{Y}_{[0,\infty)}(S)$
 by the Tannakian formalism. 

It follows from \cite[Proposition 27.5]{2017arXiv170907343S}  that $\mathfrak{S}_K^{\diamond}$ and $p-\on{Isog}_{\nu}^{\diamond}$ are compactifiable over $\on{Spd}(\Breve{\mathbb{Z}}_p)$, so proposition 22.3 (1) of $loc.cit.$ tells us that the natural maps into their canonical compactifications are open immersions, i.e. the vertical maps along the first row are open immersions. 

Next, we prove the lower left square is Cartesian, the right one is similar. Let $Y$ be the fiber product of $p_1$ and $\overline{\on{loc}_p}$, then the induced map 
$p-\on{Isog}_{\nu}^{\dagger}
\longrightarrow Y$
is proper since $s$ is proper (and $Y \longrightarrow \mathfrak{S}_K^{\dagger}$
is separated, being the base change of $p_1$). It follows from \cite[Lemma 12.5]{2017arXiv170907343S}  that to prove 
$p-\on{Isog}_{\nu}^{\dagger}
\longrightarrow Y$
is an isomorphism, it is enough to prove it is an isomorphism on geometric points. Thus we can assume that 
$S^{\sharp}=\on{Spa}( C,  C^+)$
where  $C$ is an algebraically closed perfectoid field. Since both $\on{Sht}_{\mu|\mu}^{\nu}$ and $p-\on{Isog}_{\nu}^{\dagger}$
are independent of $R^+$, we can further assume that $C^+=\mathcal{O}_{C}$.

We know that
$p$-quasi-isogenies from an abelian variety $A_x$ preserving Hodge tensors and level structures are the same as $p$-quasi-isogenies from $A_x[p^{\infty}]$ preserving Hodge tensors, since $p$-quasi-isogenies do not affect level structures (the level structure at $p$ is hyperspecial) and $s_{\alpha,l}$  while the rest Hodge tensors are all detected by the corresponding $p$-divisible groups. Moreover, we know from \cite[Theorem 17.5.2]{SW17}  that $p$-quasi-isogenies between $p$-divisible groups preserving Hodge tensors are the same as $p$-quasi-isogenies between $\mathcal{M}_x$ and $\mathcal{M}_y$ preserving $s_{\alpha,\prism}$, so it remains to show these are the same as the corresponding  $p$-quasi-isogenies between  $\mathcal{E}_x$ and $\mathcal{E}_y$, whence $\mathcal{P}_x$ and $\mathcal{P}_y$. In other words, we want to know that the restriction functor from Breuil--Kisin--Fargues modules $\mathcal{M}$ (over $\mathcal{O}_C$) to $GL_n$-shtukas (over $\on{Spa}(C,\mathcal{O}_C)$) is fully faithful up to quasi-isogeny, but this follows from \cite[Theorem 13.2.1, 14.2.1]{SW17}.
\end{proof}

\begin{remark}
The reason we use the factorization as in the lemma is because the squares in (\ref{uriweuriouwoeu}) are not Cartesian, and the reason for that is we do not have fully faithfulness of the restriction functor from Breuil--Kisin--Fargues modules over $W(R^+)$ to shtukas over $\mathcal{Y}_{[0,\infty)}(\on{Spa}(R,R^+))$ in general, disproving the expectation in \cite[Remark 4.5.13]{Kedlaya2019}. Indeed, Breuil--Kisin--Fargues modules over $W(R^+)$ depend on $R^+$ while shtukas over $\mathcal{Y}_{[0,\infty)}(\on{Spa}(R,R^+))$ do not, and there are examples where the base change of Breuil--Kisin--Fargues modules (even those coming from $p$-divisible groups) from $W(R^+)$ to $W(R^{\circ})$ is not fully faithful.  This is closely related to the failure of Tate's theorem on homomorphisms between $p$-divisible groups (the characteristic $p$ case is due to de Jong) in the case of non-discrete valuation rings.

Indeed, let $K$ be the completion of an algebraic closure of $\mathbb{F}_p((T))$
with ring of integral elements $R$,  and $H=E[p^{\infty}]$ 
be the
 $p$-divisible group of an elliptic curve $E$ over $R$ which is ordinary at the generic fiber and supersingular at the special fiber.
 Then since the generic fiber is algebraically closed, $H_K \cong \mathbb{Q}_p/\mathbb{Z}_p \times \mu_{p^{\infty}} $. Now let $H'=(\mathbb{Q}_p/\mathbb{Z}_p \times \mu_{p^{\infty}})_{R}$ 
be the constant $p$-divisible group defined over $R$, then the isomorphism $H_K \cong H'_K$ cannot be extended to an isomorphism over $R$ since  they are not isomorphic on the special fiber. The Newton polygons are different  as $E$ is supersingular at the special fiber. Hence the base change to the generic fiber is not fully faithful
\footnote{The author would like to thank Kiran Kedlaya and Marco D'Addezio for discussions on this point.}.

 As a concrete example, let  
 \[
 E: y^2=x(x-1)(x-t)
 \]
 be  defined over $\mathbb{F}_p[t][\frac{1}{t(t-1)} ]$ with $p>2$, and $E$ is ordinary over the fraction field by \cite[exercise 5.8]{Silverman2009}.
 Now we pick $\lambda \in \mathbb{F}_p$ such that $E$ is supersingular at $t=\lambda$. For example, we can take $p=11$ and $\lambda=2$, see \cite[example 4.3]{Silverman2009}. Now we base change $E$ to the completion of $\mathbb{F}_p[t][\frac{1}{t(t-1)} ]$ 
 at $t-\lambda$, which is isomorphic to $\mathbb{F}_p[[T]]$, and we further base change to $R$.
\end{remark}

We now restrict to the generic fiber to have
\begin{equation} \label{802984903}
\begin{tikzcd} 
& p-\on{Isog}_{\nu, \eta}^{\diamond} \arrow[dr,"t"] \arrow[dl,"s"'] \arrow[dd,"\on{loc}_{p,\eta}^H"]&
\\
\mathcal{X}_K^{\diamond} \arrow[dd,"\on{loc}_{p,\eta}"']
& &
\mathcal{X}_K^{\diamond} \arrow[dd,"\on{loc}_{p,\eta}"]
\\
&  
\on{Sht}_{\mu|\mu, \eta}^{\nu} \arrow[dr,"p_2"] \arrow[dl,"p_1"'] 
&
\\
\on{Sht}_{\mu,\eta} & & \on{Sht}_{\mu,\eta},
\end{tikzcd}
\end{equation}
along which we can pullback the Hecke correspondence 
$\mathscr{C}_{\nu} $ in (\ref{1222}) 
to obtain 
\[ 
\mathscr{C}_{\nu}^S: (\mathcal{X}_K^{\diamond}, \overline{\mathbb{Q}}_\ell) \longrightarrow 
(\mathcal{X}_K^{\diamond}, \overline{\mathbb{Q}}_\ell),
\]
where we use that $\mu$ is minuscule so that the canonical sheaves are constant. Note that we can pullback cohomological correspondences here by lemma \ref{8993993}, which shows that it is a composition of open immersion and Cartesian squares, and we can pullback cohomological correspondences along either Cartesian squares or smooth maps.  Moreover, it follows from the étaleness of the two correspondences that $\mathscr{C}_{\nu}^S$ is nothing but
\[
\mathscr{C}_{\nu}^S: s^*\overline{\mathbb{Q}}_\ell \cong \overline{\mathbb{Q}}_\ell 
\cong
t^*\overline{\mathbb{Q}}_\ell \cong t^!\overline{\mathbb{Q}}_\ell. 
\]
Now as in section \ref{rieourioeu}, let $V$ be a representation of $\hat{G}_{\mathbb{Q}_p}$, and $h_V $ be the function corresponding to $V$ through classical Satake, 
we define
\[
\Gamma_V^{S}:= \underset{ h_V(\nu(p))\neq 0}{\cup} p-\on{Isog}_{\nu,\eta},
\]
and 
\[
T_V^S := \underset{h_V(\nu(p))\neq 0}{\sum} h_V(\nu(p)) \mathscr{C}_{\nu}^S
\]
which is a cohomological correspondence from $(\mathcal{X}_K^{\diamond}, \overline{\mathbb{Q}}_\ell)$ to itself supported on $\Gamma_V^{S,\diamond}$. Note that $T_V^S$ is exactly the (analytification of) Hecke correspondence of $\mathcal{X}_K$, so we have proved

\begin{theorem} \label{pireopwiepo}
The pullback of $S_{V,\eta}$ along
(\ref{802984903}) is exactly $T_V^S$.
\end{theorem}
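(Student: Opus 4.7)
The plan is to combine Theorem~\ref{840238493}, which asserts $S_{V,\eta} = T_V$ as cohomological correspondences on $(Sht_{\mu,\eta}, \overline{\mathbb{Q}_l})$ (using that $\mu$ is minuscule so $\mathscr{S}_U^{(\{1\})} = \overline{\mathbb{Q}_l}$), with the functoriality of pullback of cohomological correspondences. Once $S_{V,\eta} = T_V$ is in hand, the definition (\ref{dhfoeifoie}) gives $\text{loc}_{p,\eta}^* S_{V,\eta} = \text{loc}_{p,\eta}^* T_V$, and the task reduces to identifying this latter pullback with $T_V^S$.

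First I would observe that $\text{loc}_{p,\eta}^H$ sends $p\text{-Isog}_{\nu,\eta}^{\diamond}$ into $Sht^{\nu}_{\mu|\mu,\eta}$ by construction, and in fact, since the boundedness-by-$\nu$ condition is pointwise and is faithfully transported by the functor from Breuil-Kisin-Fargues modules to Shtukas at each geometric point (as used already in the Cartesianness argument of the preceding lemma), the preimage of $Sht^{\nu}_{\mu|\mu,\eta}$ under $\text{loc}_{p,\eta}^H$ is precisely $p\text{-Isog}_{\nu,\eta}^{\diamond}$. Consequently the pullback of $\Gamma_V = \bigcup_{h_V(\nu(p)) \neq 0} Sht^{\nu}_{\mu|\mu,\eta}$ is exactly $\Gamma_V^{S,\diamond} = \bigcup_{h_V(\nu(p)) \neq 0} p\text{-Isog}_{\nu,\eta}^{\diamond}$, matching the support of $T_V^S$. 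Since pullback of cohomological correspondences is linear, it suffices to verify $(\text{loc}_{p,\eta}^H)^* \mathscr{C}_{\nu} = \mathscr{C}_{\nu}^S$ for each individual $\nu$ and then sum with the coefficients $h_V(\nu(p))$.

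Next I would compare the two canonical correspondences directly. Both $\mathscr{C}_{\nu}$ and $\mathscr{C}_{\nu}^S$ are, by their very definitions and the observation made after (\ref{1222}) about the minuscule case, given as the composition of canonical isomorphisms
\[
p_1^*\overline{\mathbb{Q}_l} \cong \overline{\mathbb{Q}_l} \cong p_2^*\overline{\mathbb{Q}_l} \cong p_2^!\overline{\mathbb{Q}_l}
\]
and respectively $s^*\overline{\mathbb{Q}_l} \cong \overline{\mathbb{Q}_l} \cong t^*\overline{\mathbb{Q}_l} \cong t^!\overline{\mathbb{Q}_l}$, where the last identification uses étaleness of $p_2$ on the Shtuka side and of $t$ on the generic fiber of $p\text{-Isog}$. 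Standard compatibility of the $(-)^* \cong (-)^!$ identification for étale morphisms with base change along a Cartesian square then gives $(\text{loc}_{p,\eta}^H)^* \mathscr{C}_{\nu} = \mathscr{C}_{\nu}^S$ at the level of the defining canonical isomorphisms.

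The main technical point to handle carefully is that the squares of (\ref{802984903}) are not Cartesian on the nose, so pullback of cohomological correspondences is not automatic. This is already addressed by the factorization through the canonical compactifications in the preceding lemma, in which the relevant lower squares become Cartesian and the top vertical maps are open immersions (hence cohomologically smooth); the pullback then fits into the framework of \cite{2017arXiv170705700X} appendix~A.2 used throughout this paper, and I would simply invoke that formalism to make the above comparison of $\mathscr{C}_{\nu}$ with $\mathscr{C}_{\nu}^S$ rigorous.
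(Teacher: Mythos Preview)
Your proposal is correct and follows essentially the same approach as the paper. The paper's proof is a one-liner invoking Theorem~\ref{840238493} and ``the above discussion'': in that discussion $\mathscr{C}_{\nu}^S$ is \emph{defined} as the pullback of $\mathscr{C}_{\nu}$ along (\ref{802984903}) and then identified with the canonical isomorphism $s^*\overline{\mathbb{Q}_l} \cong t^!\overline{\mathbb{Q}_l}$ via \'etaleness, so the identity $\text{loc}_{p,\eta}^*T_V = T_V^S$ is tautological and the theorem reduces immediately to $S_{V,\eta}=T_V$; you simply spell out these steps (linearity, matching of supports, the Cartesian factorization from the preceding lemma) in more detail.
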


\begin{proof}
This follows from the above discussion and theorem \ref{840238493}.
\end{proof}

Next we want to compare the Hecke operator and excursion operator on the compactly supported cohomology of $\bar{\mathfrak{X}}_K$, and deduce the $S=T$ conjecture of Xiao--Zhu. We begin with a well-known observation.

\begin{lemma}
Let $\pi: \mathfrak{S}_K\longrightarrow \on{Spec}(\Breve{\mathbb{Z}}_p)$ be the structure map of $\mathfrak{S}_K$, then $R^i\pi_! \overline{\mathbb{Q}}_\ell$ is a lisse sheaf on $\on{Spec}(\Breve{\mathbb{Z}}_p)$ for every $i$. 
\end{lemma}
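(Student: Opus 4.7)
The plan is to reduce to the proper smooth case via a toroidal compactification. By the work of Madapusi-Pera on toroidal compactifications of integral models of Hodge type Shimura varieties, there exists a smooth proper $\mathcal{O}_L$-scheme $\mathfrak{S}_K^{\mathrm{tor}}$ containing $\mathfrak{S}_K$ as an open subscheme, with complement $D = \mathfrak{S}_K^{\mathrm{tor}} \setminus \mathfrak{S}_K$ a relative strict normal crossing divisor. Let $j:\mathfrak{S}_K\hookrightarrow \mathfrak{S}_K^{\mathrm{tor}}$ and $i:D\hookrightarrow \mathfrak{S}_K^{\mathrm{tor}}$ be the canonical immersions, and $\bar\pi:\mathfrak{S}_K^{\mathrm{tor}}\to\mathrm{Spec}(\mathcal{O}_L)$, $\pi_D:D\to\mathrm{Spec}(\mathcal{O}_L)$ the structure maps.

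First, I would use the exact triangle $j_!\overline{\mathbb{Q}_l}\to\overline{\mathbb{Q}_l}\to i_*\overline{\mathbb{Q}_l}$ on $\mathfrak{S}_K^{\mathrm{tor}}$, and apply $R\bar\pi_*$. Since $\bar\pi$ is proper, $R\bar\pi_*j_!\overline{\mathbb{Q}_l}=R\pi_!\overline{\mathbb{Q}_l}$, so we obtain a distinguished triangle
\[
R\pi_!\overline{\mathbb{Q}_l}\longrightarrow R\bar\pi_*\overline{\mathbb{Q}_l}\longrightarrow R\pi_{D,*}\overline{\mathbb{Q}_l}\xrightarrow{+1}.
\]
It therefore suffices to show that both $R^i\bar\pi_*\overline{\mathbb{Q}_l}$ and $R^i\pi_{D,*}\overline{\mathbb{Q}_l}$ are lisse on $\mathrm{Spec}(\mathcal{O}_L)$ for every $i$, since the lisse sheaves form a triangulated subcategory.

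For $R\bar\pi_*\overline{\mathbb{Q}_l}$ this is immediate from proper and smooth base change: $\bar\pi$ is proper and smooth, so the higher direct images are lisse. For $R\pi_{D,*}\overline{\mathbb{Q}_l}$, I would stratify $D$ by its relative strata $D^{(k)}$, each of which is the disjoint union of products of intersections of components of $D$ and is smooth and proper over $\mathcal{O}_L$. The spectral sequence (or iterated use of the Mayer-Vietoris / excision triangles for the stratification) associated to this stratification expresses $R\pi_{D,*}\overline{\mathbb{Q}_l}$ as an iterated extension of $R\pi_{D^{(k)},*}\overline{\mathbb{Q}_l}$, each of which is lisse by proper and smooth base change applied to the smooth proper maps $D^{(k)}\to\mathrm{Spec}(\mathcal{O}_L)$.

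The main technical input is the existence of a toroidal compactification of $\mathfrak{S}_K$ that is smooth and proper over $\mathcal{O}_L$ with strict normal crossing boundary, together with smoothness over $\mathcal{O}_L$ of all boundary strata; this is precisely what Madapusi-Pera's construction provides for Hodge type Shimura varieties at hyperspecial level. Everything else is a formal consequence of the six functor formalism for schemes. Once these two lisseness statements are in hand, the distinguished triangle above immediately yields the lemma.
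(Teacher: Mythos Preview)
Your proof is correct. Both you and the paper use the same key input: the smooth proper toroidal compactification of $\mathfrak{S}_K$ with relative strict normal crossings boundary, due to Madapusi-Pera. The paper argues in a slightly different order: rather than using the $j_!$ triangle and the boundary stratification to get $R\pi_!$ directly, it first shows that $R^j\pi_*\overline{\mathbb{Q}_l}$ is lisse, via the Leray spectral sequence for $Rj_*$ combined with purity for the normal crossing divisor (which identifies each $R^qj_*\overline{\mathbb{Q}_l}$ with Tate twists supported on the smooth proper boundary strata), and then invokes Poincar\'e duality for the smooth morphism $\pi$ to pass from $R\pi_*$ to $R\pi_!$. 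Your argument is essentially the Verdier dual of the paper's and has the small advantage of not needing the extra appeal to Poincar\'e duality, while the paper's use of purity packages the boundary contribution more cleanly than an explicit Mayer--Vietoris stratification of $D$.
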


\begin{proof}
It follows from the existence of integral toroidal compactifications (\cite{KeerthiMADAPUSIPERA2019}) that $R^j\pi_* \overline{\mathbb{Q}}_\ell$ is lisse (using the Leray spectral sequence of the open immersion into toroidal compactification and  purity of normal crossing divisors). Now Poincare duality gives the result for $R^i\pi_! \overline{\mathbb{Q}}_\ell$. 
\end{proof}

\begin{lemma}
There is a canonical isomorphism
\[
R^i(\pi^{\diamond})_!\overline{\mathbb{Q}}_\ell \cong  c_{\Breve{\mathbb{Z}}_p}^* R^i\pi_!\overline{\mathbb{Q}}_\ell,
\]
with notations as in appendix \ref{serepoir}. 
\end{lemma}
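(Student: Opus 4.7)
The strategy is to reduce to the proper case via an integral toroidal compactification and then invoke the compatibility results of appendix \ref{serepoir}. Concretely, by the work of Madapusi Pera (extending Faltings--Chai for the Siegel case and Kisin's normalization trick) there exists an integral toroidal compactification $\bar{\mathfrak{S}}_K$ of $\mathfrak{S}_K$ over $\mathcal{O}_L$, fitting into a diagram
\[
\begin{tikzcd}
\mathfrak{S}_K \arrow[r,hook,"j"] \arrow[rd,"\pi"'] & \bar{\mathfrak{S}}_K \arrow[d,"\bar{\pi}"] \\
& \text{Spec}(\mathcal{O}_L)
\end{tikzcd}
\]
with $\bar{\pi}$ proper, $j$ an open immersion, and boundary a relative normal crossing divisor. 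By definition of the exceptional pushforward, $R\pi_!\overline{\mathbb{Q}_l} \cong R\bar{\pi}_* j_!\overline{\mathbb{Q}_l}$ on $\text{Spec}(\mathcal{O}_L)$.

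First I would pass to the associated v-stacks. The diamondification $j^\diamond : \mathfrak{S}_K^\diamond \hookrightarrow \bar{\mathfrak{S}}_K^\diamond$ is an open immersion of v-stacks (the complement of the analytification of the boundary divisor), and $\bar{\pi}^\diamond : \bar{\mathfrak{S}}_K^\diamond \to \text{Spd}(\mathcal{O}_L)$ is proper as a map of v-stacks, since $\bar{\pi}$ is a proper map of schemes. Hence we also have $R(\pi^\diamond)_!\overline{\mathbb{Q}_l} \cong R(\bar{\pi}^\diamond)_* j^\diamond_!\overline{\mathbb{Q}_l}$.

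Next I would invoke the two compatibility statements collected in appendix \ref{serepoir}: (a) the analytification functor $c^*_{(-)}$ commutes with $j_!$ for open immersions, i.e.\ $c^*_{\bar{\mathfrak{S}}_K} j_! \cong j^\diamond_! c^*_{\mathfrak{S}_K}$; and (b) it commutes with proper pushforward, i.e.\ $c^*_{\mathcal{O}_L} R\bar{\pi}_* \cong R(\bar{\pi}^\diamond)_* c^*_{\bar{\mathfrak{S}}_K}$, which is a form of proper base change for the v-stack topology. Chaining these together and using $c^*\overline{\mathbb{Q}_l} = \overline{\mathbb{Q}_l}$ gives
\[
c^*_{\mathcal{O}_L}R\pi_!\overline{\mathbb{Q}_l} \;\cong\; c^*_{\mathcal{O}_L} R\bar{\pi}_* j_!\overline{\mathbb{Q}_l}
\;\cong\; R(\bar{\pi}^\diamond)_* c^*_{\bar{\mathfrak{S}}_K} j_!\overline{\mathbb{Q}_l}
\;\cong\; R(\bar{\pi}^\diamond)_* j^\diamond_!\overline{\mathbb{Q}_l}
\;\cong\; R(\pi^\diamond)_!\overline{\mathbb{Q}_l},
\]
and taking $i$-th cohomology yields the claimed isomorphism.

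The main obstacle is really bookkeeping: one must ensure that the comparison functor $c^*$ of appendix \ref{serepoir}, which is designed to compare constructible sheaves on schemes/algebraic stacks with their v-stack counterparts, genuinely satisfies the two functorial compatibilities above. The open-immersion compatibility is essentially formal from the definition of $(-)^\diamond$, while the proper-pushforward compatibility is a consequence of proper base change together with the fact that fibers of $\bar{\pi}^\diamond$ over geometric points of $\text{Spd}(\mathcal{O}_L)$ agree (up to diamondification) with the fibers of $\bar{\pi}$ over the corresponding geometric points of $\text{Spec}(\mathcal{O}_L)$. Both statements are in principle covered by the appendix, so no new geometric input beyond Madapusi Pera's integral toroidal compactification is required.
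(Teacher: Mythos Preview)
There is a genuine gap: you are conflating $X^{\diamond}$ and $X^{\diamond\diamond}$. The compatibility results of appendix~\ref{serepoir} (Theorem~\ref{flkdjlkfjd}) say $c_Y^* Rf_! \cong Rf^{\diamond\diamond}_! c_X^*$, i.e.\ they relate $c^*$ to the $\diamond\diamond$-analytification, not to $\diamond$. For the proper $\bar{\pi}$ this is harmless, since $\bar{\mathfrak{S}}_K^{\diamond} = \bar{\mathfrak{S}}_K^{\diamond\diamond}$. But for the open immersion $j$ the appendix only gives $c^*_{\bar{\mathfrak{S}}_K} j_! \cong j^{\diamond\diamond}_! c^*_{\mathfrak{S}_K}$, and the right-hand side of your claimed identity (a) does not even type-check: $c^*_{\mathfrak{S}_K}$ lands in $D_{\text{\'et}}(\mathfrak{S}_K^{\diamond\diamond},\Lambda)$, which is not the source of $j^{\diamond}_!$. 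Concretely, inside $\bar{\mathfrak{S}}_K^{\diamond} = \bar{\mathfrak{S}}_K^{\diamond\diamond}$ the open substacks $\mathfrak{S}_K^{\diamond}$ and $\mathfrak{S}_K^{\diamond\diamond}$ differ: over $\text{Spd}(L)$ the former is only the good reduction locus $\mathcal{X}_K$, while the latter is the full analytification of $S_{K,L}$. Hence $j^{\diamond}_!\overline{\mathbb{Q}_l} \neq j^{\diamond\diamond}_!\overline{\mathbb{Q}_l}$, and your chain of isomorphisms actually computes $R(\pi^{\diamond\diamond})_!\overline{\mathbb{Q}_l}$, not $R(\pi^{\diamond})_!\overline{\mathbb{Q}_l}$. (That identification $c^*_{\mathcal{O}_L} R\pi_! \cong R(\pi^{\diamond\diamond})_!$ follows directly from Theorem~\ref{flkdjlkfjd} without any compactification.)

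The entire content of the lemma is to bridge $\diamond$ and $\diamond\diamond$, and this requires genuinely new input. The paper checks the isomorphism on fibers: on the special fiber it invokes Proposition~\ref{uwweerlkls} (itself proved via a nearby-cycle argument over $k((t))$), and on the generic fiber it identifies $R\Gamma_c(\mathcal{X}_{K,\bar{\eta}},\Lambda)$ with $R\Gamma_c(\bar{\mathfrak{X}}_K, R\psi\Lambda)$ via Huber's theorem and then uses the smoothness of $\mathfrak{S}_K/\mathcal{O}_L$ to get $R\psi\Lambda = \Lambda$. The toroidal compactification does enter (to know the right-hand side is lisse and to compare generic and special fibers), but the compatibilities you cite from the appendix are not sufficient on their own.
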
 

\begin{proof}
It is enough to prove the statement with $\overline{\mathbb{Q}}_\ell$ replaced by $\Lambda$, a finite extension of $\mathbb{Z}_\ell$. It is enough to prove the identification on the special fiber and generic fiber. Over the special fiber, this is  proposition \ref{uwweerlkls}.

Let $\Bar{\eta}$ be a completed algebraically closure of $\Breve{\mathbb{Q}}_p$, then we have 
\[
(R(\pi^{\diamond})_!\Lambda)_{\Bar{\eta}} \cong R\Gamma_c((\mathfrak{S}_{K}^{\diamond})_{\Bar{\eta}}, \Lambda) \cong R\Gamma_c(\mathcal{X}_{K,\Bar{\eta}}, \Lambda) \cong R\Gamma_c(\Bar{\mathfrak{X}}_{K}, R\psi (\Lambda)) \cong 
R\Gamma_c(\Bar{\mathfrak{X}}_{K}, \Lambda)
\]
where $ R\psi$ is the nearby cycle functor with respect to the integral model $\mathfrak{S}_K / \Breve{\mathbb{Z}}_p$. 
The first isomorphism is proper base change (\cite[Proposition 22.15]{2017arXiv170907343S}), the second is \cite[Lemma 15.6]{2017arXiv170907343S}, the third is \cite[Theorem 5.7.8]{Huber2013-td}, and the last follows from  $\mathfrak{S}_K$ being smooth over $  \Breve{\mathbb{Z}}_p$. 

On the other hand, we have
\[
(R\pi_!\Lambda)_{\Bar{\eta}} \cong 
R\Gamma_c(\mathfrak{S}_{K,\Bar{\eta}},\Lambda ) \cong R\Gamma_c(\Bar{\mathfrak{X}}_{K}, R\psi (\Lambda)) \cong 
R\Gamma_c(\Bar{\mathfrak{X}}_{K}, \Lambda)
\]
where the first isomorphism is proper base change, the second follows from the existence of toroidal compactifications (\cite{KeerthiMADAPUSIPERA2019}), see \cite{lan_stroh_2018} for example, and the last follows from the smoothness of $\mathfrak{S}_K$. We have thus proved the identification over the generic fiber. 
\end{proof}

\begin{corollary}
    We have $R^i\pi^{\diamond}_! \overline{\mathbb{Q}}_\ell$ is a locally constant sheaf whose stalks are identified with $H^i_c(\Bar{\mathfrak{X}}_{K}, \overline{\mathbb{Q}}_\ell)$. 
\end{corollary}

We can now prove the $S=T$ conjecture in \cite{2017arXiv170705700X}.

\begin{theorem}
As operators on $H^i_c(\Bar{\mathfrak{X}}_{K}, \overline{\mathbb{Q}}_\ell) $,
the Hecke correspondence  $T_V^S$ 
is the same as the excursion correspondence $\on{loc}_{p}^{W,*} S_{V}^{W}$, the analytification of the excursion operator in \cite{2017arXiv170705700X} (whose action on $H^i_c(\Bar{\mathfrak{X}}_{K}, \overline{\mathbb{Q}}_\ell) $ is the same as the action as defined in \cite{2017arXiv170705700X} through comparison of cohomology of v-sheaves and perfect schemes in \cite[Section 27]{2017arXiv170907343S}).
\end{theorem}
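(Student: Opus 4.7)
The plan is to realize both sides as specializations of a single cohomological correspondence on the integral model, and then use local constancy of the pushforward to a point. First, I would check that $R^i(\pi^{\diamond})_!\overline{\mathbb{Q}_l}$ is a locally constant sheaf on $\mathrm{Spd}(\mathcal{O}_L)$. This follows from smoothness of $\mathfrak{S}_K/\mathcal{O}_L$, the existence of integral toroidal compactifications (which yields lisseness of $R^i\pi_!\overline{\mathbb{Q}_l}$ on $\mathrm{Spec}(\mathcal{O}_L)$), and the comparison between scheme cohomology and diamond cohomology in appendix \ref{serepoir}, precisely as in the two preceding lemmas. The stalks at the generic and special geometric points are then canonically identified with $H^i_c(\mathcal{X}_{K,\bar\eta},\overline{\mathbb{Q}_l})$ and $H^i_c(\bar{\mathfrak{X}}_K,\overline{\mathbb{Q}_l})$ respectively, and these two stalks are canonically isomorphic to one another via the nearby cycles identification already noted.

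Second, I would apply $R^i(\pi^{\diamond})_!$ to the pulled-back cohomological correspondence $\mathrm{loc}_p^*S_V$ of \eqref{dhfoeifoie}, using the pushforward of cohomological correspondences. The result is a global section of the internal hom $\mathscr{H}\mathrm{om}(R^i(\pi^{\diamond})_!\overline{\mathbb{Q}_l},R^i(\pi^{\diamond})_!\overline{\mathbb{Q}_l})$, which is again locally constant on $\mathrm{Spd}(\mathcal{O}_L)$. Hence the generic-fiber and special-fiber specializations of this endomorphism agree under the canonical identification of stalks.

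Third, I would identify the two specializations separately. On the generic fiber, theorem \ref{pireopwiepo} (which is a consequence of the $S=T$ for Shtukas, theorem \ref{840238493}) gives $\mathrm{loc}_{p,\bar\eta}^*S_{V,\bar\eta}=T_V^S$, whose pushforward is the usual Hecke action on $H^i_c(\mathcal{X}_{K,\bar\eta},\overline{\mathbb{Q}_l})$; this coincides with $T_V^S$ on $H^i_c(\bar{\mathfrak{X}}_K,\overline{\mathbb{Q}_l})$ by the definition of Hecke operators on the special fiber. On the special fiber, the period map $\mathrm{loc}_{p,s}$ factors through the Witt-vector period map $\mathrm{loc}_p^W$ by construction, and combining this factorization with proposition \ref{oeurieurio} identifies $\mathrm{loc}_{p,s}^*S_{V,s}$ with the pullback of $S_V^W$ along $\mathrm{loc}_p^W$. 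By proposition \ref{fjdklfsjdkeie} (analytification commutes with pullback and composition of cohomological correspondences) together with proposition \ref{oeurieurio}, this pullback is the $\diamond$-analytification of the excursion operator of \cite{2017arXiv170705700X}, which by the comparison of cohomology of perfect schemes and v-sheaves in appendix \ref{serepoir} acts as the Xiao--Zhu excursion operator on $H^i_c(\bar{\mathfrak{X}}_K,\overline{\mathbb{Q}_l})$.

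The main obstacle, which is essentially already handled by the machinery built in the earlier sections, is the coherent bookkeeping of all the local, global, truncated, and Witt-vector variants of Hecke stacks and Shtukas, together with the Cartesianness of the various pullback squares needed to transfer the equality $S_{V,\eta}=T_V$ of theorem \ref{840238493} through the period map $\mathrm{loc}_p$; the compatibility of $\diamond$-analytification with pullback and composition of cohomological correspondences (appendix \ref{sdkkdkdkd}) is what ultimately allows the Witt-vector comparison of \cite{2017arXiv170705700X} to be imported into our setting on the special-fiber side.
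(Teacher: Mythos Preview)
Your proposal is correct and follows essentially the same argument as the paper: the proof is carried out in the discussion immediately preceding the theorem statement, using exactly the three steps you outline (local constancy of $R^i(\pi^{\diamond})_!\overline{\mathbb{Q}_l}$ via the two lemmas, pushforward of $\mathrm{loc}_p^*S_V$ giving a constant section of the internal hom, and identification of the two specializations via theorem~\ref{pireopwiepo} on the generic fiber and proposition~\ref{oeurieurio} plus proposition~\ref{fjdklfsjdkeie} on the special fiber). The references and logical flow match the paper's own treatment essentially line for line.
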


\begin{proof}

Recall that we can pushforward cohomological correspondence (\cite[A.2.6]{2017arXiv170705700X}), so we can pushforward (\ref{dhfoeifoie}) along $R(\pi^{\diamond})_{!}$ to obtain
\[
R^i(\pi^{\diamond})_{!} \on{loc}_p^* S_V: R^i(\pi^{\diamond})_!\overline{\mathbb{Q}}_\ell \longrightarrow R^i(\pi^{\diamond})_!\overline{\mathbb{Q}}_\ell
\] 
which is a global section of the internal hom $\mathcal{H}\text{om}(R^i(\pi^{\diamond})_!\overline{\mathbb{Q}}_\ell, R^i(\pi^{\diamond})_!\overline{\mathbb{Q}}_\ell)$. Since $R^i(\pi^{\diamond})_!\overline{\mathbb{Q}}_\ell$ is locally constant, so is 
$\mathcal{H}\text{om}(R^i(\pi^{\diamond})_!\overline{\mathbb{Q}}_\ell, R^i(\pi^{\diamond})_!\overline{\mathbb{Q}}_\ell)$, 
and we see that 
$R^i(\pi^{\diamond})_{!} \on{loc}_p^* S_V$
is constant. In other words, for any geometric point $\bar{\eta}$ of $\on{Spd}(\Breve{\mathbb{Q}}_p)$, we have that the restriction of  $R^i(\pi^{\diamond})_{!} \on{loc}_p^* S_V$ to $\Bar{\eta}$,
which is nothing but
\[
R^i(\pi^{\diamond}_{\Bar{\eta}})_{!} \on{loc}_{p,\Bar{\eta}}^* S_{V,\bar{\eta}}: H^i_c(\mathcal{X}_{K,\bar{\eta}}, \overline{\mathbb{Q}}_\ell) \longrightarrow 
H^i_c(\mathcal{X}_{K,\bar{\eta}}, \overline{\mathbb{Q}}_\ell),
\] 
is identified with its restriction to the special fiber
\[
R^i(\pi^{\diamond}_{s})_{!} \on{loc}_{p,s}^* S_{V,s} : H^i_c(\Bar{\mathfrak{X}}_{K}, \overline{\mathbb{Q}}_\ell) 
\longrightarrow H^i_c(\Bar{\mathfrak{X}}_{K}, \overline{\mathbb{Q}}_\ell)
\]
under the canonical identification between $H^i_c(\Bar{\mathfrak{X}}_{K}, \overline{\mathbb{Q}}_\ell)$ 
and $H^i_c(\mathcal{X}_{K,\bar{\eta}}, \overline{\mathbb{Q}}_\ell)$, where $s:= \overline{\mathbb{F}_p}$.

Now theorem \ref{pireopwiepo} tells that $\on{loc}_{p,\Bar{\eta}}^* S_{V,\bar{\eta}} = T_V^S$ is the Hecke operator, so the action
$R^i(\pi^{\diamond}_{s})_{!} \on{loc}_{p,s}^* S_{V,s}$ 
is identified with the Hecke operator on $H^i_c(\Bar{\mathfrak{X}}_{K}, \overline{\mathbb{Q}}_\ell)$. Recall that Hecke operators on 
$H^i_c(\Bar{\mathfrak{X}}_{K}, \overline{\mathbb{Q}}_\ell)$ 
is defined through the identification of 
$H^i_c(\Bar{\mathfrak{X}}_{K}, \overline{\mathbb{Q}}_\ell)$ 
with 
$H^i_c(\mathcal{X}_{K,\bar{\eta}}, \overline{\mathbb{Q}}_\ell)$
and the Hecke action on the latter, which is exactly the way we identify the excursion operators.

It remains to identify 
$R^i(\pi^{\diamond}_{s})_{!} \on{loc}_{p,s}^* S_{V,s}$ 
with excursion operators defined by Xiao--Zhu. 
As usual, we let
\[
p-\on{Isog}_V := \underset{\nu}{\cup} \  p-\on{Isog}_{\nu}
\]
where $\nu$ varies from heighest weights of irreducible summands of $V$.

We first observe that 
\[
\begin{tikzcd} 
& p-\on{Isog}_{V,s}^{\diamond} \arrow[dr,"t"] \arrow[dl,"s"'] \arrow[dd,"\on{loc}_{p,s}^H"]&
\\
\bar{\mathfrak{X}}_{K}^{\diamond} \arrow[dd,"\on{loc}_{p,s}"']
& &
\bar{\mathfrak{X}}_{K}^{\diamond} \arrow[dd,"\on{loc}_{p,s}"]
\\
&  \on{Sht}_{\mu|\mu,s}^{V} \arrow[dr,"p_2"] \arrow[dl,"p_1"']  &
\\
\on{Sht}_{\mu,s}  & & \on{Sht}_{\mu,s}
\end{tikzcd}
\]
factorizes through
\[
\begin{tikzcd} 
& p-\on{Isog}_{V,s}^{\diamond} \arrow[dr,"t"] \arrow[dl,"s"'] \arrow[dd,"\on{loc}_{p}^{H,W}"]&
\\
\bar{\mathfrak{X}}_{K}^{\diamond} \arrow[dd,"\on{loc}_{p}^{W}"']
& &
\bar{\mathfrak{X}}_{K}^{\diamond} \arrow[dd,"\on{loc}_{p}^{W}"]
\\
&
\on{Sht}_{\mu | \mu}^{V,W} \arrow[dr,"" ] \arrow[dl, ""]  \arrow[dd]
& 
\\
\on{Sht}_{\mu}^{W} \arrow[dd]
& & 
\on{Sht}_{\mu}^{W} \arrow[dd]
\\
&  \on{Sht}_{\mu|\mu,s}^{V} \arrow[dr,"p_2"] \arrow[dl,"p_1"'] &
\\
\on{Sht}_{\mu,s} & & \on{Sht}_{\mu,s},
\end{tikzcd}
\]
where $\on{loc}_{p}^{W}$ 
sends $A$ to (the corresponding torsor) the Dieudonn\'e module associated to $A[p^{\infty}]$ together with Hodge tensors, and similarly for $\on{loc}_{p}^{H,W}$. Up to truncation, the upper half diagram is precisely the analytification of the pullback diagram considered in \cite[Section 7.3.11]{2017arXiv170705700X}. 
Then proposition \ref{oeurieurio} tells us that $\on{loc}_{p,s}^* S_{V,s}$
is the same as the pullback of $ S_V^{W}$
along the upper half of the diagram. Since $S_V^W$ is the analytification of the truncated one (proposition \ref{oeurieurio}), and $\diamond$-analytification of cohomological correspondences commutes with pullback (proposition \ref{fjdklfsjdkeie}), 
we have that $\on{loc}_{p,s}^* S_{V,s}$ is exactly the analytification of the excursion operator on the special fiber of Shimura varieties constructed in \cite{2017arXiv170705700X} (being defined as the pullback of the truncated version of $\on{loc}_p^W$ of the truncated $S_V^W$ in the category of perfect schemes). Therefore, we have proved that the Hecke operator $T_V^S$
on $H^i_c(\Bar{\mathfrak{X}}_{K}, \overline{\mathbb{Q}}_\ell) $
is the same as the action of $\on{loc}_{p}^{W,*} S_{V}^{W}$.  
\end{proof}

\begin{corollary} (\cite[Proposition 6.3.1]{2018arXiv180205299X})
The action of $T_V^S$ on $H^i_c(\Bar{\mathfrak{X}}_{K}, \overline{\mathbb{Q}}_\ell) $
satisfies Eichler--Shimura relation.
\end{corollary}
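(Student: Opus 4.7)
The plan is to transfer the Eichler--Shimura polynomial identity from Witt vector Shtukas---where it is established by Xiao--Zhu in \cite{2018arXiv180205299X}---to the cohomology of Shimura varieties, via the $S=T$ theorem just proved. Concretely, \cite{2018arXiv180205299X} shows that the Witt vector excursion operator $S_V^W$, viewed as a cohomological correspondence from $(Sht_{\mu}^W, \epsilon_{\mu}^{W,*} r_{\mu}^* \mathscr{S}_U^{(\{1\})})$ to itself, satisfies a polynomial identity $P_V$ whose coefficients are built from excursion and partial Frobenius operators and which is determined by $V$ as a representation of $^LG$.

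First I would pull this identity back along the integral period map $\text{loc}_p^W$ to obtain the same identity for $\text{loc}_p^{W,*} S_V^W$ as a cohomological correspondence on $\bar{\mathfrak{X}}_K^{\diamond}$. This step is formal: pullback of cohomological correspondences along Cartesian diagrams (which is the situation in the factorization through the canonical compactification over $\text{Spd}(\mathcal{O}_L)$ constructed in the proof of the preceding theorem) commutes with composition, addition and scalar multiplication, hence with evaluation of polynomials. Then I would push forward along $(\pi^{\diamond})_!$, where $\pi:\bar{\mathfrak{X}}_K\to\text{Spec}(\overline{\mathbb{F}_p})$ is the structure map; proper pushforward of cohomological correspondences likewise respects polynomial identities, converting an identity among correspondences into the corresponding identity among endomorphisms of $H^i_c(\bar{\mathfrak{X}}_K,\overline{\mathbb{Q}_l})$.

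By the main theorem of this section, the endomorphism of $H^i_c(\bar{\mathfrak{X}}_K, \overline{\mathbb{Q}_l})$ induced by $\text{loc}_p^{W,*} S_V^W$ coincides with $T_V^S$, so $T_V^S$ inherits the Eichler--Shimura polynomial identity. The main obstacle is the translation between conventions: ensuring that the identity stated in \cite{2018arXiv180205299X} for truncated Witt vector Shtukas in the category of perfect schemes transfers faithfully to our untruncated v-stack formulation, and that the partial Frobenius factors appearing in $P_V$ are matched with the correct operators on the Shimura variety side. Both compatibilities, however, are already in place: the analytification equivalences of appendices \ref{serepoir} and \ref{qrerewrewrf} identify sheaves and cohomological correspondences on truncated perfect-scheme Shtukas with their v-stack analytifications, while the Cartesian pullback diagrams constructed in this section (culminating in proposition \ref{oeurieurio}) identify the partial Frobenius on Witt vector Shtukas with geometric Frobenius on $\bar{\mathfrak{X}}_K$. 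Once this dictionary is invoked, the corollary reduces to a short formal argument.
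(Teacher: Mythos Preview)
Your proposal is correct and matches the paper's intended argument: the paper gives no explicit proof but simply records the corollary as an immediate consequence of the preceding $S=T$ theorem together with the Eichler--Shimura identity for excursion operators established in \cite{2018arXiv180205299X}. Your write-up is a faithful unpacking of that implicit deduction, and the compatibility checks you flag (analytification of truncated Shtukas, identification of partial Frobenius with geometric Frobenius under pullback) are exactly the bookkeeping already supplied in the preceding sections and appendices.
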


\begin{remark}
The Eichler--Shimura relation, or the Blasius--Rogawski conjecture has been proved in many special cases previously, see \cite{Faltings2010-sw} \cite{+2000+43+71}, \cite{2020arXiv200611745L}, 
\cite{+2002+133+159},
and \cite{koskivirta_2014}
for example. The method adopted in most previous works is to study the algebraic correspondences explicitly, which is very different from our categorical approach. 
\end{remark}

\appendix

\section{Algebraic stacks and v-stacks}

\subsection{Comparison of cohomology} \label{serepoir}

By \cite[Section 27]{2017arXiv170907343S}, we can associate a v-sheaf $X^{\diamond\diamond}$ to any scheme $X$ locally of finite type over a complete discrete valuation ring $\mathcal{O}$ with perfect residue field of characteristic $p$. $X^{\diamond\diamond}$
is defined by sending characteristic $p$ affinoid perfectoid  $S=\on{Spa}(R,R^+)$ 
to the set of untilts $S^{\sharp}$ over $\on{Spa}(\mathcal{O},\mathcal{O})$ together with maps $S^{\sharp} \longrightarrow X$ of locally ringed spaces over $\on{Spec}(\mathcal{O})$. Another way to view $X^{\diamond\diamond}$ is that $X^{\diamond\diamond}$ is the sheafification of the presheaf sending $S=\on{Spa}(R,R^+)$ to the set of untilts $S^{\sharp}=\on{Spa}(R^{\sharp},R^{\sharp,+})$ over $\on{Spa}(\mathcal{O},\mathcal{O})$ together with maps $\on{Spec}(R^{\sharp}) \longrightarrow X$ of schemes over $\mathcal{O}$. 

On the other hand, there is another v-sheaf $X^{\diamond}$ we can associate to any scheme $X$ locally of finite type over $\mathcal{O}$. $X^{\diamond}$ is defined to be the sheafification of the presheaf sending a characteristic $p$ affinoid perfectoid space $S=\on{Spa}(
R,R^+)$ to the set of untilts $S^{\sharp}$ together with  morphisms $\on{Spec}(
R^{\sharp,+}) \rightarrow X$ of schemes over $\mathcal{O}$. In other words, it is the v-sheaf associated to the formal scheme $X^{\wedge}$, the completion of $X$ along the special fiber of $\mathcal{O}$.  We note that there is a 
 canonical map 
\[
a_X:
 X^{\diamond} \longrightarrow X^{\diamond\diamond}
\]
sending the map $\on{Spec}(R^+) \rightarrow X$ to its restriction on $\on{Spec}(R)$, which is an open immersion. It is an isomorphism when $X$ is proper over $\mathcal{O}$. 

We extend the construction to algebraic stacks locally of finite type over $\mathcal{O}$. For clarity, we specify our convention on algebraic stacks to be  fppf sheaves of groupoids with diagonals representable by algebraic spaces, which admit representable smooth surjective maps from  schemes.   Let $X$ be such an algebraic stack, we define $X^{\diamond}$ to be the stackification of the prestack sending $S=\on{Spa}(R,R^+)$ to the groupoid of untilts $S^{\sharp}=\on{Spa}(R^{\sharp},R^{\sharp,+})$ of $S$ together with morphisms of stacks $\on{Spec}(R^{\sharp,+}) \rightarrow X$ over $\mathcal{O}$. Similarly, we define $X^{\diamond\diamond}$ to be the stackification of the prestack sending $S=\on{Spa}(R,R^+)$ to the groupoid of untilts $S^{\sharp}=\on{Spa}(R^{\sharp},R^{\sharp,+})$ of $S$ together with morphisms of stacks $\on{Spec}(R^{\sharp}) \rightarrow X$ over $\mathcal{O}$. Then as usual, there is a canonical map
\[
a_X:
X^{\diamond} \longrightarrow X^{\diamond\diamond}. 
\]

We recall that when $X$ is a scheme locally of finite type over $\mathcal{O}$, there is a natural morphism
\[
c_X:
X^{\diamond\diamond}_v \rightarrow X_{\text{ét}} 
\]
from the v-site of $X^{\diamond\diamond}$ to the étale site of $X$, which is defined by sending a scheme $U$ together with a smooth morphism $U \rightarrow X$ 
to $U^{\diamond\diamond} \rightarrow X^{\diamond\diamond}$, see \cite[Section 27]{2017arXiv170907343S}. Let $\Lambda$ be a finite ring killed by $N$ with $p$ not dividing $N$, then 
$c_X$ induces a functor 
\[
c_X^*: D(X_{\text{ét}},\Lambda) \rightarrow D_{\text{ét}}(X^{\diamond\diamond},\Lambda),
\]
and for any $f: X\rightarrow Y $ separated, there is a canonical identification 
\[
c^*_YRf_! \cong Rf_!^{\diamond\diamond} c_X^*,
\]
see \cite[Proposition 27.5]{2017arXiv170907343S}. Moreover, we have a canonical identification
\[
f^{\diamond\diamond,*} c_Y^* \cong c_X^* f^*. 
\]

For a general algebraic stack  $X$ locally of finite type over $\mathcal{O}$, we can choose a smooth surjective map  $U \rightarrow X$ with $U$ being a scheme, then it is well-known
(see \cite[Corollary 5.3.6]{2014arXiv1404.1128L}  for example)
that we have a canonical identification 
\[
\mathcal{D}(X,\Lambda) \cong \text{lim} \ \mathcal{D}(U^{\bullet}_{\text{ét}},\Lambda)
\]
where $U^{\bullet}$ is the Čech nerve of $U$, $\mathcal{D}(X,\Lambda)$ is the $\infty$-categorical enhancement of the derived category of the lisse-étale site of $X$ and similarly for the right hand side. We now can extend the comparison morphism to algebraic stacks 
\[
c_X^*: \mathcal{D}(X,\Lambda) \cong \text{lim} \ \mathcal{D}(U^{\bullet}_{\text{ét}},\Lambda) \overset{\text{lim}\ c_{U^{\bullet}}^*}{\longrightarrow} \text{lim} \ \mathcal{D}_{\text{ét}}(U^{\bullet, \diamond\diamond},\Lambda) \cong \mathcal{D}_{\text{ét}}(X^{\diamond\diamond},\Lambda),
\]
where the last isomorphism is \cite[Proposition 17.3]{2017arXiv170907343S}  and remark 17.4 (and the commutation of $^{\diamond\diamond}$ with fiber products). It is not hard to see that $c_X^*$ is independent of the choice of $U$. 

Moreover, given a separated representable map $f: X \rightarrow Y$ between algebraic stacks locally of finite type over $\mathcal{O}$, we choose a smooth surjective map $U \rightarrow Y$ from a scheme $U$, and let $V $ be the pullback of $U $ along $f$ with the induced map $g: V \rightarrow U$. Then $g$ extends to $g^{\bullet}: V^{\bullet} \rightarrow U^{\bullet}$, and the limit of
$c^*_{U^{\bullet}}Rg^{\bullet}_! \cong R(g^{\bullet,\diamond\diamond})_! c_{V^{\bullet}}^*$ 
tells us that 
\[
c^*_YRf_! \cong Rf_!^{\diamond\diamond} c_X^*.
\]
Similarly, we have 
\[
f^{\diamond\diamond,*} c_Y^* \cong c_X^* f^*. 
\]
In summary, we have the following theorem.

\begin{theorem} \label{flkdjlkfjd}
Let $\Lambda$ be a finite ring killed by $N$ with $p$ not dividing $N$, $X$ an algebraic stack locally of finite type over $\mathcal{O}$, then we have a canonical comparison map
\[
c_X^*: \mathcal{D}(X,\Lambda) \rightarrow \mathcal{D}_{\text{ét}}(X^{\diamond\diamond},\Lambda). 
\]
Moreover, for any $f: X\rightarrow Y $ representable separated map, there are canonical identifications
\[
c^*_YRf_! \cong Rf_!^{\diamond\diamond} c_X^*
\]
and 
\[
f^{\diamond\diamond,*} c_Y^* \cong c_X^* f^*. 
\]
\end{theorem}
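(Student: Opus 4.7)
The plan is to construct $c_X^*$ and verify the two compatibilities by smooth descent from the scheme case treated in \cite{2017arXiv170907343S} section 27, which is already invoked without comment in the body of the preceding paragraphs. For the construction, I would choose a smooth surjection $U \to X$ from a scheme and form the Čech nerve $U^{\bullet}$, whose terms are algebraic spaces that, after a further étale refinement, can be taken to be schemes. Combining the smooth-descent equivalence $\mathcal{D}(X,\Lambda) \cong \lim \mathcal{D}((U^{\bullet})_{\text{ét}},\Lambda)$ for the lisse-étale site with its v-stack analogue $\mathcal{D}_{\text{ét}}(X^{\diamond\diamond},\Lambda) \cong \lim \mathcal{D}_{\text{ét}}(U^{\bullet,\diamond\diamond},\Lambda)$ from \cite{2017arXiv170907343S} proposition 17.3 and remark 17.4, the problem reduces to assembling the scheme-level comparison maps $c_{U^n}^*$. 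Since the transition maps in both towers are smooth pullbacks and $c_{(-)}^*$ commutes with $*$-pullback along smooth morphisms of schemes, these assemble into a morphism of cosimplicial $\infty$-categories whose limit is the desired $c_X^*$.

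Next, I would verify \emph{independence of the atlas} by forming a common refinement $W := U_1 \times_X U_2$ for two atlases $U_1, U_2 \to X$: the nerve $W^{\bullet}$ maps smoothly to both $U_i^{\bullet}$, and the scheme-level functoriality of $c$ under smooth pullback identifies the three limits canonically. For the \emph{compatibility with $Rf_!$}, given a representable separated $f : X \to Y$, I would pick an atlas $V \to Y$ and pull back to $U := V \times_Y X \to X$; representability makes $U$ a scheme and separatedness makes each induced $g^n : U^n \to V^n$ a separated morphism of schemes. The scheme-level identity $c_{V^n}^* R g^n_! \cong R(g^{n,\diamond\diamond})_! c_{U^n}^*$ from \cite{2017arXiv170907343S} proposition 27.5 then assembles under the limit, using that both $Rf_!$ and $Rf_!^{\diamond\diamond}$ are computed level-wise on the respective nerves by smooth base change. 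The identity $f^{\diamond\diamond,*} c_Y^* \cong c_X^* f^*$ is established in the same way from its scheme-level version, and is simpler since it only involves $*$-pullbacks which are tautologically compatible with descent.

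The main obstacle will be the $\infty$-categorical coherence of the limit assembly: the natural transformations $c_{V^n}^* R g^n_! \Rightarrow R(g^{n,\diamond\diamond})_! c_{U^n}^*$ must be compatible with the smooth-pullback structure of the cosimplicial nerves in a fully coherent way in order to descend to a natural transformation of limits. Concretely, this amounts to checking that the base-change isomorphism of \cite{2017arXiv170907343S} proposition 27.5 is natural in commutative squares of schemes whose horizontal maps are separated and whose vertical maps are smooth, and that the associated coherences (compositions, degeneracies) commute. This is formal but the only step requiring real care; once it is in place, one upgrades each pointwise isomorphism to an equivalence of limit diagrams, yielding the claimed identifications.
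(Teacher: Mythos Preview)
Your proposal is correct and follows essentially the same approach as the paper: the paper constructs $c_X^*$ by choosing a smooth atlas $U \to X$, identifying $\mathcal{D}(X,\Lambda) \cong \lim \mathcal{D}(U^{\bullet}_{\text{ét}},\Lambda)$ and $\mathcal{D}_{\text{ét}}(X^{\diamond\diamond},\Lambda) \cong \lim \mathcal{D}_{\text{ét}}(U^{\bullet,\diamond\diamond},\Lambda)$ via \cite{2017arXiv170907343S} proposition 17.3 and remark 17.4, and taking the limit of the scheme-level $c_{U^{\bullet}}^*$; for the compatibilities it pulls back an atlas of $Y$ along $f$ and takes the limit of the scheme-level identities from \cite{2017arXiv170907343S} proposition 27.5. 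You are in fact more careful than the paper on two points the paper leaves implicit: that the terms of the Čech nerve are a priori only algebraic spaces (requiring a further étale refinement or an extension of Scholze's comparison to algebraic spaces), and the $\infty$-categorical coherence needed to pass to the limit.
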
 

Lastly, we pass to adic sheaves. Let $l$ be a prime different from $p$, and $\Lambda$ be a finite extension of $\mathbb{Z}_\ell$, we know  by \cite{2014arXiv1404.1128L} that
\[
\mathcal{D}(X,\Lambda) = \underset{n}{\text{lim}} \ \mathcal{D}(X,\Lambda/l^n),
\]
where the limit is taken as $\infty$-categories. On the other hand, we also have 
\[
\mathcal{D}_{\text{ét}}(X^{\diamond\diamond},\Lambda) = \underset{n}{\text{lim}} \ \mathcal{D}_{\text{ét}}(X^{\diamond\diamond},\Lambda/l^n)
\]
by \cite[Proposition 26.2]{2017arXiv170907343S}. The analogue of theorem \ref{flkdjlkfjd} for such $\Lambda$ follows directly from the limit expression, using that pullback and $!$-pushforward are compatible with the limit expression, see \cite[Remark 26.3]{2017arXiv170907343S}. Further, we can (idempotently completely) invert $l$ and taking the limit over finite extensions of $\mathbb{Z}_\ell$ to obtain the comparison with coefficient $\overline{\mathbb{Q}}_\ell$. 

\begin{remark} \label{jfoieuoirue}
If the algebraic stack $X$ is locally of finite type over a perfect field, then $X^{\diamond\diamond}$ depends only on the perfection $X^{\on{perf}}$ of $X$, and theorem \ref{flkdjlkfjd} holds for $X^{\on{perf}}$ since  perfection does not change cohomology. This is an important case in this article. 
\end{remark}

A natural question is whether we have a similar comparison result for the $\diamond$-analytification.  The answer is no in general, but it holds in the following special case. 

\begin{proposition} \label{uwweerlkls}
Let $X$ be a scheme locally of finite type over  a perfect field $k$, and $\Lambda$ be either a finite extension of $\mathbb{Z}_\ell$ or $\overline{\mathbb{Q}}_\ell$ then we have a canonical identification 
\[
R\pi_!^{\diamond} \Lambda \cong c_{k}^* R\pi_! \Lambda
\]
where $\pi: X \rightarrow \on{Spec}(k)$
is the structure map. 
\end{proposition}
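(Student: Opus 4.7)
The plan is to reduce to the proper case via Nagata compactification, where $\bar{X}^{\diamond} = \bar{X}^{\diamond\diamond}$ and Theorem \ref{flkdjlkfjd} applies directly, and then conclude via an open-closed exact triangle on $\bar{X}^{\diamond}$.

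First, I would apply Nagata compactification to factor $\pi = \bar{\pi} \circ j$, where $j : X \hookrightarrow \bar{X}$ is an open immersion into a proper $k$-scheme and $\bar{\pi} : \bar{X} \to \mathrm{Spec}(k)$ is proper; let $i : Z \hookrightarrow \bar{X}$ denote the reduced closed complement, and write $\pi_Z := \bar{\pi} \circ i$. By definition $R\pi_! = R\bar{\pi}_* \circ j_!$, and similarly $R\pi_!^{\diamond} = R\bar{\pi}_*^{\diamond} \circ j_!^{\diamond}$. Since $\bar{X}$ and $Z$ are proper, the valuative criterion of properness (applied v-locally to reduce to valuation rings $R^+$ with fraction field $R$) gives canonical isomorphisms $\bar{X}^{\diamond} \cong \bar{X}^{\diamond\diamond}$ and $Z^{\diamond} \cong Z^{\diamond\diamond}$. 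Theorem \ref{flkdjlkfjd} then yields
\[
R\bar{\pi}_*^{\diamond} \Lambda \cong c_k^* R\bar{\pi}_* \Lambda, \qquad R\pi_{Z,*}^{\diamond} \Lambda \cong c_k^* R\pi_{Z,*} \Lambda.
\]

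Next, on $\bar{X}^{\diamond}$ the open sub-v-sheaf $X^{\diamond} \subset \bar{X}^{\diamond}$ has as closed complement the tube $\tilde{Z}^{\diamond}$ of $Z$, namely the v-subsheaf classifying maps $\mathrm{Spec}(R^+) \to \bar{X}$ whose image meets $Z$. The open-closed exact triangle $j_!^{\diamond} \Lambda \to \Lambda_{\bar{X}^{\diamond}} \to \tilde{i}_*^{\diamond} \Lambda$ on $\bar{X}^{\diamond}$ becomes, after applying $R\bar{\pi}_*^{\diamond}$, the triangle
\[
R\pi_!^{\diamond} \Lambda \to R\bar{\pi}_*^{\diamond} \Lambda \to R\pi_*^{\tilde{Z}^{\diamond}} \Lambda.
\]
Matching this against $c_k^*$ of the algebraic triangle $R\pi_! \Lambda \to R\bar{\pi}_* \Lambda \to R\pi_{Z,*} \Lambda$ and using the isomorphisms of the previous paragraph, the proposition reduces to showing that the natural closed immersion $Z^{\diamond} \hookrightarrow \tilde{Z}^{\diamond}$ induces an isomorphism $R\pi_*^{\tilde{Z}^{\diamond}} \Lambda \cong R\pi_{Z,*}^{\diamond} \Lambda$.

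The main obstacle is this tubular-neighborhood collapse. The key point is that there is a canonical specialization retraction $\mathrm{sp} : \tilde{Z}^{\diamond} \to Z^{\diamond}$ coming from the reduction map of the formal completion $\widehat{\bar{X}}|_Z \to Z$, whose geometric fibers are formal disks, hence cohomologically trivial for the constant sheaf; this yields $R\mathrm{sp}_* \Lambda \cong \Lambda_{Z^{\diamond}}$ and hence the desired identification. One can verify this by Noetherian induction along a stratification reducing to the case where $Z$ is a regularly embedded closed subscheme of $\bar{X}$, in which situation the tube is explicitly a unit-disk bundle over $Z^{\diamond}$ whose relative cohomology is $\Lambda$. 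Finally, the adic case (with $\Lambda$ a finite extension of $\mathbb{Z}_l$, or $\overline{\mathbb{Q}_l}$) follows from the torsion case by the limit descriptions $\mathcal{D}(-,\Lambda) = \varprojlim_n \mathcal{D}(-,\Lambda/l^n)$ on both sides and the compatibility of the six functors with these limits (cf.\ \cite{2017arXiv170907343S} remark 26.3), together with the further step of idempotent completion and inverting $l$ for $\overline{\mathbb{Q}_l}$-coefficients.
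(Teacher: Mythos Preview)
Your approach via Nagata compactification and the open--closed triangle is genuinely different from the paper's. The paper does not compactify at all: after reducing to $k$ algebraically closed, $X$ affine, and $\Lambda$ finite, it uses proper base change to replace $\mathrm{Spd}(k)$ by $\mathrm{Spd}(\mathbb{C})$ for $\mathbb{C}$ a completed algebraic closure of $k((t))$, then observes that $X^{\diamond}_{k((t))}$ is literally the diamond of the analytic adic space $\mathrm{Spa}(R((t)),R[[t]])$, the adic generic fibre of the formal scheme $\mathrm{Spf}(R[[t]])$ over $\mathrm{Spf}(k[[t]])$. Huber's comparison then identifies $R\Gamma_c$ with compactly supported cohomology of the special fibre with coefficients in nearby cycles, and since the integral model $\mathrm{Spec}(R[[t]])/\mathrm{Spec}(k[[t]])$ is a trivial family, $R\psi(\Lambda)=\Lambda$ and one is done. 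This $k[[t]]$-trick converts the problem into a standard nearby-cycle computation rather than a tube-collapse.

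Your strategy is sound in outline, but the tube-collapse step carries essentially all the content and is where your argument becomes sketchy. A few points deserve care. First, the closed complement of $X^{\diamond}$ in $\bar{X}^{\diamond}$ is not the v-sheaf of maps $\mathrm{Spec}(R^+)\to\bar{X}$ whose image meets $Z$ (that condition is not v-local); rather, one should describe it via points or as the diamond of the formal completion. Second, even granting fibrewise contractibility of formal polydisks (which itself needs a reference in the diamond formalism), you need properness or a base-change statement for the specialization map $\tilde{Z}^{\diamond}\to Z^{\diamond}$ to pass from fibrewise triviality to $R\mathrm{sp}_*\Lambda\cong\Lambda$, and you need compatibility with the restriction maps from $R\Gamma_c(\bar{X}^{\diamond},\Lambda)$ so that the cofibres match. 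Third, the Noetherian-induction reduction to regularly embedded $Z$ is vague: a stratification of $Z$ induces a filtration on cohomology of the tube, but relating the graded pieces to normal bundles still requires a tubular-neighbourhood statement in this setting. None of this looks false, but filling it in is real work---arguably comparable to what the paper's more direct route accomplishes in a few lines.
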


\begin{proof}
It is enough to show the result after base change to the algebraically closure of $k$, so we assume that $k$ is algebraically closed, and we are aiming to prove 
\[
R\Gamma_{c}(X^{\diamond}, \Lambda) = R\Gamma_c(X,\Lambda).
\]
By a standard Čech cohomology argument, we can assume that $X=\on{Spec}(R)$ is affine. Moreover, it is enough to prove the case when $\Lambda$ is finite, so we assume that. 

Let $C$ be a completed algebraically closure of $k((t))$, and 
by proper base change (\cite[Proposition 22.15]{2017arXiv170907343S}), we have 
\[
R\Gamma_{c}(X^{\diamond}, \Lambda) =R\Gamma_{c}(X^{\diamond}_{C}, \Lambda) 
\]
where 
$X^{\diamond}_{C} := X^{\diamond}\times_{\on{Spd}(k)} \on{Spd}(C)$. We observe that 
\[
X^{\diamond}_{k((t))} = \on{Spd}(R((t)),R[[t]])
\]
where $X^{\diamond}_{k((t))} :=X^{\diamond}\times_{\on{Spd}(k)} \on{Spd}(k((t)),k[[t]])$, which visibly is the analytification of the analytic adic space $X^{\on{ad}}_{k((t))}:= \on{Spa}(R((t)),R[[t]])$. Then by \cite[Lemma 15.6]{2017arXiv170907343S}, 
\[
R\Gamma_{c}(X^{\diamond}_{C}, \Lambda)  = 
R\Gamma_{c}((X^{\on{ad}}_{k((t))})_{C}, \Lambda). 
\]

To compute 
$R\Gamma_{c}((X^{\on{ad}}_{k((t))})_{C}, \Lambda)$, we observe that
\[
\on{Spa}(R((t)),R[[t]])=
\on{Spa}(R[[t]],R[[t]])_{\eta} := \on{Spa}(R[[t]],R[[t]]) \times_{\on{Spa}(k[[t]],k[[t]])} \on{Spa}(k((t)),k[[t]]),
\]
namely, $X^{\on{ad}}_{k((t))}$ is the generic fiber of the formal scheme $\mathfrak{X}:=\on{Spa}(R[[t]],R[[t]])$ 
over $\on{Spa}(k[[t]],k[[t]])$. Note that $\mathfrak{X}$ is the $t$-adic completion of the scheme $\on{Spec}(R) \times_{\on{Spec}(k)} \on{Spec}(k[[t]]) $ over $\on{Spec}(k[[t]])$, which has special fiber $X$.  
Then \cite[Theorem 5.7.8]{Huber2013-td}  tells us that 
\[
R\Gamma_{c}((X^{\on{ad}}_{k((t))})_{C}, \Lambda) =
R\Gamma_{c}(X,R\psi (\Lambda))
\]
where $R\psi$ is the nearby cycle functor with respect to $\on{Spec}(R[[t]]) $ over $\on{Spec}(k[[t]])$. Since $\on{Spec}(R) \times_{\on{Spec}(k)} \on{Spec}(k[[t]])$
is a constant family over $\on{Spec}(k[[t]])$, we have $R\psi (\Lambda)=\Lambda$, finishing the proof. 

\end{proof}

\subsection{Comparison of cohomological correspondences} \label{sdkkdkdkd}

We have established the comparison of cohomology, and now we want to use it to compare cohomological correspondences. Let 
\[
\mathscr{C}: s^*\mathcal{F} \longrightarrow t^!\mathcal{G}
\]
be a cohomological correspondence supported on
\[
\begin{tikzcd}
& Z \arrow[ld,"s"'] \arrow[rd,"t"]&
\\
X & & Y
\end{tikzcd}
\]
where $X, Z$ and $Y$ are algebraic stacks locally of finite type over $\mathcal{O}$, or perfections of such stacks over a perfect field, $s$ and $t$ are representable separated morphisms, and $\mathcal{F} \in D^b(X_{\text{ét}},\overline{\mathbb{Q}}_\ell)$ (resp. $\mathcal{G} \in D^b(Y_{\text{ét}},\overline{\mathbb{Q}}_\ell)$). There is a natural analytification of $\mathscr{C}$,
\[
\mathscr{C}^{\diamond\diamond}:(X^{\diamond\diamond},c_X^*\mathcal{F}) \longrightarrow (Y^{\diamond\diamond},c_Y^{*}\mathcal{G})
\]
supported on 
\[
\begin{tikzcd}
& Z^{\diamond\diamond} \arrow[ld,"s^{\diamond\diamond}"'] \arrow[rd,"t^{\diamond\diamond}"]&
\\
X^{\diamond\diamond} & & Y^{\diamond\diamond},
\end{tikzcd}
\]
which is defined by
\[
\mathscr{C}^{\diamond\diamond}: s^{\diamond\diamond,*}c_X^*\mathcal{F} \cong c_Z^* s^* \mathcal{F} \overset{c_Z^* \mathscr{C}}{\longrightarrow} c_Z^*t^!\mathcal{G} \rightarrow t^{\diamond\diamond,!} c_Y^* \mathcal{G},
\]
where the last map is the adjoint of the canonical map 
\[
t^{\diamond\diamond}_!c_Z^*t^!\mathcal{G} \cong c_Y^* t_!t^!\mathcal{G} \rightarrow  c_Y^* \mathcal{G}
\]
with the first isomorphism being theorem \ref{flkdjlkfjd} and the second the counit map. 

This gives the $\diamond\diamond$-analytification of the cohomological correspondence, but what we need in this article is the analytification with respect to $\diamond$. We can construct the desired cohomological correspondence from $\mathscr{C}^{\diamond\diamond}$ under additional hypothesis. We have naturally a commutative diagram
\[
\begin{tikzcd}
& Z^{\diamond} \arrow[dd,"a_Z"] \arrow[ld,"s^{\diamond}"'] \arrow[rd,"t^{\diamond}"]&
\\
X^{\diamond}\arrow[dd,"a_X"] & & Y^{\diamond} \arrow[dd,"a_Y"]
\\
& Z^{\diamond\diamond} \arrow[ld,"s^{\diamond\diamond}"'] \arrow[rd,"t^{\diamond\diamond}"]&
\\
X^{\diamond\diamond} & & Y^{\diamond\diamond},
\end{tikzcd}
\]
assume the right square is Cartesian, \footnote{As noted by the referee, being Cartesian fails for open immersions such as $\mathbb{G}_m \rightarrow \mathbb{A}^1$.} then we can pullback $\mathscr{C}^{\diamond\diamond}$ along it to have the cohomological correspondence
\[
\mathscr{C}^{\diamond}: (X^{\diamond}, a_X^*c_X^*\mathcal{F}) \longrightarrow (Y^{\diamond},a_Y^*c_Y^* \mathcal{G})
\]
supported on $Z^{\diamond}$. 

We record some formal properties of $\mathscr{C}^{\diamond}$. 

\begin{proposition} \label{fjdklfsjdkeie}
$\mathscr{C}^{\diamond}$ commutes with composition and pullbacks. More precisely, if $\mathscr{C}_1$ and $\mathscr{C}_2$ are composable, and assume that the right squares involving $a_Z$ and $a_Y$ of both $\mathscr{C}_1$ and $\mathscr{C}_2$ are Cartesian, then so does $\mathscr{C}_1 \circ \mathscr{C}_2$, and  we have
\[
\mathscr{C}_1^{\diamond} \circ \mathscr{C}_2^{\diamond} = (\mathscr{C}_1 \circ \mathscr{C}_2)^{\diamond}.
\]

Given a commutative diagram
\[
\begin{tikzcd}
& Z_1 \arrow[dd,"b"] \arrow[ld,"s_1"'] \arrow[rd,"t_1"]&
\\
X_1\arrow[dd,"a"'] & & Y_1 \arrow[dd,"c"]
\\
& Z_2 \arrow[ld,"s_2"'] \arrow[rd,"t_2"]&
\\
X_2 & & Y_2,
\end{tikzcd}
\]
and a cohomological correspondence 
\[
\mathscr{C}_2: (X_2,\mathcal{F}) \longrightarrow (Y_2, \mathcal{G}) 
\]
supported on $Z_2$, 
suppose that either the vertical arrows are smooth or the right square is Cartesian, so we can pullback  $\mathscr{C}_2$ along the diagram to obtain $\mathscr{C}_1$ supported on $Z_1$. Assume that both $\mathscr{C}_1^{\diamond}$ and $\mathscr{C}_2^{\diamond}$ exists, i.e. the square corresponding to $a_{Y_1}$ and $a_{Z_1}$ (resp.  $a_{Y_2}$ and $a_{Z_2}$) is Cartesian, then $\mathscr{C}_1^{\diamond}$ is the pullback of $\mathscr{C}_2^{\diamond}$ along the diagram 
\[
\begin{tikzcd}
& Z_1^{\diamond} \arrow[dd,"b^{\diamond}"] \arrow[ld,"s_1^{\diamond}"'] \arrow[rd,"t_1^{\diamond}"]&
\\
X_1^{\diamond} \arrow[dd,"a^{\diamond}"'] & & Y_1^{\diamond} \arrow[dd,"c^{\diamond}"]
\\
& Z_2^{\diamond} \arrow[ld,"s_2^{\diamond}"'] \arrow[rd,"t_2^{\diamond}"] &
\\
X_2^{\diamond} & & Y_2^{\diamond}.
\end{tikzcd}
\]
\end{proposition}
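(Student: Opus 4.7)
The plan is to first establish both properties at the level of the $\diamond\diamond$-analytification, where everything follows formally from the compatibilities of $c_X^*$ with pullback and $!$-pushforward recorded in Theorem \ref{flkdjlkfjd}, and then to deduce the $\diamond$-version by pulling back along the squares built from the open immersions $a_X, a_Y, a_Z$.

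For the $\diamond\diamond$-case, recall that a cohomological correspondence $\mathscr{C}: p^{*}\mathcal{F}\to q^{!}\mathcal{G}$ composes with another $\mathscr{C}': p'^{*}\mathcal{G}\to q'^{!}\mathcal{H}$ via the formal recipe: form the fiber product $Z\times_{Y} Z'$, pull $\mathscr{C}$ back via the left leg, pull $\mathscr{C}'$ back via the right leg, and use base change to identify the intermediate $q^{!}$ and $p'^{*}$ of the same sheaf on $Y$. Theorem \ref{flkdjlkfjd} gives $c_{?}^{*}\circ f^{*}\cong f^{\diamond\diamond,*}\circ c_{?}^{*}$ and $c_{?}^{*}\circ Rf_{!}\cong Rf_{!}^{\diamond\diamond}\circ c_{?}^{*}$ for representable separated $f$. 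Taking adjoints (and using that $f^{!}$ is adjoint to $Rf_{!}$ in both worlds) gives the compatibility $c_{?}^{*}\circ f^{!}\cong f^{\diamond\diamond,!}\circ c_{?}^{*}$ for the maps involved. Consequently the base-change isomorphisms used in the definition of composition are preserved by $c^{*}$, so $(\mathscr{C}_1\circ \mathscr{C}_2)^{\diamond\diamond}=\mathscr{C}_1^{\diamond\diamond}\circ \mathscr{C}_2^{\diamond\diamond}$. The same arguments show that pullback of cohomological correspondences along a commutative diagram commutes with $\diamond\diamond$-analytification, either via smooth base change (if the vertical maps are smooth) or via proper base change (if the relevant square is Cartesian); both versions of base change exist on the algebraic and v-stack sides and are intertwined by $c^{*}$.

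To pass from $\diamond\diamond$ to $\diamond$, first observe that if the $a$-squares for $\mathscr{C}_1$ and $\mathscr{C}_2$ are Cartesian, then the $a$-square for $\mathscr{C}_1\circ \mathscr{C}_2$ is also Cartesian: the support of the composition is the fiber product of supports over the middle stack, and Cartesianness is preserved under taking fiber products. Thus $(\mathscr{C}_1\circ \mathscr{C}_2)^{\diamond}$ is defined, and it is the pullback of $(\mathscr{C}_1\circ \mathscr{C}_2)^{\diamond\diamond}$ along the $a$-diagram. Now invoke the second statement of the proposition (for the $\diamond\diamond$-analytification, already proved), together with the standard fact that pullback of cohomological correspondences along nested Cartesian squares commutes with composition: pulling $\mathscr{C}_1^{\diamond\diamond}\circ \mathscr{C}_2^{\diamond\diamond}$ back along the $a$-square is the same as composing the pullbacks, giving $\mathscr{C}_1^{\diamond}\circ \mathscr{C}_2^{\diamond}=(\mathscr{C}_1\circ \mathscr{C}_2)^{\diamond}$. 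The pullback statement is similar: stacking the given commutative diagram on top of the corresponding $a$-diagrams produces a single commutative diagram whose bottom square admits $\diamond\diamond$-pullback and whose top squares are the defining $a$-Cartesian squares. Applying the $\diamond\diamond$-pullback statement to the bottom square and then transitivity of pullback of cohomological correspondences along stacked diagrams (see \cite{2017arXiv170705700X} A.2) yields that $\mathscr{C}_1^{\diamond}$ is the pullback of $\mathscr{C}_2^{\diamond}$ along the corresponding $\diamond$-diagram.

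The main obstacle is a purely bookkeeping one: one must check that all the base change and adjunction isomorphisms used to define composition and pullback of cohomological correspondences are indeed transported faithfully by $c^{*}$, and that the resulting squares remain Cartesian after passing to $\diamond$. There is no conceptual issue because $c^{*}$ behaves as a morphism of six-functor formalisms on the relevant sheaves (restricted to representable separated maps, where Theorem \ref{flkdjlkfjd} applies); the work is to verify that every single square appearing in the recipes for composition and for pullback falls within the scope of those compatibilities. Once this is done, the two claims reduce to the analogous formal properties for the pullback of cohomological correspondences in the v-stack world, already established in \cite{2017arXiv170705700X} appendix A.2.
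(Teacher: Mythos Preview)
The paper states this proposition without proof, treating it as a formal bookkeeping exercise in six-functor compatibilities, so there is no paper argument to compare against. Your sketch is the natural one and is essentially correct: reduce to the $\diamond\diamond$-case via the functoriality of $c_X^*$ with $f^*$ and $Rf_!$ from Theorem~\ref{flkdjlkfjd}, then pull back along the $a$-squares (which are open immersions, hence smooth) to pass to $\diamond$.

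One imprecision worth flagging: you write that taking adjoints yields an \emph{isomorphism} $c_?^{*}\circ f^{!}\cong f^{\diamond\diamond,!}\circ c_?^{*}$. This does not follow formally from $c_Y^{*}Rf_!\cong Rf_!^{\diamond\diamond}c_X^{*}$; taking right adjoints of that identity would involve $c_{?*}$, not $c_?^{*}$. What one actually obtains is only a natural transformation $c_Z^{*}q^{!}\to q^{\diamond\diamond,!}c_Y^{*}$, namely the adjoint of $q_!^{\diamond\diamond}c_Z^{*}q^{!}\cong c_Y^{*}q_!q^{!}\to c_Y^{*}$, and this is precisely the map used in the paper's definition of $\mathscr{C}^{\diamond\diamond}$. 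Fortunately, a natural transformation is all that is needed: the recipes for composition and pullback of cohomological correspondences are built out of $*$-pullback, $!$-pushforward, and base-change transformations, and checking that these are intertwined by $c^{*}$ (as natural transformations, not isomorphisms) suffices for the claimed identities. So your argument goes through once you weaken ``isomorphism'' to ``natural transformation'' at that step and verify the relevant coherence squares commute.
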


\subsection{Truncated moduli spaces of Witt vector shtukas and truncated Witt vector Hecke stacks} \label{qrerewrewrf}

We would like to compare the cohomological correspondences in \cite{2017arXiv170705700X} between moduli spaces of Witt vector shtukas  with the ones between v-stacks. More precisely, only the truncated moduli spaces of Witt vector shtukas are considered in \cite{2017arXiv170705700X} since the untruncated ones are not algebraic stacks and the cohomological formalism does not directly apply to them. 
However, the problem disappears by passing to v-stacks as the formalism developed in \cite{2017arXiv170907343S} is very general and does apply to stacks with infinite dimensional automorphisms. 

We recall the definition of truncated Witt vector Hecke stacks and moduli spaces of Witt vector shtukas in \cite{2017arXiv170705700X}. We omit the superscript $W$ for moduli spaces of Witt vector shtukas, as we will only use the truncated version for Witt vector ones. More importantly, we use the superscript $W$ for the untruncated moduli spaces of Witt vector shtukas or Hecke stacks viewed as v-stacks, whereas the truncated ones below are only defined as perfect stacks. 

Let $m \in \mathbb{N} \cup \{\infty \}$, and $L^mG$ be the perfect scheme defined by $L^mG(R)=G(W_m(R))$ (with $W_{\infty}(R)=W(R)$) for any perfect algebra $R$ over $\mathbb{F}_q$. Suppose now that  $m$ is large with respect to $\mu_{\bullet}$ so that the action of $ LG(R)$ on $Gr^W_{\mu_{\bullet}}$ factorizes through $L^mG $, where $Gr_{\mu_{\bullet}}^W$ is the (twisted products of) Witt vector affine Grassmannian, see \cite[Definition 5.1.2]{2017arXiv170705700X}.  Let \[
\on{Hecke}^{\on{loc}(m)}_{\mu_{\bullet}} := L^mG \setminus Gr^W_{\mu_{\bullet}}.
\]

We observe that 
\[
\on{Hecke}^{\on{loc}(\infty), \diamond}_{\mu_{\bullet}} =
\on{Hecke}^W_{\mu_{\bullet}}
\]
and
\[
\on{Hecke}^{\on{loc}(\infty), \diamond\diamond}_{\mu_{\bullet}} =
\on{Hecke}_{\mu_{\bullet}, s}^{\on{loc}, (\{1\}, \cdots, \{n\})}.
\]

Let $(m,n)$ be a pair of non-negative integers such that $m-n$ is $\mu_{\bullet}$-large, see \cite[Definition 3.1.6]{2017arXiv170705700X}, and 
$\on{Sht}^{\on{loc}(m,n)}_{\mu_{\bullet}}$ 
be defined as in \cite[Definition 5.3.1]{2017arXiv170705700X}, and similarly for
$\on{Sht}^{0,\on{loc}(m,n)}_{\mu_{\bullet}|\nu_{\bullet}}$. By $loc.cit.$ section 5.3.2, both 
$\on{Sht}^{\on{loc}(m,n)}_{\mu_{\bullet}}$
and 
$\on{Sht}^{0,\on{loc}(m,n)}_{\mu_{\bullet}|\nu_{\bullet}}$
can be written as quotients  by $L^mG$ of perfect schemes perfectly of finite type, so they fit into the cohomological formalism of the previous section by remark \ref{jfoieuoirue}. Now by \cite[Proposition \rom{6}.4.1]{2021arXiv210213459F}, we have canonical identifications 
\[
D_{\text{ét}}(\on{Sht}^W_{\mu_{\bullet}}, \Lambda) \cong D_{\text{ét}}(\on{Sht}^{\on{loc}(m,n),\diamond}_{\mu_{\bullet}},\Lambda),
\]
\[
D_{\text{ét}}(\on{Sht}^{0,W}_{\mu_{\bullet}|\nu_{\bullet}},\Lambda) \cong 
D_{\text{ét}}(\on{Sht}^{0,\on{loc}(m,n),\diamond}_{\mu_{\bullet}|\nu_{\bullet}}, \Lambda)
\]
and
\[
D_{\text{ét}}(\on{Hecke}^W_{\mu_{\bullet}},\Lambda) \cong 
D_{\text{ét}}(\on{Hecke}^{\on{loc}(m),\diamond}_{\mu_{\bullet}}, \Lambda).
\]
 More precisely, the conclusion of \cite[Proposition \rom{6}.4.1]{2021arXiv210213459F}   holds if we change the assumption to that the group $H$ has a filtration with graded pieces being extensions of closed unit  discs (instead of $\mathbb{A}^1$) v-locally, which the analytification $L^mG^{\diamond}$ does, and the proof in $cit.loc.$ goes without change.  
In other words, the truncated moduli spaces of Witt vector shtukas and Witt vector Hecke stacks have the same cohomology with the untruncated ones.  Similar identifications hold between moduli spaces of local Witt vector shtukas or Witt vector Hecke stacks and $\diamond\diamond$-analytification of truncated ones, for example
\[
D_{\text{ét}}(\on{Sht}^{\on{loc},W}_{\mu_{\bullet}}, \Lambda) \cong D_{\text{ét}}(\on{Sht}^{\on{loc}(m,n),\diamond\diamond}_{\mu_{\bullet}},\Lambda).
\]

\begin{remark}
The introduction of truncated moduli spaces of Witt vector shtukas or Witt vector Hecke stacks in \cite{2017arXiv170705700X} is because the untruncated ones do not directly fit into the cohomological formalism of Artin stacks. Using the new cohomology theory of v-stacks developed in \cite{2017arXiv170907343S}, we believe large parts of \cite{2017arXiv170705700X} can be simplified using the untruncated objects, with some caution on the arguments using fixed points results. 
\end{remark}

\bibliographystyle{alpha} 
\bibliography{ref}
\end{document}